\newtheorem{theorem}{Theorem}
\newtheorem{lemma}{Lemma}
\newcommand{\R}{\mathbb R}
\newcommand{\Z}{\mathbb Z}
\newcommand{\C}{\mathbb C}
\newcommand{\N}{\mathbb N}
\renewcommand{\S}{\mathcal S}
\newcommand{\w}{\omega}
\newcommand{\W}{\mathbb{W}}
\newcommand{\e}{\varepsilon}
\newcommand{\g}{\gamma}
\newcommand{\p}{\varphi}
\newcommand{\s}{\psi}
\renewcommand{\a}{\alpha}
\renewcommand{\b}{\beta}
\renewcommand{\d}{\delta}
\renewcommand{\P}{\mathbb{P}}
\newcommand{\mc}{\mathcal}
\newcommand{\mb}{\mathbb}
\def\set4{\mathcal I}
\def\tup14{(1,2,3,4)}
\newcommand\vwidehat[1]{\arraycolsep=0pt\relax%
\begin{array}{c}
\stretchto{
  \scaleto{
    \scalerel*[\widthof{\ensuremath{#1}}]{\kern-.5pt\bigwedge\kern-.5pt}
    {\rule[-\textheight/2]{1ex}{\textheight}} 
  }{\textheight} %
}{0.5ex}\\           
#1\\                 
\rule{-1ex}{0ex}
\end{array}
}
\newtheorem*{comm*}{Comment}
\newtheorem{definition}{Definition}
\newtheorem*{lemma*}{Lemma}
\newtheorem*{theorem*}{Theorem}
\newcommand{\pp}{\mathbin{\!/\mkern-5mu/\!}}
\newcommand\widecheck[1]{%
\savestack{\tmpbox}{\stretchto{%
  \scaleto{%
    \scalerel*[\widthof{\ensuremath{#1}}]{\kern-.6pt\bigwedge\kern-.6pt}%
    {\rule[-\textheight/2]{1ex}{\textheight}}
  }{\textheight}%
}{0.5ex}}%
\stackon[1pt]{#1}{\scalebox{-1}{\tmpbox}}%
}
\newcommand{\supp}{\mathrm{supp}}
\newtheorem{prop}[theorem]{Proposition}
\begin{document}

\author{Larry Guth}
\address{Department of Mathematics\\
Massachusetts Institute of Technology\\
Cambridge, MA 02142-4307, USA}
\email{lguth@math.mit.edu}

\author{Dominique Maldague}
\address{Department of Mathematics\\
Massachusetts Institute of Technology\\
Cambridge, MA 02142-4307, USA}
\email{dmal@mit.edu}

\keywords{decoupling inequalities, superlevel sets, square function estimates}
\makeatletter
\@namedef{subjclassname@2020}{\textup{2020} Mathematics Subject Classification}
\makeatother
\subjclass[2020]{42B15, 42B10}

\date{}

\title{Amplitude dependent wave envelope estimates for the cone in $\R^3$ }
\maketitle 
\begin{abstract} For functions $f$ with Fourier transform supported in the truncated cone, we bound superlevel sets $\{x\in\R^3:|f(x)|>\a\}$ using an $\a$-dependent version of the wave envelope estimate (Theorem 1.3) from \cite{locsmooth}. Our estimates imply both sharp square function and decoupling inequalities for the cone. We also obtain sharp small cap decoupling for the cone, where small caps $\g$ subdivide canonical $1\times R^{-1/2}\times R^{-1}$ planks into $R^{-\b_2}\times R^{-\b_1}\times R^{-1}$ sub-planks, for $\b_1\in[\frac{1}{2},1]$ and $\b_2\in[0,1]$. 
\end{abstract}

\section{Introduction}

We use the high/low method to derive superlevel set estimates for functions supported in the cone in $\R^3$. By high/low method, we mean an argument with cases depending on whether functions are high or low-frequency dominated. This type of argument was used in \cite{wellspaced} to prove incidence estimates for certain configurations of tubes, in \cite{gmw} to prove $L^6$ decoupling for the parabola with a $(\log R)$-power bound, and \cite{lvlsets} to prove sharp level set estimates for small caps of the parabola. \cite{locsmooth} used a sophisticated high/low method to prove a more detailed version of the square function estimate for the cone, which we will call a wave envelope estimate. 

The statement of the wave envelope estimate (Theorem 1.3) from \cite{locsmooth} was one of the main novelties of that paper. This main estimate implies the square function estimate for the cone (in $2+1$ dimensions), but it is formulated in a different way. It was inspired by decoupling theory, which gets a lot of leverage from induction on scales. The square function estimate as stated does not interact well with induction on scales. The main inequality of \cite{locsmooth} is a stronger statement, which does interact well with induction on scales. In spite of the inspiration coming from decoupling theory, the main inequality of \cite{locsmooth} is independent from decoupling for the cone. Neither inequality implies the other one. Our Theorem \ref{mainC} is an amplitude-dependent version of the wave envelope estimate from \cite{locsmooth}. It implies both the square function estimate for the cone and decoupling for the cone. And it implies a new result, called small cap decoupling for the cone.

Consider the truncated cone 
$\Gamma=\{\xi\in\R^3:\xi_1^2+\xi_2^2=\xi_3^2,\quad\frac{1}{2}\le|\xi_3|\le 1\}$. 
For a large parameter $R\gg1$, let $\mc{N}_{R^{-1}}(\Gamma)$ denote the $R^{-1}$-neighborhood of $\Gamma$. Decompose $\mc{N}_{R^{-1}}(\Gamma)$ into a collection ${\bf{S}}_{R^{-1/2}}$ of cone planks $\theta$ which have approximate dimensions $1\times R^{-1/2}\times R^{-1}$. For Schwartz functions $f:\R^3\to\C$ with Fourier transform supported in $\mc{N}_{R^{-1}}(\Gamma)$, define Fourier projections $f_\theta:\R^3\to\C$ by
\[ f_\theta(x)=\int_\theta\widehat{f}(\xi)e^{2\pi i x\cdot\xi}d\xi. \]
In \cite{locsmooth}, a detailed Fourier decomposition of $\sum_\theta|f_\theta|^2$ leads to considering partial-sum and localized versions of the square function which we now describe. For each dyadic $s\in[R^{-1/2},1]$, let ${\bf{S}}_s$ be a partition of $\mc{N}_{s^2}(\Gamma)$ into canonical cone planks $\tau$ of dimension $1\times s\times s^2$. For each $\theta\in{\bf{S}}_{R^{-1/2}}$, there is a dual rectangular box $\theta^*$ centered at the origin with dimensions $1\times R^{1/2}\times R$. Translates of $\theta^*$ are called wave packets. For each $\tau\in{\bf{S}}_s$, there is also a dual rectangular box $\tau^*$ of dimension $1\times s^{-1}\times s^{-2}$. Define $U_{\tau,R}$ to be a scaled version of $\tau^*$ with dimensions $s^2R\times sR\times R$, which roughly contains $\theta^*$ for each $\theta\subset\tau$. Let $U\|U_{\tau,R}$ be a tiling of $\R^3$ by translates of $U_{\tau,R}$ and call each $U$ a wave envelope. The partial and localized square function corresponding to the wave envelope $U$ is 
\[ S_{U}f(x)=\Big( \sum_{\theta\subset\tau}|f_\theta|^2(x)W_U(x)\Big)^{1/2} \]
where $W_{U}$ is an $L^\infty$-normalized weight function localized to $U$. The precise definitions of ${\bf{S}}_s$, $U_{\tau,R}$, and $W_{U}$ are contained in \textsection\ref{mainCpf}. With this notation, we now recall the wave envelope estimate from \cite{locsmooth}. 

\begin{theorem}[Theorem 1.3 from \cite{locsmooth}]\label{locsmooththm}
For any Schwartz function $f:\R^3\to\C$ with $\supp\widehat{f}\subset\mc{N}_{R^{-1}}(\Gamma)$, 
\begin{equation}\label{mainCeqnlocsmooth} \int|f|^4 \le C_\e R^\e \sum_{\substack{R^{-1/2}<s<1\\ s\quad\text{dyadic}}}\sum_{\tau\in{\bf{S}}_s}\sum_{U\|U_{\tau,R}}|U|^{-1}\|S_Uf\|_2^4 . \end{equation}
\end{theorem}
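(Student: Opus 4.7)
The plan is a multi-scale high/low decomposition of $|f|^2$, in the spirit of the parabola arguments in \cite{gmw, lvlsets}. For each dyadic $s \in [R^{-1/2}, 1]$ and each $\tau \in {\bf S}_s$, set $f_\tau := \sum_{\theta \subset \tau} f_\theta$, and start from the identity
\[ |f|^2 = \sum_{\tau \in {\bf S}_s} |f_\tau|^2 + \sum_{\tau_1 \neq \tau_2} f_{\tau_1}\overline{f_{\tau_2}}. \]
The key geometric observation is that $\widehat{|f_\tau|^2}$ is supported in $\tau - \tau$, a plank through the origin of size roughly $1 \times s \times s^2$, while for distinct siblings $\tau_1, \tau_2 \in {\bf S}_s$ contained in a common parent $\tilde\tau \in {\bf S}_{2s}$, the Fourier support of $f_{\tau_1}\overline{f_{\tau_2}}$ lies inside $\tilde\tau - \tilde\tau$ but \emph{outside} the thinner plank $\tau - \tau$, since the short-direction separation between distinct children of $\tilde\tau$ is comparable to $s^2$. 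Thus these cross terms are high-frequency relative to scale $s$ but low-frequency relative to scale $2s$, which sets up a natural dyadic frequency decomposition of $|f|^2$.

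Accordingly, I would introduce a smooth low-pass $\eta_s$ whose Fourier transform equals $1$ on a slight enlargement of $\tau - \tau$ (uniformly in $\tau$) and set $L_s := \eta_s * |f|^2$. Then $L_s \approx \sum_{\tau \in {\bf S}_s} |f_\tau|^2$ modulo Schwartz-tail errors, with $L_{R^{-1/2}} \approx \sum_\theta |f_\theta|^2$ at the finest scale. Pairing the telescope
\[ \sum_\theta |f_\theta|^2 \approx L_1 + \sum_{R^{-1/2} \leq s < 1}\bigl(L_{s/2} - L_s\bigr) \]
against $|f|^2$ reduces the problem to controlling each scale-$s$ piece $I_s := \int |f|^2 \cdot (L_{s/2} - L_s)$ by the corresponding scale-$s$ term on the right-hand side of (\ref{mainCeqnlocsmooth}). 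A clean way to organize this, following \cite{locsmooth}, is a pigeonhole on the ``dominant scale'' $s(x)$ at which $L_{s(x)}(x)$ is largest, and then charge $|f|^2(x)$ to the scale-$s(x)$ term; the $O(\log R)$ iteration depth produces only a $\log R$ factor absorbed by $R^\varepsilon$.

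The heart of the argument is then the per-scale estimate
\[ I_s \lesssim C_\varepsilon R^\varepsilon \sum_{\tau \in {\bf S}_s} \sum_{U \| U_{\tau,R}} |U|^{-1} \|S_U f\|_2^4. \]
After replacing $L_{s/2} - L_s$ by $\sum_{\tau} |f_\tau|^2$ and applying Cauchy-Schwarz in $\tau$, one localizes spatially by tiling $\R^3$ with the wave envelopes $U \| U_{\tau,R}$. The choice of $U_{\tau,R}$ as $\tau^*$ expanded by $R$ is essential here: its dimensions $s^2 R \times sR \times R$ are the unique ones that are both dual to $\tau$ and contain each wave packet $\theta^*$ for $\theta \subset \tau$, so the wave packet decomposition of $f_\tau$ fits cleanly inside $U_{\tau,R}$. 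On each such $U$, approximate $L^2$ orthogonality of $\{f_\theta\}_{\theta \subset \tau}$ against the weight $W_U$ then recovers precisely $|U|^{-1}\|S_U f\|_2^4$. The main obstacle is controlling compound Schwartz-tail errors and cross-term leakage across the boundary of $\tau - \tau$ without compounding losses over the $O(\log R)$ iteration steps; this is handled by working exclusively with smooth weighted cutoffs rather than sharp indicators, which absorbs all such errors into the final $R^\varepsilon$ factor and is precisely the technical content underlying the wave envelope formulation.
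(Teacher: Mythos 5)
The proposal is essentially the argument the paper sketches for the \emph{parabola} in \textsection\ref{pf} transplanted to the cone, and it does not go through in $\R^3$. Two related gaps:

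\textbf{The starting reduction is missing.} You pair the telescope against $|f|^2$, which yields $\int |f|^2 \sum_\theta |f_\theta|^2$, not $\int|f|^4$. Turning that into a bound on $\int |f|^4$ requires the reverse $L^4$ square function estimate $\int|f|^4 \lesssim_\epsilon R^\epsilon \int\bigl(\sum_\theta|f_\theta|^2\bigr)^2$. For the parabola in $\R^2$ this is C\'ordoba's elementary estimate, which is exactly why the high/low telescope in \textsection\ref{pf} is applied to the nonnegative function $\sum_\theta|f_\theta|^2$ and paired with itself. For the cone in $\R^3$ this estimate was the Mockenhaupt square function conjecture, proved by \cite{locsmooth} \emph{as a consequence of} the wave envelope estimate you are trying to prove. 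Assuming it, or something morally equivalent (which is what the asserted approximation $L_s \approx \sum_\tau|f_\tau|^2$ effectively does), is circular.

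\textbf{A one-pass frequency decomposition cannot replace the induction on scales.} Even after fixing the pairing, the cone argument cannot be a single telescope over dyadic frequency annuli. The proof of Theorem \ref{locsmooththm} in \cite{locsmooth}, and of the stronger Theorem \ref{mainC} in this paper, is an induction on scales: one defines the constant $S(r,R)$ in \eqref{constdef}, establishes a Kakeya-type estimate (Lemma \ref{kaklem}, proved via a frequency decomposition restricted to the single step $r\to r^2$, as in Proposition \ref{kakcone}), a base case coming from cylindrical decoupling for the parabola (Lemma \ref{basecase}), and a Lorentz rescaling lemma (Lemma \ref{rescaling}), then closes the induction in Proposition \ref{induct}. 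The frequency-localization you describe is indeed the engine of the Kakeya step, but it only works across one scale jump before one must rescale; iterating it across all scales naively loses unbounded constants. The genuinely hard geometric inputs---the cone Kakeya/Nikodym-type overlap estimate and parabola decoupling---do not appear anywhere in your sketch, and they cannot be absorbed into ``Schwartz-tail errors''; smooth cutoffs handle tails, not the combinatorics of tangent planks in $\R^3$.
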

\noindent Note that in \cite{locsmooth}, the definition of $S_Uf$ uses a sharp cutoff $\chi_U$ in place of $W_U$, but this is a superficial difference (see the discussion following \eqref{weightdef}). Our amplitude-dependent wave envelope estimate is

\begin{theorem}\label{mainC} For any Schwartz function $f:\R^3\to\C$ with Fourier transform supported in $\mc{N}_{R^{-1}}(\Gamma)$ and any $\a>0$,
\begin{equation}\label{mainCeqnintro} \alpha^4 |\{x\in\R^3:|f(x)|>\a\}| \le C_\e R^\e \sum_{\substack{R^{-1/2}<s<1\\ s\quad\text{dyadic}}}\sum_{\tau\in{\bf{S}}_s}\sum_{U\in\mc{G}_\tau(\a)}|U|^{-1}\|S_Uf\|_2^4  \end{equation}
where $\mc{G}_\tau(\a)=\{U\|U_{\tau,R}:\,C_\e R^\e |U|^{-1}\|S_Uf\|_2^2\ge \frac{\a^2}{(\#\tau)^2} \}$ and $\#\tau=\#\{\tau\in{\bf{S}}_s:f_{\tau}\not\equiv 0\}$. 
\end{theorem}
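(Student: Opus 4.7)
The plan is to adapt the high/low induction-on-scales argument of \cite{locsmooth} used to prove Theorem \ref{locsmooththm}, now tracking the amplitude $\alpha$ throughout. This cannot be done by simply invoking Theorem \ref{locsmooththm} and discarding bad envelopes: for $U \notin \mc G_\tau(\alpha)$, the defining inequality gives $|U|^{-1}\|S_Uf\|_2^4 < \alpha^2\|S_Uf\|_2^2/(C_\e R^\e(\#\tau)^2)$, and summing via $\sum_U\|S_Uf\|_2^2\sim\|f_\tau\|_2^2$ (which follows from the partition-of-unity property $\sum_U W_U\sim 1$ and Plancherel) yields at best a bound $\lesssim \alpha^2\|f\|_2^2/R^\e$ on the bad contribution. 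This is not comparable to $\alpha^4|\{|f|>\alpha\}|$ via Chebyshev (which runs the wrong way), so the amplitude threshold must be built into the proof from the start rather than imposed a posteriori.

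Accordingly, at each scale $s$ and plank $\tau\in{\bf S}_s$, I would perform a high/low decomposition $\sum_{\theta\subset\tau}|f_\theta|^2=L_\tau+H_\tau$ with $L_\tau$ Fourier-localized near the origin at scale $\sim s^2$ and $H_\tau$ the high-frequency remainder. The low-frequency piece $L_\tau$ is essentially locally constant on $U_{\tau,R}$, so that $L_\tau(x)\lesssim |U|^{-1}\int L_\tau\, W_U \lesssim |U|^{-1}\|S_Uf\|_2^2$ for $x\in U$. For a point $x$ with $|f(x)|>\alpha$, the pigeonhole $|f(x)|\le \sum_\tau|f_\tau(x)|\le (\#\tau)\max_\tau|f_\tau(x)|$ produces some $\tau$ with $|f_\tau(x)|^2\gtrsim \alpha^2/(\#\tau)^2$. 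If the low-frequency part of $\sum_{\theta\subset\tau}|f_\theta|^2$ dominates $|f_\tau(x)|^2$ (up to acceptable losses), local constancy certifies $U\in\mc G_\tau(\alpha)$. If instead the high-frequency part dominates, the argument recurses to a finer scale $s'<s$, where $H_\tau$ has been absorbed into a smaller-scale low-frequency piece. The recursion is well-founded and terminates at $s=R^{-1/2}$, where cross terms vanish (there is only one $\theta$ in each $\tau$), forcing the low-frequency case.

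The main obstacle is tracking the amplitude threshold $\alpha^2/(\#\tau)^2$ faithfully through the recursion: as $s$ decreases, the number of active planks $\#\tau$ changes and the partition of $|f|$ into plank contributions is refined, so one must verify at each inductive step that the threshold is preserved up to an acceptable $R^\e$ loss. Accumulating contributions across the $O(\log R)$ dyadic scales, and absorbing the induction errors and the $\log R$ factor into the constant $C_\e R^\e$ (by lowering $\e$), then produces the desired bound \eqref{mainCeqnintro}.
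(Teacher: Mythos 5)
Your high-level instinct --- adapt the high/low, induction-on-scales machinery of \cite{locsmooth} with an amplitude parameter threaded through --- is the right starting point, and your opening observation that one cannot simply take Theorem \ref{locsmooththm} and throw away the bad envelopes a posteriori is correct and important. But the sketch that follows is essentially the \emph{parabola} strategy (what the paper describes in \textsection\ref{pf} and carries out for Theorem \ref{mainP}), and it does not capture the structure actually needed for the cone. Several components are missing in a way that would make the recursion, as written, fail.

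First, for the cone the frequency support of $\sum_{\theta\subset\tau}|f_\theta|^2$ does not admit a clean split into a low piece $L_\tau$ localized near the origin at scale $s^2$ and a high remainder $H_\tau$ on which planks at the next scale are finitely overlapping. The supports $\theta-\theta$ are $1\times R^{-1/2}\times R^{-1}$ slabs whose overlap geometry near the origin is governed by a cone, not an annulus; this is exactly what forces a Kakeya-type step in \cite{locsmooth}, and the paper's proof replaces it with a refined, \emph{amplitude-dependent} Kakeya estimate (Proposition \ref{kakcone}), whose proof requires its own pruning process on wave envelopes (Definition \ref{conepruning}) and a weak high-dominance lemma (Lemma \ref{weakhi}). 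Your ``low-frequency part is locally constant on $U$'' step is fine as far as it goes, but it is the Kakeya step, not a dyadic annular decomposition, that organizes the cone's frequency side.

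Second, ``recurse to a finer scale $s'<s$ where $H_\tau$ has been absorbed into a smaller-scale low-frequency piece'' glosses over the hardest part of the argument: for the cone, passing to a finer scale is a Lorentz rescaling, and it is precisely here that the $\#\tau$-tracking problem you flag bites. Lorentz rescaling does not commute naively with the count of active planks, so the paper replaces $\#\tau$ by the multiplicative quantity $\mu_f(s)$ in \eqref{mudef}, chosen specifically so that $\mu_f(s)\tilde\mu_{f_\tau}(s''s)\lessapprox\mu_f(s''s)$ under the rescaling; Theorem \ref{mainC} as stated is then recovered only after the pigeonholing in \textsection\ref{mainthmpf} shows $\mu_{f^1}(s)\lessapprox\#\tau$. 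You identify the obstacle but propose no mechanism; without $\mu_f$ (or something like it) the induction does not close.

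Third, your recursion ``terminates at $s=R^{-1/2}$ where cross terms vanish, forcing the low-frequency case,'' but the actual base case is not there. The induction is on the pair $(r,R)$ via the auxiliary constant $S_K(r,R)$, and the base case is $R\le K$ (a constant), which is handled by Lemma \ref{basecase}: one views $\Gamma_{1/K}$ as a $1/K$-neighborhood of a cylinder over a parabola and invokes the amplitude-dependent parabola estimate (Theorem \ref{mainP}) cylindrically. That parabola result is itself one of the main contributions of the paper and requires a separate proof (pruning process, locally constant lemmas, bilinear restriction, broad--narrow reduction) occupying \textsection\ref{tools}--\ref{mainPpf}. Nothing in your sketch produces a usable base case.

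In short, the proposal correctly diagnoses that amplitude-dependence must be built into the induction, and the local-constancy-implies-good-envelope step is sound, but it conflates the parabola and cone architectures. The cone proof needs the auxiliary constant $S_K(r,R)$, an amplitude-dependent Kakeya step with its own pruning, Lorentz rescaling with the $\mu_f$ bookkeeping, and Theorem \ref{mainP} as the base case --- none of which appear in the sketch.
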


Comparing Theorem \ref{mainC} with Theorem \ref{locsmooththm}, we see that Theorem \ref{locsmooththm} replaces the left hand side of \eqref{mainCeqnintro} by $\|f\|_{L^4(\R^3)}^4$ and sums over all $U\|U_{\tau,R}$ on the right hand side instead of the restricted set $\mc{G}_\tau(\a)$. Each term on the right hand side in Theorem \ref{mainC} is necessary, up to the $C_\e R^\e$ factor, which follows by the same argument as in \cite{locsmooth}. Theorem \ref{mainC} may be viewed as a strengthening of Theorem \ref{locsmooththm} since by a dyadic pigeonholing argument, there is some $\a>0$, depending on $f$, so that $\|f\|_{L^4(\R^3)}^4\lesssim (\log R)\a^4|\{x:|f(x)|>\a\}|$. 

To start digesting the amplitude-dependent wave envelope estimate of Theorem \ref{mainC}, consider the example $f=\sum_\theta f_\theta$ where each $|f_\theta|\sim \chi_{B_R}$ for a fixed $R$-ball $B_R$. Use Theorem \ref{mainC} to bound the high set 
\[ H=\{x:|f(x)|>\frac{1}{100}R^{1/2}\}, \]
noting that $\|f\|_\infty\lesssim \#\theta\sim R^{1/2}$. We essentially only have the $s=R^{-1/2}$ term on the right hand side of \eqref{mainCeqnintro}. This is because the defining property of $\mc{G}_\tau(\frac{1}{100}R^{1/2})$ is $C_\e R^\e|U|^{-1}\|S_Uf\|_2^2\ge  (\frac{1}{100}R^{1/2})^2\cdot \frac{1}{(\#\tau)^2}$, but for our example, we also have the upper bound
\[ |U|^{-1}\|S_Uf\|_2^2=|U|^{-1}\int\sum_{\theta\subset\tau}|f_\theta|^2W_U \lesssim |U|^{-1}\int(\#\theta\subset\tau)W_U\lesssim sR^{1/2}\lesssim\frac{R^{1/2}}{\#\tau}.  \]
Thus, Theorem \ref{mainC} gives the bound 
\begin{equation}\label{highset} (R^{1/2})^4|H|\le C_\e R^\e \sum_{\theta\in{\bf{S}}_{R^{-1/2}}}\sum_{U\in\mc{G}_\theta(\frac{1}{100}R^{1/2})}|U|^{-1}\Big(\int |f_\theta|^2 W_U\Big)^2\lesssim C_\e R^\e(R^{1/2})|B_R|.  \end{equation}
This is essentially sharp, which follows from letting $f$ be an exponential sum localized to $B_R$ with frequencies $\{(\frac{1}{2}-\frac{n^2}{2R},\frac{n}{R^{1/2}},\frac{1}{2}+\frac{n^2}{2R}):1\le n\le R^{1/2}\}$. The $L^6$ decoupling inequality for the cone gives the same sharp bound for $|H|$. If we use Theorem \ref{locsmooththm}, which is not amplitude-dependent, to bound $|H|$, then the $s=1$ term dominates the right hand side, giving the weaker bound $(R^{1/2})^4|H|\le C_\e R^\e (R^{1/2})^2|B_R|$.

Our primary application of the amplitude-dependent Theorem \ref{mainC} is to new small cap decoupling results for the cone. The only previous result of this type that we are aware of is the special case Theorem 3.6 from \cite{smallcap}. The cone planks $\theta\in{\bf{S}}_{R^{-1/2}}$ we defined above are called \emph{canonical} because they are maximal subsets of $\mc{N}_{R^{-1}}(\Gamma)$ which are comparable, up to some uniform absolute constants, to convex sets. Small cap decoupling involves further subdivision of canonical $1\times R^{-1/2}\times R^{-1}$ planks $\theta$ into $R^{-\b_2}\times R^{-\b_1}\times R^{-1}$ small caps $\g$ (with respect to the same frame as $\theta$), where $\b_1\in[\frac{1}{2},1]$ and $\b_2\in[0,1]$. Recall that $(\ell^2,L^p)$ decoupling into canonical $\theta$ follows from the Pramanik-Seeger argument (recorded by Bourgain-Demeter in \cite{BD}). The idea is that (a piece of) the cone $\Gamma$ may be viewed as a subset of a certain neighborhood of a cylinder over the parabola. After applying cylindrical decoupling over the parabola to decouple to $f_{\tau}$, where $\tau$ are relatively coarse cone planks, a Lorentz rescaling  transforms $f_{\tau}$ into a function with Fourier support on all of $\Gamma$ again and we may iterate the process until we reach the $f_\theta$. This argument no longer works to prove small cap decoupling for the cone from small cap decoupling for the parabola since the Lorentz rescaling changes the dimensions of the small caps, creating an inefficiency in successively applying cylindrical small cap decoupling inequalities. In \textsection\ref{int}, we present the idea of how to obtain small cap decoupling from the refined bounds from Theorem \ref{mainC}. The $(\ell^p,L^p)$ small cap theorem for $\Gamma$ we obtain is the following. 
\begin{theorem}\label{smallcapthm} Let $\b_1\in[\frac{1}{2},1]$ and $\b_2\in[0,1]$. For $p\ge 2$, 
\[ \int_{\R^3}|f|^p\le C_\e R^\e(R^{(\b_1+\b_2)(\frac{p}{2}-1)}+R^{(\b_1+\b_2)(p-2)-1}+R^{(\b_1+\b_2-\frac{1}{2})(p-2)})\sum_\g\|f_\g\|_{L^p(\R^3)}^p\]
for any Schwartz function $f:\R^3\to\C$ with Fourier transform supported in $\mc{N}_{R^{-1}}(\Gamma)$. 
\end{theorem}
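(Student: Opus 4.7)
The plan is to deduce Theorem \ref{smallcapthm} from Theorem \ref{mainC} by a level-set / refined decoupling argument, converting the amplitude-dependent $L^4$-type superlevel set estimate into an $L^p$ inequality in terms of $\sum_\gamma\|f_\gamma\|_p^p$.

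First, by dyadic pigeonholing I would reduce to a normalized situation in which every active $f_\gamma$ has approximately constant pointwise magnitude $A$ on a union of $N$ wave-packet tubes $T_\gamma$ of dimensions $R^{\beta_2}\times R^{\beta_1}\times R$ dual to $\gamma$, with $M$ active small caps in total; in this normalization, $\sum_\gamma \|f_\gamma\|_p^p\approx M A^p N R^{\beta_1+\beta_2+1}$. Then I would write $\int|f|^p=p\int_0^\infty \alpha^{p-1}|\{|f|>\alpha\}|\,d\alpha$ and pigeonhole in $\alpha$ to reduce to proving a bound of the form $\alpha^p |\{|f|>\alpha\}| \le C_\e R^\e \mc{B}(p,\beta_1,\beta_2) \sum_\gamma\|f_\gamma\|_p^p$ for a single critical level $\alpha\in(0,\|f\|_\infty]$, where $\mc{B}(p,\beta_1,\beta_2)$ is the sum of the three factors appearing in Theorem \ref{smallcapthm}.

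Next, I would apply Theorem \ref{mainC} to bound $\alpha^4 |\{|f|>\alpha\}|$ by the triple sum over dyadic $s\in[R^{-1/2},1]$, $\tau\in{\bf S}_s$, and $U\in\mc{G}_\tau(\alpha)$. For each such $U$, local $L^2$-orthogonality at the scale dual to $U$ (the Fourier supports of the $f_\gamma$ with $\gamma\subset\theta$ are $R^{-1}$-separated) yields
\[ \|S_U f\|_2^2 \approx A^2 \cdot \#\{\gamma\subset\tau:\text{some }T_\gamma\text{ intersects }U\}. \]
The defining condition $|U|^{-1}\|S_U f\|_2^2 \gtrsim \alpha^2/(\#\tau)^2$ of $\mc{G}_\tau(\alpha)$ then becomes a lower bound on the per-envelope tube count; combined with the global count of at most $M N$ small-cap wave packets, this limits $\#\mc{G}_\tau(\alpha)$ and hence the triple sum in terms of $\alpha,M,N,A$ and the $s$-dependent geometry. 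Summing over $U,\tau,s$ produces an estimate $\alpha^4|\{|f|>\alpha\}|\le C_\e R^\e Q(M,N,A,R)$ for an explicit multiscale quantity $Q$.

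Finally, multiplying by $\alpha^{p-4}$ and using trivial upper bounds on $\alpha$ (such as $\alpha\le\|f\|_\infty\lesssim M A$ and $\alpha^2|\{|f|>\alpha\}|\le \|f\|_2^2\sim M N A^2 R^{\beta_1+\beta_2+1}$) converts the level-set bound into an $L^p$ estimate. The three factors in Theorem \ref{smallcapthm} should arise from three regimes: the canonical scale $s=R^{-1/2}$ reproduces the sharp high-set bound from the example \eqref{highset} and yields $R^{(\beta_1+\beta_2-\frac{1}{2})(p-2)}$; using the $L^\infty$ bound $\alpha\lesssim MA$ contributes $R^{(\beta_1+\beta_2)(p-2)-1}$; and using the $L^2$ bound on the level set contributes $R^{(\beta_1+\beta_2)(p/2-1)}$. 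The main obstacle is the multiscale counting at each intermediate $s\in(R^{-1/2},1)$: one must carefully balance the lower bound on the per-envelope tube count enforced by $U\in\mc{G}_\tau(\alpha)$ against the global count $MN$, while accounting for how the small-cap tubes $T_\gamma$ (fixed dimensions $R^{\beta_2}\times R^{\beta_1}\times R$) fit inside the $s$-dependent wave envelope $U_{\tau,R}$ (dimensions $s^2 R\times sR\times R$). The restriction to $\mc{G}_\tau(\alpha)$ is precisely what prevents intermediate scales from contributing spurious terms --- as in \eqref{highset}, where only $s=R^{-1/2}$ survives --- and verifying that the three claimed factors cleanly exhaust this balancing is the heart of the argument.
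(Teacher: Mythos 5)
Your overall structure is sound at the start: reduce by pigeonholing to $\|f_\g\|_\infty\sim 1$ with $\|f_\g\|_p^p\sim\|f_\g\|_2^2$, reduce to a single level set $U_\a$, apply Theorem~\ref{mainC} to get a single dominant scale $\sigma$, and use local $L^2$-orthogonality to pass from $\theta$'s to $\g$-level square functions. However, the mechanism you propose for the ``heart of the argument'' --- counting wave packets $T_\g$ intersecting each envelope $U$, and then bounding $\#\mc{G}_\tau(\a)$ against a global count $MN$ --- is not what the paper does, and you acknowledge you have not carried it out. This is a genuine gap: the multiscale balancing is left as an ``obstacle,'' not resolved, and the formula $\|S_Uf\|_2^2\approx A^2\cdot\#\{\g\subset\tau:T_\g\cap U\neq\emptyset\}$ as written is dimensionally off (it omits the volume factor coming from how $T_\g$, of dimensions $R^{\b_2}\times R^{\b_1}\times R$, sits inside $U$, of $s$-dependent dimensions $s^2R\times sR\times R$; whether $T_\g\subset U$ or $U\subset T_\g$ changes the count entirely and is precisely what the counting must track).

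The paper's proof avoids wave-packet counting altogether and is slicker. From the defining property of $\mc{G}_{\tau_\sigma}(\a)$ and $\#\tau_\sigma\lesssim\sigma R^{1/2}$, one gets $\a^2\lesssim C_\e R^\e(\sigma R^{1/2})^2|U|^{-1}\int\sum_{\g_\sigma\subset\tau_\sigma}|f_{\g_\sigma}|^2 W_U$ for $U\in\mc{G}_{\tau_\sigma}(\a)$ (after local $L^2$-orthogonality at the scale $\g_\sigma=\max(R^{-\b_2},\sigma^2)\times\max(R^{-\b_1},\sigma R^{-1/2})\times R^{-1}$). Raising this to the power $\tfrac{q}{2}-2$ and multiplying by $\a^4|U_\a|\le C_\e R^\e\sum|U|^{-1}\|S_Uf\|_2^4$ gives $\a^q|U_\a|\le C_\e R^\e\sum(\sigma R^{1/2})^{q-4}|U|^{1-q/2}\big(\int\sum|f_{\g_\sigma}|^2 W_U\big)^{q/2}$. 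One then peels off $\tfrac{q}{2}-1$ factors of $\|\sum_{\g_\sigma\subset\tau_\sigma}|f_{\g_\sigma}|^2\|_\infty\lesssim\#\g\subset\tau_\sigma\cdot\#\g\subset\g_\sigma$ (this is where $\|f_\g\|_\infty\lesssim 1$ enters, as flat decoupling), and the remaining linear factor sums over $U,\tau$ to $\sum_\g\|f_\g\|_2^2$ by Plancherel. The resulting scalar bound is then verified against the three claimed powers of $R$ by a straightforward but multi-case check over the ranges of $p$ and over which of the four geometric quantities $1,R^{\b_1}\sigma R^{-1/2},R^{\b_2}\sigma^2,\sigma^3 R^{\b_1+\b_2-1/2}$ dominates $\#\g\subset\g_\sigma$. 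I'd encourage you to adopt this exponent-raising/flat-decoupling route: it is precisely designed so that the defining property of $\mc{G}_\tau(\a)$ does the balancing work automatically, with no wave-packet bookkeeping.
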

The $p=4$ case of this theorem follows from the non-amplitude dependent wave packet estimate of Guth, Wang, and Zhang using analogous arguments as in Theorem 3.6 from \cite{smallcap}. The powers of $R$ in the upper bound come from considering three natural sharp examples for the ratio $\|f\|_{L^p(B_R)}^p/(\sum_\g\|f_\g\|_p^p)$. The first is the square root cancellation example, where $|f_\g|\sim \chi_{B_R}$ for all $\g$ and $f=\sum_\g e_\g f_\g$ where $e_\g$ are $\pm1$ signs chosen (using Khintchine's inequality) so that $\|f\|_{L^p(B_R)}^p\sim R^{(\b_1+\b_2) p/2}R^3$. Then
\[ \|f\|_p^p/(\sum_\g\|f_\g\|_p^p)\gtrsim (R^{(\b_1+\b_2) p/2}R^3)/(R^{\b_1+\b_2)}R^3)\sim R^{(\b_1+\b_2)(\frac{p}{2}-1)}. \]
The second example is the constructive interference example. Let $f_\g=R^{\b_1+\b_2+1}\widecheck{\eta}_\g$ where $\eta_\g$ is a smooth bump function approximating $\chi_\g$. Since $|f|=|\sum_\g f_\g|$ is approximately constant on unit balls and $|f(0)|\sim R^{\b_1+\b_2}$, we have
\[\|f\|_p^p/(\sum_\g\|f_\g\|_p^p)\gtrsim (R^{(\b_1+\b_2) p})/(R^{\b_1+\b_2}R^{\b_1+\b_2+1})\sim R^{(\b_1+\b_2)(p-2)-1}. \]
There is one more example which may dominate the ratio: The block example is $f=R^{\b_1+\b_2+1}\sum_{\g\subset \theta}\widecheck{\eta}_\g$ where $\theta$ is a canonical $1\times R^{-1/2}\times R^{-1}$ block. Since $f=f_\theta$ and $|f_\theta|$ is approximately constant on dual $\sim 1\times R^{1/2}\times R$ blocks $\theta^*$, we have
\[ 
    \|f\|_p^p/(\sum_\g\|f_\g\|_p^p)\gtrsim (R^{(\b_1+\b_2-\frac{1}{2})p}R^{\frac{3}{2}})/(R^{(\b_1+\b_2-\frac{1}{2})}R^{\b_1+\b_2+1})=R^{(\b_1+\b_2-\frac{1}{2})(p-2)}. \]
The square root cancellation, constructive interference, and block examples demonstrate that Theorem \ref{smallcapthm} is sharp, up to the $C_\e R^\e$ factor. The full proof of Theorem \ref{smallcapthm} as a relatively straightforward corollary of Theorem \ref{mainC} is contained in \textsection\ref{conesmallcap}. 

The statement of Theorem \ref{mainC} is one of the main novelties of the paper.  Given the statement, the proof follows analogous steps to the proof of Theorem 1.3 from \cite{locsmooth}. There are two main differences in the proof of Theorem \ref{mainC}. The first is that our Kakeya-type step must now depend on an amplitude parameter. The second is that the ``base case" for the induction-on-scales argument no longer uses the $L^4$ square function estimate for the parabola, which is known by an elementary C\`{o}rdoba-type argument. Instead, we must prove a version of Theorem \ref{mainC} for the parabola, which we explain now. 

Define the truncated parabola $\P^1=\{(t,t^2):|t|\le 1\}$. Let $\sqcup \theta=\mc{N}_{R^{-1}}(\P^1)$ be a partition of the $R^{-1}$-neighborhood of $\P^1$ into canonical $R^{-1/2}\times R^{-1}$ blocks $\theta$. Use the notation $\ell(\theta)=R^{-1/2}$ to index over canonical  $\theta$. The dual set $\theta^*$ of a canonical block $\theta$ of the parabola is a $R^{1/2}\times R$ block centered at the origin. For each dyadic parameter $W$, $R^{1/2}\le W\le R$ and each canonical $\sim (W/R)\times (W/R)^2$ block $\tau$, define the wave envelope $U_{\tau,R}$ to be the convex hull of $\cup_{\theta\subset\tau}\theta^*$, which has dimensions $\sim W\times R$, and let $U\|U_{\tau,R}$ be a tiling of $\R^2$ by translates of $U_{\tau,R}$. As before, let $S_{U}f$ denote the partial, localized square function $S_{U}f(x)^2=\sum_{\theta\subset\tau}|f_\theta|^2(x)W_U(x)$ where $W_{U}$ is an $L^\infty$-normalized weight function adapted to $U$. 
\begin{theorem} \label{mainP} For any $\e>0$, there exists $C_\e>0$ such that the following holds for any sufficiently large $R$. Let $f:\R^2\to\C$ be a Schwartz function with Fourier transform supported in $\mc{N}_{R^{-1}}(\P^1)$. Then for any $\a>0$,
\[ \a^4|U_\a|\le C_\e R^\e\sum_{\substack{R^{1/2}\le W\le R\\W\,\text{dyadic}}}\sum_{\substack{ \ell(\tau)=\frac{W}{R}}}\sum_{U\in\mc{G}_\tau(\a)}|U|^{-1}\|S_Uf\|_2^4 \]
in which $\mc{G}_\tau(\a)=\{U\|U_{\tau,R}:\,C_\e R^\e|U|^{-1}\|S_Uf\|_2^2\ge \frac{\a^2}{(\#\tau)^2}\}$ and $\#\tau$ means $\#\{\tau:\ell(\tau)=s,\quad f_\tau\not\equiv 0\}$.  
\end{theorem}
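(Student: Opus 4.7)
The plan is to adapt the high-low proof of Theorem \ref{locsmooththm} to the parabola in $\R^2$, keeping track of the amplitude parameter $\a$ throughout. Since Theorem \ref{mainP} plays the role of the base case for Theorem \ref{mainC}, no further induction on scales is needed here: Córdoba-type orthogonality in $\R^2$ combined with a single pass through dyadic scales should close the estimate.

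First I would perform the Fourier decomposition grouping the cross terms $f_{\theta_1}\bar f_{\theta_2}$ by the smallest common parabolic cap:
\[
|f|^2 = \sum_\theta |f_\theta|^2 + \sum_{\substack{R^{-1/2}\le s\le 1\\ s\ \text{dyadic}}} \sum_{\tau \in {\bf S}_s} G_{s,\tau},\qquad
G_{s,\tau}:=|f_\tau|^2-\sum_{\substack{\tau'\in{\bf S}_{s/2}\\ \tau'\subset\tau}}|f_{\tau'}|^2.
\]
Each $G_{s,\tau}$ has Fourier support in $(\tau-\tau)\setminus\bigcup_{\tau'\subset\tau}(\tau'-\tau')$, an annular region at scale $\sim s$ in the tangent direction of $\tau$, whose dual is contained in $U_{\tau,R}$. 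Consequently $G_{s,\tau}$ is essentially constant on translates of $U_{\tau,R}$. Moreover, since $\widehat{W_U}$ is concentrated at Fourier scale $W^{-1}\times R^{-1}\ll R^{-1/2}$, the cross terms $f_{\theta_1}\bar f_{\theta_2}$ for $\theta_1\neq\theta_2\subset\tau$ are annihilated by $W_U$ up to Schwartz tails, yielding the pointwise bound $|f_\tau(x)|^2\lesssim |U|^{-1}\|S_U f\|_2^2$ on $U\|U_{\tau,R}$, and hence $|G_{s,\tau}(x)|\lesssim|U|^{-1}\|S_U f\|_2^2$.

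Let $U_\a=\{x:|f(x)|>\a\}$. On $U_\a$, $\a^2\le|f|^2$, so pigeonholing over the $O(\log R)$ dyadic scales together with the low term yields $V\subset U_\a$ with $|V|\gtrsim|U_\a|/\log R$ on which either $\sum_\theta |f_\theta|^2\gtrsim\a^2/\log R$ (low case) or $\big|\sum_{\tau\in{\bf S}_s} G_{s,\tau}\big|\gtrsim\a^2/\log R$ for some fixed dyadic $s$ (high case). In the high case at scale $s$, the $L^2$-orthogonality of the $G_{s,\tau}$ across $\tau$ — valid because their Fourier supports are $s\times s^2$ rectangles oriented along distinct tangent directions of $\P^1$ with $O(1)$ overlap away from the origin by curvature — combined with the locally constant property gives
\[
\a^4|V|\lesssim (\log R)^2\sum_{\ell(\tau)=s}\sum_{U\|U_{\tau,R}}|U|^{-1}\|S_U f\|_2^4;
\]
the low case at $s=R^{-1/2}$ is handled analogously since $\theta^*=U_{\theta,R}$ at the finest scale.

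The remaining amplitude-dependent Kakeya-type step is the restriction of the inner sum to $U\in\mc{G}_\tau(\a)$. The key observation is a pointwise dichotomy: if at some $x\in V$ every wave envelope $U(\tau,x)\|U_{\tau,R}$ containing $x$ satisfied $U(\tau,x)\notin\mc{G}_\tau(\a)$, then by the pointwise bound above we would have
\[
|G_{s,\tau}(x)|\lesssim|U(\tau,x)|^{-1}\|S_{U(\tau,x)} f\|_2^2<\frac{\a^2}{C_\e R^\e (\#\tau)^2}
\]
for every non-zero $\tau\in{\bf S}_s$, giving $\big|\sum_\tau G_{s,\tau}(x)\big|<\a^2/(C_\e R^\e\#\tau)\le\a^2/(C_\e R^\e)$, contradicting the high-case lower bound $\a^2/\log R$ for $R$ sufficiently large. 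Therefore each $x\in V$ is witnessed by at least one good $\tau$ with $U(\tau,x)\in\mc{G}_\tau(\a)$, and after replacing each $G_{s,\tau}$ by $G_{s,\tau}\cdot 1_{\mc{G}_\tau(\a)}$ (smoothed at the spatial scale of $U_{\tau,R}$ so Fourier orthogonality is preserved) the $L^2$ estimate above produces only $U\in\mc{G}_\tau(\a)$ on the right-hand side. The same dichotomy at $s=R^{-1/2}$ cleans up the low case. The main technical obstacle I foresee is the careful verification that the pointwise bound $|f_\tau|^2\lesssim|U|^{-1}\|S_U f\|_2^2$ and the $L^2$-orthogonality across $\tau$ both survive the insertion of the physical-space cutoffs $1_{\mc{G}_\tau(\a)}$ with only $R^\e$ losses, via standard but delicate uncertainty-principle arguments in the style of \cite{locsmooth}.
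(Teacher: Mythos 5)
Your proposal contains several genuine gaps that I do not think can be repaired without essentially importing the paper's machinery.

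\textbf{The Fourier support of $G_{s,\tau}$.} You claim $G_{s,\tau}=|f_\tau|^2-\sum_{\tau'\subset\tau}|f_{\tau'}|^2$ has Fourier support in $(\tau-\tau)\setminus\bigcup_{\tau'\subset\tau}(\tau'-\tau')$, an annulus at distance $\sim s$ from the origin. This is false. Writing $G_{s,\tau}=\sum f_\theta\bar f_{\theta'}$ over pairs $\theta,\theta'\subset\tau$ that are not both in the same $\tau'$, this sum includes pairs of adjacent $\theta$'s that straddle the boundary between two children $\tau'_1,\tau'_2$. For such a pair the Fourier support $\theta-\theta'$ is an $R^{-1/2}\times R^{-1}$ box whose closure contains the origin, so $\widehat{G_{s,\tau}}$ reaches all the way down to frequency $\sim R^{-1/2}$, not $\sim s$. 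As a result $G_{s,\tau}$ is not locally constant at scale $U_{\tau,R}$, the subsequent $L^2$ orthogonality of the $G_{s,\tau}$ across $\tau$ fails (all of them overlap near the origin), and the ``annihilation of cross terms by $W_U$'' does not happen for adjacent caps. The paper avoids this entirely by decomposing not $|f|^2$ but the square function $\sum_\theta|f_\theta|^2$ into dyadic annuli in Fourier, after first passing from $\int|f|^4$ to $\int|\sum_\theta|f_\theta|^2|^2$; compare \eqref{Fdecomp}. Your telescoping identity decomposes a different object with different Fourier geometry.

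\textbf{The pointwise bound.} The inequality $|f_\tau(x)|^2\lesssim|U|^{-1}\|S_Uf\|_2^2$ on $U\|U_{\tau,R}$ is not true, even for the diagonal contribution. The quantity $\sum_{\theta\subset\tau}|f_\theta|^2$ is locally constant at the scale of $\theta^*$ (dimensions $R^{1/2}\times R$), whereas $U_{\tau,R}$ has dimensions $W\times R$ with $W>R^{1/2}$; averaging over the larger set $U$ can lose a factor of $W/R^{1/2}=\#(\theta\subset\tau)$. More precisely Cauchy--Schwarz gives $|f_\tau(x)|^2\le(\#\theta\subset\tau)\sum_\theta|f_\theta(x)|^2$, and even after the locally constant property the best you can conclude is $|f_\tau(x)|^2\lesssim(\#\theta\subset\tau)\,|U|^{-1}\|S_Uf\|_2^2$. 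Your ``pointwise dichotomy'' step then fails: with this extra factor the contradiction you derive from $U(\tau,x)\notin\mc{G}_\tau(\a)$ for all $\tau$ does not materialize. The paper's amplitude-dependent mechanism is the dyadic pruning process of Definition~\ref{taukprune} together with Lemma~\ref{highdom}: the pruned function $f_{W_m}^{\mc{B}}$ is built with smooth partitions of unity $\s_U$ adapted to bad wave envelopes at one scale at a time, precisely so that the high-dominance of $\sum_{\tau_{m-1}}|f_{W_m,\tau_{m-1}}^{\mc{B}}|^2$ can be proved rigorously via Cauchy--Schwarz and the defining property of the good set, \emph{without} any rough cutoffs $\mathbf{1}_{\mc{G}_\tau(\a)}$ that would ruin the Fourier support.

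\textbf{Missing bilinear restriction.} Beyond the above, the argument is missing the bilinear restriction step (Theorem~\ref{bilrest}), reached in the paper via a broad--narrow reduction in \textsection\ref{broadnarrow}. This is what converts $\a^4|\mathrm{Br}^m_\a|\lesssim\int|f_{W_m}^{\mc{B}}|^4$ into $\int|\sum_{\tau_{m-1}}|f_{W_m,\tau_{m-1}}^{\mc{B}}|^2*w_{R/W_{m-1}}|^2$ and makes the high/low analysis of the square function available with the needed fourth-power structure. Your ``single pass through dyadic scales'' with raw $L^2$ orthogonality does not supply the $L^4$ bound. The obstacle you flag at the end --- that the insertion of cutoffs $1_{\mc{G}_\tau(\a)}$ must be compatible with Fourier orthogonality --- is not a routine uncertainty-principle issue but the central difficulty, and resolving it is exactly what the recursive pruning $f_{W_N}=f_{W_1}+\sum_{m=2}^N f_{W_m}^{\mc{B}}$, the lemmas in \textsection\ref{pruning}, and Proposition~\ref{mainprop} are built to accomplish.
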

Theorem \ref{mainP} implies the sharp superlevel set estimates recorded in Theorem 3 of \cite{lvlsets} as well as sharp $(\ell^2,L^6)$ decoupling \cite{BD}. We also prove the following general $(\ell^q,L^p)$ estimates for $\P^1$, which may be compared with Corollary 5 from \cite{lvlsets}. For $\b\in[\frac{1}{2},1]$, let $\sqcup \g=\mc{N}_{R^{-1}}(\P^1)$ be a partition of $\mc{N}_{R^{-1}}(\P^1)$ by $\sim R^{-\b}\times R^{-1}$ small caps $\g$.

\begin{theorem}\label{smallcapthmB}
Let $\b\in[\frac{1}{2},1]$. For any $p,q$ satisfying $\frac{3}{p}+\frac{1}{q}\le 1$, 
\begin{equation}\label{smallcapLp} \int_{B_R}|\sum_\g f_\g|^p\le C_\e R^\e\Big[1+R^{\b(\frac{1}{2}-\frac{1}{q})p}+R^{\b(p-\frac{p}{q}-1)-1}\Big]\big(\sum_\g\|f_\g\|_{L^p(\R^2)}^q\big)^{\frac{p}{q}} .\end{equation}
\end{theorem}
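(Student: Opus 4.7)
The plan is to derive Theorem \ref{smallcapthmB} from Theorem \ref{mainP} following the strategy used in \textsection\ref{conesmallcap} for the analogous cone statement Theorem \ref{smallcapthm}. Both results share the structural feature that the bracket on the right-hand side of \eqref{smallcapLp} matches the three natural sharp examples for the parabola: a trivial bound, a square-root cancellation example, and a constructive interference example.

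The first step is dyadic pigeonholing. At the cost of a $(\log R)^{O(1)}$ factor, I would reduce to the situation where $\|f_\g\|_{L^p(\R^2)}\sim A$ is essentially constant across $\g$ in a surviving set $\mathcal{S}$ of cardinality $M_0$, and where each $|f_\g|$ admits an essentially single-amplitude wave packet decomposition in terms of translates of the dual box $\g^*$ of dimensions $R^\b\times R$. With these normalizations $(\sum_\g\|f_\g\|_p^q)^{p/q}\sim M_0^{p/q}A^p$. A dyadic decomposition of superlevel sets reduces the problem to showing, for each dyadic $\a>0$ and $U_\a=\{x\in B_R:|f(x)|\sim\a\}$,
\begin{equation*}
\a^p|U_\a|\le C_\e R^\e\Bigl[1+R^{\b(\tfrac{1}{2}-\tfrac{1}{q})p}+R^{\b(p-\tfrac{p}{q}-1)-1}\Bigr]M_0^{p/q}A^p .
\end{equation*}

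Fix $\a$ and invoke Theorem \ref{mainP} to bound $\a^4|U_\a|$ by $C_\e R^\e$ times a sum over scales, $\tau$, and $U\in\mathcal{G}_\tau(\a)$ of $|U|^{-1}\|S_Uf\|_2^4$. Expanding $f_\theta=\sum_{\g\subset\theta}f_\g$ and applying Cauchy--Schwarz (each canonical block $\theta$ contains $\sim R^{\b-1/2}$ small caps) yields
\begin{equation*}
\|S_Uf\|_2^2\le R^{\b-1/2}\sum_{\g\subset\tau}\int|f_\g|^2 W_U.
\end{equation*}
Using the local constancy of $|f_\g|^2$ on translates of $\g^*$ together with the pigeonholed wave packet structure, one obtains the amplitude bound $\int|f_\g|^2W_U\lesssim A^2\min(|U|,\,\|f_\g\|_p^p A^{-p})$, complemented when needed by the H\"older bound $\int|f_\g|^2W_U\le \|f_\g\|_p^2|U|^{1-2/p}$. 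Feeding these into $|U|^{-1}\|S_Uf\|_2^4$ and exploiting the defining lower bound $|U|^{-1}\|S_Uf\|_2^2\gtrsim R^{-\e}\a^2/(\#\tau)^2$ for $U\in\mathcal{G}_\tau(\a)$ simultaneously gives an upper estimate for each summand and a cap on $\#\mathcal{G}_\tau(\a)$.

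The last step is to organize the resulting sum $\sum_{s,\tau,U}|U|^{-1}\|S_Uf\|_2^4$ into regimes distinguished by the scale $W$ appearing in Theorem \ref{mainP}. The regime where $|U|$ is near the maximal scale $|B_R|$ yields the constructive interference term $R^{\b(p-\frac{p}{q}-1)-1}$; the regime where $|U|$ is near the minimal scale yields the square-root cancellation term $R^{\b(\frac{1}{2}-\frac{1}{q})p}$; the trivial term $1$ handles values of $\a$ below a natural threshold. I expect the main obstacle to be precisely this bookkeeping step: checking that the contributions from all dyadic $s\in[R^{-1/2},1]$ assemble into the three-term bracket with the correct dependence on $\a$, and confirming that the hypothesis $\frac{3}{p}+\frac{1}{q}\le 1$ is exactly what forces these three regimes to be exhaustive. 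Once that matching is carried out, summing over the $O(\log R)$ dyadic scales costs only a factor absorbed into $C_\e R^\e$, completing the proof.
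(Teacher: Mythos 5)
Your general framework---dyadic pigeonholing, invoking Theorem \ref{mainP}, feeding the defining inequality for $\mathcal{G}_{\tau}(\alpha)$ back into the resulting bound, removing $L^\infty$ factors, and matching exponents---is the same as the paper's. The gap is in how you pass from $f_\theta$ to $f_\gamma$. Your Cauchy--Schwarz step $\|S_Uf\|_2^2\le R^{\beta-1/2}\sum_{\gamma\subset\tau}\int|f_\gamma|^2 W_U$ overcounts and will not reach the sharp bracket. The paper's key move here is to introduce intermediate caps $\gamma_k$ of scale $\max(R^{-\beta},W_k^{-1})$, where $W_k$ is the dominant scale produced by Theorem \ref{mainP}: these are precisely the caps that are \emph{locally orthogonal} on a wave envelope $U\|U_{\tau_k,R}$, because $\widehat{W_U}$ is supported at scale $W_k^{-1}$, so the cross-terms $\int f_{\gamma_k}\overline{f_{\gamma_k'}}W_U$ vanish for non-adjacent $\gamma_k,\gamma_k'\subset\theta$. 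This yields the lossless bound
\[
\int|f_\theta|^2 W_U\lesssim\sum_{\gamma_k\subset\theta}\int|f_{\gamma_k}|^2 W_U,
\]
after which $L^\infty$-removal produces the factor $(\#\{\gamma\subset\tau_k\})(\#\{\gamma\subset\gamma_k\})$, whereas your Cauchy--Schwarz forces $R^{\beta-\frac{1}{2}}(\#\{\gamma\subset\tau_k\})$. Since $\#\{\gamma\subset\gamma_k\}=\max(1,R^{\beta}W_k^{-1})\le R^{\beta-\frac{1}{2}}$ with equality only when $W_k=R^{1/2}$, your route loses a genuine power of $R$ at every intermediate scale, and the final exponent arithmetic (which in the paper runs over ranges of $p$, not ranges of $|U|$) will not close to the three-term bracket in \eqref{smallcapLp}.

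A smaller point: your attribution of the two nontrivial terms to maximal/minimal $|U|$ is reversed. In the illustrative derivation of \textsection\ref{int}, the smallest wave envelopes ($s=R^{-1/2}$, $U\sim\theta^*$) produce the constructive-interference exponent $R^{\beta(p-p/q-1)-1}$, while the largest ($s=1$, $U\sim B_R$) produce the square-root-cancellation exponent $R^{\beta(\frac{1}{2}-\frac{1}{q})p}$. The hypothesis $\frac{3}{p}+\frac{1}{q}\le 1$ enters as an algebraic constraint in the final case analysis (it forces $p\ge 3$ and $p-\frac{p}{q}-3\ge 0$), not via exhaustion of scale regimes.
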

The powers of $R$ come from parabola analogs of the square root cancellation and constructive interference examples described above for the cone. The restriction $\frac{3}{p}+\frac{1}{q}\le 1$ prevents the block example from dominating, which we do not investigate in this paper.

The paper is organized as follows. In \textsection\ref{int} and \textsection\ref{pf}, we present some intuition behind deriving small cap estimates from the refined square function estimates and the proof of the refined square function estimates, respectively. Then in \textsection\ref{tools}, we develop multi-scale high/low frequency tools and prove Theorem \ref{mainP}. The proof of Theorem \ref{smallcapthmB} is contained in \textsection\ref{smallcapP}. We prove Theorem \ref{mainC} in \textsection\ref{mainCpf}-\ref{lorsec}, which follows the outline of \cite{locsmooth}, and prove Theorem \ref{smallcapthm} in \textsection\ref{conesmallcap}. The appendix describes how to adapt the argument to prove Theorem \ref{mainC} for general cones in $\R^3$. 

For $a,b>0$, the notation $a\lesssim b$ means that $a\le Cb$ where $C>0$ is a universal constant whose definition varies from line to line, but which only depends on fixed parameters of the problem. Also, $a\sim b$ means $C^{-1}b\le a\le Cb$ for a universal constant $C$.

LG is supported by a Simons Investigator grant. DM is supported by the National Science Foundation under Award No. 2103249. DM would like to thank Ciprian Demeter for sharing ideas about small cap decoupling for the cone, which helped formulate Theorem \ref{smallcapthm}.

\subsection{Obtaining small cap decoupling from refined square function estimates \label{int}}

We demonstrate the ideas for how to obtain small cap decoupling, Theorem \ref{smallcapthm}, from the amplitude-dependent wave envelope estimate, Theorem \ref{mainC}, in two special cases. 

After a series of pigeonholing steps similar to the process described in Section 5 of \cite{gmw}, proving Theorem \ref{smallcapthm} reduces to demonstrating that 
\begin{equation}\label{red} \a^p|U_\a|\le C_\e R^\e [(R^{(\b_1+\b_2)(\frac{p}{2}-1)}+R^{(\b_1+\b_2)(p-2)-1}+R^{(\b_1+\b_2-\frac{1}{2})(p-2)})]\sum_\g\|f_\g\|_2^2  \end{equation}
for functions $f$ satisfying the extra assumption that $\|f_\g\|_\infty\lesssim 1$ for all $\g$. The cases $2\le p\le 4$ and $6\le p< \infty$ involve interpolation with trivial $L^2$ and $L^\infty$ estimates respectively, so we focus on the range $4\le p\le 6$. 

Begin with the case where the $s=R^{-1/2}$ term dominates the right hand side of \eqref{mainCeqn}:
\[ \a^4|U_\a| \le C_\e R^\e \sum_{\theta\in{\bf{S}}_{R^{-1/2}}} \sum_{U\in\mc{G}_\theta(\a)}|U|^{-1}\Big(\int |f_\theta|^2 W_U\Big)^2 \]
for $\mc{G}_{\theta}(\a)=\{U\|\theta^*: \,C_\e R^\e|U|^{-1}\int|f_\theta|^2W_U\ge \a^2 R^{-1} \}$. The first step is to use the defining property of $\mc{G}_\theta(\a)$ to get an $L^p$-type estimate:
\[ \a^4|U_\a| \le C_\e R^\e  \sum_{\theta\in{\bf{S}}_{R^{-1/2}}} \sum_{U\in\mc{G}_\theta(\a)}\Big(\frac{R}{\a^2}C_\e R^\e |U|^{-1}\int|f_\theta|^2W_U\Big)^{\frac{p}{2}-2}|U|^{-1}\Big(\int |f_\theta|^2 W_U\Big)^2. \]
It then suffices to check that
\begin{align}\label{suff2} R^{\frac{p}{2}-2} \sum_{\theta\in{\bf{S}}_{R^{-1/2}}} \sum_{U\in\mc{G}_\theta(\a)}|U|\Big( |U|^{-1}\int|f_\theta|^2W_U\Big)^{\frac{p}{2}}\le  C_\e R^\e R^{(\b_1+\b_2)(p-2)-1}\sum_\g\|f_\g\|_2^2. \end{align} 
By H\"{o}lder's inequality, 
\[ \Big( |U|^{-1}\int|f_\theta|^2W_U\Big)^{\frac{p}{2}}\lesssim |U|^{-1}\int|f_\theta|^{p}W_U. \]
We need a way to bound integrals with $f_\theta$ in terms of expressions involving $f_\g$. We do this using flat decoupling, which is equivalent to the following argument. Using the assumption $\|f_\g\|_\infty\lesssim 1$, remove $L^\infty$ factors of $f_\theta=\sum_{\g\subset\theta}f_\g$: 
\[ |U|^{-1}\int|f_\theta|^{p}W_U\lesssim (\#\g\subset\theta)^{p-2}|U|^{-1}\int|f_\theta|^{2}W_U\lesssim (R^{\b_1+\b_2-\frac{1}{2}})^{p-2}|U|^{-1}\int|f_\theta|^{2}W_U. \] 
The summary of the previous few steps is that the left hand side of \eqref{suff2} is bounded by 
\[  R^{\frac{p}{2}-2}\sum_{\theta\in{\bf{S}}_{R^{-1/2}}} \sum_{U\in\mc{G}_\theta(\a)}R^{(\b_1+\b_2-\frac{1}{2})(p-2)}\int|f_\theta|^2W_U \lesssim \sum_{\theta\in{\bf{S}}_{R^{-1/2}}} R^{(\b_1+\b_2)(p-2)-1}\int_{\R^3}|f_\theta|^2. \]
By Plancherel's theorem, $\sum_{\theta\in{\bf{S}}_{R^{-1/2}}}\|f_\theta\|_2^2\lesssim\sum_\g\|f_\g\|_2^2$, which finishes the verification of \eqref{suff2}. 

The second case we consider is the other extreme of Theorem \ref{mainC}, which is when the $s=1$ term dominates. Then there is essentially one $\tau\in{\bf{S}}_1$, the wave envelope $U_{\tau,R}$ is a ball of radius $R$, and 
\[ \a^4|U_\a|\le C_\e R^\e \sum_{B_R\in\mc{G}(\a)}|B_R|^{-1}\big(\int\sum_{\theta\in{\bf{S}}_{R^{-1/2}}}|f_\theta|^2W_{B_R}\Big)^2 \]
where $\mc{G}(\a)=\{B_R:\,C_\e R^\e |B_R|^{-1}\int\sum_\theta|f_\theta|^2W_{B_R}\ge \a^2\}$. The first step is again to write an $L^p$-expression: 
\[ \a^4|U_\a|\le C_\e R^\e \sum_{B_R\in\mc{G}(\a)}\Big(\frac{C_\e R^\e}{\a^2}|B_R|^{-1}\int \sum_{\theta\in{\bf{S}}_{R^{-1/2}}}|f_\theta|^2W_{B_R} \Big)^{\frac{p}{2}-2}|B_R|^{-1}\big(\int\sum_{\theta\in{\bf{S}}_{R^{-1/2}}}|f_\theta|^2W_{B_R}\Big)^2 \]
and note that it suffices to verify that
\begin{align}\label{suff3}  
 \sum_{B_R\in\mc{G}(\a)}|B_R|\Big(|B_R|^{-1}\int \sum_{\theta\in{\bf{S}}_{R^{-1/2}}}|f_\theta|^2W_{B_R} \Big)^{\frac{p}{2}} \le C_\e R^\e R^{(\b_1+\b_2)(\frac{p}{2}-1)}\sum_\g\|f_\g\|_2^2. 
\end{align}
We will use an additional property we may assume about the weight functions $W_{B_R}$, which is that $\widehat{W}_{B_R}$ is supported in a ball of radius $R^{-1}$ centered at the origin. A local $L^2$-orthogonality argument shows that for each $B_R$ and $\theta\in{\bf{S}}_{R^{-1/2}}$, 
\[ \int|f_\theta|^2W_{B_R}\lesssim \sum_{\g\subset\theta}\int|f_\g|^2W_{B_R}. \]
By H\"{o}lder's inequality and the property $\|f_\g\|_\infty\lesssim 1$, we have
\[ \Big(|B_R|^{-1}\int\sum_\g|f_\g|^2W_{B_R}\Big)^{\frac{p}{2}} \lesssim |B_R|^{-1}\int(\sum_\g|f_\g|^2)^{\frac{p}{2}}W_{B_R} \lesssim \#\g^{\frac{p}{2}-1}|B_R|^{-1}\int\sum_\g|f_\g|^2W_{B_R} . \]
Finally, noting the bound $\#\g^{\frac{p}{2}-1}\lesssim R^{(\b_1+\b_2)(\frac{p}{2}-1)}$ finishes our verification of \eqref{suff3}.

\subsection{Proof strategy \label{pf}}

The main new idea we introduce to prove Theorem \ref{mainC} is already present in the simpler set-up of Theorem \ref{mainP}, which we will focus on here. 
First we write a non-$\a$-dependent version of Theorem \ref{mainP}: for Schwartz $f:\R^3\to\C$ with Fourier transform supported in $\mc{N}_{R^{-1}}(\P^1)$, 
\begin{equation}\label{nonalphadep} 
\int|f|^4\lesssim \sum_{\substack{R^{\frac{1}{2}}\le W\le R\\ W\,\text{dyadic}}}\sum_{\ell(\tau)=\frac{W}{R}}\sum_{U\|U_{\tau,R}}|U|^{-1}\Big(\int\sum_{\theta\subset\tau}|f_\theta|^2W_U\Big)^2. 
\end{equation}
We will show how this inequality follows from the C\'{o}rdoba $L^4$ square function estimate for the parabola combined with a high-low frequency decomposition. In particular, the $L^4$ square function estimate is 
\begin{equation}\label{sqfn} \int|f|^4\lesssim \int|\sum_\theta|f_\theta|^2|^2. \end{equation}
Using Plancherel's theorem, we study the integral on the right hand side in frequency space:
\[ \int|\sum_\theta|f_\theta|^2|^2=\int|\sum_\theta\widehat{|f_\theta|^2}|^2. \]
Each $\widehat{|f_\theta|^2}=\widehat{f_\theta}*\widehat{\overline{f_\theta}}$ is supported in the $R^{-1/2}\times R^{-1}$ rectangle $\theta-\theta$. The $\{\theta-\theta\}$ form a set of  $\sim R^{-1/2}\times R^{-1}$ rectangles centered at the origin with $\gtrsim R^{-1/2}$-separated orientations. On the outermost part of this set ($|\xi|\gtrsim R^{-1/2}$) the $\theta-\theta$ are finitely overlapping. At the innermost part ($|\xi|\le R^{-1}$), all of the $\theta-\theta$ overlap. Decompose frequency space into dyadic annuli: 
\begin{equation}\label{Fdecomp} \sum_{\substack{R^{\frac{1}{2}}\le W\le R\\ W\,\text{dyadic}}}\int_{|\xi|\sim W^{-1}}|\sum_\theta\widehat{|f_\theta|^2}|^2+\int_{|\xi|\le R^{-1}} |\sum_\theta\widehat{|f_\theta|^2}|^2. \end{equation}
On the annulus $|\xi|\sim W^{-1}$, 
the functions $\{\sum_{\theta\subset\tau}|f_\theta|^2\}_{\ell(\tau)=\frac{W}{R}}$ are finitely overlapping. It follows that
\[ \int_{|\xi|\sim W^{-1}}|\sum_\theta\widehat{|f_\theta|^2}|^2\lesssim \sum_{\ell(\tau)=\frac{W}{R}}\int_{|\xi|\lesssim W^{-1}}|\sum_{\theta\subset\tau}\widehat{|f_\theta|^2}|^2. \]
The sets $\{\theta-\theta\}_{\theta\subset\tau}$ intersected with the $W^{-1}$ ball are contained in a single $\sim W^{-1}\times R^{-1}$ block we will call $U_{\tau,R}^*$. For a smooth bump function $\eta_{U_{\tau,R}^*}\approx \chi_{U_{\tau,R}^*}$, we then have
\begin{align*} 
\int_{|\xi|\lesssim W^{-1}}|\sum_{\theta\subset\tau}\widehat{|f_\theta|^2}|^2&\le \int|\sum_{\theta\subset\tau}\widehat{|f_\theta|^2}\eta_{U_{\tau,R}^*}|^2=  \int|\sum_{\theta\subset\tau}{|f_\theta|^2}*\widecheck{\eta}_{U_{\tau,R}^*}|^2\lesssim \sum_{U\|U_{\tau,R}}|U|^{-1}\Big(\int \sum_{\theta\subset\tau}|f_\theta|^2W_U\Big)^2
\end{align*}
where $W_{U}$ are an $L^\infty$-normalized weight functions whose order of decay away from $U$ we may prescribe. 
In summary, the Fourier localization of $\sum_\theta|f_\theta|^2$ to the set $|\xi|\gtrsim W^{-1}$ gives approximate $L^2$ orthogonality of the $\sum_{\theta\subset\tau}|f_\theta|^2$ and the Fourier localization of $\sum_{\theta\subset\tau}|f_\theta|^2$ to the ball $|\xi|\lesssim W^{-1}$ gives the average of $\sum_{\theta\subset\tau}|f_\theta|^2$ over the wave envelope $U_{\tau,R}$. 

Theorem \ref{mainP} is the $\a$-dependent version of \eqref{nonalphadep}:
\[ \a^4|U_\a|\lessapprox \sum_{\substack{R^{\frac{1}{2}}\le W\le R\\ W\,\text{dyadic}}}\sum_{\tau\in{\bf{S}}_{\frac{W}{R}}(\P^1)}\sum_{U\in\mc{G}_{\tau}}|U|^{-1}\|S_Uf\|_{2}^4 \]
where $U_\a=\{x:|f(x)|>\a\}$ and $\mc{G}_\tau=\mc{G}_\tau(\a)=\{U\|U_{\tau,R}:|U|\|S_Uf\|_2^2\gtrapprox \frac{\a^2}{(\#\tau)^2}$. We prove this via a stopping-time algorithm tuned to a pruning process for wave envelopes. We now describe a simplified and heuristic version of the algorithm.  

\vspace{2mm}
\noindent\fbox{Step 1:} For each $\theta\in{\bf{S}}_{R^{-1/2}}(\P^1)$, define the dual set $\theta^*=\{x:|x\cdot y|\le 1\quad\forall y\in\theta-\theta\}$ and write $U_{\theta,R}=\theta^*$.
Let $\sum_{U\|U_{\theta,R}}\s_{U}$ be a partition of unity subordinate to the tiling $U\|U_{\theta,R}$ and satisfying the additional property that $\widehat{\s_{U}}$ is supported in a $U_{\theta,R}^*=\theta-\theta$. Organize the wave packets $U\|U_{\theta,R}$ into a good and a bad set defined by
\[ \mc{G}_{\theta}=\{U\|U_{\tau,R}:\|\s_{U}f_\theta\|_\infty\gtrsim \frac{\a}{\#\theta}\}\qquad\text{and}\qquad \mc{B}_\theta=\mc{G}_\theta^c. \]
On $U_\a$, we show that $\a\sim f(x)\sim \sum_{\theta}\sum_{U\in\mc{G}_\theta}\s_Uf_\theta$.
Note that for $U\in\mc{G}_\theta$, $\s_U(x)|f_\theta|(x)\approx S_{U}f(x)$ and the property $\|\s_Uf\|_\infty\gtrsim\frac{\a}{\#\theta}$ is like the condition $|U|^{-1}\|S_{U}f\|_2^2\gtrsim\frac{\a^2}{(\#\theta)^2}$ for $x\in U$. Write $F=\sum_\theta\sum_{U\in\mc{G}_\theta}\s_Uf_\theta$ and $F_\theta=\sum_{U\in\mc{G}_\theta}\s_Uf_\theta$, noting that the Fourier supports of each $f_\theta$ and $F_\theta$ are essentially the same. Notice that if we carry out the argument verifying \eqref{nonalphadep} for $F$, then we would be done. The strategy is to identify a case where we can guarantee that with $F$ in place of $f$, the term corresponding to $W=R^{1/2}$ dominates in \eqref{Fdecomp}. 

Specifically, let $\tau\in{\bf{S}}_{R^\e R^{-1/2}}(\P^1)$. As above, let $U_{\tau,R}$ be the convex hull of $\cup_{\theta\subset\tau}\theta^*$ and $\sum_{U\|U_{\tau,R}}\s_U$ be a partition of unity subordinate to tiling $U\|U_{\tau,R}$. Organize the wave envelopes $U$ into good and bad by
\[ \mc{G}_\tau=\{U\|U_{\tau,R}:|U|^{-1}\|S_UF\|_{2}^2\ge \frac{\a^2}{(\#\tau)^2}\}\quad\text{and}\quad \mc{B}_\tau=\mc{G}_\tau^c  . \]
Further refine $F$ by letting
\[ F^{\mc{G}}=\sum_{\tau\in{\bf{S}}_{R^\e R^{-1/2}}(\P^1)}\sum_{U\in\mc{G}_{\tau}}\s_U\sum_{\theta\subset\tau}F_\theta \qquad\text{and}\qquad F^{\mc{B}}=\sum_{\tau\in{\bf{S}}_{R^\e R^{-1/2}}(\P^1)}\sum_{U\in\mc{G}_{\tau}}\s_U\sum_{\theta\subset\tau}F_\theta .\]
Suppose that 
\begin{equation}\label{sup1}
    |U_\a|\lesssim |\{x:\a\sim |F^{\mc{B}}(x)|\}|. 
\end{equation}
Note that on the set on the right hand side, which we will denote $U_\a^{\mc{B}}$, 
\[ \sum_{\tau\in{\bf{S}}_{R^\e R^{-1/2}}(\P^1)}|F^{\mc{B}}_\tau|^2(x)\lesssim |\sum_{\tau\in{\bf{S}}_{R^\e R^{-1/2}}(\P^1)}|F^{\mc{B}}_\tau|^2*\widecheck{\eta}_{\ge R^{-\e} R^{-1/2}}(x)|.  \]
Indeed, if $\sum_{\tau}|F_\tau^{\mc{B}}|^2$ did not satisfy this high-dominance condition, then by Cauchy-Schwarz,
\[ \a^2\lesssim \#\tau|\sum_\tau|F_{\tau}^{\mc{B}}|^2*\widecheck{\eta}_{\le R^{-\e}R^{-1/2}}(x)| .\]
Using Plancherel's theorem and the delicate definition of $F^{\mc{B}}$, for each $\tau\in{\bf{S}}_{R^\e R^{-1/2}}$, we have roughly 
\begin{align*} 
||F_{\tau}^{\mc{B}}|^2*\widecheck{\eta}_{\le R^{-\e}R^{-1/2}}(x)|\lesssim \sum_{\theta\subset\tau}\sum_{U\in\mc{B}_\tau}\s_U(x)|F_\theta
|^2*|\widecheck{\eta}_{\le R^{-\e}R^{-1/2}}|(x)\lesssim \sum_{U\in\mc{B}_\tau}\s_U(x)|U|^{-1}\int\sum_{\theta\subset\tau}|F_\theta|^2W_U. 
\end{align*} 
Combine this with the Cauchy-Schwarz step above and use the definition of the bad set $\mc{B}_{\tau}$, for $x\in U\in{\mc{B}_{\tau}}$ to get the inequality
\[ \a^2\lesssim \#\tau\sum_\tau\frac{\a^2}{(\#\tau)^2},\]
which we may arrange to be a contradiction by choosing appropriate implicit constants. We call $\sum_\tau|F_{\tau}^{\mc{B}}|^2$ weakly high-dominated on $U_\a^{\mc{B}}$. Since the $\tau$ are only a factor of $R^\e$ more coarse than the $\theta$, this is essentially the property we wished to verify, 
which concludes this case. 

In the case that \eqref{sup1} does not hold, we have 
\[ |U_\a|\lesssim |\{x:\a\sim|F^{\mc{G}}(x)|\}, \]
which initiates the next step of the algorithm. 
\vspace{2mm}
\newline\noindent\fbox{Step $k$:} The input at this stage is the inequality 
\[ |U_\a|\lesssim |\{x:\a\sim |F_{k-1}^{\mc{G}}(x)| \}|\]
where $F_{k-1}^{\mc{G}}$ is a version of $f$ which contains only good wave envelopes (in an analogous sense as above) at scales $W$, $R^{1/2}\le W\le R^{(k-1)\e}R^{1/2}$. Step $k$ of the algorithm defines 
\[ F_{k-1}^{\mc{G}}=F_k^{\mc{G}}+F_k^{\mc{B}}\]
where $F_k^{\mc{G}}$ only has wave packets at scale $R^{k\e}R^{1/2}$ which are good. In the case that 
\[ |U_\a|\lesssim |\{x:\a\sim |F_k^{\mc{B}}(x)|\}|, \]
then we can show that $F_k^{\mc{B}}$ satisfies a weak high-dominance property which suffices to verify Theorem \ref{mainP}. In the case that $|U_\a|\lesssim |\{x:\a\sim |F_k^{\mc{G}}(x)|\}|$, we initiate the next step of the algorithm. 

\noindent\fbox{Final step:} The final step of the algorithm only happens if 
\[ |U_\a|\lesssim |\{x:\a\sim|F_1^{\mc{G}}(x)|\} \]
where $F_1^{\mc{G}}$ has good wave envelopes for all scales. In particular, we have good wave envelopes at scale $W=R$, which basically means that 
\[ \a^2\lesssim |B_R|^{-1}\int\sum_\theta|f_\theta|^2W_{B_R} \]
for each $B_R$ that $F_1^{\mc{G}}$ is essentially supported. Conclude directly that 
\[ \a^4|U_\a|\lessapprox \sum_{B_R\in\mc{G}(\a)}|B_R|\Big(\int\sum_\theta|f_\theta|^2W_{B_R}\Big)^2 \]
where $\mc{G}(\a)=\{B_R:\,C_\e R^\e |B_R|^{-1}\int\sum_\theta|f_\theta|^2W_{B_R}\ge \a^2\}$. 

\section{Proof of Theorem \ref{mainP} \label{tools}}

The proof of Theorem \ref{mainP} follows from a wave-envelope pruning process, which leads to a decomposition of $f=\sum_m f_m$ where there are $\lessapprox 1$ many terms in the sum. Each $f_m$ has the property that a corresponding square function at a certain scale is weakly high-frequency dominated. Then we analyze each $f_m$ and the relevant square functions using the geometry of the frequency support. In this section, we introduce notation for different scale neighborhoods of $\P^1$, a pruning process for wave envelopes at various scales, some high/low lemmas which are used to analyze the high/low frequency parts of square functions, and a version of a bilinear restriction theorem for $\P^1$.   

Begin by fixing some notation, as above. Let $\e>0$. Let $R\in 4^\N$ be a parameter we will take to be sufficiently large, depending on $\e>0$. Fix a ball $B_R\subset\R^2$ of radius $R$. The parameter $\a>0$ describes the superlevel set 
\[ U_\a=\{x\in B_R:|f(x)|\ge \a\}.\]
We analyze scales $W_k\in 2^\N$ satisfying $R^{1-(k-1)\e}\le W_k<2R^{1-(k-1)\e}$ and $R^{1/2}\le W_k\le R$. Let $N$ distinguish the index for which $W_N$ is closest to $R^{1/2}$. Since $R^{1/2}$ and $W_N$ differ at most by a factor of $R^\e$, we will ignore the distinction between $W_N$ and $R^{1/2}$ in the rest of the argument. Note that $N\sim \e^{-1}$.  

Define the following collections, each of which partitions a neighborhood of $\P$ into approximate rectangles. 
\begin{enumerate}
    \item $\{\theta\}$ is a partition of $\mc{N}_{R^{-1}}(\P^1)$ by approximate $R^{-1/2}\times R^{-1}$ rectangles.  
    \item $\{\tau_k\}$ is a partition of $\mc{N}_{W_k^2/R^2}(\P^1)$ by approximate $(W_k/R)\times (W_k/R)^2$ rectangles. Assume the additional property that $\theta\cap\tau_k=\emptyset$ or $\theta\subset\tau_k$. 
\end{enumerate}
Since $R\in 4^\N$ and $W_k\in 2^\N$ with $W_k\le R$, each $R/W_k\in 2^\N$. One way to precisely define the $\{\tau_k\}$ (including $\{\tau_N\}=\{\theta\}$) is by
\begin{equation}\label{blocks} 
\bigsqcup_{|l|\le RW_k^{-1}-2} \{(\xi_1,\xi_2)\in\mc{N}_{W_k^2/R^2}(\P^1):lW_kR^{-1}\le \xi_1<(l+1)W_kR^{-1} \}  \end{equation}
and the two end pieces
\[  \{(\xi_1,\xi_2)\in\mc{N}_{W_k^2/R^2}(\P^1):\xi_1<-1+W_kR^{-1}\} \sqcup  \{(\xi_1,\xi_2)\in\mc{N}_{W_k^2/R^2}(\P^1):1-W_kR^{-1}\le \xi_1\}. \]

\subsection{A pruning process for the wave envelopes \label{pruning}} 
We will define wave envelopes corresponding to each $\tau_k$, for each scale $W_k$. 


Suppose that $\tau_k$ is the $l$th-piece $\tau_k=\{(\xi_1,\xi_2)\in \mc{N}_{W_k^2/R^2}:lW_kR^{-1}\le \xi_1<(l+1)W_kR^{-1}\}$. Then define 
\[ \tau_k^*=\{x_1(1,2lW_kR^{-1})+x_2(-2lW_kR^{-1},1):|x_1|\le R/W_k,\quad |x_2|\le R^2/W_k^2 \}\]
and note that
\[ \frac{1}{20}\tau_k^*\subset\{x\in\R^2:|x\cdot(\xi-(lW_k/R,l^2W_k^2/R^2))|\le 1\quad\forall\xi\in\tau_k\}\subset 20\tau_k^*. \]
For each $\tau_k$, fix a wave envelope $U_{\tau_k,R}$ which is a rectangle of dimension $W_k\times R$ centered at the origin and with long side parallel to the long side of $\tau_k^*$. The motivation for this definition of wave envelope is that the convex hull of $\cup_{\theta\subset\tau_k}\theta^*$ is comparable to $U_{\tau_k,R}$.

For $\tau_k$, tile the plane by translates $U$ of $U_{\tau_k,R}$, denoted by $U\| U_{\tau_k,R}$. We will define an associated partition of unity $\sum_{U\|U_{\tau_k,R}}\s_{U}$. 
First let $\p(\xi)$ be a bump function supported in $[-\frac{1}{4},\frac{1}{4}]^2$. For each $m\in\Z^2$, let 
\[ \s_m(x)=c\int_{[-\frac{1}{2},\frac{1}{2}]^2}|\widecheck{\p}|^2(x-y-m)dy, \]
where $c$ is chosen so that $\sum_{m\in\Z^2}\s_m(x)=c\int_{\R^2}|\widecheck{\p}|^2=1$. Since $|\widecheck{\p}|$ is a rapidly decaying function, for any $n\in\N$, there exists $C_n>0$ such that
\[ \s_m(x)\le c\int_{[0,1]^2}\frac{C_n}{(1+|x-y-m|^2)^n}dy \le \frac{\tilde{C}_n}{(1+|x-m|^2)^n}. \]
Define the partition of unity $\s_{U}$ associated to $U\| U_{\tau_k,R}$ to be $\s_{U}(x)=\s_m\circ A_{\tau_k}$, where $A_{\tau_k}$ is a linear transformation taking $U_{\tau_k,R}$ to $[-\frac{1}{2},\frac{1}{2}]^2$ and $A_{\tau_k}(U)=m+[-\frac{1}{2},\frac{1}{2}]^2$. The important properties of $\s_{U}$ are (1) rapid decay off of $U$ and (2) Fourier support contained in a dual set $U^*=\{\xi:|x\cdot\xi|\le 1\,\,\forall x\in U\}$ which is a rectangle at the origin of dimension $W_k^{-1}\times R^{-1}$. 

We need one more auxiliary function related to each $U$ before we can define the pruning process. For each $U_{\tau_k,R}$, let 
$W_{U_{\tau_k,R}}$ be the weight function adapted to $U_{\tau_k,R}$ defined by 
\begin{equation}\label{weightdef} W_{U_{\tau_k,R}}(x)=W\circ R_{\tau_k}(x)\end{equation}
where $W$ is defined by 
\[ W(x,y)=\frac{1}{(1+|x|^2)^{100}(1+|y|^2)^{100}},\qquad \|W\|_1=1,\]
and $R_{\tau_k}:\R^2\to\R^2$ is the linear transform which first rotates $2U_{\tau_k,R}$ to $[-W_k,W_k]\times[-R,R]$ and then scales $[-W_k,W_k]\times [-R,R]$ to the cube $[-1,1]^2$. For each $U\|U_{\tau_k,R}$, let $W_{U}=W_{U_{\tau_k,R}}(x-c_{U})$ where $c_U$ is the center of $U$. 

Note that with this definition of $W_U$, we can easily verify that Theorem \ref{locsmooththm} implies the wave envelope estimate (Theorem 1.3) of \cite{locsmooth} with sharp cutoffs $\chi_U$. Indeed, for each $\tau$ and $U\|U_{\tau,R}$, we have
\begin{align*}
\sum_{U\|U_{\tau,R}}\|S_Uf\|_2^4&=\sum_{U\|U_{\tau,R}}\Big(\int\sum_{\theta\subset\tau}|f_\theta|^2W_U\Big)^2    =\sum_{U\|U_{\tau,R}}\Big(\sum_{U'\|U_{\tau,R}}\int_{U'}\sum_{\theta\subset\tau}|f_\theta|^2W_U\Big)^2  \\
&\le \sum_{U\|U_{\tau,R}}\Big(\sum_{U'\|U_{\tau,R}}\|W_U\|_{L^\infty(U')}\int_{U'}\sum_{\theta\subset\tau}|f_\theta|^2\Big)^2 \\
(\text{Cauchy-Schwarz) }&\lesssim \sum_{U\|U_{\tau,R}}\Big(\sum_{U'\|U_{\tau,R}}\|W_U\|_{L^\infty(U')}\Big)\sum_{U''\|U_{\tau,R}}\|W_U\|_{L^\infty(U'')}\Big(\int_{U''}\sum_{\theta\subset\tau}|f_\theta|^2\Big)^2\\
&\lesssim \sum_{U\|U_{\tau,R}}\sum_{U''\|U_{\tau,R}}\|W_U\|_{L^\infty(U'')}\Big(\int_{U''}\sum_{\theta\subset\tau}|f_\theta|^2\Big)^2\lesssim \sum_{U''\|U_{\tau,R}}\Big(\int_{U''}\sum_{\theta\subset\tau}|f_\theta|^2\Big)^2. 
\end{align*}

Now we describe the pruning process, which sorts between important and unimportant wave envelopes corresponding to $f$ on the set $U_\a$. Define the average integral notation $\fint_Ug$ to mean
\[ \fint_Ug= |U|^{-1}\int g W_U. \]
Let $C_0>0$ be a universal constant and $m_0>0$ be a constant depending on $\e$ which are both fixed in \textsection\ref{broadnarrow}. 
First identify a ``good" subset of $U\|U_{\theta,R}$ as follows. 
Set 
\[ U\in \mc{G}_\theta\quad\text{if}\quad (\log R)C_0^{2m_0}R^{2C_0\e}\fint_U|f_\theta|^2\ge \frac{\a^2}{(\#\theta)^2}  .\]
When we write $\#\theta$ (or $\#\tau_k$), we always mean the size of the set $\{\theta :f_{\theta}\not\equiv 0\}$ (or the size of $\{\tau_k:f_{\tau_k}\not\equiv 0\})$. 

\begin{definition}[Pruning with respect to $\tau_k$]\label{taukprune} For each $\theta$, define 
\[ f_{W_N,\theta}:=\sum_{\substack{U\|U_{\theta,R}\\U\in\mc{G}_\theta}}\s_Uf_\theta \qquad \text{and}\qquad f_{W_N}:=\sum_\theta f_{W_N,\theta}. \]
For each $k<N$ and each $\tau_k$, let 
\begin{align*} 
\mc{G}_{\tau_k}:=\{U&\|U_{\tau_k,R}:\log RC_0^{2m_0}R^{2C_0\e}\fint_U\sum_{\theta\subset\tau_k}|f_{W_{k+1},\theta}|^2\ge \frac{\a^2}{(\#\tau_k)^2}\},\\
f_{W_k,\theta}&:=\sum_{\substack{U\in\mc{G}_{\tau_k}}}\s_U f_{W_{k+1},\theta} \quad\text{where}\quad\theta\subset\tau_k, \\
\text{and}&\qquad f_{W_k}:=\sum_{\theta}f_{W_k,\theta}.
\end{align*}
Finally, write $f_{W_{m}}-f_{W_{m-1}}:=f_{W_m}^{\mc{B}}$.
\end{definition}

We record some of the important properties of this pruning process in the following lemma. 
\begin{lemma}\label{pruneprop} 
\begin{enumerate}
    \item\label{decomp}  $f_{W_N}=f_{W_1}+\sum_{m=2}^{N}f_{W_m}^{\mc{B}}$
    \item \label{ineqprune}For all $R^{\frac{1}{2}}\le W_{k+1}< W_k<R$, $|f_{W_{k+1}}|\le |f_{W_k}|\le |f|$ and for each $\theta$, $|f_{W_{k+1},\theta}|\le |f_{W_k,\theta}|\le |f_\theta|$. Also, $|f_{W_{k+1},\theta}^{\mc{B}}|\le |f_{W_k,\theta}^{\mc{B}}|\le |f_\theta|$.
    \item\label{supp} $ \text{supp} \widehat{f_{W_k,\theta}}\subset (N-k+2)\theta$ for all $\theta$. 
\end{enumerate} 
\end{lemma}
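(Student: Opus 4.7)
The three parts of the lemma have quite different flavors. Part (1) is a trivial telescoping identity: by the definition $f_{W_m}^{\mc{B}} := f_{W_m} - f_{W_{m-1}}$, the sum $\sum_{m=2}^{N} f_{W_m}^{\mc{B}}$ collapses to $f_{W_N} - f_{W_1}$, which is the claim; no further structural input is needed.

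Part (2) is a consequence of the observation that at each stage the pruning is realized by pointwise multiplication by a nonnegative function bounded above by $1$. For any $\theta \subset \tau_k$, the identity
\[ f_{W_k, \theta}(x) = \Bigl( \sum_{U \in \mc{G}_{\tau_k}} \s_U(x) \Bigr) f_{W_{k+1}, \theta}(x) \]
combined with the partition-of-unity property $0 \le \sum_{U \in \mc{G}_{\tau_k}} \s_U(x) \le 1$ gives pointwise domination of the more pruned function by the less pruned one at adjacent levels. Iterating back to the base case, where the same reasoning gives $|f_{W_N, \theta}| \le |f_\theta|$, produces $|f_{W_k, \theta}| \le |f_\theta|$ for every $k$. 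The corresponding inequality for the global $f_{W_k}$ propagates from the $\theta$-indexed version, and the bad pieces $f_{W_k}^{\mc{B}} = f_{W_k} - f_{W_{k-1}}$ inherit the bound $|f_{W_k,\theta}^{\mc{B}}| \le |f_\theta|$ as differences of pointwise-dominated functions with the same parent $f_{W_{k+1}, \theta}$ (up to one further pruning).

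Part (3) is the substantive step, which I would prove by induction on $k$ running from $k = N$ down to $k = 1$. For the base case, $\widehat{\s_U}$ for $U \| U_{\theta, R}$ is supported in the dual rectangle $U^* \sim \theta - \theta$, a rectangle of dimensions $R^{-1/2} \times R^{-1}$ at the origin oriented like $\theta$. Hence
\[ \supp \widehat{f_{W_N, \theta}} \subset \supp \widehat{\s_U} + \supp \widehat{f_\theta} \subset (\theta - \theta) + \theta \subset 2\theta, \]
matching $(N - N + 2)\theta$. For the inductive step at scale $W_k$, $\widehat{\s_U}$ is supported in a rectangle of size $W_k^{-1} \times R^{-1}$ centered at the origin and oriented like $\tau_k$. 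The key geometric observation is that since $\theta \subset \tau_k$, the orientations of $\tau_k$ and $\theta$ differ by an angle $\lesssim W_k/R$; consequently, rewriting that rectangle in $\theta$'s frame yields a set of dimensions at most $W_k^{-1} \times R^{-1}$ aligned with $\theta$. Because $W_k \ge R^{1/2}$ gives $W_k^{-1} \le R^{-1/2}$, this set is contained in a single $\theta$-copy at the origin. Taking a Minkowski sum with the inductive hypothesis $\supp \widehat{f_{W_{k+1}, \theta}} \subset (N - k + 1)\theta$ therefore adds one more $\theta$-copy, yielding $(N - k + 2)\theta$, as claimed.

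The main technical subtlety I expect is the orientation bookkeeping in (3): the $\tau_k$-aligned rectangle $\supp \widehat{\s_U}$ must be shown to fit, up to uniform constants, inside a $\theta$-aligned $R^{-1/2} \times R^{-1}$ rectangle. This is pinned down by the parabola's curvature, which forces the angular extent of $\tau_k$ to be $\lesssim W_k/R$, so that the vertical deviation of the rotated long side is $\lesssim W_k^{-1} \cdot (W_k/R) = R^{-1}$, comfortably absorbed by $\theta$'s short side. Parts (1) and (2) are then essentially bookkeeping, given the careful setup in Section \ref{pruning}.
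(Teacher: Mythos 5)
Your argument is correct and follows the same route as the paper's (very terse) proof: part (1) is pure telescoping, part (2) is pointwise domination by the partition-of-unity factor, and part (3) is induction on $k$ using that $\widehat{\s_U}$ lives in a small rectangle near the origin; your treatment of (3) is more detailed than the paper's one-line remark, and the orientation bookkeeping you supply is exactly the right justification.

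One caution: the inequalities you correctly derive in part (2), namely $|f_{W_k,\theta}|\le|f_{W_{k+1},\theta}|\le|f_\theta|$ for $W_{k+1}<W_k$ (more pruning means smaller modulus), are the \emph{reverse} of what the lemma's statement literally asserts; likewise the chained bound $|f_{W_{k+1},\theta}^{\mc{B}}|\le|f_{W_k,\theta}^{\mc{B}}|$ does not follow from the pointwise-domination argument and does not appear to hold in general (the bad pieces at different levels are cut off by different, non-nested sums of $\s_U$). What your argument and the definitions actually give is $|f_{W_k,\theta}^{\mc{B}}|\le|f_{W_k,\theta}|\le|f_\theta|$, which matches the way the lemma is invoked later (e.g.\ in the chain $|f_{W_m,\theta}^{\mc{B}}|\le|f_{W_k,\theta}|\le|f_{W_{k+1},\theta}|\le|f_\theta|$ in Proposition \ref{mainprop}). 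So the statement of the lemma almost certainly has its indices transposed; your proof establishes the corrected version, but you should make that discrepancy explicit rather than silently proving something different from what is written. Also, your last sentence of part (2) identifies the ``parent'' of $f_{W_k,\theta}^{\mc{B}}$ as $f_{W_{k+1},\theta}$, but since $f_{W_k,\theta}^{\mc{B}}=\bigl(1-\sum_{U\in\mc{G}_{\tau_{k-1}}}\s_U\bigr)f_{W_k,\theta}$ the relevant parent is $f_{W_k,\theta}$; the conclusion is unaffected, just the bookkeeping.
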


\begin{proof} To see property \eqref{decomp}, write 
\begin{align*} 
f_{W_N}    &=f_{W_{N-1}}+f_{W_N}-f_{W_{N-1}}=\cdots=f_{W_1}+\sum_{m=2}^{N}(f_{W_{m}}-f_{W_{m-1}})= f_{W_{1}}+\sum_{m=2}^{N}f_{W_{m-1}}^{\mc{B}}.
\end{align*} 
Property \eqref{ineqprune} follows directly from the definition of the pruning process. The claims about $f_{W_k}$ and $f_{W_k,\theta}^{\mc{B}}$ follow from the same reasoning. Property \eqref{supp} follows from the noting that the Fourier support of each $\s_{U}$, for $U\| U_{\tau_k,R}$, is contained in $\theta$. 
\end{proof}

\subsection{Lemmas about square functions} 
We will prove various results about square functions (squared) of the form $\sum_\tau|f_{W_k,\tau}|^2$ for various sizes of $\tau$. First we define some weight functions, which are useful when we invoke the locally constant property. By locally constant property, we mean generally that if a function $f$ has Fourier transform supported in a convex set $A$, then for a bump function $\p_A\equiv 1$ on $A$, $f=f*\widecheck{\p_A}$. Since $|\widecheck{\p_A}|$ is an $L^1$-normalized function which is positive on a set dual to $A$, $|f|*|\widecheck{\p_A}|$ is an averaged version of $|f|$ over a dual set $A^*$. After defining some weight functions, we record some of the specific locally constant properties we need in Lemma \ref{locconst}

For each $W_k$, let $w_{R/W_k}$ be the weight function
\begin{equation}\label{ballweight2} w_{R/W_k}(x)=\frac{c'}{(1+|x|^2/(R/W_k)^2)^{10}},\qquad\|w_{R/W_k}\|_1=1. \end{equation}

\begin{lemma}[Locally constant property]\label{locconst} For any $\theta$, 
\[ |f_\theta|^2(x)\lesssim |f_\theta|^2*|\widecheck{\rho}_\theta|\]
where $\rho_{\theta}$ be a smooth bump function equal to $1$ on $\theta$ and supported in $2\theta$. 
For each $k$ and $m$ and for any $x$, 
\[\sum_{\tau_m}|f_{W_k,\tau_m}|^2(x)\lesssim_\e \sum_{\tau_m}|f_{W_k,\tau_m}|^2*w_{R/W_m}(x) \]
in which $f_{W_k,\tau_m}=\sum_{\theta\subset\tau_m}f_{W_k,\theta}$.
\end{lemma}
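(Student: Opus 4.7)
The plan is to prove the two inequalities separately, both via the standard locally constant heuristic: if $g$ has Fourier transform supported in a set $A$, then $g = g*\widecheck{\rho}$ for any $\rho$ with $\rho\equiv 1$ on $A$, and the convolution kernel $|\widecheck{\rho}|$ can be pointwise compared with a positive $L^1$-normalized bump adapted to the dual set $A^*$.

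For the first inequality, I would use that $\widehat{f_\theta}$ is supported in $\theta$ with $\rho_\theta\equiv 1$ on $\theta$ to write $f_\theta = f_\theta*\widecheck{\rho_\theta}$. Splitting the weight via $|\widecheck{\rho_\theta}| = |\widecheck{\rho_\theta}|^{1/2}\cdot |\widecheck{\rho_\theta}|^{1/2}$ and applying Cauchy--Schwarz yields
\[
|f_\theta(x)|^2 \le \|\widecheck{\rho_\theta}\|_{1} \int |f_\theta(x-y)|^2\,|\widecheck{\rho_\theta}(y)|\,dy.
\]
Since $\widecheck{\rho_\theta}$ arises from a unit-scale Schwartz bump by an affine change of variables adapted to $\theta$, a change-of-variables computation gives $\|\widecheck{\rho_\theta}\|_1 \lesssim 1$, yielding the first estimate.

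For the second inequality, I would first note via Lemma~\ref{pruneprop}\eqref{supp} and $N\sim \e^{-1}$ that the Fourier support of $f_{W_k,\tau_m} = \sum_{\theta\subset\tau_m}f_{W_k,\theta}$ is contained in a dilate $C_\e\tau_m$; consequently $\widehat{|f_{W_k,\tau_m}|^2}$ is supported in the origin-centered rectangle $C_\e(\tau_m-\tau_m)$, which has dimensions $\lesssim_\e (W_m/R)\times (W_m/R)^2$ and thus lies in a ball $B(0, C'_\e W_m/R)$. Crucially, the \emph{same} ball contains every such rectangle (all are centered at the origin), so the Fourier support of the full sum $\sum_{\tau_m}|f_{W_k,\tau_m}|^2$ also lies in $B(0,C'_\e W_m/R)$. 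I would then choose an isotropic Schwartz bump $\phi$ equal to $1$ on $B(0,C'_\e W_m/R)$ and supported in twice that ball, and write
\[
\sum_{\tau_m}|f_{W_k,\tau_m}|^2 = \Bigl(\sum_{\tau_m}|f_{W_k,\tau_m}|^2\Bigr)*\widecheck{\phi}.
\]
Each summand is nonnegative, so moving the absolute value inside the convolution gives $\sum_{\tau_m}|f_{W_k,\tau_m}|^2(x)\le \sum_{\tau_m}(|f_{W_k,\tau_m}|^2*|\widecheck{\phi}|)(x)$. A standard integration-by-parts bound gives $|\widecheck{\phi}(y)|\lesssim_\e (W_m/R)^2(1+(W_m/R)|y|)^{-20}$, which compared with the explicit definition of $w_{R/W_m}$ in \eqref{ballweight2} (for an appropriate value of $c'$) gives $|\widecheck{\phi}|\lesssim_\e w_{R/W_m}$ pointwise, completing the proof.

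The main obstacle I anticipate is the geometric inclusion $\bigcup_{\tau_m}C_\e(\tau_m-\tau_m)\subset B(0,C'_\e W_m/R)$; once that is in hand, the rest is a routine application of Fourier-support arguments and scaling. Without this isotropic inclusion, the two sides of the desired inequality would have different characters (anisotropic versus isotropic averaging), and the use of the single ball weight $w_{R/W_m}$ would fail.
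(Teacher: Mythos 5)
Your proof is correct and follows essentially the same route as the paper. The only cosmetic difference is in the second inequality: the paper applies the same Cauchy--Schwarz/Fourier-inversion trick as for the first claim to the amplitude $f_{W_k,\tau_m}$, using a bump function $\rho_{\tau_m}$ adapted to a (noncentered) ball containing the Fourier support of $f_{W_k,\tau_m}$, and relies on the modulus $|\widecheck{\rho}_{\tau_m}|$ killing the oscillatory phase so that $|\widecheck{\rho}_{\tau_m}|\lesssim_\e w_{R/W_m}$; you instead apply Fourier inversion directly to the nonnegative function $|f_{W_k,\tau_m}|^2$, whose Fourier support is automatically an origin-centered set of diameter $\lesssim_\e W_m/R$, and move to the absolute value via the triangle inequality rather than Cauchy--Schwarz. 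Both routes establish the same conclusion with the same supporting facts (Lemma~\ref{pruneprop}\eqref{supp} and the dimensions of $\tau_m-\tau_m$), and your observation that the single isotropic ball $B(0,C'_\e W_m/R)$ contains every $C_\e(\tau_m-\tau_m)$ is precisely what makes the uniform weight $w_{R/W_m}$ valid across all $\tau_m$.
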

The first property also holds for any $f_{W_k,\theta}$ in place of $f_\theta$ since each of these functions has essentially the same Fourier support. 
\begin{proof}[Proof of Lemma \ref{locconst}] Using Fourier inversion and H\"{o}lder's inequality, 
\[ |f_{W_k,\theta}(y)|^2=|f_{W_k,\theta}*\widecheck{\rho_{\theta}}(y)|^2\le\|\widecheck{\rho_{\theta}}\|_1 |f_{W_k,\theta}|^2*|\widecheck{\rho_{\theta}}|(y). \]
Since $\rho_{\theta}$ may be taken to be a standard bump function adapted to the unit ball precomposed with an affine transformation, $\|\widecheck{\rho_{\theta}}\|_1$ is a constant. The function $\widecheck{\rho_{\theta}}$ decays rapidly off of $\theta^*$, so
\[ |f_{W_k,\theta}(x)|^2\lesssim |\theta^*|^{-1}|f_{W_k,\theta}|^2*W_{\theta^*}(x), \]
which proves the first claim. For the second claim, note that by \eqref{supp} of Lemma \ref{pruneprop}, the Fourier support of $f_{W_k,\tau_m}$ is contained in $\cup_{\theta\subset\tau_m}(N-k+2)\theta$, which is contained in $C_\e B_{\tau_m}$ where $B_{\tau_m}$ is a ball of radius $2W_m/R$ which contains $\tau_m$. Let $\rho_{\tau_m}$ be a bump function equal to $1$ on $C_\e B_{\tau_m}$ and supported in $2C_\e B_{\tau_m}$. Again using Fourier inversion, we have
\[ |f_{W_k,\tau_m}(x)|^2=|f_{W_k,\tau_m}*\widecheck{\rho}_{\tau_m}(x)|^2\lesssim |f_{W_k,\tau_m}|^2*|\widecheck{\rho}_{\tau_m}|(x). \]
It remains to note that for each $\tau_m$,  $|\widecheck{\rho}_{\tau_m}|\lesssim_\e w_{R/W_m}$. 

\end{proof}

We use the locally constant property to show that it suffices to replace $f$ by $f_{W_N}$ in $U_\a$. 
\begin{lemma}\label{essent} We have the pointwise inequality
\[ |f(x)-f_{W_N}(x)|\lesssim \frac{1}{(\log R)^{1/2}C_0^{m_0}R^{C_0\e}}\a.\]
\end{lemma}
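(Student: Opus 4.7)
The plan is to establish the pointwise per-$\theta$ bound
\[
    |g_\theta(x)| \;\lesssim\; \frac{\alpha}{(\log R)^{1/2}\,C_0^{m_0}\,R^{C_0\e}\,\#\theta},
\]
where $g_\theta := f_\theta - f_{W_N,\theta} = \sum_{U\in\mc{B}_\theta}\s_U f_\theta$ and $\mc{B}_\theta := (\mc{G}_\theta)^c$. Since $f - f_{W_N} = \sum_\theta g_\theta$, the triangle inequality $|f - f_{W_N}|(x) \le \sum_\theta |g_\theta|(x)$ then yields the lemma, because the factor $\#\theta$ produced by the sum over nonzero $f_\theta$ exactly cancels the $\#\theta$ in the denominator above.

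To prove the per-$\theta$ bound, I would first apply the locally constant property. The first part of Lemma~\ref{locconst} is valid for $g_\theta$ because $\s_U$ is Fourier-supported in the dual rectangle $U_{\theta,R}^*$ of dimensions $R^{-1/2}\times R^{-1}$, so $g_\theta$ has Fourier support in a bounded dilate of $\theta$, giving
\[
    |g_\theta(x)|^2 \;\lesssim\; \int |g_\theta(y)|^2\, w(x-y)\, dy,
\]
where $w$ is an $L^1$-normalized weight adapted to $\theta^* = U_{\theta,R}$. Using that $\{\s_U\}$ is a nonnegative partition of unity with $\sum_U \s_U \equiv 1$, I would bound
\[
    |g_\theta(y)|^2 \;=\; |f_\theta(y)|^2\Bigl(\sum_{U\in\mc{B}_\theta}\s_U(y)\Bigr)^2 \;\le\; |f_\theta(y)|^2 \sum_{U\in\mc{B}_\theta}\s_U(y),
\]
and then use $\s_U \lesssim W_U$ pointwise together with the rapid decay of $w(x-\cdot)$ off a $\theta^*$-translate centered at $x$ to swap sum and integral, producing
\[
    |g_\theta(x)|^2 \;\lesssim\; \sum_{U\in\mc{B}_\theta}\omega(x,U)\fint_U|f_\theta|^2,
\]
where $\omega(x,U)$ is a rapidly decaying factor in $\mathrm{dist}(x,U)/\mathrm{diam}(U)$ satisfying $\sum_U \omega(x,U) = O(1)$. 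The defining property $\fint_U|f_\theta|^2 < \alpha^2/\bigl((\log R)C_0^{2m_0}R^{2C_0\e}(\#\theta)^2\bigr)$ for $U\in\mc{B}_\theta$ then finishes the estimate after summing.

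The main technical delicacy is in the swap-and-localize step: one has to ensure that the convolution with $w$ does not wash out the spatial localization of $\s_U$, so that each $U \in \mc{B}_\theta$ contributes a term which is small outside a neighborhood of $U$ of size $\mathrm{diam}(U)$. This is straightforward because both $\widecheck{\rho}_\theta$ and $\s_U$ enjoy Schwartz-type decay on the same scale $\theta^*$. The key structural observation is that no cancellation among the different wave packets is being exploited; the argument relies purely on pointwise size control, namely that bad wave envelopes are quantitatively too small to contribute to the superlevel set at amplitude $\alpha$.
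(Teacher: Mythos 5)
Your proposal is correct and follows essentially the same route as the paper's proof: the core ingredients — Fourier support of $g_\theta$ in a bounded dilate of $\theta$, the locally constant property, $\sigma_U\lesssim W_U$, and the defining inequality for bad wave envelopes — are identical. The only difference is organizational: you factor out a per-$\theta$ bound on $|g_\theta|^2$ and use the elementary inequality $(\sum_{U\in\mathcal{B}_\theta}\sigma_U)^2\le\sum_{U\in\mathcal{B}_\theta}\sigma_U$ (valid since the $\sigma_U$ form a nonnegative partition of unity), whereas the paper bounds $\sum_\theta\sum_{U\notin\mathcal{G}_\theta}\sigma_U|f_\theta|$ directly and applies Cauchy--Schwarz in the convolution measure $|\widecheck{\rho}_{2\theta}|(x-y)\,dy$, which requires a slightly more delicate bookkeeping of square roots when summing over $U$. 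Your version keeps all quantities squared until the end, which is a bit cleaner and buys nothing and loses nothing; both arguments close for the same reason.
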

\begin{proof} Consider the difference of $f$ and $f_{W_N}$:
\begin{align*}
|f(x)-f_{W_N}(x)|&\le  \sum_\theta \sum_{U\notin\mc{G}_\theta}\s_U(x)|f_\theta(x)|. 
\end{align*}
Since $\s_Uf_{\theta}$ has Fourier transform supported in $2\theta$, we may use the same bump function $\rho_{2\theta}$ from the proof of Lemma \ref{locconst} to bound
\[ \sum_\theta \sum_{U\notin\mc{G}_\theta}\s_U(x)|f_\theta(x)|\le\sum_\theta \sum_{U\notin\mc{G}_\theta}\int \s_U(y)|f_\theta(y)||\widecheck{\rho}_{2\theta}|(x-y)dy . \]
By Cauchy-Schwarz, since $\|\widecheck{\rho}_{2\theta}\|_1\sim 1$, the right hand side is bounded by 
\[ \sum_\theta \sum_{U\notin\mc{G}_\theta}\Big(\int \s_U^2(y)|f_\theta(y)|^2|\widecheck{\rho}_{2\theta}|(x-y)dy\Big)^{\frac{1}{2}}. \]
Use the fact that for any $x$, $\sum_{U\not\in\mc{G}_\theta}\|\s_U(\cdot)|\widecheck{\rho}_{2\theta}(x-\cdot)\|_{\infty}\lesssim |U|^{-1}$ and that $\s_U(y)\le W_U(y)$ 
to get the upper bound
\[ \sum_\theta\max_{U\not\in\mc{G}_\theta}\Big(|U|^{-1}\int|f_\theta|^2W_U\Big)^{\frac{1}{2}} . \]
Finally, use the defining property of $\mc{G}_\theta$:
\[ \sum_\theta\max_{U\not\in\mc{G}_\theta}\Big(|U|^{-1}\int|f_\theta|^2W_U\Big)^{\frac{1}{2}}\le \sum_\theta\frac{1}{(\log R)^{1/2}C_0^{m_0}R^{C_0\e}}\frac{\a}{\#\theta}. \]

\end{proof}

Next we analyze high and low frequency portions of square functions. Define radial Littlewood-Paley-type functions that we will eventually use to localize to certain frequencies.  
\begin{definition}[Auxiliary functions] \label{aux}Let $\p(x):\R^2\to[0,\infty)$ be a radial, smooth bump function satisfying $\p(\xi)=1$ on $B_1$ and $\supp\p\subset B_2$. Then for $\xi\in B_2$, 
\begin{align*}
    1= \p(R^{\e(J+1)}\xi)+\sum_{j=-1}^J[\p(R^{\e j}\xi)-\p(R^{\e(j+1)}\xi)]
\end{align*}
where $J$ is defined by $R^{\e J }\le  R< R^{\e(J+1)}$. For each $r$, let
\[ \eta_{\le  r}=\p(r^{-1}\xi), \quad\eta_{> r}=\p(\xi)-\p(r^{-1}\xi),\quad \eta_{\sim r}(\xi) =\p(r^{-1}\xi)-\p(2 r^{-1}\xi) . \]
\end{definition}


In the following lemma, for each $\tau_m\subset\tau_{m-1}$, let 
\[ f_{W_m,\tau_m}^{\mc{B}}=\sum_{U\not\in\mc{G}_{\tau_{m-1}}}\s_U\sum_{\theta\subset\tau_{m}}f_{W_m,\theta}. \]
\begin{lemma}[Low lemma]\label{low} Fix $m$, $2\le m<N$. For any $2\le k<m$ and $r\le W_k/R$, 
\[ \sum_{\tau_m}|f_{W_m,\tau_{m-1}}^{\mc{B}}|^2*\widecheck{\eta}_{\le r}(x)=\sum_{\tau_k}\sum_{\substack{\tau_k'\\\tau_k\sim\tau_k'}} (f_{W_m,\tau_k}^{\mc{B}}\overline{f}_{W_m,\tau_k'}^{\mc{B}})*\widecheck{\eta}_{\le r}(x)\]
where $r_j=R^{-j\e}$ and $\tau_k\sim\tau_k'$ means that $\emph{dist}(\tau_k,\tau_k')\lesssim_\e W_k/R$. 
\end{lemma}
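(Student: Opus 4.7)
The plan is to establish the identity by working on the Fourier side and exploiting the curvature of $\P^1$. First I would expand both sides into sums of bilinear products $f_{W_m,\tau_m}^{\mc{B}}\,\overline{f_{W_m,\tau_m'}^{\mc{B}}}$ indexed by pairs $(\tau_m,\tau_m')$, using the linearity $f_{W_m,\tau_k}^{\mc{B}}=\sum_{\tau_m\subset\tau_k}f_{W_m,\tau_m}^{\mc{B}}$ (which is immediate from the definitions by pulling the $\s_U$-sum outside, together with the identity of $\mc{G}_{\tau_k}$-indexed wave envelopes across the common coarser parent) and the analogous decomposition at scale $m-1$. After this expansion, both sides become sums of the same bilinear terms differing only in the index set of admissible pairs: the LHS restricts to pairs sharing a common $\tau_{m-1}$-parent, while the RHS ranges over pairs whose $\tau_k$-parents satisfy the $\sim$ relation.

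Next I would use property (3) of Lemma \ref{pruneprop} together with the fact that each cutoff $\s_U$ has Fourier support in the small rectangle $U_{\tau_{m-1},R}^*$ of dimensions $W_{m-1}^{-1}\times R^{-1}$ (much smaller than $\tau_m$) to conclude that $f_{W_m,\tau_m}^{\mc{B}}$ has Fourier support contained in a bounded dilate of $\tau_m$. Consequently each bilinear term $f_{W_m,\tau_m}^{\mc{B}}\,\overline{f_{W_m,\tau_m'}^{\mc{B}}}$ has Fourier support contained in a bounded dilate of $\tau_m-\tau_m'$.

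Then I would invoke the parabola geometry. For $\tau_m$ and $\tau_m'$ centered at positions $a_m,a_m'\in[-1,1]$ on $\P^1$, the set $\tau_m-\tau_m'$ is a parallelogram of dimensions $\sim (W_m/R)\times(W_m/R)^2$ centered at $(a_m-a_m')(1,a_m+a_m')$; since $|a_m+a_m'|\lesssim 1$, its distance to the origin is $\sim|a_m-a_m'|$. Convolution with $\widecheck{\eta}_{\le r}$ is Fourier multiplication by $\eta_{\le r}$, supported in $B_{2r}\subset B_{2W_k/R}$. Hence a bilinear term contributes nonzero to the convolution precisely when $|a_m-a_m'|\lesssim r+W_m/R\lesssim W_k/R$. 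For an appropriate choice of the implicit constant in the relation $\tau_k\sim\tau_k'$, this is exactly the condition $\tau_k(\tau_m)\sim\tau_k(\tau_m')$, so both sides reduce to the same sum over surviving pairs.

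The main obstacle is the bookkeeping: one must align the admissible pair sets on the LHS and the RHS after the Fourier filter and confirm they agree as subsets of $\{(\tau_m,\tau_m'):|a_m-a_m'|\lesssim W_k/R\}$. In particular, the constraints $k<m$ and $r\le W_k/R$ are used to ensure that the surviving pairs on the RHS (those in nearby $\tau_k$'s) are caught by the coarser grouping on the LHS, while the Fourier support thickening in property (3) of Lemma \ref{pruneprop} (the factor $N-m+2$) must be absorbed into the constant defining $\sim$. Once these constants are coordinated, the identity follows immediately from the curvature of $\P^1$, without any loss.
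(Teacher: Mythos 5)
There is a scale-ordering issue that makes the key step of your argument fail. In this paper the $W_k$ are \emph{decreasing} in $k$ (from $W_1\approx R$ down to $W_N\approx R^{1/2}$), so the caps $\tau_k$ shrink as $k$ grows. The paper's proof expands $f_{W_m,\tau_{m-1}}^{\mc{B}}=\sum_{\tau_k\subset\tau_{m-1}}f_{W_m,\tau_k}^{\mc{B}}$ at the $\tau_k$ scale, which only makes sense when the $\tau_k$ are \emph{finer} than $\tau_{m-1}$, i.e.\ $k\ge m$; the constraint ``$k<m$'' printed in the statement is a misprint (every subsequent application uses $k\ge m$, e.g.\ Lemma \ref{highdom} applies it with $\tau_k=\theta$, and Case 2 of Proposition \ref{mainprop} takes $W_k\le W_m$). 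You took $k<m$ literally and hence decomposed the other way, writing $f_{W_m,\tau_k}^{\mc{B}}=\sum_{\tau_m\subset\tau_k}f_{W_m,\tau_m}^{\mc{B}}$ and working with bilinear blocks at the $\tau_m$ scale. Under that reading the claimed identity is in fact false: expanding both sides at the $\tau_{m-1}$ level, the LHS retains only diagonal pairs $\tau_{m-1}=\tau_{m-1}'$, while the RHS retains all pairs with $\tau_k(\tau_{m-1})\sim\tau_k(\tau_{m-1}')$. With $k<m$ one can take $r\sim W_k/R\gg W_{m-1}/R$, and then an off-diagonal pair of \emph{adjacent} $\tau_{m-1}$'s has bilinear Fourier support within $O_\e(W_{m-1}/R)\ll r$ of the origin, so it survives $\eta_{\le r}$ and appears only on the RHS.

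Separately, the final ``bookkeeping'' sentence is doing all the work in your write-up and does not go through as stated. You assert that both sides reduce to ``the same sum over surviving pairs,'' but the LHS sums over pairs of $\tau_m$'s that share a $\tau_{m-1}$-parent, while the RHS sums over pairs whose $\tau_k$-parents are $\sim$-related; these are \emph{not} both characterized by the single Fourier-support condition $|a_m-a_m'|\lesssim r+W_m/R$, so equality of the two index sets is exactly what is in doubt, not a formality. The reason the paper's version is easy is that when one decomposes at the $\tau_k$ scale with $\tau_k\subset\tau_{m-1}$, both sides automatically range over the identical family of pairs $(\tau_k,\tau_k')$ inside each fixed $\tau_{m-1}$, and the only thing left to check is that $\supp\big(\widehat{f}_{W_m,\tau_k}^{\mc{B}}*\widehat{\overline{f_{W_m,\tau_k'}^{\mc{B}}}}\big)\subset (N-m+2)(\tau_k-\tau_k')$ (from \eqref{supp} of Lemma \ref{pruneprop}) misses $B_{2r}\subset B_{2W_k/R}$ unless $\mathrm{dist}(\tau_k,\tau_k')\lesssim_\e W_k/R$. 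Your Fourier-support mechanism is the right one, but it must be applied at the $\tau_k$ scale with $\tau_k\subset\tau_{m-1}$, not at the $\tau_m$ scale.
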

\begin{proof} By Plancherel's theorem, since $\overline{\widecheck{\eta}}_{\le r}={\widecheck{\eta}}_{\le r}$, we have for each ${\tau_m}$
\begin{align}
|f_{W_m,\tau_{m-1}}^{\mc{B}}|^2*\widecheck{\eta}_{\le r}(x)&= \int_{\R^2}|f_{W_m,\tau_{m-1}}^{\mc{B}}|^2(x-y)\widecheck{\eta}_{\le r}(y)dy \nonumber \\
&=  \int_{\R^2}\widehat{f}_{W_m,\tau_{m-1}}^{\mc{B}}*\widehat{\overline{f_{W_m,\tau_{m-1}}^{\mc{B}}}}(\xi)e^{-2\pi i x\cdot\xi}\eta_{\le r}(\xi)d\xi \nonumber \\
&=  \sum_{{\tau_k},{\tau_k}'\subset\tau_{m-1}}\int_{\R^2}e^{-2\pi i x\cdot\xi}\widehat{f}_{W_m,\tau_k}^{\mc{B}}*\widehat{\overline{f^{\mc{B}}_{W_m,\tau_k'}}}(\xi)\eta_{\le r}(\xi)d\xi .\label{dis}
\end{align}
The support of $\widehat{\overline{f_{W_m,\tau_k'}^{\mc{B}}}}(\xi)=\int e^{-2\pi ix\cdot\xi}\overline{f}_{W_m,\tau_k'}^{\mc{B}}(x)dx=\overline{\widehat{f}_{W_m,\tau_k'}^{\mc{B}}}(-\xi)$ is contained in $-(N-m+2)\tau_k'$ (where we used \eqref{supp} of Lemma \ref{pruneprop}). This means that the support of $\widehat{f}_{W_m,\tau_k}^{\mc{B}}*\widehat{\overline{f^{\mc{B}}_{W_m,\tau_k'}}}(\xi)$ is contained in $(N-m+2)(\tau_k-\tau_k')$. Since the support of $\eta_{\le r}(\xi)$ is contained in the ball of radius $2r$ (which is $\le 2W_k/R$), for each $\tau_k\subset\tau_{m-1}$, there are only $\lesssim_\e 1$ many adjacent $\tau_k'\subset\tau_{m-1}$ so that the integral in \eqref{dis} is nonzero. 
\end{proof}

Define the notation 
\[ f_{W_m,\theta}^{\mc{B}}=\sum_{U\not\in\mc{G}_{\tau_{m-1}}}\s_Uf_{W_m,\theta}\]
where $\theta\subset\tau_{m-1}$. 
\begin{lemma}[High lemma I]\label{high} For any $W_m,W_k$, 
\[ \int |\sum_{\theta}|f_{W_m,\theta}^{\mc{B}}|^2*\widecheck{\eta}_{\ge W_k^{-1}}|^2\lesssim_\e \sum_{\tau_k}\int |\sum_{\theta\subset\tau_k}|f_{W_m,\theta}^{\mc{B}}|^2*\widecheck{\eta}_{\ge W_k^{-1}}|^2. \]
\end{lemma}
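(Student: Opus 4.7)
The plan is to prove the inequality as an almost-orthogonality statement on the Fourier side. Writing $g_{\tau_k} := \sum_{\theta \subset \tau_k} |f_{W_m,\theta}^{\mc{B}}|^2 * \widecheck{\eta}_{\ge W_k^{-1}}$, both sides of the lemma can be rewritten via Plancherel's theorem in terms of $\widehat{g_{\tau_k}}$; the goal is then to show that the Fourier supports of the $\widehat{g_{\tau_k}}$ have overlap multiplicity $\lesssim_\e 1$, after which a pointwise Cauchy--Schwarz in frequency immediately yields
\[ \Big|\sum_{\tau_k} \widehat{g_{\tau_k}}(\xi)\Big|^2 \lesssim_\e \sum_{\tau_k} |\widehat{g_{\tau_k}}(\xi)|^2 \]
and integration gives the lemma.

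The first substep is to identify the Fourier support of $\widehat{g_{\tau_k}}$. Each $f_{W_m,\theta}^{\mc{B}}=\sum_{U\notin\mc{G}_{\tau_{m-1}}}\s_U f_{W_m,\theta}$ is a sum of pointwise products of $f_{W_m,\theta}$ with the partition-of-unity cutoffs $\s_U$, whose Fourier supports lie in the dual rectangle of $U_{\tau_{m-1},R}$, of dimensions $\sim W_{m-1}^{-1}\times R^{-1}$. Because this dual rectangle has both sides $\le R^{-1/2}$, convolving with $\widehat{f_{W_m,\theta}}$ (supported in $(N-m+2)\theta$ by property \eqref{supp} of Lemma \ref{pruneprop}) keeps the support inside $C_\e\theta$. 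Hence $\widehat{|f_{W_m,\theta}^{\mc{B}}|^2}=\widehat{f_{W_m,\theta}^{\mc{B}}}*\widehat{\overline{f_{W_m,\theta}^{\mc{B}}}}$ is supported in $C_\e(\theta-\theta)$, which is a rectangle of dimensions $\sim R^{-1/2}\times R^{-1}$ through the origin with long side tangent to $\P^1$ at the center of $\theta$. Multiplying by $\eta_{\ge W_k^{-1}}$, $\widehat{g_{\tau_k}}$ is supported in
\[ A_{\tau_k} := \bigcup_{\theta\subset\tau_k} C_\e(\theta-\theta) \cap \{W_k^{-1}\lesssim |\xi|\lesssim 1\}. \]

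The heart of the argument is the geometric claim that $\{A_{\tau_k}\}$ has overlap multiplicity $\lesssim_\e 1$. Fix $\xi$ in the annulus $W_k^{-1}\lesssim|\xi|\lesssim 1$. For $\xi \in C_\e(\theta - \theta)$, the component of $\xi$ normal to the tangent direction of $\P^1$ at the center of $\theta$ is at most $\lesssim_\e R^{-1}$; dividing by $|\xi|$ forces the tangent direction of $\theta$ to agree with the direction of $\xi$ (mod $\pi$) up to angular error $\lesssim_\e R^{-1}/|\xi|\lesssim_\e W_k/R$. Since the canonical blocks $\tau_k$ partition $\mc{N}_{W_k^2/R^2}(\P^1)$ into pieces whose tangent directions sweep out intervals of length $\sim W_k/R$, only $\lesssim_\e 1$ many $\tau_k$ can contain a $\theta$ with tangent direction in this allowed range. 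Thus at each $\xi$ only $\lesssim_\e 1$ of the $\widehat{g_{\tau_k}}(\xi)$ are nonzero, which is the desired overlap bound.

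The main obstacle is the careful bookkeeping of the factor $(N-m+2)\sim\e^{-1}$ entering from property \eqref{supp} of Lemma \ref{pruneprop}: it enlarges the rectangles $\theta-\theta$ and correspondingly inflates the allowable angular window. Since the conclusion of the lemma permits $\e$-dependent constants, this causes no real trouble, but one must verify that $R^{-1}/|\xi|$ times this inflation is still $\lesssim_\e W_k/R$, which is automatic from $|\xi|\gtrsim W_k^{-1}$. Everything else is a direct application of Plancherel's theorem and pointwise Cauchy--Schwarz.
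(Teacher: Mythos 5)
Your proof is correct and takes essentially the same approach as the paper: Plancherel to pass to the Fourier side, then the observation that outside the ball of radius $W_k^{-1}$ the Fourier supports of the $\widehat{|f_{W_m,\theta}^{\mc{B}}|^2}$ (in $\e$-dependent dilations of $\theta-\theta$) cluster into groups corresponding to $\lesssim_\e 1$ many $\tau_k$, followed by Cauchy--Schwarz. You spell out the angular-window calculation a bit more explicitly than the paper does, but the geometric content is identical.
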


\begin{proof} By Plancherel's theorem, we have
\begin{align*}
\int |\sum_{\theta}|f_{W_m,\theta}^{\mc{B}}|^2*\widecheck{\eta}_{\ge W_k^{-1}}|^2 &=    \int_{W_k^{-1}<|\xi|}|\sum_\theta \widehat{|f_{W_m,\theta}^{\mc{B}}|^2}(\xi)\eta_{\ge W_k^{-1}}(\xi)|^2 \\
    &=  \int_{W_k^{-1}<|\xi|}|\sum_{\tau_k}\sum_{\theta\subset\tau_k} \widehat{|f_{W_m,\theta}^{\mc{B}}|^2}(\xi)\eta_{\ge W_k^{-1}}(\xi)|^2   .
\end{align*}
The maximum number of nonzero summands corresponding to each $\tau_k$ in the final integrand is $\lesssim_\e 1$. This is because the maximum overlap of the sets $[(N+k-2) (\theta-\theta)]\cap\supp\eta_{\ge W_k^{-1}}$ happens on the circle of radius $W_k^{-1}$ where it is $\lesssim (N-k+2)W_k/R^{1/2}\sim_\e W_k/R^{1/2}$, coming from adjacent $\theta$. Therefore, by Cauchy-Schwarz, 
\begin{align*}
\int_{W_k^{-1}<|\xi|}|\sum_{\tau_k}\sum_{\theta\subset\tau_k} \widehat{|f_{W_m,\theta}^{\mc{B}}|^2}(\xi)\eta_{\ge W_k^{-1}}(\xi)|^2&\lesssim_\e\sum_{\tau_k}\int_{W_k^{-1}<|\xi|}|\sum_{\theta\subset\tau_k} \widehat{|f_{W_m,\theta}^{\mc{B}}|^2}(\xi)\eta_{\ge W_k^{-1}}(\xi)|^2\\
&\sim_\e \sum_{\tau_k}\int|\sum_{\theta\subset\tau_k} |f_{W_m,\theta}^{\mc{B}}|^2*\widecheck{\eta}_{\ge W_k^{-1}}|^2. 
\end{align*}
\end{proof}

\begin{lemma}[High lemma II]\label{high2} For any $W_k,W_m$,
\[ \int|\sum_{\tau_k}|f_{W_m,\tau_k}^{\mc{B}}|^2*\widecheck{\eta}_{\ge W_k/R}|^2\lesssim_\e  \sum_{\tau_k}\int|f_{W_m,\tau_{k}}^{\mc{B}}|^4.\]
\end{lemma}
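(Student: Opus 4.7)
The plan is to mimic the structure of the proof of High Lemma I, moving the analysis to the frequency side via Plancherel and exploiting bounded overlap of the Fourier supports on the high-frequency set $\{|\xi| \gtrsim W_k/R\}$. The new twist compared to Lemma \ref{high} is that the desired right-hand side no longer contains the convolution, which forces us to absorb $\widecheck{\eta}_{\ge W_k/R}$ via a trivial $L^2$ bound at the end.

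First I would apply Plancherel to rewrite the left-hand side as
\[
\int_{|\xi| \ge W_k/R} \Bigl| \sum_{\tau_k} \widehat{|f_{W_m,\tau_k}^{\mc{B}}|^2}(\xi)\, \eta_{\ge W_k/R}(\xi) \Bigr|^2 d\xi.
\]
By Lemma \ref{pruneprop}\eqref{supp}, each $f_{W_m,\tau_k}^{\mc{B}}$ has Fourier support in $\bigcup_{\theta \subset \tau_k} (N-m+2)\theta \subset C_\e \tau_k$, so $\widehat{|f_{W_m,\tau_k}^{\mc{B}}|^2}$ is supported in $C_\e(\tau_k - \tau_k)$, a rectangle of dimensions $\sim W_k/R \times (W_k/R)^2$ centered at the origin, oriented along the normal to $\P^1$ at $\tau_k$.

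The key geometric observation is that on the set $\{|\xi| \ge W_k/R\}$ these rectangles are finitely overlapping: the angular width of $C_\e(\tau_k - \tau_k)$ as seen from the origin at radius $r \ge W_k/R$ is at most $(W_k/R)^2 / r \lesssim W_k/R$, while the orientations of the rectangles (as $\tau_k$ ranges over the $\sim R/W_k$ pieces of the partition) are $\sim W_k/R$-separated in angle. Hence each $\xi$ with $|\xi| \ge W_k/R$ lies in $\lesssim_\e 1$ of the supports $C_\e(\tau_k - \tau_k)$. By Cauchy--Schwarz in $\tau_k$ and then Plancherel,
\[
\int_{|\xi| \ge W_k/R} \Bigl| \sum_{\tau_k} \widehat{|f_{W_m,\tau_k}^{\mc{B}}|^2}\, \eta_{\ge W_k/R} \Bigr|^2
\lesssim_\e \sum_{\tau_k} \int \bigl| |f_{W_m,\tau_k}^{\mc{B}}|^2 * \widecheck{\eta}_{\ge W_k/R} \bigr|^2.
\]

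Finally, since $|\eta_{\ge W_k/R}| \le 1$, Plancherel gives $\|g * \widecheck{\eta}_{\ge W_k/R}\|_2 \le \|g\|_2$ for any $g \in L^2$; applying this with $g = |f_{W_m,\tau_k}^{\mc{B}}|^2$ yields
\[
\sum_{\tau_k} \int \bigl| |f_{W_m,\tau_k}^{\mc{B}}|^2 * \widecheck{\eta}_{\ge W_k/R} \bigr|^2
\le \sum_{\tau_k} \int |f_{W_m,\tau_k}^{\mc{B}}|^4,
\]
which is the desired bound. The only delicate step is the overlap count in the middle paragraph; everything else is Plancherel together with the trivial $L^\infty$ bound on $\eta_{\ge W_k/R}$. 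I expect the geometric overlap verification to be the main point to write carefully, since it parallels but is slightly different from the circle-of-radius-$W_k^{-1}$ computation in the proof of Lemma \ref{high}.
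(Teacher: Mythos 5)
Your proof is correct and follows the same Plancherel/overlap-count/Cauchy--Schwarz route the paper gestures at (the paper's proof of this lemma is a one-line reference back to Lemma~\ref{high} noting the Fourier support of each $|f_{W_m,\tau_k}^{\mc{B}}|^2$ lies in a $C_\e$-dilate of $\tau_k-\tau_k$, with $\lesssim_\e 1$ overlap outside a ball of radius $W_k/R$). You have correctly identified and filled in the one extra step the paper leaves implicit: after Cauchy--Schwarz one must still discard the multiplier $\eta_{\ge W_k/R}$, which follows immediately from $|\eta_{\ge W_k/R}|\le 1$ and Plancherel.
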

\begin{proof} This follows by the same reasoning as in the proof of Lemma \ref{high}, except each $|f_{W_m,\tau_k}^{\mc{B}}|^2$ has Fourier support in $(N-k+2)(\tau_k-\tau_k)$, which overlap at most by $\lesssim_\e 1$ outside a ball of a radius $W_k/R$. 
\end{proof}

\begin{lemma}[High-domination of bad parts]\label{highdom} Let $R$ be sufficiently large depending on $\e$. If $m\in\{2,\ldots,N\}$ and $ \a\lesssim \e^{-1}C_0^{m_0}R^{C_0\e}|f_{W_m}^{\mc{B}}(x)|$,
then
\[ \sum_{\tau_{m-1}}|f_{W_m,\tau_{m-1}}^{\mc{B}}|^2*w_{R/W_{m-1}}(x)\le 2|\sum_{\tau_{m-1}}|f_{W_m,\tau_{m-1}}^{\mc{B}}|^2*w_{R/W_{m-1}}*\widecheck{\eta}_{> W_{m-1}^{-1}}(x)|. \]
\end{lemma}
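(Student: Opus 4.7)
I would prove Lemma \ref{highdom} by contradiction. Set $g(x) := \sum_{\tau_{m-1}} |f_{W_m,\tau_{m-1}}^{\mc{B}}|^2 * w_{R/W_{m-1}}(x)$ and suppose $g(x) > 2|g*\widecheck{\eta}_{> W_{m-1}^{-1}}(x)|$. By property \eqref{supp} of Lemma \ref{pruneprop}, each $\widehat{|f_{W_m,\tau_{m-1}}^{\mc{B}}|^2}$ sits in a small origin-centered set of diameter $\lesssim_\e W_{m-1}/R$, and thus (for $\p$ in Definition \ref{aux} chosen with a slightly enlarged $\equiv 1$ region if needed) $\widehat g$ lies where $\p\equiv 1$. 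Hence $g = g*\widecheck{\eta}_{\le W_{m-1}^{-1}} + g*\widecheck{\eta}_{> W_{m-1}^{-1}}$, and since $g\ge 0$ is real, the contradiction hypothesis forces $|g*\widecheck{\eta}_{\le W_{m-1}^{-1}}(x)| \ge g(x)/2$.

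Next I apply the hypothesis $\alpha \lesssim \e^{-1} C_0^{m_0} R^{C_0\e} |f_{W_m}^{\mc{B}}(x)|$. Writing $f_{W_m}^{\mc{B}}= \sum_{\tau_{m-1}} f_{W_m,\tau_{m-1}}^{\mc{B}}$, Cauchy--Schwarz (paying $\#\tau_{m-1}$) followed by Lemma \ref{locconst} (applicable to $f_{W_m,\tau_{m-1}}^{\mc{B}}$ since its Fourier transform is supported in an enlargement of $\tau_{m-1}$) gives
\[ \alpha^2 \lesssim_\e \e^{-2}C_0^{2m_0}R^{2C_0\e}(\#\tau_{m-1})\,g(x) \lesssim_\e \e^{-2}C_0^{2m_0}R^{2C_0\e}(\#\tau_{m-1})\,|g*\widecheck{\eta}_{\le W_{m-1}^{-1}}(x)|. \]

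The core step is to prove the matching upper bound
\[ |g*\widecheck{\eta}_{\le W_{m-1}^{-1}}(x)| \lesssim_\e \frac{\alpha^2}{(\log R)\,C_0^{2m_0}R^{2C_0\e}(\#\tau_{m-1})}, \]
which combined with the preceding display yields $\alpha^2 \lesssim_\e \alpha^2/(\e^2\log R)$, a contradiction for $R$ large depending on $\e$. To establish this upper bound per $\tau_{m-1}$, I would combine: (a) Cauchy--Schwarz on the partition of unity $\sum_U\s_U\le 1$, giving $|f_{W_m,\tau_{m-1}}^{\mc{B}}|^2 \le \sum_{U\notin\mc{G}_{\tau_{m-1}}}\s_U|f_{W_m,\tau_{m-1}}|^2$; (b) local $L^2$-orthogonality of $\{f_{W_m,\theta}\}_{\theta\subset\tau_{m-1}}$ at the wave-envelope scale $W_{m-1}\ge R^{1/2}$; and (c) the locally constant property of each $|f_{W_m,\theta}|^2$ along the long direction of $\theta^*$ (scale $R$), which aligns with $U$'s long side. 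Together these convert the isotropic $W_{m-1}$-scale ball averages produced by $\widecheck{\eta}_{\le W_{m-1}^{-1}}$ into the wave envelope averages $\fint_U\sum_{\theta\subset\tau_{m-1}}|f_{W_m,\theta}|^2$, which the defining property of $\mc{G}_{\tau_{m-1}}$ bounds by $\alpha^2/[(\log R)C_0^{2m_0}R^{2C_0\e}(\#\tau_{m-1})^2]$. Summing over the $O(1)$ nearby wave envelopes and over $\tau_{m-1}$ then yields the claimed bound.

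The main obstacle is the conversion in step (c). The cutoff $\widecheck{\eta}_{\le W_{m-1}^{-1}}$ averages isotropically at scale $W_{m-1}$, whereas the pruning condition controls only averages over the anisotropic $W_{m-1}\times R$ wave envelopes; naively bounding via $\|K\|_\infty$ for the convolution kernel $K = w_{R/W_{m-1}}*\widecheck{\eta}_{\le W_{m-1}^{-1}}$ loses a factor $R/W_{m-1}$ that is not absorbed by $\log R$ when $W_{m-1}\approx R^{1/2}$. The remedy is to exploit the approximate constancy of $|f_{W_m,\theta}|^2$ along the long direction of $\theta^*$ over a length $R$, which lets one extend the ball average at scale $W_{m-1}$ to the full wave envelope $U$ without paying the anisotropic penalty.
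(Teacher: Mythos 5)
Your proof is correct and follows essentially the same strategy as the paper: argue by contradiction, get a lower bound on the low-frequency part via Cauchy--Schwarz across $\tau_{m-1}$ and the locally constant property, then derive a matching upper bound by expanding $f^{\mc{B}}_{W_m,\tau_{m-1}}$ in the $\s_U$ partition of unity, invoking local $L^2$-orthogonality (the paper's Lemma \ref{low}), and using the locally constant property of $|f_{W_m,\theta}|^2$ along $\theta^*$ to upgrade the $W_{m-1}$-ball average produced by $\widecheck{\eta}_{\le W_{m-1}^{-1}}$ to the anisotropic wave-envelope average that the pruning condition controls. Your discussion of the ``main obstacle'' correctly isolates the step (the $\widecheck{\rho}_\theta$ convolution in the paper's proof) where the naive $R/W_{m-1}$ loss is avoided, and the arithmetic producing the $\a^2\lesssim_\e\a^2/(\e^2\log R)$ contradiction matches the paper.
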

\begin{proof} Recall that $f_{W_m}^{\mc{B}}=f_{W_m}-f_{W_{m-1}}$. Using the definition of the pruning process, we may also write $f_{W_m}^{\mc{B}}=\sum_{\tau_{m-1}}f_{W_m,\tau_{m-1}}^{\mc{B}}$ where  $f_{W_m,\tau_{m-1}}^{\mc{B}}:=\sum_{U\not\in\mc{G}_{\tau_{m-1}}}\s_U\sum_{\theta\subset\tau_{m-1}}f_{W_m,\theta}$. By Cauchy-Schwarz, we have 
\begin{align}
\nonumber \a&\lesssim \e^{-1}C_0^{m_0}R^{C_0\e}(\#\tau_{m-1})^{1/2}(\sum_{\tau_{m-1}}|f_{W_m,\tau_{m-1}}^{\mc{B}}|^2(x))^{1/2}\\
\label{CS} &\lesssim \e^{-1}C_0^{m_0}R^{C_0\e}(\#\tau_{m-1})^{1/2}(\sum_{\tau_{m-1}}|f_{W_m,\tau_{m-1}}^{\mc{B}}|^2*w_{R/W_{m-1}}(x))^{1/2}  \end{align} 
where we used Lemma \ref{locconst} in the last inequality. Now suppose that 
\begin{equation}\label{contra2} \sum_{\tau_{m-1}}|f_{W_m,\tau_{m-1}}^{\mc{B}}|^2*w_{R/W_{m-1}}(x)\le 2\big|\sum_{\tau_{m-1}}|f_{W_m,\tau_{m-1}}^{\mc{B}}|^2*w_{R/W_{m-1}}*\widecheck{\eta}_{\le W_{m-1}^{-1}}(x)\big|. \end{equation} 
Use Lemma \ref{low} with $r=W_{m-1}^{-1}$ (noting that $W_{m-1}^{-1}\le R^{-\frac{1}{2}}$) to get \[\sum_{\tau_{m-1}}|f_{W_m,\tau_{m-1}}^{\mc{B}}|^2*\widecheck{\eta}_{\le W_{m-1}^{-1}}(x)=\sum_{\theta}\sum_{\theta'\sim\theta} (f_{W_m,\theta}^{\mc{B}}\overline{f}_{W_m,\theta}^{\mc{B}})*\widecheck{\eta}_{\le W_{m-1}^{-1}}(x). \]
Then by the proof of Lemma \ref{locconst}, 
\[\sum_\theta |f_{W_m,\theta}^{\mc{B}}|^2*|\widecheck{\eta}_{\le W_{m-1}^{-1}}|(y)\lesssim_\e \sum_\theta |f_{W_m,\theta}^{\mc{B}}|^2*|\widecheck{\rho}_{\theta}|*|\widecheck{\eta}_{\le W_{m-1}^{-1}}|(y)  .\]
For each $\theta\subset\tau_{m-1}$, 
\begin{align*}
|f_{W_m,\theta}^{\mc{B}}|^2*|\widecheck{\rho}_\theta|&*|\widecheck{\eta}_{\le W_{m-1}^{-1}}|(y)\lesssim \int|\sum_{U\not\in\mc{G}_{\tau_{m-1}}} \s_U(y-z)f_{W_m,\theta}|^2(y-z)|\widecheck{\rho}_\theta|*|\widecheck{\eta}_{\le W_{m-1}^{-1}}|(y-z)dz\\
    (\text{Cauchy-Schwarz})\qquad \qquad   &\le \int\sum_{U\not\in\mc{G}_{\tau_{m-1}}} \s_U(z)|f_{W_m,\theta}|^2(z)\sum_{U'\not\in\mc{G}_{\tau_{m-1}}}\s_{U'}(z)|\widecheck{\rho}_\theta|*|\widecheck{\eta}_{\le W_{m-1}^{-1}}|(y-z)dz \\
(\sum_{U'\not\in\mc{G}_{\tau_{m-1}}}\s_{U'}\le 1)\qquad \qquad   &\le \int\sum_{U\not\in\mc{G}_{\tau_{m-1}}} \s_U(z)|f_{W_m,\theta}|^2(z)|\widecheck{\rho}_\theta|*|\widecheck{\eta}_{\le W_{m-1}^{-1}}|(y-z)dz \\
&\le \sum_{U\not\in\mc{G}_{\tau_{m-1}}}\|\s_U^{\frac{1}{2}}(\cdot)|\widecheck{\rho}_\theta|*|\widecheck{\eta}_{\le W_{m-1}^{-1}}|(y-\cdot)\|_{\infty}\int \s_U^{\frac{1}{2}}(z)|f_{W_m,\theta}(z)|^2dz. 
\end{align*}
Sum the above inequality over $\theta\subset\tau_{m-1}$ and use the condition $U\not\in \mc{G}_{\tau_{m-1}}$ to obtain
\begin{align*}
\sum_{\theta\subset\tau_{m-1}}|f_{W_m,\theta}^{\mc{B}}|^2*|\widecheck{\rho}_\theta|*|\widecheck{\eta}_{\le W_{m-1}^{-1}}|(y)    &\le \sum_{U\not\in\mc{G}_{\tau_{m-1}}}\|\s_U^{\frac{1}{2}}(\cdot)|\widecheck{\rho}_\theta|*|\widecheck{\eta}_{\le W_{m-1}^{-1}}|(y-\cdot)\|_{\infty}\frac{1}{(\log R)C_0^{m_0}R^{C_0\e}}\frac{\a^2}{(\#\tau_{m-1})^2}\\
&\lesssim \frac{1}{(\log R)C_0^{m_0}R^{C_0\e}}\frac{\a^2}{(\#\tau_{m-1})^2}. 
\end{align*}
Conclude in this case that \[\big|\sum_{\tau_m}|f_{W_m,\tau_m}^{\mc{B}}|^2*w_{R/W_m}(y)*\widecheck{\eta}_{\le W_{m-1}^{-1}}(y)\big|\lesssim_\e \frac{1}{(\log R)C_0^{m_0}R^{C_0\e}}\frac{\a^2}{\#\tau_{m-1}}. \]
Combined with \eqref{CS} and \eqref{contra2}, this is a contradiction, for $R$ sufficiently large depending on $\e$. Conclude that \eqref{contra2} is false and we have the desired high-frequency dominance. 
\end{proof}

\section{Proof of Theorem \ref{mainP} \label{mainPpf}}

By Lemma \ref{essent} and \eqref{decomp} of Lemma \ref{pruneprop}, 
\begin{equation}\label{sufficient} |U_\a|\le |\{x\in U_\a:|f(x)|\lesssim \e^{-1}|f_{W_1}(x)|\}|+\sum_{m=2}^{N}|\{x\in U_\a:|f(x)|\lesssim\e^{-1} |f_{W_m}^{\mc{B}}(x)|\}| .\end{equation} 
Since there are $\lesssim \e^{-1}$ many sets on the right hand side, it suffices to prove Theorem \ref{mainP} with $U_\a$ replaced by each of the sets on the right hand side above. The following proposition treats the easier case of $m=1$. 

\begin{prop}[$m=1$] \label{m0}
\[ \a^4|\{x\in U_\a:|f(x)|\lesssim \e^{-1}|f_{W_1}(x)|\}|\lesssim (\log R)^2C_0^{4m_0}R^{4C_0\e}\sum_{U\in\mc{G}_{\tau_1}}|U|\left(\fint_U\sum_{\theta}|f_\theta|^2\right)^2\]
where $\tau_1$ is $[-1,1]^2$. 
\end{prop}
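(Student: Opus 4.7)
The plan is to directly exploit the defining property of $\mathcal{G}_{\tau_1}$, together with a containment argument showing that $S := \{x \in U_\a : |f(x)| \lesssim \e^{-1}|f_{W_1}(x)|\}$ is essentially contained in $\bigcup_{U \in \mathcal{G}_{\tau_1}} U$. The key simplification is that since $\tau_1 = [-1,1]^2$ covers the entire frequency neighborhood, we have $\#\tau_1 = 1$, so the membership criterion for $U \in \mathcal{G}_{\tau_1}$ collapses to
\[
(\log R)\, C_0^{2m_0} R^{2C_0\e} \fint_U \sum_\theta |f_{W_2,\theta}|^2 \ge \a^2.
\]
Combining this with $|f_{W_2,\theta}| \le |f_\theta|$ from property \eqref{ineqprune} of Lemma \ref{pruneprop}, I obtain for every $U \in \mathcal{G}_{\tau_1}$ the bound
\[
\a^4 \le (\log R)^2 C_0^{4m_0} R^{4C_0\e} \Bigl( \fint_U \sum_\theta |f_\theta|^2 \Bigr)^2.
\]
Multiplying by $|U|$ and summing over $U \in \mathcal{G}_{\tau_1}$ produces the right-hand side of the proposition, with $\sum_{U \in \mathcal{G}_{\tau_1}} |U|$ multiplying $\a^4$ on the left.

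It therefore suffices to show $|S| \lesssim R^{O(\e)} \sum_{U \in \mathcal{G}_{\tau_1}} |U|$, since any extra $R^{O(\e)}$ factor is absorbed into $R^{4C_0\e}$ by enlarging $C_0$. I would establish this by factoring $f_{W_1}(x) = \s_{\mathrm{good}}(x) \cdot g(x)$ with $g := \sum_\theta f_{W_2,\theta}$ and $\s_{\mathrm{good}} := \sum_{U \in \mathcal{G}_{\tau_1}} \s_U \le 1$, exploiting that $f_{W_2,\theta}$ does not depend on $U$ so the partition-of-unity factor pulls out of the sum over $\theta$. Each $\s_U$ decays rapidly off $U$ in the rescaled coordinates used in its definition, so at any point $x$ lying at rescaled distance $\ge R^\e$ from every good envelope, $\s_{\mathrm{good}}(x) \lesssim R^{-N}$ for any prescribed $N$ (provided the decay exponent in the definition of $\s_m$ is chosen large enough). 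Combined with the a priori bound $\|g\|_\infty \le \mathrm{poly}(R)$ valid for Schwartz $f$, this forces $|f_{W_1}(x)| \ll \e\a$, contradicting $x \in S$. Consequently $S$ is contained in the $R^\e$-dilation of $\bigcup_{U \in \mathcal{G}_{\tau_1}} U$, and the fact that the translates $U \| U_{\tau_1, R}$ tile $\R^2$ disjointly gives $|S| \lesssim R^{2\e} \sum_{U \in \mathcal{G}_{\tau_1}} |U|$.

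The main obstacle is making the containment step quantitatively rigorous: one must ensure that the rapid-decay tails of $\s_U$ cannot conspire with the polynomial-in-$R$ size of $g$ to keep $|f_{W_1}|$ above $\e\a$ at points genuinely far from all good envelopes. This is resolved by choosing the decay exponent $n$ in the definition of $\s_m$ sufficiently large in advance (depending on $\e$ and on the fixed constants in the a priori bound for $\|g\|_\infty$), so that the effective buffer of size $R^\e$ in rescaled coordinates is ample. The resulting $R^{2\e}$ loss from enlarging envelopes is harmless since it is absorbed into the final $R^{4C_0\e}$ factor in the proposition.
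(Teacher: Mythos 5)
The first part of your argument is correct and matches the paper: since $\#\tau_1 = 1$, the defining condition for $U\in\mc{G}_{\tau_1}$ is $(\log R)C_0^{2m_0}R^{2C_0\e}\fint_U\sum_\theta|f_{W_2,\theta}|^2\ge\a^2$, and using $|f_{W_2,\theta}|\le|f_\theta|$ from Lemma~\ref{pruneprop}\eqref{ineqprune} you correctly get $\a^4\le(\log R)^2 C_0^{4m_0}R^{4C_0\e}\bigl(\fint_U\sum_\theta|f_\theta|^2\bigr)^2$ for each $U\in\mc{G}_{\tau_1}$. Reducing the proposition to the measure bound $|S|\lesssim R^{O(\e)}\sum_{U\in\mc{G}_{\tau_1}}|U|$ is also a valid strategy.

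The gap is in the containment argument for this measure bound. You invoke an ``a priori bound $\|g\|_\infty\le\mathrm{poly}(R)$ valid for Schwartz $f$,'' but no such bound exists: the proposition must hold for arbitrary Schwartz $f$ with no size normalization. Even after normalizing using the homogeneity of the inequality under $(f,\a)\mapsto(cf,c\a)$, the buffer needed to force $\s_{\mathrm{good}}(x)\|g\|_\infty\ll\e\a$ depends on $\a/\|g\|_\infty$, and $\a>0$ is a free parameter that can be taken to zero while $\|g\|_\infty$ stays fixed. On $S$ you only know $\|g\|_\infty\gtrsim\e\a$, which gives $\a/\|g\|_\infty\lesssim\e^{-1}$ --- no useful lower bound. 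So no fixed decay exponent for $\s_m$ gives a uniform $R^\e$ buffer, and the claimed containment of $S$ in an $R^\e$-dilation of $\bigcup_{U\in\mc{G}_{\tau_1}}U$ cannot be established this way.

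The missing observation is simpler and already available: $U_\a$ is defined in \textsection\ref{tools} as $U_\a=\{x\in B_R:|f(x)|\ge\a\}$ for a fixed $R$-ball $B_R$ (this localization is justified by Lemma~\ref{local}). Since $\tau_1=[-1,1]^2$ corresponds to $W_1\sim R$, the envelope $U_{\tau_1,R}$ is an $R\times R$ cube, so $|S|\le|B_R|\sim|U|$ for any single $U\|U_{\tau_1,R}$. It then only remains to note $\mc{G}_{\tau_1}\ne\emptyset$ whenever $S\ne\emptyset$: on $S$ one has $0<\a\lesssim\e^{-1}|f_{W_1}(x)|$, while $f_{W_1}=(\sum_{U\in\mc{G}_{\tau_1}}\s_U)f_{W_2}$ vanishes identically if $\mc{G}_{\tau_1}=\emptyset$. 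Choosing one such $U\in\mc{G}_{\tau_1}$ and combining with your pointwise bound on $\a^4$ gives the proposition; this is the paper's argument, and no geometric containment of $S$ in $\bigcup_{U\in\mc{G}_{\tau_1}}U$ is needed.
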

\begin{proof} Recall that $U_\a\subset B_R$ for some fixed ball $B_R$. By definition of $f_{W_1}$ and assuming $|U_\a|>0$, there exists some $R$-cube $U\in\mc{G}_{\tau_1}$. Then using the definition of $\mc{G}_{\tau_1}$, we have
\[ \a^4|\{x\in U_\a:|f(x)|\lesssim\e^{-1}|f_{W_1}(x)|\}|\lesssim \Big((\log R)C_0^{2m_0}R^{2C_0\e} |U|^{-1}\int\sum_{\theta}|f_{W_1,\theta}|^2\s_U^{\frac{1}{2}}\Big)^2R^2. \]
By \eqref{ineqprune} of Lemma \ref{pruneprop} and noting that $\s_U^{\frac{1}{2}}\lesssim W_U$, this finishes the proof. 
\end{proof}

To bound the parts of $U_\a$ corresponding to $m=2,\ldots,N$, we first prove a \emph{broad} (or bilinear) version of Theorem \ref{mainP} in \textsection\ref{broad} and then reduce to the broad case in \textsection\ref{broadnarrow}. 

\subsection{The broad part of $U_\a$ \label{broad}}
The notation $\ell(\tau)=s$ means that $\tau$ is an approximate $s\times s^2$ block which is part of a partition of $\mc{N}_{s^2}(\P^1)$. For $m$, $2\le m\le N$ and two non-adjacent blocks $\tau,\tau'$ satisfying $\ell(\tau)=\ell(\tau')=R^{-\e}$, define the $m$th broad version of $U_\a$ to be
\begin{equation}\label{broad2} \text{Br}_\a^m(\tau,\tau')=\{x\in\R^2:\a\lesssim \e^{-1}|f_{W_m,\tau}^{\mc{B}}(x)f_{W_m,\tau'}^{\mc{B}}(x)|^{1/2}
\}. \end{equation}

We will use the following version of a local bilinear restriction theorem, which follows from a standard C\'{o}rdoba argument \cite{cordoba}. This version of the theorem is proved as Theorem 16 of \cite{lvlsets}.  
\begin{theorem}\label{bilrest} Let $S\ge 4$, $\frac{1}{2}\ge D\ge S^{-1/2}$, and $X\subset\R^2$ be any Lebesgue measurable set. Suppose that $\tau$ and $\tau'$ are $D$-separated subsets of $\mc{N}_{S^{-1}}(\P^1)$. Then for a partition $\{\theta_S\}$ of $\mc{N}_{S^{-1}}(\P^1)$ into $\sim S^{-1/2}\times S^{-1}$-blocks, we have
\[\int_{X}|f_{\tau}|^2(x)|f_{\tau'}|^2(x)dx\lesssim D^{-2}
\int_{\mc{N}_{S^{1/2}}(X)}|\sum_{\theta_S}|f_{\theta_S}|^2*w_{S^{1/2}}(x)|^2dx .\]
\end{theorem}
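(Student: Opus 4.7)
The plan is to execute a Córdoba-style bilinear argument on the Fourier side. The two inputs that drive the proof are the $D$-separation of $\tau$ and $\tau'$, which controls the multiplicity of Fourier-support overlaps among bilinear products, and a smooth localization to a thickening of $X$ compatible with the natural scale $S^{-1/2}$ of those supports.

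First I would dominate $\chi_X$ by a smooth weight $\eta$ that majorizes $\chi_X$, is essentially supported on $\mc{N}_{S^{1/2}}(X)$, and whose Fourier transform is supported in $B_{S^{-1/2}}(0)$; such $\eta$ is obtained by convolving $\chi_X$ with a Schwartz bump adapted to the ball of radius $S^{1/2}$. Then $\int_X |f_\tau|^2 |f_{\tau'}|^2 \lesssim \int |f_\tau \bar f_{\tau'}|^2 \eta$, and after writing $g = f_\tau \bar f_{\tau'} = \sum_{\theta \subset \tau,\, \sigma \subset \tau'} f_\theta \bar f_\sigma =: \sum_{(\theta,\sigma)} g_{\theta,\sigma}$ I would analyze the cross-terms of $|g|^2 = \sum g_{\theta,\sigma}\,\bar g_{\theta',\sigma'}$ on the Fourier side.

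The heart of the argument is a multiplicity count. The Fourier support of $g_{\theta,\sigma}\,\bar g_{\theta',\sigma'}$ lies in $(\theta - \theta') + (\sigma' - \sigma)$, and by Plancherel its contribution to $\int g_{\theta,\sigma}\,\bar g_{\theta',\sigma'}\,\eta$ vanishes unless this set meets $B_{S^{-1/2}}(0)$. The vector $\theta - \theta'$ is aligned with the tangent of $\P^1$ at $\tau$ and $\sigma' - \sigma$ with that at $\tau'$; these tangent directions differ by angle $\sim D$, so the constraint forces $|\theta - \theta'|, |\sigma - \sigma'| \lesssim S^{-1/2}/D$. Since consecutive $\theta_S$-blocks are $\sim S^{-1/2}$-spaced, each $(\theta, \sigma)$ admits $\lesssim D^{-1}$ partners in each factor, giving total multiplicity $\lesssim D^{-2}$. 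A Cauchy--Schwarz / Schur application then produces
\[ \int |g|^2 \eta \;\lesssim\; D^{-2} \sum_{\theta \subset \tau,\, \sigma \subset \tau'} \int |f_\theta|^2 |f_\sigma|^2\, \tilde\eta \]
with $\tilde\eta$ a harmless modification of $\eta$ still essentially supported on $\mc{N}_{S^{1/2}}(X)$.

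To finish I would invoke the locally constant property at scale $S^{1/2}$ (in the spirit of Lemma~\ref{locconst}): since $|f_{\theta_S}|^2$ has Fourier support in $\theta_S - \theta_S$, whose dual box has dimensions $\sim S^{1/2} \times S$, one has $|f_{\theta_S}|^2 \lesssim |f_{\theta_S}|^2 * w_{S^{1/2}}$ pointwise. Extending the sum from $(\theta, \sigma) \in \tau \times \tau'$ to $(\theta_S, \sigma_S)$ ranging over all pairs in the partition of $\mc{N}_{S^{-1}}(\P^1)$ only increases the right-hand side, and recognizing the result as $\int_{\mc{N}_{S^{1/2}}(X)} \bigl|\sum_{\theta_S} |f_{\theta_S}|^2 * w_{S^{1/2}}\bigr|^2 dx$ completes the bound. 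The main obstacle is keeping the multiplicity count honest: one must verify that the $S^{-1/2}$-blur introduced by $\hat\eta$ truly only admits overlap within the narrow diagonal neighborhood dictated by transversality, and that replacing $\eta$ by its modification $\tilde\eta$ at each stage does not leak outside $\mc{N}_{S^{1/2}}(X)$.
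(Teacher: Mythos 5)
The paper itself does not prove this statement: it cites it as Theorem 16 of \cite{lvlsets}, noting only that it ``follows from a standard C\'{o}rdoba argument.'' Your proposal reconstructs exactly that argument, and the overall structure --- smooth majorant with Fourier support in $B_{S^{-1/2}}$, expansion into bilinear pieces $g_{\theta,\sigma}=f_\theta\bar f_\sigma$, a transversality-driven multiplicity count giving $D^{-2}$, Cauchy--Schwarz/Schur, and then the locally constant property at scale $S^{1/2}$ --- is correct and is what the cited reference does.

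One step as written does not produce the object you need. You propose to obtain $\eta$ ``by convolving $\chi_X$ with a Schwartz bump adapted to the ball of radius $S^{1/2}$.'' Such an $\eta$ has the right Fourier support and the right decay, but it does \emph{not} majorize $\chi_X$ when $X$ is sparse: for $x\in X$, $\chi_X*\phi(x)=\int_X\phi(x-y)\,dy$ can be arbitrarily small if $|X\cap B(x,S^{1/2})|$ is small, and the theorem must hold for arbitrary measurable $X$. Since the bilinear expansion has terms of both signs, you genuinely need $\eta\ge\chi_X$ pointwise before you can pass from $\int_X$ to $\int(\cdot)\,\eta$. The fix is standard and cheap: take $\eta=\sum_{Q}\phi_Q$, summing over $S^{1/2}$-cubes $Q$ that meet $X$, with each $\phi_Q$ a Schwartz bump satisfying $\phi_Q\ge 1$ on $Q$ and $\supp\widehat{\phi_Q}\subset B_{S^{-1/2}}$; equivalently, convolve $\chi_{\mc{N}_{S^{1/2}}(X)}$ (rather than $\chi_X$) with an $L^1$-normalized bump at a slightly finer scale. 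Either choice gives $\eta\ge\chi_X$, $\supp\widehat\eta\subset B_{O(1)S^{-1/2}}$, and rapid decay off $\mc{N}_{S^{1/2}}(X)$, which is what the rest of your argument uses.

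Two smaller remarks. First, your multiplicity count of $D^{-2}$ is an upper bound obtained by bounding $\theta'$ and $\sigma'$ independently; the two frequency constraints actually couple them, so the true overlap is more like $D^{-1}$ --- but since the statement only asserts $D^{-2}$, your cruder count is sufficient. Second, the final passage from $\int(\cdot)\,\eta$ to $\int_{\mc{N}_{S^{1/2}}(X)}|\sum_{\theta_S}|f_{\theta_S}|^2*w_{S^{1/2}}|^2$ does need the tail of $\eta$ absorbed via the locally constant property of $\sum|f_{\theta_S}|^2$ (Fourier support in $B_{O(S^{-1/2})}$), which you flag; this is routine once the locally constant inequality $|f_{\theta_S}|^2\lesssim |f_{\theta_S}|^2*w_{S^{1/2}}$ is in hand.
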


\begin{prop}\label{mainprop}If $m\in\{2,\ldots,N\}$
then
\[ \a^4|\text{Br}_\a^m(\tau,\tau')|\le C_\e R^\e\sum_{R^{1/2}<W_k<W_m}\sum_{\substack{\tau\\\ell(\tau)=W_k/R}}\sum_{U\in\mc{G}_\tau}|U|\left(\fint_U\sum_{\theta\subset\tau}|f_\theta|^2\right)^2. \]
\end{prop}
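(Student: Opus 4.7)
The plan is to combine the broad-set condition with bilinear restriction (Theorem~\ref{bilrest}) to reduce to an $L^2$ estimate for a $\theta$-scale square function, execute the multi-scale high/low frequency decomposition described in Section~\ref{pf} to produce wave envelope averages, and use the high-dominance Lemma~\ref{highdom} together with the defining property of $\mc{G}_{\tau_k}$ to restrict the final sum to scales $R^{1/2}<W_k<W_m$ and to good wave envelopes.

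From the definition of $\text{Br}_\a^m(\tau,\tau')$ we have
\[ \a^4|\text{Br}_\a^m(\tau,\tau')|\lesssim\e^{-4}\int_{\text{Br}_\a^m(\tau,\tau')}|f_{W_m,\tau}^{\mc{B}}|^2|f_{W_m,\tau'}^{\mc{B}}|^2. \]
Since $\tau$ and $\tau'$ are $R^{-\e}$-separated subsets of $\mc{N}_{R^{-1}}(\P^1)$, and each $f_{W_m,\theta}^{\mc{B}}$ has Fourier support in $O_\e(1)\cdot\theta$ by property~\eqref{supp} of Lemma~\ref{pruneprop}, Theorem~\ref{bilrest} with $S=R$ and $D=R^{-\e}$ gives
\[ \a^4|\text{Br}_\a^m(\tau,\tau')| \lesssim \e^{-4}R^{2\e}\int_{\mc{N}_{R^{1/2}}(\text{Br}_\a^m)}\Big|\sum_\theta|f_{W_m,\theta}^{\mc{B}}|^2 \ast w_{R^{1/2}}\Big|^2. \]
I would then decompose the integrand dyadically in frequency: via Plancherel, split into annular pieces supported on $\{|\xi|\sim W_k^{-1}\}$ plus a low-frequency remainder at scale $|\xi|\lesssim W_{m-1}^{-1}$. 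On each annulus, High Lemma~I (Lemma~\ref{high}) yields approximate $L^2$-orthogonality across $\tau_k$-blocks; for each $\tau_k$, the sets $\{\theta-\theta : \theta\subset\tau_k\}$ intersected with $\{|\xi|\lesssim W_k^{-1}\}$ lie in the single dual block $U_{\tau_k,R}^*$, so convolving against a bump adapted to this block averages $\sum_{\theta\subset\tau_k}|f_{W_m,\theta}^{\mc{B}}|^2$ over translates of the wave envelope $U_{\tau_k,R}$ and produces the expected bound $\sum_{U\|U_{\tau_k,R}}|U|^{-1}\bigl(\int\sum_{\theta\subset\tau_k}|f_{W_m,\theta}^{\mc{B}}|^2W_U\bigr)^2$.

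The low-frequency remainder (corresponding to scales $W_k\ge W_m$) is killed using the high-dominance Lemma~\ref{highdom}. Because $\text{Br}_\a^m(\tau,\tau')\subset\{x:\a\lesssim\e^{-1}C_0^{m_0}R^{C_0\e}|f_{W_m}^{\mc{B}}(x)|\}$, the lemma says the $\tau_{m-1}$-scale square function is dominated on $\text{Br}_\a^m$ by its $|\xi|>W_{m-1}^{-1}$ part. A pointwise Cauchy-Schwarz relating the $\theta$-scale square function of the previous paragraph to the $\tau_{m-1}$-scale square function in Lemma~\ref{highdom} then makes the low-frequency remainder absorbable into the left-hand side, leaving only the range $R^{1/2}<W_k<W_m$ in the final sum.

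For those remaining scales I would restrict to $U\in\mc{G}_{\tau_k}$ by splitting $\sum_{U\|U_{\tau_k,R}}=\sum_{U\in\mc{G}_{\tau_k}}+\sum_{U\notin\mc{G}_{\tau_k}}$. For $U\in\mc{G}_{\tau_k}$, the pointwise bound $|f_{W_m,\theta}^{\mc{B}}|\le|f_\theta|$ from Lemma~\ref{pruneprop} gives the claimed right-hand side. For $U\notin\mc{G}_{\tau_k}$, the defining inequality of $\mc{G}_{\tau_k}$ together with the pointwise comparisons between $f_{W_m,\theta}^{\mc{B}}$ and $f_{W_{k+1},\theta}$ coming from the nested pruning gives
\[ |U|^{-1}\int\sum_{\theta\subset\tau_k}|f_{W_m,\theta}^{\mc{B}}|^2\,W_U < \frac{1}{(\log R)C_0^{2m_0}R^{2C_0\e}}\,\frac{\a^2}{(\#\tau_k)^2}, \]
and summing over $U$, $\tau_k$, and the $\lesssim\e^{-1}$ many dyadic scales produces a total at most $\tfrac12\a^4|\text{Br}_\a^m(\tau,\tau')|$ for $R$ large enough, which is absorbed. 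The main obstacle is the Cauchy-Schwarz bridging step used to invoke Lemma~\ref{highdom}: the high-dominance is formulated at the $\tau_{m-1}$-scale with weight $w_{R/W_{m-1}}$, whereas bilinear restriction outputs a $\theta$-scale quantity with weight $w_{R^{1/2}}$, and transferring between the two without losing more than an $R^{O(\e)}$ factor requires careful tracking of the nested Fourier supports.
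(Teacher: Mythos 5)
You have the right ingredients, but two of your steps diverge from what the paper actually does and at least one of them hides a genuine gap.

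First, the paper applies Theorem~\ref{bilrest} at scale $S=(R/W_{m-1})^2$, so that the output is exactly the $\tau_{m-1}$-scale square function $\sum_{\tau_{m-1}}|f_{W_m,\tau_{m-1}}^{\mc{B}}|^2*w_{R/W_{m-1}}$ that appears in Lemma~\ref{highdom}; the low-frequency remainder is then killed with no bridging at all, and only afterwards does the argument descend to finer scales via Lemma~\ref{low}. You apply bilinear restriction at $S=R$ and so obtain the $\theta$-scale square function with weight $w_{R^{1/2}}$. You correctly flag the bridge back to Lemma~\ref{highdom} as the obstacle, but it is not a simple Cauchy--Schwarz: the inequality $|f_{W_m,\tau_{m-1}}^{\mc{B}}|^2\le \#\{\theta\subset\tau_{m-1}\}\sum_{\theta\subset\tau_{m-1}}|f_{W_m,\theta}^{\mc{B}}|^2$ runs the wrong way, so knowing the $\tau_{m-1}$-scale square function is high-dominated does not transfer to the $\theta$-scale one. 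The clean fix is to run bilinear restriction at the coarser scale in the first place. Relatedly, your single annulus-by-annulus treatment compresses what is in the paper a two-regime analysis: Case~1 with $r\le R^{-1/2}$ (nearest-neighbor $\theta$-products via Lemma~\ref{low}) and Case~2 with $r>R^{-1/2}$, which needs the $L^4$ reverse square function estimate inside each $\tau_k$ followed by a further high/low split; the latter regime is not covered by the one-step wave envelope averaging you describe.

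Second, and more seriously, your mechanism for restricting the final sum to $U\in\mc{G}_{\tau_k}$ does not work. You split $\sum_{U\|U_{\tau_k,R}}$ into good and bad envelopes and try to absorb the bad contribution into $\frac12\a^4|\text{Br}_\a^m(\tau,\tau')|$. But by the time you reach this step you have applied Plancherel and dropped the spatial localization, so the bad-$U$ sum runs over the entire tiling of $\R^2$; bounding each term by $|U|\,\a^4/(R^{O(\e)}(\#\tau_k)^4)$ and summing gives a quantity proportional to the (unbounded) total area $\sum_U|U|$, which is not controlled by $\a^4|\text{Br}_\a^m|$. Even if the localization were carried through Plancherel, the volume swept by bad envelopes meeting $\text{Br}_\a^m$ can greatly exceed $|\text{Br}_\a^m|$. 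The paper never produces a bad-$U$ sum at all: it exploits the pruning. Since for $W_k\le W_m$ one has $|f_{W_m,\theta}^{\mc{B}}|\le|f_{W_k,\theta}|$, and by Definition~\ref{taukprune} the pruned function is $f_{W_k,\theta}=\sum_{U\in\mc{G}_{\tau_k}}\s_U\,f_{W_{k+1},\theta}$, substituting this identity and unwinding via Minkowski's inequality lands directly on $\sum_{U\in\mc{G}_{\tau_k}}|U|^{-1}\bigl(\int\sum_{\theta\subset\tau_k}|f_\theta|^2 W_U\bigr)^2$ with the good-envelope restriction built in. That replacement of the pruned function by its definition as a sum over good envelopes is the step your proposal is missing, and the absorption you propose cannot serve as a substitute.
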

\begin{proof}
By bilinear restriction, here given by Theorem \ref{bilrest}, 
\begin{align*}
    \int_{\text{Br}_\a^m(\tau,\tau')}|f_{W_m}^{\mc{B}}|^4&\lesssim_\e R^{2\e} \int_{\mc{N}_{R/W_{m-1}}(\text{Br}_\a^m(\tau,\tau'))}|\sum_{\tau_{m-1}}|f_{W_m,\tau_{m-1}}^{\mc{B}}|^2*w_{R/W_{m-1}}|^2 .
\end{align*}
For each $x\in U_\a$, $\sum_{\tau_{m-1}}|f_{W_m,\tau_{m-1}}^{\mc{B}}|^2*w_{R/W_{m-1}}(x)\sim\sum_{\tau_{m-1}}|f_{W_m,\tau_{m-1}}^{\mc{B}}|^2*w_{R/W_{m-1}}(y)$ for any $y\in B_{R/W_{m-1}}(x)$. By Lemma \ref{highdom}, the last integral is bounded by 
\[\int_{\mc{N}_{R/W_{m}}(\text{Br}_\a^m(\tau,\tau'))}|\sum_{\tau_{m-1}}|f_{W_m,\tau_{m-1}}^{\mc{B}}|^2*w_{R/W_{m-1}}*\widecheck{\eta}_{>W_{m-1}^{-1}}|^2 .\]
We are done using properties of the restricted domain, so we now integrate over all of $\R^2$. Let $r$ be some dyadic value between $W_{m-1}^{-1}\le r\lesssim\e^{-1} W_{m-1}/R$ satisfying
\[ \int|\sum_{\tau_{m-1}}|f_{W_m,\tau_{m-1}}^{\mc{B}}|^2*w_{R/W_{m-1}}*\widecheck{\eta}_{>W_{m-1}^{-1}}|^2\le (\log R)^2 \int |\sum_{\tau_{m-1}}|f_{W_m,\tau_{m-1}}^{\mc{B}}|^2*w_{R/W_{m-1}}*\widecheck{\eta}_{>W_{m-1}^{-1}}*\widecheck{\eta}_{\sim r}|^2. \]
Use Young's convolution inequality to simplify the expression
\[ \int|\sum_{\tau_{m-1}}|f_{W_m,\tau_{m-1}}^{\mc{B}}|^2*w_{R/W_{m-1}}*\widecheck{\eta}_{>W_{m-1}^{-1}}*\widecheck{\eta}_{\sim r}|^2\lesssim  \int|\sum_{\tau_{m-1}}|f_{W_m,\tau_{m-1}}^{\mc{B}}|^2*\widecheck{\eta}_{\sim r}|^2. \]
The analysis of the integral on the right hand side splits into two further cases. 

\vspace{1mm}
\noindent\underline{Case 1: $r\le R^{-1/2}$.} 
Use Lemma \ref{low} to arrive at an expression with $\theta$:
\[ \int|\sum_{\tau_{m-1}}|f_{W_m,\tau_{m-1}}^{\mc{B}}|^2*\widecheck{\eta}_{\sim r}|^2=\int|\sum_{\theta}\sum_{\theta'\sim\theta}(f_{W_m,\theta}^{\mc{B}}\overline{f_{W_m,\theta'}^{\mc{B}}})*\widecheck{\eta}_{\sim r}|^2. \]
By Plancherel's theorem, 
\[ \int|\sum_{\theta}\sum_{\theta'\sim\theta}(f_{W_m,\theta}^{\mc{B}}\overline{f_{W_m,\theta'}^{\mc{B}}})*\widecheck{\eta}_{\sim r}|^2= \int|\sum_{\theta}\sum_{\theta'\sim\theta}(\widehat{f}_{W_m,\theta}^{\mc{B}}*\widehat{\overline{f_{W_m,\theta'}^{\mc{B}}}}){\eta}_{\sim r}|^2. \]
Let $k\ge m$ satisfy $R^\e r^{-1}\le W_k^{-1}\le R^{2\e}r^{-1}$. For each $\tau_k$, the intersection of a $2r$ ball with the support of $\sum_{\theta\subset\tau_k}\sum_{\theta'\sim\theta}(\widehat{f}_{W_m,\theta}^{\mc{B}}*\widehat{\overline{f}}_{W_m,\theta'}^{\mc{B}})$ is contained in $C_\e U_{\tau_k,R}^*$ where $U_{\tau_k,R}^*$ is the $W_k^{-1}\times R^{-1}$ rectangle dual to $U_{\tau_k,R}$. The sets $C_\e U_{\tau_k,R}^*$ are $\lesssim_\e R^\e$ overlapping on the support of $\eta_{\sim r}$, so 
\[ \int|\sum_{\theta}\sum_{\theta'\sim\theta}(\widehat{f}_{W_m,\theta}^{\mc{B}}*\widehat{\overline{f^{\mc{B}}_{W_m,\theta'}}}){\eta}_{\sim r}|^2\lesssim_\e R^\e \sum_{\tau_k}\int|\sum_{\theta\subset\tau_k}\sum_{\theta'\sim\theta}(\widehat{f}_{W_m,\theta}^{\mc{B}}*\widehat{\overline{f^{\mc{B}}_{W_m,\theta'}}}){\eta}_{\sim r}|^2.\]
Let $\rho_{\tau_k}$ be a Schwartz function satisfying $|\widecheck{\rho}_{\tau_k}|\lesssim\chi_{U_{\tau_k,R}}$ (where $\chi_{U_{\tau_k,R}}$ is the characteristic function), $\|\widecheck{\rho}_{\tau_k}\|_1\sim 1$, and $\rho_{\tau_k}\gtrsim 1$ on $C_\e U_{\tau_k,R}^*$. Then 
\begin{align*} 
\sum_{\tau_k}\int|\sum_{\theta\subset\tau_k}\sum_{\theta'\sim\theta}(\widehat{f}_{W_m,\theta}^{\mc{B}}*\widehat{\overline{f^{\mc{B}}_{W_m,\theta'}}}){\eta}_{\sim r}|^2&\lesssim\sum_{\tau_k} \int|\sum_{\theta\subset\tau_k}\sum_{\theta'\sim\theta}(\widehat{f}_{W_m,\theta}^{\mc{B}}*\widehat{\overline{f^{\mc{B}}_{W_m,\theta'}}})\rho_{\tau_k}|^2\\
&\lesssim_\e \sum_{\tau_k}\int|\sum_{\theta\subset\tau_k}|{f}_{W_m,\theta}^{\mc{B}}|^2*|\widecheck{\rho}_{\tau_k}||^2. 
\end{align*}
Since $W_k\le W_m$, $|f_{W_m,\theta}^{\mc{B}}|\le |f_{W_k,\theta}|\le |f_{W_{k+1},\theta}|\le |f_\theta|$ (see Lemma \ref{pruneprop}), so 
\begin{align*} 
\sum_{\tau_k}\int|\sum_{\theta\subset\tau_k}|{f}_{W_m,\theta}^{\mc{B}}|^2*|\widecheck{\rho}_{\tau_k}||^2&\le \sum_{\tau_k}\int|\sum_{\theta\subset\tau_k}|f_{W_k,\theta}|^2*|\widecheck{\rho}_{\tau_k}||^2 \\
&=  \sum_{\tau_k}\int|\sum_{\theta\subset\tau_k}\int |\sum_{U\in\mc{G}_{\tau_k}}\s_U(y)f_{W_{k+1},\theta}(y)|^2|\widecheck{\rho}_{\tau_k}|(x-y)dy|^2dx \\
&\lesssim \sum_{\tau_k}\int|\sum_{\theta\subset\tau_k}\int \sum_{U\in\mc{G}_{\tau_k}}\s_U(y)|f_{\theta}|^2(y)|\widecheck{\rho}_{\tau_k}|(x-y)dy|^2dx. 
\end{align*}
Note that for each $y$ and $x\in y+U_{\tau_k,R}$, 
$\s_U(y)
\le \max_{z\in x+U_{\tau_k,R}}|\s_U(z)|=: \tilde{\s}_U(x)$. 
Therefore, for each $\tau_k$, 
\begin{align*}
\int|\sum_{\theta\subset\tau_k}\int \sum_{U\in\mc{G}_{\tau_k}}\s_U(y)|f_{\theta}|^2(y)|\widecheck{\rho}_{\tau_k}|(x-y)dy|^2dx &\le \int|\sum_{\theta\subset\tau_k}\int \sum_{U\in\mc{G}_{\tau_k}}\tilde{\s}_U(x)|f_{\theta}|^2(y)|\widecheck{\rho}_{\tau_k}|(x-y)dy|^2dx\\
(\text{since}\,\,\sum_{U\in\mc{G}_{\tau_k}}\tilde{\s}_U\lesssim 1)\qquad\qquad\qquad&\lesssim \sum_{U\in\mc{G}_{\tau_k}}\int \tilde{\s}_U(x)\big|\int \sum_{\theta\subset\tau_k}|f_{\theta}|^2(y)|\widecheck{\rho}_{\tau_k}|(x-y)dy\big|^2dx \\
(\text{Minkowski's inequality})\qquad\qquad\quad&\lesssim  \sum_{U\in\mc{G}_{\tau_k}}\Big( \int \sum_{\theta\subset\tau_k}|f_{\theta}|^2(y)\big(\int \tilde{\s}_U(x)|\widecheck{\rho}_{\tau_k}|^2(x-y)dx\big)^{\frac{1}{2}}dy\Big)^2 \\
&\lesssim  \sum_{U\in\mc{G}_{\tau_k}}|U|^{-1}\Big( \int \sum_{\theta\subset\tau_k}|f_{\theta}|^2(y)W_U(y)dy\Big)^2 ,
\end{align*}
which finishes this case. 

\vspace{1mm}
\noindent\underline{Case 2: $r> R^{-1/2}$.} Let $W_k\le W_m$ satisfy $W_k/R\le r\le R^\e W_k/R$. By Lemma \ref{low},  
\begin{align*}
\int |\sum_{\tau_{m-1}}|f_{W_m,\tau_{m-1}}^{\mc{B}}|^2*\widecheck{\eta}_{\sim r}|^2&= \int|\sum_{\tau_k}\sum_{\tau_k'\sim\tau_k}(f_{W_m,\tau_k}^{\mc{B}}\overline{f_{W_m,\tau_k}^{\mc{B}}})*\widecheck{\eta}_{\sim r}|^2 .
\end{align*}
Then by an analogue to Lemma \ref{high2},
\[ \int|\sum_{\tau_k}\sum_{\tau_k'\sim\tau_k}(f_{W_m,\tau_k}^{\mc{B}}\overline{f_{W_m,\tau_k}^{\mc{B}}})*\widecheck{\eta}_{\sim r}|^2\lesssim_\e R^{\e} \sum_{\tau_k}\int|f_{W_m,\tau_k}^{\mc{B}}|^4 .\]  
Now apply the $L^4$ reverse square function inequality to get for each $\tau_k$,
\[ \int|f_{W_m,\tau_k}^{\mc{B}}|^4\lesssim_\e \int|\sum_{\theta\subset\tau_k}|f_{W_m,\theta}^{\mc{B}}|^2|^2. \]
\noindent\underline{Case 2a:} Suppose that 
\begin{equation}\label{Case2a}\sum_{\tau_k}\int|\sum_{\theta\subset\tau_k}|f_{W_m,\theta}^{\mc{B}}|^2|^2\lesssim \sum_{\tau_k}\int|\sum_{\theta\subset\tau_k}|f_{W_m,\theta}^{\mc{B}}|^2*\widecheck{\eta}_{\le W_m^{-1}}|^2 . \end{equation}
Then since $W_k\le W_m$ (so $\tau_k\subset\tau_m$), for the same function $\rho_{\tau_m}$ from Case 1, we have
\begin{align*} 
\sum_{\tau_k}\int|\sum_{\theta\subset\tau_k}|f_{W_m,\theta}^{\mc{B}}|^2*\widecheck{\eta}_{\le W_m^{-1}}|^2&\lesssim \sum_{\tau_k}\int|\sum_{\theta\subset\tau_k}|f_{W_m,\theta}^{\mc{B}}|^2*|\widecheck{\rho}_{\tau_m}||^2\\
&\lesssim \sum_{\tau_m}\int|\sum_{\theta\subset\tau_m}|f_{W_m,\theta}^{\mc{B}}|^2*|\widecheck{\rho}_{\tau_m}||^2\\
&\lesssim \sum_{\tau_m}\sum_{U\in\mc{G}_{\tau_m}}|U|\left(\fint_U\sum_{\theta\subset\tau_m}|f_\theta|^2\right)^2
\end{align*} 
where we used the analogous argument as in Case 1 to justify the final line.

\noindent\underline{Case 2b:} If \eqref{Case2a} does not hold, then 
\begin{equation}\label{Case2b}
\sum_{\tau_k}\int|\sum_{\theta\subset\tau_k}|f_{W_m,\theta}^{\mc{B}}|^2|^2\lesssim \sum_{\tau_k}\int|\sum_{\theta\subset\tau_k}|f_{W_m,\theta}^{\mc{B}}|^2*\widecheck{\eta}_{\ge W_m^{-1}}|^2 . \end{equation}
Since for each $\tau_k$, the Fourier transform of $\sum_{\theta\subset\tau_k}|f_{W_m,\theta}^{\mc{B}}|^2$ is contained in a ball of radius $\sim C_\e R^{-1/2}$, there is some dyadic $s$ in the range $W_m^{-1}\le s\lesssim_\e W_N^{-1}=R^{-1/2}$ satisfying 
\[\sum_{\tau_k}\int|\sum_{\theta\subset\tau_k}|f_{W_m,\theta}^{\mc{B}}|^2*\widecheck{\eta}_{\ge W_m^{-1}}|^2\lesssim_\e(\log R)^2 \sum_{\tau_k}\int|\sum_{\theta\subset\tau_k}|f_{W_m,\theta}^{\mc{B}}|^2*\widecheck{\eta}_{\sim s}|^2 . \]
From here, the analysis of each integral on the right hand side follows the argument of Case 1. If $s\le W_k^{-1}$, then for each $\tau_k$, 
\[\int|\sum_{\theta\subset\tau_k}|f_{W_m,\theta}^{\mc{B}}|^2*\widecheck{\eta}_{\sim s}|^2\lesssim \int|\sum_{\theta\subset\tau_k}|f_{W_m,\theta}^{\mc{B}}|^2*|\widecheck{\rho}_{\tau_k}||^2\lesssim\sum_{U\in\mc{G}_{\tau_k}}|U|\Big(\fint_U\sum_{\theta\subset\tau_k}|f_\theta|^2\Big)^2. \]
If $s>W_k^{-1}$, then for $W_k\le W_l\le W_m$ with $W_l$ closest to $s$, we have 
\begin{align*}
    \sum_{\tau_k}\int|\sum_{\theta\subset\tau_k}|f_{W_m,\theta}^{\mc{B}}|^2*\widecheck{\eta}_{\sim s}|^2&\lesssim   \sum_{\tau_k}\int|\sum_{\theta\subset\tau_k}|f_{W_m,\theta}^{\mc{B}}|^2*|\widecheck{\rho}_{\tau_l}||^2 \\
    &\lesssim  \sum_{\tau_l}\int|\sum_{\theta\subset\tau_l}|f_{W_k,\theta}^{\mc{B}}|^2*|\widecheck{\rho}_{\tau_l}||^2\lesssim \sum_{\tau_l}\sum_{U\in\mc{G}_{\tau_l}}|U|\Big(\fint\sum_{\theta\subset\tau_l}|f_\theta|^2\Big)^2, 
\end{align*}
which finishes the proof. 

\end{proof}

\subsection{Bilinear reduction \label{broadnarrow}}

First we will show why it suffices to consider $U_\a$ contained in an $R$-ball $B_R$. Combining this with a broad/narrow analysis, we will show that Propositions \ref{m0} and \ref{mainprop} imply Theorem \ref{mainP}. 

\begin{lemma}[Local version of Theorem \ref{mainP}]\label{local} Suppose that for each $R$-ball $B_R$, 
\[ \a^4|\{x\in B_R:\a\le|f(x)|\}|\lesssim_\e R^\e\sum_{R^{\frac{1}{2}}\le W\le R}\sum_{\ell(\tau)=W/R}\sum_{U\in\mc{G}_\tau}|U|^{-1}\Big(\int\sum_{\theta\subset\tau}|f_\theta|^2W_U\Big)\]
where $\mc{G}_\tau=\{U\|U_{\tau,R}:\frac{\a^2}{(\#\tau)^2}\le C_\e R^\e |U|^{-1}\int\sum_{\theta\subset\tau}|f_\theta|^2W_U\}$. Then Theorem \ref{mainP} is true. 
\end{lemma}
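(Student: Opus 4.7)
The plan is to reduce the global estimate to the local one via a spatial partition of unity whose members are Fourier-localized on a scale smaller than a single block $\theta$. First I would fix a disjoint partition $\R^2 = \sqcup_i Q_i$ into $R$-cubes, together with a smooth partition of unity $\sum_i \psi_i = 1$ with $\psi_i \ge 0$, each $\psi_i$ concentrated on $Q_i$ with rapid Schwartz decay, with only $O(1)$ overlap at any point, and with $\widehat{\psi_i}$ supported in a ball of radius $cR^{-1}$ for a small absolute constant $c$. Set $g_i := f\psi_i$; then $\widehat{g_i}$ is supported in a mild enlargement of $\mc{N}_{R^{-1}}(\P^1)$, so after harmlessly rescaling the $\theta$-partition by a constant the hypothesis of Lemma \ref{local} applies to $g_i$ on the $\sqrt{2}R$-ball containing $Q_i$. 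Since $\sum_i \psi_i = 1$ and only $O(1)$ of them are nonzero at each point, some $\psi_i(x) \gtrsim 1$ at every $x$, so summing the local bounds yields
\[ \a^4|U_\a| \lesssim \sum_i \a^4\,|\{x : |g_i(x)| \gtrsim \a\}| \lesssim R^\e \sum_i \sum_{\substack{R^{1/2}\le W\le R\\\text{dyadic}}} \sum_{\ell(\tau)=W/R} \sum_{U \in \mc{G}_\tau(g_i)} |U|^{-1}\Big(\int \sum_{\theta \subset \tau} |(g_i)_\theta|^2 W_U\Big)^2. \]

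Next I would convert the right-hand side back to a global one. Because $\widehat{\psi_i}$ is supported in a ball much smaller than any $\theta$, a standard Fourier support argument yields a pointwise inequality of the shape $|(g_i)_\theta|^2 \lesssim (|f_\theta|^2 \psi_i^2) \ast |\widecheck{\rho}|$ for a rapidly decaying $\widecheck{\rho}$ (plus $O(1)$-mixing from neighboring blocks), which integrated against $W_U$ gives $\int \sum_{\theta \subset \tau} |(g_i)_\theta|^2 W_U \lesssim \int \sum_{\theta \subset \tau} |f_\theta|^2 \psi_i^2 W_U$. Since $\psi_i \le 1$, the same inequality yields $\|S_U g_i\|_2^2 \lesssim \|S_U f\|_2^2$ and $\#\tau_{g_i} \lesssim \#\tau_f$, so $\mc{G}_\tau(g_i) \subset \mc{G}_\tau(f)$ after absorbing constants into $C_\e$. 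Every wave envelope $U$ has dimensions $W \times R$ with $W \le R$ and hence diameter $\lesssim R$, so the rapid decay of $W_U$ makes $\psi_i^2 W_U$ negligible outside the $O(1)$ cubes $Q_i$ closest to $U$. Combining this with the elementary bound $\sum_i a_i^2 \le (\sum_i a_i)^2$ for $a_i \ge 0$ and the inequality $\sum_i \psi_i^2 \le \sum_i \psi_i = 1$ gives
\[ \sum_i \Big(\int \sum_{\theta \subset \tau} |f_\theta|^2 \psi_i^2 W_U\Big)^2 \le \Big(\sum_i \int \sum_{\theta \subset \tau} |f_\theta|^2 \psi_i^2 W_U\Big)^2 \le \Big(\int \sum_{\theta \subset \tau} |f_\theta|^2 W_U\Big)^2, \]
which is exactly the summand appearing on the right side of Theorem \ref{mainP}.

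The hardest part will be the Schwartz-tails bookkeeping: making the pointwise inequality $|(g_i)_\theta|^2 \lesssim (|f_\theta|^2 \psi_i^2) \ast |\widecheck{\rho}|$ rigorous while tracking all constants, and verifying that the $O(1)$-overlap of $\psi_i^2 W_U$ in the index $i$ genuinely survives the Schwartz tails of both $\psi_i$ and $W_U$. None of this is conceptually deep — the real content of the lemma is just the idea of choosing a spatial partition whose Fourier localization is finer than the $\theta$-partition, so that localizing $f$ to a single $R$-cube preserves the pieces $f_\theta$ up to an $O(1)$ loss.
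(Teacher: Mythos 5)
Your proposal is essentially the paper's argument: spatially localize $f$ by Fourier--compactly--supported bumps so that the hypothesis can be applied piece by piece, check that the ``good'' wave envelopes for each localized piece are contained in those for $f$, and recombine using $\sum_i a_i^2 \le (\sum_i a_i)^2$ together with $\sum_i \psi_i^2 \lesssim 1$ (the paper uses a bounded-overlap family $\phi_{B_R}$ with $\sum_{B_R}\phi_{B_R}^2\lesssim 1$ rather than an exact partition of unity, which is the same in substance). The one place you take a slightly different route is the conversion of $(g_i)_\theta$ into $\psi_i f_\theta$: you propose a locally-constant/convolution bound with $O(1)$ mixing from adjacent $\theta$-blocks, whereas the paper avoids the mixing altogether by first using the triangle inequality to pass to a sparse subcollection $\Theta$ of $\theta$'s whose doublings are pairwise disjoint, so that $(\phi_{B_R}f)_\theta = \phi_{B_R} f_\theta$ holds exactly and no convolution estimate against $W_U$ is needed. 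Your version would also work, but needs care since the sharp-cutoff projection $(\,\cdot\,)_\theta$ is not literally a local operation, and the orientation of $\theta^*$ only matches that of $U$ up to the angular width of $\tau$; the paper's sparsification sidesteps these bookkeeping points cleanly, so it is worth knowing about.
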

\begin{proof} Fix a finitely overlapping cover of $\R^2$ by $R$ balls $B_R$. Let $\phi_{B_R}\lesssim 1$ be a Schwartz function satisfying $\phi_{B_R}\gtrsim 1$ on $B_R$, $\supp\widehat{\phi}_{B_R}\subset\{\xi:|\xi|\le R^{-1}\}$, and rapid decay off of $B_R$. Each $\phi_{B_R}f_{\theta}$ has Fourier support inside of $2\theta$. We may use the triangle inequality to assume that $|\phi_{B_R}(x)f(x)|\lesssim |\sum_{\theta\in\Theta}\phi_{B_R}(x)f_\theta(x)|$ where the Fourier support of each $\phi_{B_R}f_\theta$ is disjoint for $\theta\in\Theta$. Then apply the hypothesis at scale $\sim R$ to get
\[ \a^4|\{x\in B_R:\a\le|f(x)|\}|\lesssim_\e R^\e\sum_{R^{\frac{1}{2}}\le W\le R}\sum_{\ell(\tau)=W/R}\sum_{U\in\mc{G}_\tau(B_R)}|U|^{-1}\Big(\int\sum_{\theta\subset\tau}|\phi_{B_R}f_\theta|^2W_U\Big)^2\]
where $\mc{G}_{\tau}(B_R)=\{U\|U_{\tau,R}:\frac{\a^2}{(\#\tau)^2}\le C_\e R^\e |U|^{-1}\int\sum_{\theta\subset\tau}|\phi_{B_R}f_\theta|^2W_U\}$. 
Note that each $\mc{G}_{\tau}(B_R)\subset\mc{G}_\tau$. Using the triangle inequality,
\[ \a^4|\{x\in \R^2:\a\le|f(x)|\}|\lesssim_\e R^\e \sum_{B_R}\sum_{R^{\frac{1}{2}}\le W\le R}\sum_{\ell(\tau)=W/R}\sum_{U\in\mc{G}_\tau}|U|^{-1}\Big(\int\sum_{\theta\subset\tau}|\phi_{B_R}f_\theta|^2W_U\Big)^2.\] 
Interchange the order of summation to first sum over $B_R$ and use $\|\cdot\|_{\ell^2}\le\|\cdot\|_{\ell^1}$, leading to 
\[ \sum_{B_R}\Big(\int\sum_{\theta\subset\tau}|\phi_{B_R}f_\theta|^2W_U\Big)^2\le \Big(\sum_{B_R}\int\sum_{\theta\subset\tau}|\phi_{B_R}f_\theta|^2W_U\Big)^2\lesssim \Big(\int\sum_{\theta\subset\tau}|f_\theta|^2W_U\Big)^2, \]
which finishes the proof.
\end{proof}

\begin{proof}[Propositions \ref{m0} and \ref{mainprop} imply Theorem \ref{mainP}]
By Lemma \ref{local}, to prove Theorem \ref{mainP}, it suffices to bound $\a^4|U_\a|$. By \eqref{sufficient}, it suffices to bound each set
\begin{equation}\label{Ualpham} \{x\in U_\a:|f(x)|\lesssim \e^{-1} |f_{W_m}^{\mc{B}}(x)|\} \end{equation}
for $m=2,\ldots,N$. In the following argument, for any $K\ge 4$, let $\ell(\tau)=K^{-1}$ index a partition of $\mc{N}_{K^{-2}}(\P^1)$ by approximate $K^{-1}\times K^{-2}$ blocks, where we take the dyadic value closest to $K^{-1}$ and use the definition from \eqref{blocks}. Let $f_{W_m,\tau}^{\mc{B}}=\sum_{\theta\subset\tau}f_{W_m,\theta}^{\mc{B}}$. Since $|f_{W_m}^{\mc{B}}|\le\sum_{\ell(\tau)=R^{-\e}}|f_{W_m,\tau}^{\mc{B}}(x)|$, there is a universal constant $C_0>0$ so that $|f_{W_m}^{\mc{B}}(x)|>R^{C_0\e}\max_{\substack{\ell(\tau)=\ell(\tau')=R^{-\e}\\\tau,\tau'\,\,\text{nonadj.}}}|f_{W_m,\tau}^{\mc{B}}(x)f_{W_m,\tau'}^{\mc{B}}(x)|^{1/2}$ implies $|f_{W_m}^{\mc{B}}(x)|\le C_0\max_{\ell(\tau)=R^{-\e}}|f_{W_m,\tau}^{\mc{B}}(x)|$. This gives the first step in the broad-narrow inequality
\begin{align*}
    |f_{W_m}^{\mc{B}}(x)|&\le C_0\max_{\ell(\tau)=R^{-\e}}|f_{W_m,\tau}^{\mc{B}}(x)|+R^{C_0\e}\max_{\substack{\ell(\tau')=\ell(\tau'')=R^{-\e}\\\tau',\tau''\,\text{nonadj.}}}|f_{W_m,\tau'}^{\mc{B}}(x)f_{W_m,\tau''}^{\mc{B}}(x)|^{1/2} .
\end{align*}
Iterate the inequality $m_0\sim\e^{-1}$ times (for the first term) where $R^{m_0\e}\sim R^{1/2}$ to bound $|f_{W_m}^{\mc{B}}(x)|$ by
\begin{align*}
    |f_{W_m}^{\mc{B}}(x)|&\lesssim  C_0^{m_0} \max_{\ell(\tau)=R^{-1/2}}|f_{W_m,\tau}^{\mc{B}}(x)|\\
    &\qquad+C_0^{m_0}R^{C_0\e}\sum_{\substack{R^{-1/2}<\Delta<1\\\Delta\in R^{\N\e}}} \max_{\substack{\ell(\tilde{\tau})= \Delta}}\max_{\substack{\ell(\tau')=\ell(\tau'')= R^{-\e}\Delta\\\tau',\tau''\subset\tilde{\tau},\,\,\text{nonadj.}}}|f_{W_m,\tau'}^{\mc{B}}(x)f_{W_m,\tau''}^{\mc{B}}(x)|^{1/2} .
\end{align*}
Recall that our goal is to bound the size of the set \eqref{Ualpham}. By the triangle inequality and using the notation $\theta$ for blocks $\tau$ with $\ell(\tau)=R^{-1/2}$,
\begin{align}
\label{broadnar1}    &|U_\a|\le |\{x\in \R^2:\a\lesssim \e^{-1}C_0^{m_0}\max_{\theta}|f_{W_m,\theta}^{\mc{B}}(x)|\}|
    +\sum_{\substack{R^{-1/2}<\Delta<1\\\Delta\in R^{\N\e}}} \sum_{\substack{\ell(\tilde{\tau})\sim \Delta\\\ell(\tau')=\ell(\tau'')\sim K^{-1}\Delta\\\tau',\tau''\subset\tilde{\tau},\,\,\text{nonadj.}}}|\text{Br}_\a^m(\tau',\tau'')|
\end{align}
where $\text{Br}_\a^m(\tau',\tau'')$ is the set
\[\{x\in \R^2:\a\lesssim \e^{-1}C_0^{m_0}K^{C_0}|f_{W_m,\tau'}^{\mc{B}}(x)f_{W_m,\tau''}^{\mc{B}}(x)|^{1/2}  \}. \]
The first term in the upper bound from \eqref{broadnar1} is bounded trivially:
\begin{align*}
\a^4|\{x\in \R^2:\a\lesssim \e^{-1}&C^{m_0}\max_{\theta}|f_{W_m,\theta}^{\mc{B}}(x)|\}| \lesssim_\e \int_{\R^2}\sum_\theta|f_{W_m,\theta}^{\mc{B}}|^4 \lesssim_\e \int_{\R^2}\sum_\theta|f_{W_N,\theta}|^4 
\end{align*}
where we used Lemma \ref{pruneprop} in the last line. It remains to note that for each $\theta$, 
\begin{align*}
\int |f_{W_N,\theta}|^4&= \int|\sum_{U\in\mc{G}_\theta}\s_Uf_\theta|^4 \le \sum_{U\in\mc{G}_\theta}\int|\s_U(x)|^2|f_\theta(x)|^4 dx\\
&\lesssim  \sum_{U\in\mc{G}_\theta}\int|\s_U(x)|^2|f_\theta*\widecheck{\rho}_\theta(x)|^4dx\\
(\text{H\"{o}lder's in the convolution})\qquad\qquad&\lesssim \sum_{U\in\mc{G}_\theta}\int|\s_U(x)|^2(|f_\theta|^2*|\widecheck{\rho}_\theta|(x))^2dx \\
(\text{Minkowski's inequality})\qquad\qquad&\lesssim \sum_{U\in\mc{G}_\theta}\Big(\int|f_\theta|^2(y)\Big(\int|\s_U(x)|^2|\widecheck{\rho}_{\theta}|^2(x-y)dy\Big)^{\frac{1}{2}}dy\Big)^2 \\
&\lesssim \sum_{U\in\mc{G}_\theta}|U|^{-1}\Big(\int|f_\theta|^2(y)W_U(y)dy\Big)^2 \end{align*}
where we used the definition of $W_U$ in the final line. 

Now we bound the remaining sets on the right hand side of \eqref{broadnar1}. Fix $\Delta\in R^{\N\e}$, $R^{-1/2}<\Delta<1$, $\tilde{\tau}$ satisfying $\ell(\tilde{\tau})=\Delta$, and $\tau',\tau''\subset\tilde{\tau}$ satisfying $\ell(\tau')=\ell(\tau'')= R^{-\e}\Delta$, $\tau',\tau''$ nonadjacent. By parabolic rescaling, there exists a linear transformation $T$ so that $|f_{W_m,\tau'}^{\mc{B}}\circ T|=|g_{\underline{\tau}'}|$ and $|f_{W_m,\tau''}^{\mc{B}}\circ T|=|g_{\underline{\tau}''}|$ where $\underline{\tau}'$ and $\underline{\tau}''$ are $\sim R^{-\e}$-separated blocks in $\mc{N}_{\Delta^{-2}R^{-1}}(\P^1)$. For each $\theta$, $|f_{W_m,\theta}^{\mc{B}}\circ T|=|g_{\underline{\theta}}|$ where $\underline{\theta}$ is a $\sim \Delta^{-1} R^{-\frac{1}{2}}\times \Delta^{-2}R^{-1}$ block. For $k\ge m$, $W_k\times R$ wave envelopes $U$ become $\Delta W_k\times \Delta^2R$ wave envelopes $\underline{U}$.


Next, we will note that a square function formed from $|g|=|f_{W_m}^{\mc{B}}\circ T|$ at scale $(\Delta W_m)/(\Delta^2 R)$ is high-dominated in the sense of Lemma \ref{highdom}. Let $\underline{\tau}_{m-1}$ be $\sim (\Delta W_{m-1})/(\Delta^{2}R)$ caps satisfying $|g_{\underline{\tau}_{m-1}}|=|f_{W_m,\tau_{m-1}}^{\mc{B}}\circ T|$. Consider the following abbreviated version of the proof of Lemma \ref{highdom}. By Cauchy-Schwarz,
\[ \a^2\lesssim\e^{-2}C_0^{2m_0}R^{2C_0\e}|g_{\underline{\tau}'}(x)||g_{\underline{\tau}''}(x)|\lesssim \e^{-2}C_0^{2m_0}R^{2C_0\e} \#\underline{\tau}_{m-1} \sum_{\underline{\tau}_{m-1}}|g_{\underline{\tau}_{m-1}}|^2(x)    \]
where $\#\underline{\tau_m}=\#\{\tau_m\subset(\tau'\cup\tau''):g_{\underline{\tau}_m}\not\equiv 0\}$. Assume that the square function is low-dominated: 
\begin{equation}\label{contra3} 
\sum_{\underline{\tau}_m}|g_{\underline{\tau}_m}|^2(x) \lesssim |\sum_{\underline{\tau}_m}|g_{\underline{\tau}_m}|^2*\widecheck{\eta}_{\le (\Delta W_{m-1})^{-1}}(x) |.
\end{equation} 
Then by analogues of Lemma \ref{low} and the proof of and Lemma \ref{high}, 
\[ \sum_{\underline{\tau}_{m-1}}|g_{\underline{\tau}_{m-1}}|^2(x) \lesssim\sum_{\underline{\tau}_{m-1}} \max_{\underline{U}\not\in\mc{G}_{\tau_{m-1}}}|\underline{U}|^{-1}\int \sum_{\underline{\theta}\subset\underline{\tau}_{m-1}}|g_{\underline{\theta}}|^2\s_{\underline{U}}^{\frac{1}{2}}. \]
Undoing the change of variables defining $g$, we see that 
\[ |\underline{U}|^{-1}\int \sum_{\underline{\theta}\subset\underline{\tau}_{m-1}}|g_{\underline{\theta}}|^2\s_{\underline{U}}^{\frac{1}{2}}=|{U}|^{-1}\int \sum_{{\theta}\subset{\tau}_{m-1}}|f_{W_m,{\theta}}^{\mc{B}}|^2\s_{{U}}^{\frac{1}{2}}. \]
By definition of $f_{W_m}^{\mc{B}}$, the right hand side is $\le \frac{1}{(\log R)C_0^{m_0}R^{C_0\e}}\frac{\a^2}{(\#\tau_{m-1})^2}$. As in the proof of Lemma \ref{high}, combined with \eqref{contra3}, this leads to a contradiction. Therefore, the function $g_{\underline{\tau}'}+g_{\underline{\tau}''}$ satisfies the same weak high-dominance property as $f^{\mc{B}}_{W_m}$ did in the bound for $\text{Br}_\a^m(\tau,\tau')$ in Proposition \ref{mainprop}. This means that we are in a position to repeat the proof of Proposition \ref{mainprop} at scale $\Delta^2R$, with the function $f_{W_m,\tilde{\tau}}^{\mc{B}}\circ T$, and using the broad caps $\tilde{\tau}',\tilde{\tau}''$, which yields 
\begin{align*} 
\a^4|\mc{J}(T)|^{-1}|\text{Br}_\a^m(\tau',\tau'')|&\lesssim_\e C_0^{4m_0}R^{100C_0\e} \sum_{ \Delta R^{1/2}<\Delta W_k<\Delta W_m}\sum_{\substack{\underline{\tau}_k\\\ell(\underline{\tau}_k)=\Delta^{-1}W_k/R}}\sum_{\underline{U}\in\mc{G}_{\underline{\tau}_k}}|\underline{U}|\left(|\underline{U}|^{-1}\int\sum_{\underline{\theta}\subset\underline{\tau}_k}|g_{\underline{\theta}}|^2W_{\underline{U}}\right)^2, 
\end{align*}
finally implying 
\begin{align*} 
\a^4|\text{Br}_\a^m(\tau',\tau'')|&\lesssim_\e C_0^{4m_0}R^{100C_0\e}  \sum_{  R^{1/2}< W_k<W_m}\sum_{\substack{{\tau}_k\subset\tilde{\tau}\\\ell({\tau}_k)=W_k/R}}\sum_{{U}\in\mc{G}_{{\tau}_k}}|U|\left(|U|^{-1}\int\sum_{{\theta}\subset{\tau}_k}|f_\theta|^2W_U\right)^2 ,
\end{align*}
as desired. 
\end{proof}

\subsection{Theorem \ref{mainP} for general curves \label{mainPgensec}}

In this section, we sketch how to generalize  Theorem \ref{mainP} from $\mb{P}^1$ to curves
\[ \mc{C}=\{(t,\g(t)):|t|\le\frac{1}{2}\} \]
where $\g:[-\frac{1}{2},\frac{1}{2}]\to\R$ is a $C^2$ function satisfying 
\begin{equation}\label{gencurve} \g(0)=0,\quad\g'(0)=0,\quad\frac{1}{2}\le \g''(t)\le 2\qquad\text{for all}\quad |t|\le\frac{1}{2} . \end{equation}
Define a canonical $\sim R^{-1/2}\times R^{-1}$ rectangle $\theta$ to be the portion of $\mc{N}(\mc{C})$ over $t$ in a dyadic interval of length $R^{-1/2}$. The proof of Proposition \ref{mainprop} holds uniformly for curves satisfying \eqref{gencurve}. In particular, the bilinear restriction inequality, Theorem \ref{bilrest}, cited as Theorem 16 from \cite{lvlsets} also holds uniformly over curves satisfying \eqref{gencurve}, by the proof in \cite{lvlsets}. Writing Lemma \ref{local} uniformly in ${\bf{C}}$ is straightforward. 

The biggest adaptation for general curves happens in the broad-narrow analysis which is used to prove that Propositions \ref{m0} and \ref{mainprop} prove Theorem \ref{mainP}. In particular, for $R^{-1/2}<\Delta<1$ and $\ell(\tau)=\Delta$, we no longer have a parabolic rescaling which maps $\tau\cap{\mc{C}}$ to ${\mc{C}}$. Instead, we define a more general map $L:\R^2\to\R^2$ as follows. Suppose that $\tau$ is the intersection of $\mc{N}_{\Delta^2}(\mc{C})$ with the strip $[(l-\frac{1}{2})\Delta,(l+\frac{1}{2})\Delta)\times\R$. Then let 
\[ L(x,y)=(\Delta^{-1}(x-l\Delta),\Delta^{-2}(y-\g'(l\Delta)x-\g(l\Delta)+\g'(l\Delta)l\Delta)). \]
Note that
\[ L({\mc{C}})=\{(s,\tilde{\g}(s)):|s|\le\frac{1}{2}\} \]
where $\tilde{\g}(s)=\Delta^{-2}[\g(\Delta(\w+l))-\g'(l\Delta)\Delta(\w+l)-\g(l\Delta )+\g'(l\Delta)l\Delta]$. The function $\tilde{\g}$ again satisfies \eqref{gencurve}. If $\ell(\theta)=R^{-1/2}$ is a canonical cap of $\mc{C}$, then $L(\theta)$ is a canonical cap of $L(\mc{C})$ with $\ell(L(\theta))=\Delta^{-1}R^{-1/2}$. Finally, we check how the wave envelopes transform under $L$. Let $A:\R^2\to\R^2$ be the linear part of $L$, so $A(x,y)=(\Delta^{-1}x,\Delta^{-2}(y-\g'(l\Delta)x))$. If $R^{-1/2}<s<\Delta$ and $\tilde{\tau}$, $\ell(\tilde{\tau})=\Delta^{-1}s$ is a canonical cap of $\tilde{\g}$ then the wave envelope $U_{\tilde{\tau},\Delta^2R}$ is 
\[ \{\lambda_1(1,\tilde{\g}'(\tilde{c}))+\lambda_2(-\tilde{\g}'(\tilde{c}),1):|\lambda_1|\le s\Delta R \quad\text{and}\quad |\lambda_2|\le  \Delta^2 R\}\]
where $\tilde{\tau}$ lies above the set where $|\w-\tilde{c}|\le\frac{s}{2}$. After applying Proposition \ref{mainprop} at scale $\Delta^2R$, we undo the change of variables to return to our original function. The wave envelope displayed above becomes $A^*(U_{\tilde{\tau},\Delta^2R})$, which is 
\[ \{\lambda_1(\Delta^{-1}-\Delta^{-2}\g'(l\Delta)\tilde{\g}'(\tilde{c}),\Delta^{-2}\tilde{\g}'(\tilde{c}))+\lambda_2(-\Delta^{-1}\tilde{\g}'(\tilde{c})-\Delta^{-2}\g'(l\Delta),\Delta^{-2}):|\lambda_1|\le s \Delta R \quad\text{and}\quad |\lambda_2|\le  \Delta^2 R\}. \]
Using the definition of $\tilde{\g}$, this simplifies to 
\[ \{\lambda_1(\Delta^{-1}-\Delta^{-2}\g'(l\Delta)\tilde{\g}'(\tilde{c}),\Delta^{-2}\tilde{\g}'(\tilde{c}))+\lambda_2(-\g'(\Delta(\tilde{c}+l)),1):|\lambda_1|\le s R \quad\text{and}\quad |\lambda_2|\le  R\}, \]
which is comparable to $U_{L^{-1}(\tilde{\tau}),R}$, as desired.

\section{Small cap decoupling for $\P^1$ from Theorem \ref{mainP} \label{smallcapP} }

Let $\b\in[\frac{1}{2},1]$ and let $\g$ be approximate $R^{-\b}\times R^{-1}$ boxes tiling the $R^{-1}$ neighborhood of $\P^1$. One way to precisely define the $\g$ is as follows: let $s$ be a dyadic number satisfying $R^{-\b}\le s<2R^{-\b}$. Then define the $\gamma$ to be 
\begin{equation}\label{smallblocks} 
\bigsqcup_{|l|\le RW_k^{-1}-2} \{(\xi_1,\xi_2)\in\mc{N}_{W_k^2/R^2}(\P^1):lW_kR^{-1}\le \xi_1<(l+1)W_kR^{-1} \}  \end{equation}
and the two end pieces
\[  \{(\xi_1,\xi_2)\in\mc{N}_{W_k^2/R^2}(\P^1):\xi_1<-1+W_kR^{-1}\} \sqcup  \{(\xi_1,\xi_2)\in\mc{N}_{W_k^2/R^2}(\P^1):1-W_kR^{-1}\le \xi_1\}. \]
Note that with this definition, the $\gamma$ further subdivide the $\theta$, $\ell(\theta)=R^{-1/2}$ defined in \textsection\ref{tools}. Suppose that we have pigeonholed the $f_\g$ so that
\[ \|f_\g\|_p^p\sim_p\|f_\g\|_2^2\]
(for an implicit constant that is uniform for $p$ in a compact set) and $\|f_\g\|_2^2$ is either comparable to a uniform number or equal to $0$, and $\|f_\g\|_\infty\sim 1$ or $0$. Reducing to this case is explained in an analogous context in Section 5 of \cite{gmw}. We wish to verify the following small cap theorem (Theorem 3.1 of \cite{smallcap}) using Theorem \ref{mainP}.

Recall the statement of Theorem \ref{smallcapthmB}:
\begin{theorem*}
Let $\b\in[\frac{1}{2},1]$. For any $p,q\ge 1$ satisfying $\frac{3}{p}+\frac{1}{q}\le 1$, 
\begin{equation}
\int_{B_R}|\sum_\g f_\g|^p\le C_\e R^\e\Big[1+R^{\b(\frac{1}{2}-\frac{1}{q})p}+R^{\b(p-\frac{p}{q}-1)-1}\Big]\big(\sum_\g\|f_\g\|_{L^p(\R^2)}^q\big)^{\frac{p}{q}} .\end{equation}
\end{theorem*}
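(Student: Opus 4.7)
After the pigeonholing reduction preceding Theorem \ref{smallcapthmB} in \textsection\ref{smallcapP}, it is enough to prove the superlevel-set version
\[
 \a^p|U_\a|\le C_\e R^\e\bigl[1+R^{\b(\frac12-\frac1q)p}+R^{\b(p-\frac pq-1)-1}\bigr]\bigl(\textstyle\sum_\g\|f_\g\|_p^q\bigr)^{p/q}
\]
for $U_\a=\{x\in B_R:|f(x)|\sim\a\}$. (As in the cone discussion in \textsection\ref{int}, the ranges $2\le p\le 4$ and $p\ge 6$ follow by interpolation with trivial $L^2$ and $L^\infty$ estimates, so one focuses on $4\le p\le 6$.) The plan is to apply Theorem \ref{mainP} and then raise $\a^4$ to $\a^p$ using the defining inequality of $\mc{G}_\tau(\a)$: $\a^2/(\#\tau)^2\le C_\e R^\e|U|^{-1}\|S_Uf\|_2^2$, raised to the power $(p-4)/2$, gives
\[
 \a^p|U_\a|\lesssim C_\e R^{p\e/2}\sum_{W\in[R^{1/2},R]\cap 2^\N}\sum_{\ell(\tau)=W/R}(\#\tau)^{p-4}\sum_{U\|U_{\tau,R}}|U|\bigl(|U|^{-1}\|S_Uf\|_2^2\bigr)^{p/2}.
\]
Applying Jensen's inequality to the (approximate) probability measure $W_U/\|W_U\|_1$ with the convex function $t\mapsto t^{p/2}$, together with $\sum_{U\|U_{\tau,R}}W_U\lesssim 1$, reduces the right-hand side to $\sum_W\sum_\tau(\#\tau)^{p-4}\int(\sum_{\theta\subset\tau}|f_\theta|^2)^{p/2}$.

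\textbf{Scale-by-scale analysis.} The two nontrivial terms in the theorem correspond to the extremal dyadic scales $W=R^{1/2}$ and $W=R$, matching the parabola analogues of the constructive-interference and square-root-cancellation examples from the discussion following Theorem \ref{smallcapthm}. At the finest scale $W=R^{1/2}$, $\tau$ coincides with a canonical block $\theta$; using $\|f_\theta\|_\infty\lesssim\#\{\g\subset\theta:f_\g\not\equiv 0\}$ (a consequence of $\|f_\g\|_\infty\sim 1$) together with the pointwise inequality $|f_\theta|^p\le\|f_\theta\|_\infty^{p-2}|f_\theta|^2$ and Plancherel, while keeping careful track of the actual count of nonzero blocks, yields a bound of the form $R^{\b(p-2)-1}\sum_\g\|f_\g\|_2^2$. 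At the coarsest scale $W=R$ there is essentially a single $\tau$ and $U=B_R$; since $\widehat{W_{B_R}}$ is supported in a ball of radius $R^{-1}\le R^{-\b}$, the small caps $\g$ are (approximately) $L^2$-orthogonal against $W_{B_R}$, so that $\int\sum_\theta|f_\theta|^2W_{B_R}\lesssim\sum_\g\int|f_\g|^2W_{B_R}$, and combining this with $\sum_\g|f_\g|^2\le M:=\#\{\g:f_\g\not\equiv 0\}$ pointwise and Jensen produces the factor $M^{p/2-1}\sum_\g\|f_\g\|_2^2$. Intermediate dyadic scales are controlled by combining these two methods; a direct computation shows that the resulting exponent of $W$ is monotone, so the dyadic sum loses only a factor of $\log R\le C_\e R^\e$ beyond the extremes.

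\textbf{Conversion and main obstacle.} Under the pigeonholing, $\sum_\g\|f_\g\|_2^2\sim M^{1-p/q}(\sum_\g\|f_\g\|_p^q)^{p/q}$ with $M\le R^\b$, so the finest-scale bound becomes $R^{\b(p-2)-1}M^{1-p/q}(\sum)^{p/q}$, which one verifies is controlled by $R^{\b(p-p/q-1)-1}(\sum)^{p/q}$ using $M\le R^\b$ and the constraint $\frac3p+\frac1q\le 1$; similarly the coarsest-scale bound becomes $M^{p/2-p/q}(\sum)^{p/q}$, controlled by $R^{\b p(1/2-1/q)}(\sum)^{p/q}$ when $q\ge 2$ and by $(\sum)^{p/q}$ (the $+1$) otherwise. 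The main obstacle I anticipate is the intermediate-scale analysis: the pointwise $L^\infty$ estimate is efficient for small $W$ while the local $L^2$-orthogonality is efficient for large $W$, and showing that their combination matches \emph{exactly} the two extremal terms---with no new term arising at intermediate $W$, and with careful attention to how the actual number $\#\tau$ of nonzero blocks interacts with the bound---is the main technical task. The constraint $\frac3p+\frac1q\le 1$ is invoked precisely to rule out a third (``block'') competing term.
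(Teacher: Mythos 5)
Your overall strategy is the right one and matches the paper in outline: apply Theorem \ref{mainP}, promote $\a^4$ to $\a^p$ via the defining inequality of $\mc{G}_\tau(\a)$, and pass from the square function at each scale to the small caps $\g$ by a combination of $L^\infty$-stripping and local $L^2$-orthogonality. But there are two substantive gaps. First, the parenthetical reduction to $4\le p\le 6$ by interpolation does not go through here. The interpolation remark in \textsection\ref{int} concerns the cone's $(\ell^p,L^p)$ reduction \eqref{red}, where the right-hand side $\sum_\g\|f_\g\|_2^2$ is independent of the exponent. Here, after pigeonholing, the right-hand side is $\#\g^{p/q}\|f_\g\|_p^p$ and the constraint $\frac3p+\frac1q\le1$ couples $p$ and $q$: at $p=4$ the constraint forces $q\ge 4$, so for $2\le q<4$ there is no admissible $p=4$ endpoint to interpolate from, yet those $(p,q)$ are allowed once $p\ge 6$. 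The paper does not interpolate; it notes the constraint forces $p\ge 3$ and treats three regimes $3\le p\le4$, $4\le p\le6$ and $p\ge6$ directly. In particular, for $3\le p\le4$ it keeps the exponent $4$, divides by $\a^{4-p}$, and splits on the dichotomy $\a^2\le\#\g$ versus $\a^2>\#\g$ — a manoeuvre your sketch does not anticipate.

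Second, the intermediate-scale analysis — the claim that the exponent in $W$ is monotone and only the extremal scales matter — is precisely the hard part and cannot be asserted without computation. The paper introduces the intermediate caps $\g_k$ of width $\max(R^{-\b},W_k^{-1})$: this is exactly the scale at which the $W_k^{-1}\times R^{-1}$ Fourier support of $W_U$ cuts off local $L^2$-orthogonality, so orthogonality against $W_U$ only takes you from $\sum_{\theta\subset\tau_k}|f_\theta|^2$ to $\sum_{\g_k\subset\tau_k}|f_{\g_k}|^2$, not all the way down to $\g$. One then strips $L^\infty$ factors of $\|\sum_{\g_k\subset\tau_k}|f_{\g_k}|^2\|_\infty\lesssim(\#\g\subset\tau_k)(\#\g\subset\g_k)$ and must verify, for each dyadic $W_k$ and each $p$-regime (with a further split on $q\ge2$ versus $q<2$ when $p\ge6$), an inequality involving the four quantities $\#\g$, $\#\tau_k$, $\#\g\subset\tau_k$, $\#\g\subset\g_k$. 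Your sketch asserts the conclusion of this check rather than performing it; moreover, the ``finest-scale'' bound $R^{\b(p-2)-1}\sum_\g\|f_\g\|_2^2$ you state presumes the dense case $\#\g\subset\theta\sim R^{\b-1/2}$, whereas the actual proof must hold with the true cardinalities as free parameters, which is where the constraint $\frac3p+\frac1q\le1$ is used case by case.
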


\begin{proof}[Proof of Theorem \ref{smallcapthmB}]
It suffices to prove Theorem \ref{smallcapthmB} for $f$ satisfying additional hypotheses. By analogous argument as in Section 5 of \cite{gmw}, it suffices to consider $f=\sum_\g f_\g$ satisfying $\|f_\g\|_\infty\sim 1$ or $f_\g=0$ for all $\g$ and 
\[ \|f_\g\|_p^p\sim_p\|f_\g\|_2^2\]
for all $\g$ and all $p\ge 1$. By an argument analogous as in the beginning of Section 5.2 from \cite{lvlsets}, we may assume that for each $s$-arc $\w$ of $\P^1$, the number $\#\{\g:\g\cap\w\not=\emptyset\}$ is comparable to a fixed quantity or it is equal to $0$. The final pigeonholing step is to choose $\a>0$ so that 
\[\int_{B_R\cap\{|f|>R^{-1000}\}}|\sum_\g f_\g|^p\lesssim(\log R)\a^p|U_\a| \]
where $U_\a=\{x\in B_R:|f(x)|\sim\a\}$. The other portion  $\int_{B_R\cap\{|f|<R^{-1000}\}}|f|^p$ of the integral may be dealt with using trivial arguments. Thus our goal for the rest of the proof is to show
\[ \a^p|U_\a|\le C_\e R^\e(1+R^{\b(\frac{p}{2}-\frac{p}{q})}+R^{\b(p-\frac{p}{q}-1)-1})\#\g^{\frac{p}{q}}\|f_\g\|_p^p \]
where $\|f_\g\|_p^p$ is roughly uniform in $\g$. 

The restriction $\frac{3}{p}+\frac{1}{q}\le 1$ implies that $p\ge 3$. By Theorem \ref{mainP}, there is a scale $W_k$ for which 
\[ \a^{4}|U_\a|\le C_\e R^\e \sum_{\tau_k}\sum_{U\in\mc{G}_{\tau_k}(\a)}|U|^{-1}\|S_Uf\|_2^4. \]
Let $\g_k$ denote $\sim\max(R^{-\b},W_k^{-1})$-small caps, so either $\g=\g_k$ or $\g\subset\g_k$. By the definition of $\mc{G}_{\tau_k}(\a)$ and local $L^2$-orthogonality (the argument from the proof of Lemma \ref{low}), we have 
\[ \frac{\a^2}{(\#\tau_k)^2}\lesssim_\e R^\e |U|^{-1}\int\sum_{\theta\subset\tau_k}|f_\theta|^2W_U\lesssim_\e R^\e|U|^{-1}\int \sum_{\g_k\subset\tau_k}|f_{\g_k}|^2W_U. \]
Therefore, 
\[ \a^{\max(4,p)}|U_\a|\lesssim_\e R^{O(\e)}\sum_{\tau_k}\#\tau_k^{\max(4,p)-4}\sum_{U\in\mc{G}_{\tau_k}(\a)}|U|\Big(|U|^{-1}\int\sum_{\g_k\subset\tau_k}|f_{\g_k}|^2W_U\Big)^{\max(\frac{p}{2},2)} . \]
By removing $\max(\frac{p}{2},2)-1$ many factors of $\|\sum_{\g_k\subset\tau_k}|f_{\g_k}|^2\|_\infty\lesssim \#\g\subset\tau_k\#\g\subset\g_k$ and using $L^2$ orthogonality, further conclude that either $\a^2\le\#\g$ or
\begin{align}
\a^{\max(4,p)} |U_\a|&\le C_\e R^\e\sum_{\tau_k}\#\tau_k^{\max(4,p)-4}(\#\g\subset\tau_k\#\g\subset\g_k)^{\max(\frac{p}{2},2)-1}\sum_{\g\subset\tau_k}\|f_\g\|_2^2\nonumber \\
&\lesssim_pC_\e R^\e\#\g\#\tau_k^{\max(4,p)-4}\#\g\subset\tau_k^{\max(\frac{p}{2},2)-1}\#\g\subset\g_k^{\max(\frac{p}{2},2)-1}\|f_\g\|_p^p \label{fromhere}  .\end{align}

\noindent\fbox{Case 1: $3\le p\le 4$.} We claim that in this range,
\[ \a^p|U_\a|\lesssim R^{\b(\frac{p}{2}-\frac{p}{q})}\#\g^{\frac{p}{q}}\|f_\g\|_p^p. \]
Note that if $\a^2\le\#\g$, then 
\[ \a^{p}|U_\a|\le \#\g^{\frac{p}{2}}\|f_\g\|_2^2\lesssim_p \max(1,R^{\b(\frac{1}{2}-\frac{1}{q})p})\#\g^{\frac{p}{q}}\|f_\g\|_p^p, \]
so we assume without loss of generality that $\a^2>\#\g$.

Since $p\le 4$, $\frac{3}{p}+\frac{1}{q}\le 1$ implies that $q\ge 4$ and in particular, $\frac{p}{2}-\frac{p}{q}-1\ge 2-\frac{p}{2}\ge 0$. Now use \eqref{fromhere} with exponent $4$ to see that it suffices to verify that
\[ \frac{1}{\a^{4-p}}\#\g\#\g\subset\tau_k\#\g\subset\g_k \overset{?}{\le}R^{\b(\frac{p}{2}-\frac{p}{q})} \#\g^{\frac{p}{q}}. \]
Using the assumption that $\a^2\ge\#\g$, we check
\begin{align*}
\#\g^{\frac{p}{2}-\frac{p}{q}-1}\#\g\subset\tau_k\#\g\subset\g_k&\overset{?}{\le}R^{\b(\frac{p}{2}-\frac{p}{q})} \\
(\text{implied by})\qquad R^{\b(\frac{p}{2}-\frac{p}{q}-1)}(R^\b W_k/R)\max(1,R^\b W_k^{-1})&\overset{?}{\le}R^{\b(\frac{p}{2}-\frac{p}{q})} \\
    \max(1,R^\b W_k^{-1})&\overset{?}{\le}RW_k^{-1}\qquad(true).
\end{align*}

\noindent\fbox{Case 2: $4\le p\le 6$.} 
Picking up from \eqref{fromhere}, we verify
\begin{align*} 
\#\g\#\tau_k^{p-4}\#\g\subset\tau_k^{\frac{p}{2}-1}\#\g\subset\g_k^{\frac{p}{2}-1}&\overset{?}{\le}R^{\b(p-\frac{p}{q}-1)-1}\#\g^{\frac{p}{q}}\\
\#\g^{p-3}\#\g\subset\tau_k^{3-\frac{p}{2}}\#\g\subset\g_k^{\frac{p}{2}-1}&\overset{?}{\le} R^{\b(p-\frac{p}{q}-1)-1}\#\g^{\frac{p}{q}}\\
\#\g^{p-\frac{p}{q}-3}(R^{\b}W_k/R)^{3-\frac{p}{2}}(R^\b W_k^{-1})^{\frac{p}{2}-1}&\overset{?}{\le} R^{\b(p-\frac{p}{q}-1)-1} \\
\#\g^{p-\frac{p}{q}-3}R^{2\b-3+\frac{p}{2}}W_k^{4-p}&\overset{?}{\le} R^{\b(p-\frac{p}{q}-1)-1} \\
\#\g^{p-\frac{p}{q}-3}&\overset{?}{\le} R^{\b(p-\frac{p}{q}-3)}(W_k/R^{1/2})^{p-4} \qquad(true).
\end{align*}
To verify the final line, we used that $p-\frac{p}{q}-3\ge 0$, which is always true by hypothesis. 

\noindent\fbox{Case 3: $6\le p$.} 
Again, from \eqref{fromhere}, we verify
\begin{align*} 
\#\g \#\tau_k^{p-4}\#\g\subset\tau_k^{\frac{p}{2}-1}\#\g\subset\g_k^{\frac{p}{2}-1}&\overset{?}{\le} R^{\b(p-\frac{p}{q}-1)-1}\#\g^{\frac{p}{q}} \\
\#\tau_k^{\frac{p}{2}-3}\#\g^{\frac{p}{2}}\#\g\subset\g_k^{\frac{p}{2}-1}&\overset{?}{\le} R^{\b(p-\frac{p}{q}-1)-1}\#\g^{\frac{p}{q}} \\
 \#\tau_k^{\frac{p}{2}-3}\#\g^{\frac{p}{2}}\#\g\subset\g_k^{\frac{p}{2}-1}(\#\g_k^{\frac{p}{q}-\frac{p}{2}})&\overset{?}{\le} R^{\b(p-\frac{p}{q}-1)-1}\#\g^{\frac{p}{q}} (\#\g_k^{\frac{p}{q}-\frac{p}{2}})\\
  \#\tau_k^{\frac{p}{2}-3}\#\g\subset\g_k^{p-\frac{p}{q}-1}&\overset{?}{\le} R^{\b(p-\frac{p}{q}-1)-1}\#\g_k^{\frac{p}{q}-\frac{p}{2}}.
 \end{align*}
If $q\ge 2$, then we have
\begin{align*}
\#\g_k^{\frac{p}{2}-\frac{p}{q}}\#\tau_k^{\frac{p}{2}-3}\#\g\subset\g_k^{p-\frac{p}{q}-1}&\overset{?}{\le} R^{\b(p-\frac{p}{q}-1)-1}\\
(\text{implied by})\qquad (\min(R^\b,W_k))^{\frac{p}{2}-\frac{p}{q}}(R W_k^{-1})^{\frac{p}{2}-3}(R^\b\max(R^{-\b},W_k^{-1}))^{p-\frac{p}{q}-1}&\overset{?}{\le} R^{\b(p-\frac{p}{q}-1)-1} .
\end{align*}
If $R^\b\le W_k$, then this is true since $R^{\b(\frac{p}{2}-\frac{p}{q})}(R R^{-\b})^{\frac{p}{2}-3}\overset{?}{\le} R^{\b(p-\frac{p}{q}-1)-1}$ reduces to checking that $p\ge 4$. If $R^\b>W_k$, then it is true since 
$W_k^{\frac{p}{2}-\frac{p}{q}}(RW_k^{-1})^{\frac{p}{2}-3}(R^\b W_k^{-1})^{p-\frac{p}{q}-1}\overset{?}{\le}R^{\b(p-\frac{p}{q}-1)-1}$ reduces to verifying $R^{1/2}\le W_k$.

Finally, assume that $q<2$. The inequality we are trying to verify is 
\[\#\tau_k^{\frac{p}{2}-3}\#\g\subset\g_k^{p-\frac{p}{q}-1}\overset{?}{\le} R^{\b(p-\frac{p}{q}-1)-1}\#\g_k^{\frac{p}{q}-\frac{p}{2}}. \]
Note that $\#\tau_k\le \#\g_k$, so it suffices to verify
\[ \#\tau_k^{p-\frac{p}{q}-3}\#\g\subset\g_k^{p-\frac{p}{q}-1}\overset{?}{\le} R^{\b(p-\frac{p}{q}-1)-1}. \]
Since all of the powers on the left hand side are nonnegative, it now suffices to check
\[ (R W_k^{-1})^{p-\frac{p}{q}-3}(R^\b\max(R^{-\b},W_k^{-1}))^{p-\frac{p}{q}-1}\overset{?}{\le} R^{\b(p-\frac{p}{q}-1)-1}. \]
If $R^\b\le W_k$, this is true because it reduces to verifying $(RR^{-\b})^{p-\frac{p}{q}-3}\overset{?}{\le}R^{\b(p-\frac{p}{q}-1)-1} $, which simplifies to $1\le R^{(2\b-1)(p-\frac{p}{q}-2)}$ (true). If $R^\b>W_k$, this is true since it reduces to verifying $R^{p-\frac{p}{q}-2} W_k^{2p-2\frac{p}{q}-4}$, which clearly follows from $R\le W_k^2$.

$R^{\b(\frac{p}{2}-\frac{p}{q})}$ since $\#\tau_k^{\frac{p}{2}-3}\le\#\g_k^{\frac{p}{2}-1}$. 
\end{proof}

\section{The proof of the main theorem for the cone for the cone \label{mainCpf}}

Theorem \ref{mainC} builds on the proof of Theorem 1.3 from \cite{locsmooth}. The functions we will consider are Schwartz functions $f:\R^3\to\C$ with Fourier transform supported in an $R^{-1}$ neighborhood of 
\[ \Gamma=\{(\xi_1,\xi_2,\xi_3)\in\R^3:\xi_1^2+\xi_2^2=\xi_3^2,\quad\frac{1}{2}\le|\xi_3|\le 2\} ,\]
which we denote $\mc{N}_{R^{-1}}(\Gamma)$. Although many choices of conical caps work in the argument that follows, we will choose a specific way to partition $\mc{N}_{R^{-1}}(\Gamma)$ for concreteness. Let $R\in 4^\N$. Split $[0,2\pi)$ into $R^{1/2}$ many intervals $I_\theta$ of length $2\pi/R^{1/2}$. Writing $(\xi_1,\xi_2,\xi_3)=(\rho\cos\w,\rho\sin\w,z)$ in cylindrical coordinates, each canonical $R^{-1/2}$-cap $\theta$ for the cone is 
\begin{equation}\label{theta} \theta:=\mc{N}_{R^{-1}}(\Gamma)\cap \{\w\in I_\theta\} \end{equation}
and we may write $\mc{N}_{R^{-1}}(\Gamma)=\sqcup\theta$. For another dyadic value $s\ge R^{-1/2}$, an $s$-cap $\tau$ satisfies either $\theta\cap \tau=\emptyset$ or $\theta\subset\tau$. Note that each $s$-cap $\tau$ is approximately a plank of dimension $1\times s\times s^2$, meaning that there is some rectangular plank $P$ of dimensions $1\times s\times s^2$ so that $cP\subset \tau\subset CP$ for absolute constants $c,C>0$, where the $cP$ and $CP$ are dilations of $P$ with respect to its centroid. Define the Fourier projections $f_\tau$ by 
\[ f_\tau(x)=\int_\tau \widehat{f}(\xi)e^{2\pi i x\cdot\xi}d\xi . \]

The parameter $\a>0$ describes the size of functions in the superlevel sets 
\[ U_\a=\{x\in B_R:|f(x)|\ge \a\},\]
where $B_R$ is any $R$-ball in $\R^3$. Next we summarize the relevant notation, occasionally with superficial differences, from Section 3 of \cite{locsmooth} so that we may easily reference certain parts of their argument. 
\begin{enumerate}
    \item Let ${\bf{S}}_s$ denote the collection of $s$-caps $\tau$. Our $s$-caps $\tau$ are comparable to the caps $\tau(s,\xi)$ defined at the beginning of Section 3 of \cite{locsmooth}, which we show in \textsection\ref{conesmallcap}. 
    \item For $\tau\in{\bf{S}}_s$, let the associated $I_\tau$ in the definition of $\tau$ have left endpoint $\w_\tau$. Then define the frame ${\bf{c}}(\w_\tau)=(\cos\w_{\tau},\sin\w_\tau,1)$, ${\bf{n}}(\w_\tau)=(\cos\w_{\tau},\sin\w_{\tau},-1)$, and ${\bf{t}}(\w_{\tau})=(-\sin\w_{\tau},\cos\w_\tau,0)$. Define $U_{\tau,R}$ by
    \[ U_{\tau,R}=\{x\in\R^3:|{\bf{c}}(\w_{\tau})\cdot x|\le Rs^2,\quad|{\bf{n}}(\w_\tau)\cdot x|\le R,\quad |{\bf{t}}(\w_\tau)\cdot x|\le sR\}.\]
    Write $U\| U_{\tau,R}$ to index a tiling of $\R^3$ by wave envelopes $U$ which are translates of $U_{\tau,R}$.  
    \item For $U\| U_{\tau,R}$, define $W_U$ analogously as in \eqref{weightdef}. The average integral notation $\fint_Ug$ here means
    \[ \fint_Ug= |U|^{-1}\int g W_U. \]
    \item Given any $U\|U_{\tau,R}$
    , define the square function 
    \[ S_Uf(x)=\big(\sum_{\substack{\theta\in\mc{S}_{R^{-1/2}}\\ \theta\subset \tau }}|f_\theta|^2(x)W_U(x)\big)^{1/2} .\]
    These partial square functions are larger than the analogous partial square functions from \cite{locsmooth}, which is fine because we do not invoke any results from \cite{locsmooth} specifically regarding the partial square functions in our argument. Note that for any $r$-cube $B_r$, 
    \[ S_{B_r}f(x)=\big(\sum_{\tau\in{\bf{S}}_{r^{-1/2}}}|f_\tau|^2(x)W_{B_r}(x)\big)^{1/2}. \]
    \item Define the average $L^p$ norm of the square function $S_Uf$ by 
    \[ \|S_Uf\|_{L^p_{avg}}=\big(|U|^{-1}\int |S_Uf|^p\big)^{1/p} . \]
\end{enumerate}

Recall the statement of the theorem we need to prove. 
\begin{theorem*}\label{mainC*} For $\e>0$, there exists $C_\e>0$ such that the following inequality holds for $R$ sufficiently large. For any $\a>0$,
\begin{equation}\label{mainCeqn} \alpha^4 |U_\alpha| \le C_\e R^\e \sum_{\substack{R^{-1/2}<s<1\\ s\quad\text{dyadic}}}\sum_{\tau\in{\bf{S}}_s} \sum_{\substack{U\|U_{\tau,R}\\
C_\e R^\e\|S_Uf\|_{L^2_{avg}}\ge \frac{\a}{\#\tau}}} |U|\|S_Uf\|_{L^2_{avg}}^4  \end{equation}
where $f:\R^3\to\C$ is a Schwartz function with Fourier transform supported in $\mc{N}_{R^{-1}}(\Gamma)$.
\end{theorem*}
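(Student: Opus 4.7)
\medskip

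\noindent\textbf{Proof plan for Theorem \ref{mainC}.} My plan is to adapt the pruning/high-low/broad-narrow scheme used to prove Theorem \ref{mainP}, with three structural changes forced by the move from $\mb{P}^1$ in $\R^2$ to $\Gamma$ in $\R^3$: (i) the wave envelopes $U_{\tau,R}$ are now genuinely $3$-dimensional planks parametrized by the continuous family of scales $s\in[R^{-1/2},1]$; (ii) the broad-narrow step must be a genuine cone broad-narrow, using Lorentz rescaling on $\Gamma$ in place of parabolic rescaling on $\mb{P}^1$; (iii) the "base case" of the induction is no longer the elementary C\'ordoba $L^4$ square function for $\mb{P}^1$ but Theorem \ref{mainP} itself, transplanted to each $\tau\in{\bf{S}}_s$ via the cylindrical model of a piece of the cone.

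First I would set up the pruning. For each dyadic $s\in[R^{-1/2},1]$ and each $\tau\in{\bf{S}}_s$, fix the tiling $U\|U_{\tau,R}$ with the partition of unity $\sum_U \sigma_U\equiv 1$ and Fourier-supported in $U_{\tau,R}^*$ exactly as in \textsection\ref{pruning}. Define inductively from the finest scale $s=R^{-1/2}$ upward: $\mc G_\theta=\{U\|\theta^*: (\log R)C_0^{2m_0}R^{2C_0\e}\fint_U|f_\theta|^2 \ge \a^2/(\#\theta)^2\}$, $f_{R^{-1/2},\theta}=\sum_{U\in\mc G_\theta}\sigma_U f_\theta$, and then at each coarser dyadic scale $s$ and each $\tau\in{\bf{S}}_s$, a good set $\mc G_\tau$ defined by the same threshold on $\fint_U\sum_{\theta\subset\tau}|f_{s',\theta}|^2$, where $s'$ is the next finer dyadic scale. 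This yields a telescoping decomposition $f\approx f_{1}+\sum_{s} f_s^{\mc B}$ on $U_\a$ (the analog of Lemma \ref{essent} and Lemma \ref{pruneprop}\eqref{decomp}), reducing the problem to bounding $|\{x\in U_\a: \a\lesssim \e^{-1}|f_s^{\mc B}(x)|\}|$ for each dyadic $s$ separately.

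Next I would set up the high/low machinery for the cone. The key geometric input is that $\widehat{|f_\theta|^2}$ is supported in $\theta-\theta$, a $1\times R^{-1/2}\times R^{-1}$ plank through the origin, while for a coarser $\tau\in{\bf S}_s$ the piece $\sum_{\theta\subset\tau}\widehat{|f_\theta|^2}$ localized to the ball $\{|\xi|\lesssim s^{-2}\}$ sits inside $U_{\tau,R}^*$. This is the exact cone analog of the frequency picture described in \textsection\ref{pf} and gives the Low Lemma, the two High Lemmas, and the High-Domination Lemma essentially verbatim, with $W_k/R$ replaced by the triple of cone scales and $R^{1/2}$ replaced by $s^{-2}$. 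These combine to show that on the set where $f_s^{\mc B}$ dominates, $\sum_{\tau\in{\bf S}_{s'}}|f_{s,\tau}^{\mc B}|^2$ is weakly high-frequency dominated at the appropriate scale.

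Then comes the broad-narrow reduction. Here I would iterate a trilinear (or bilinear between two transverse pieces, which suffices for a $4$th power) broad-narrow dichotomy on the $\w$-parameter of the cone: either $|f_s^{\mc B}(x)|$ is controlled by its largest cap of size $R^{-\e}$ and we iterate, or two nonadjacent caps $\tau',\tau''\in{\bf S}_{R^{-\e}\Delta}$ inside a common $\tau\in{\bf S}_\Delta$ both contribute comparably. In the broad case, apply a Lorentz rescaling that maps $\tau$ to all of $\Gamma$: this is the rescaling invariance of the cone, and it sends the $(\Delta W_k)\times \Delta^2 R$ wave envelopes at the rescaled scale back to the correct $W_k\times R$ wave envelopes in the original coordinates, exactly as in the verification at the end of \textsection\ref{broadnarrow}. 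After rescaling, the bilinear piece is controlled via a cone-bilinear restriction $L^4$ estimate (the $3$-dimensional analog of Theorem \ref{bilrest}, available by a C\'ordoba-type argument since two $R^{-\e}$-separated cone caps satisfy a transversality condition). This reduces matters to a Kakeya-type estimate for the sum $\sum_\tau|f_{s,\tau}^{\mc B}|^2*w$ at the appropriate scale. Combining with the high/low lemmas then recovers a sum of good-envelope contributions at some intermediate scale, closing the induction.

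The final, and genuinely new, ingredient is the base case. Unlike the parabola where C\'ordoba's $L^4$ estimate closes the argument trivially at the finest scale, here after Lorentz rescaling to a single cone cap $\tau$ we are left with a cylindrical problem: the Fourier support becomes a neighborhood of $\mb{P}^1\times I$ inside $\R^3$. I would slice in the $\xi_3$-direction (the cylinder axis after rescaling), apply Theorem \ref{mainP} fiberwise with amplitude parameter appropriately calibrated to $\a/(\#\tau)$, and then integrate in $\xi_3$, using that the wave envelopes of $\Gamma$ restrict fiberwise to wave envelopes of the parabola. The hard part will be this last step: one must verify that the good/bad classification of envelopes and the $\a/(\#\tau)$ thresholds all survive slicing and reassembly after the Lorentz rescaling, i.e.\ that the pruning is stable under the cylindrical decomposition. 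Once that compatibility is checked, assembling the three-dimensional wave envelope estimate from the two-dimensional one is an exercise in Fubini and change of variables, and the rest of the proof tracks the $\mb{P}^1$ argument without further surprises. The appendix would then extend this to general cones satisfying the curvature condition of \eqref{gencurve}, by replacing Lorentz rescaling with the general affine rescaling from \textsection\ref{mainPgensec} and invoking the general-curve version of Theorem \ref{mainP} in the base case.
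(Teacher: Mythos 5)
Your proposal correctly identifies several real ingredients of the paper's proof — the pruning of wave envelopes, the high/low frequency machinery, Lorentz rescaling, and the invocation of Theorem \ref{mainP} via cylindrical decomposition of a small cone cap — but it assembles them into an architecture that the paper does not use and, more importantly, one that rests on a bilinear cone estimate that is not available.

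The paper does not run a direct broad-narrow/bilinear argument on the cone. Following \cite{locsmooth}, it instead defines a constant $S(r,R)$ via \eqref{constdef} (together with a carefully chosen multiplicative quantity $\mu_f(s)$ from \eqref{mudef}) and establishes three lemmas: a refined Kakeya-type estimate (Lemma \ref{kaklem}, proved as Proposition \ref{kakcone}), a base case for $R\le K$ (Lemma \ref{basecase}, where the cylinder $\mb{P}^1\times I$ and Theorem \ref{mainP} appear), and a Lorentz rescaling estimate (Lemma \ref{rescaling}). These are combined via a two-scale induction on $R/r$ (Proposition \ref{induct}). The pruning and high/low lemmas you describe live entirely inside Proposition \ref{kakcone}, which is a pure frequency-decomposition argument with no broad-narrow and no bilinear restriction for the cone. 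The only place a broad-narrow/bilinear argument appears is inside the proof of Theorem \ref{mainP} for the parabola.

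The gap in your approach is the step you describe as controlling the broad piece by ``a cone-bilinear restriction $L^4$ estimate (the $3$-dimensional analog of Theorem \ref{bilrest}, available by a C\'ordoba-type argument since two $R^{-\e}$-separated cone caps satisfy a transversality condition).'' No such C\'ordoba-type estimate exists in the form you need. Theorem \ref{bilrest} for $\mb{P}^1$ works because the sets $\{(\theta-\theta)\pm(\theta'-\theta')\}$ for $\theta\subset\tau$, $\theta'\subset\tau'$ at separation $D$ are essentially disjoint. For the truncated cone in $\R^3$, each $\theta-\theta$ is a $1\times R^{-1/2}\times R^{-1}$ plank whose long ``$1$'' direction is the core line ${\bf c}(\w_\theta)$; for $\theta\subset\tau$ and $\theta'\subset\tau'$ with $\tau,\tau'$ at angular separation $\sim 1$, the Minkowski sums are $\sim 1\times 1\times R^{-1}$ slabs that stack up with $\sim R^{1/2}$ overlap rather than being finitely overlapping. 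This degeneracy along the null direction is precisely the obstruction that made the $L^4$ square function for the cone a long-standing open problem and that the wave envelope formalism of \cite{locsmooth} was designed to circumvent. Running a naive C\'ordoba bilinear estimate on the cone at $L^4$ would therefore lose an unacceptable power of $R$ before any Kakeya step; the paper (and GWZ before it) avoid this by never attempting a cone bilinear estimate, only using the parabola one cylindrically after restricting to a single small cap.

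A secondary gap: your proposal never addresses how $\#\tau$ in the statement of Theorem \ref{mainC} behaves under Lorentz rescaling. The paper has to replace $\#\tau$ by $\mu_f(s)$, a supremum over chains of partitions of scales, precisely so that the rescaling step (Lemma \ref{rescaling}) is multiplicative; this in turn requires the pigeonholing in Lemma \ref{pig} and Section \ref{mainthmpf} to regularize the Fourier support so that $\mu_f(s)\lessapprox\#\tau$. This compatibility is a genuine technical hurdle that does not appear in the proof of Theorem \ref{mainP} and that your proposal leaves unaddressed.
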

The notation $\#\tau$ means the size of the set $\{\tau\in{\bf{S}}_s: f_\tau\not=0\}$. In an analogous fashion to \cite{locsmooth}, Theorem \ref{mainC} is proved by analyzing an auxiliary constant $S(r,R)$ defined presently. To define $S(r,R)$ adapted to our set-up, we need an expression for $\#\tau$ which behaves well in rescaling arguments. Let $K=K(\e)$ be a constant. For $s\in [R^{-1/2},1]$, define $\mu_f(s)$ to be the supremum over partitions $s\le s_1\le \cdots\le s_k\le K^{-1}$, with $s_i\le K^{-1}s_{i+1}$ for each $i$, of the quantity 
\begin{equation}\label{mudef}
    \#\{\tau_{s_k}\in{\bf{S}}_{s_k}:f_{\tau_{s_k}}\not\equiv 0\}\prod_{i=1}^{k-1}\max_{\tau_{s_{i+1}}\in{\bf{S}}_{s_{i+1}}}\#\{\tau_{s_i}\in{\bf{S}}_{s_i}:\tau_{s_i}\subset\tau_{s_{i+1}},\quad f_{\tau_{s_i}}\not\equiv 0\}. 
\end{equation} 
If $s>K^{-1}$, then let $\mu_f(s)=\{\tau_s\in{\bf{S}}_{s}:f_{\tau_s}\not\equiv 0\}$. Define $S(r,R)$ to be the infimum of the set of $A>0$ satisfying \begin{equation}\label{constdef}
\sum_{\substack{B_r\subset\R^3\\\|S_{B_r}f\|_{L^2_{avg}}\ge \a }}|B_r|\|S_{B_r}f\|_{L^2_{avg}}^4 \le A \sum_{R^{-1/2}\le s\le 1} \sum_{\tau\in{\bf{S}}_s} \sum_{\substack{U \parallel U_{\tau,R}\\ A\|S_Uf\|_{L^2_{avg}}\ge \frac{\a}{\mu_f(s)}}} |U|\|S_Uf\|_{L^2_{avg}}^4  \end{equation}
for all $\a>0$ and all Schwartz functions $f:\R^3\to\C$ with Fourier transform supported in $\mc{N}_{R^{-1}}(\Gamma)$. Note that $S(r,R)<\infty$ since for $R$-cubes $B_R$ tiling $\R^3$ and any $B_r\subset B_R$,  
\begin{align*}
\|S_{B_r}f\|_{L^2_{avg}}^2&=\fint_{B_r}\sum_{d(\tau)=r^{-1/2}}|f_\tau|^2\\
    \text{(Cauchy-Schwarz)}\qquad   &\lesssim \fint_{B_r}\sum_{d(\theta)=R^{-1/2}}|f_\theta|^2(\#\theta\subset\tau) \lesssim \Big(\frac{R}{r}\Big)^{7/2} \|S_{B_R}f\|_{L^2_{avg}}^2 .
\end{align*}
So in particular,
\begin{align*}
\sum_{\substack{B_r\subset\R^3\\\|S_{B_r}f\|_{L^2_{avg}}\ge \a }}|B_r|\|S_{B_r}f\|_{L^2_{avg}}^4 \le (R/r)^{10} \sum_{\substack{B_R \\ (R/r)^{\frac{7}{4}}\|S_Uf\|_{L^2_{avg}}\ge \frac{\a}{\mu_f(1)} }}|B_R|\|S_{B_R}f\|_{L^2_{avg}}^4   .
\end{align*}
Noticing that the right hand side corresponds to the $s=1$ term of \eqref{constdef}, meaning that \eqref{constdef} is satisfied with $A\sim (R/r)^{10}$.

We will state the following three lemmas which have analogues in \cite{locsmooth} and will be used to analyze $S(r,R)$.  
The first lemma is proved via a Kakeya type estimate (Proposition \ref{kakcone} below). 
\begin{lemma}[Analogue of Lemma 3.1 of \cite{locsmooth}] \label{kaklem} There is some $C>0$ so that for any $r\ge 10$ and $r_1\in[r,r^2]$,
\[ S(r_1,r^2)\le C\big(\log \frac{r^2}{r_1}\big)^{10}. \]
\end{lemma}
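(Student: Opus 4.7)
The plan is to verify that the defining inequality \eqref{constdef} for $S(r_1,r^2)$ holds with $A=C(\log(r^2/r_1))^{10}$ for an absolute constant $C$. Fix $\alpha>0$ and a Schwartz $f$ with Fourier support in $\mc{N}_{r^{-2}}(\Gamma)$. For each ball $B_{r_1}$ contributing to the left-hand side of \eqref{constdef} we have $|B_{r_1}|^{-1}\int\sum_{\theta}|f_\theta|^2 W_{B_{r_1}}\ge\alpha^2$, and the strategy is to distribute this $L^2$ mass among the scales $s$ and wave envelopes $U\|U_{\tau,r^2}$ appearing on the right-hand side.

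First I would develop cone analogues of the Fourier decomposition used in the parabolic case (see \eqref{Fdecomp}), exploiting the fact that $\widehat{|f_\theta|^2}$ is supported in $\theta-\theta$, a plank whose thickness in the cone-normal direction is $r^{-2}$ and whose angular dimension is $r^{-1}$. Localizing in dyadic frequency annuli regroups the fine caps $\theta$ into coarser caps $\tau\in{\bf S}_s$; on the $s$-annulus the partial sums $\sum_{\theta\subset\tau}|f_\theta|^2$ are approximately $L^2$-orthogonal across $\tau\in{\bf S}_s$, and each partial sum is individually Fourier-localized to a set dual to $U_{\tau,r^2}$, so that its $B_{r_1}$-average decomposes into averages over appropriate wave envelopes $U$ intersecting $B_{r_1}$. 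This produces a pointwise-in-$B_{r_1}$ bound of the form $\|S_{B_{r_1}}f\|_{L^2_{avg}}^2\lesssim\sum_s\sum_{\tau\in{\bf S}_s}\sum_{U\|U_{\tau,r^2}}c_{B_{r_1},U}\,\|S_Uf\|_{L^2_{avg}}^2$ with nonnegative weights $c_{B_{r_1},U}$ whose $U$-sum is controlled.

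Next I would dyadically pigeonhole in the scale $s$ and in the value of $\|S_Uf\|_{L^2_{avg}}$, losing only factors of $\log(r^2/r_1)$. The crucial observation is that only scales satisfying $r^2s^2\ge r_1$, i.e., $s\ge r_1^{1/2}/r$, produce wave envelopes whose shortest axis reaches size $r_1$; contributions from $s<r_1^{1/2}/r$ are absorbed by $L^2$-orthogonality at the ball scale $B_{r_1}$ together with a trivial $L^2$ bound for $S_{B_{r_1}}f$. This truncation caps the number of relevant dyadic scales at $O(\log(r^2/r_1))$ rather than $O(\log r^2)$, which is the mechanism that ultimately produces the $\log(r^2/r_1)$ dependence claimed in the lemma.

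After the pigeonholing, each surviving ball $B_{r_1}$ is associated to a dominant scale $s^*$ and a dominant level $\beta$ with $A\beta\ge\alpha/\mu_f(s^*)$, and the problem reduces to counting $(B_{r_1},U)$-incidences at scale $s^*$ among wave envelopes with $\|S_Uf\|_{L^2_{avg}}\sim\beta$. This counting is carried out by the Kakeya-type estimate in Proposition \ref{kakcone}, whose role is precisely to deliver the factor $\mu_f(s)$ through the geometry of wave envelopes for the cone. The main obstacle is arranging Proposition \ref{kakcone} and the pigeonholing so that the threshold $\alpha/\mu_f(s)$ on the right-hand side of \eqref{constdef} emerges from the Kakeya bound with matching constants, and so that the final polylogarithmic loss truly depends only on $r^2/r_1$; this last point is where the geometric truncation $s\ge r_1^{1/2}/r$ identified above is essential, and where the argument genuinely differs from the non-amplitude-dependent version in \cite{locsmooth}.
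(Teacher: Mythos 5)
Your proposal misses the central mechanism of the paper's argument and also mislocates where the real work happens. As the paper states, Lemma~\ref{kaklem} follows from Proposition~\ref{kakcone} simply by ``unwinding the definitions'': since the trivial partition gives $\mu_f(s)\ge\#\tau$, the threshold condition $C(\log(r^2/r_1))^3\|S_Uf\|_{L^2_{avg}}^2\ge\alpha^2/\#\tau$ implies $A\|S_Uf\|_{L^2_{avg}}\ge\alpha/\mu_f(s)$ once $A\gtrsim(\log(r^2/r_1))^{10}$, so the set of envelopes appearing in Proposition~\ref{kakcone} is contained in the set appearing in the definition of $S(r_1,r^2)$. You instead treat Proposition~\ref{kakcone} as an external counting tool to be invoked after pigeonholing, which is circular: that Kakeya estimate already \emph{is} the content of Lemma~\ref{kaklem}, and it is what the rest of Section~\ref{kak} sets out to prove.

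More substantively, your sketch of how one would establish the estimate omits the amplitude-dependent pruning process (Definition~\ref{conepruning}), which is the heart of the argument and the genuine novelty over Lemma~1.4 of \cite{locsmooth}. The pruning iteratively sorts wave envelopes at scales $\sigma_1=r^{-1},\dots,\sigma_{N_0}=r/r_1$ into ``good'' (satisfying the $\alpha$-threshold) and ``bad,'' and the whole point is that the bad parts $f_{\sigma_n}^{\mc{B}}$ either contribute negligibly on the ball $B_{r_1}$ (Lemma~\ref{prune1}) or are weakly high-frequency dominated (Lemma~\ref{weakhi}), at which point Lemma~\ref{middom} converts the $L^4$ integral $\int_{|\xi|\le 2r_1^{-1}}|\sum_\theta\widehat{|f_{\sigma_n,\theta}^{\mc{B}}|^2}\eta_{\Theta_{\tau_n}^c}|^2$ into a sum over good envelopes. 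Your proposed pointwise-in-$B_{r_1}$ bound $\|S_{B_{r_1}}f\|_{L^2_{avg}}^2\lesssim\sum c_{B_{r_1},U}\|S_Uf\|_{L^2_{avg}}^2$ is the wrong type of estimate (the argument works at the $L^4$ level, not as a pointwise $L^2$ decomposition), and pigeonholing such a bound does not deliver the amplitude threshold $\alpha^2/\#\tau$ on the surviving $U$; that threshold is built in by the pruning, not recovered afterward. Finally, your truncation scale $s\ge r_1^{1/2}/r$ (shortest axis of $U$ reaching $r_1$) does not match the paper's, which works down to cap size $\sigma_{N_0}^{-1}r^{-1}=r_1/r^2$ (middle axis of $U$ reaching $r_1$); and the mechanism for truncation is not ``$L^2$-orthogonality absorbs small scales'' but rather that after multiplying by $\widetilde{W}_{B_{r_1}}$ (Definition~\ref{weight}, Fourier support in $|\xi|\le r_1^{-1}$), the Fourier annuli $\Omega_{\sigma_k}$ for $k>N_0$ simply do not intersect the ball $|\xi|\le 2r_1^{-1}$. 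Both truncations give $O(\log(r^2/r_1))$ dyadic scales, so the exponent bookkeeping is not where you go wrong; it is the absence of the pruning step and the misuse of Proposition~\ref{kakcone}.
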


We use the observation of Bourgain and Demeter \cite{BD} that a neighborhood of a truncated piece of $\Gamma$ is comparable to a cylindrical neighborhood of a parabola. This allows Theorem \ref{mainP} to come into play and establish the base case of the induction-on-scales argument for bounding $S(r,R)$. Let $K=K(\e)$ be the same constant from the definition of $\mu_f(s)$ which we will now use to measure the truncation. We replace $S(r,R)$ by $S_K(r,R)$, which has the same defining inequality for $S(r,R)$ but restricted to functions $f$ with Fourier transform supported in a $K^{-1}$-truncation of $\Gamma$ (defined precisely in Section \ref{trunc}). Since $K$ is a constant, splitting $\Gamma$ into $O(K)$ many truncated regions and decoupling each region suffices to bound $S(r,R)$. See Section \ref{trunc} for a detailed definition of $S_K$. 
\begin{lemma}[Analogue of Lemma 3.2 of \cite{locsmooth}]\label{basecase} For any $K\ge 10$, any $1\le r\le R\le K$, and any $\delta>0$, $S_K(r,R)\le C_\delta K^\delta$.
\end{lemma}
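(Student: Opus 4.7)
The plan is to use the Bourgain-Demeter observation that a $K^{-1}$-angular piece of the cone $\Gamma$ is, after a linear change of variables, comparable to a cylindrical neighborhood $\mc{N}_{R^{-1}}(\P^1)\times I$ of the truncated parabola, with $\P^1$ sitting in the plane transverse to the ruling direction and $|I|\sim 1$. Because $S_K(r,R)$ only involves $f$ with Fourier support in such a truncated region, this reduces the 3D cone estimate to a 2D parabola estimate, which is furnished by Theorem \ref{mainP}. A convenient preliminary observation is that for $R\le K$ the counting function $\mu_f(s)$ from \eqref{mudef} reduces to the flat count $\#\{\tau\in{\bf{S}}_s:f_\tau\not\equiv 0\}$: since $s\ge R^{-1/2}\ge K^{-1/2}>K^{-1}$, the definition places us in the $s>K^{-1}$ case, where $\mu_f(s)$ is by definition that flat count, which matches exactly the counting used in Theorem \ref{mainP}.

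Next I would fix a $K^{-1}$-angular sector $\Gamma_0\subset\Gamma$ centered at some $\omega_0$ and use a linear map $L$ on $\R^3$ (aligned with the frame $({\bf{c}}(\omega_0),{\bf{t}}(\omega_0),{\bf{n}}(\omega_0))$) so that $L(\mc{N}_{R^{-1}}(\Gamma_0))$ is contained in a shifted copy of $\mc{N}_{R^{-1}}(\P^1)\times I$. Under this identification, each canonical cap $\tau\in{\bf{S}}_s(\Gamma_0)$ corresponds (up to uniform constants) to $\underline\tau\times I$ with $\underline\tau$ a canonical parabola $s$-cap, and the wave envelope $U_{\tau,R}$ corresponds to a product $U_{\underline\tau,R}\times I^\ast$; correspondingly the weight $W_U$ factors, up to rapid Schwartz decay, as $W_{U_{\underline\tau,R}}$ times a bump in the cylinder-axis variable. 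By Plancherel in that variable, I decompose $L^\ast f(y_1,y_2,y_3) = \int g_\eta(y_1,y_2)\,e^{2\pi i y_3 \eta}\, d\eta$ with each $g_\eta:\R^2\to\C$ Schwartz and $\supp\widehat{g_\eta}\subset\mc{N}_{R^{-1}}(\P^1)$.

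A further Plancherel in $y_3$ then yields the slicewise identity
\[ \int |f_\tau(y_1,y_2,y_3)|^2\, dy_3 \;=\; \int |(g_\eta)_{\underline\tau}(y_1,y_2)|^2\, d\eta, \]
and integrating against the cylinder bump gives $\|S_U f\|_{L^2_{\mathrm{avg}}}^2 \sim \int \|S_{\underline U}\, g_\eta\|_{L^2_{\mathrm{avg}}}^2\, d\eta$, together with the analogous identity for $\|S_{B_r}f\|_{L^2_{\mathrm{avg}}}^2$. Applying Theorem \ref{mainP} to each $g_\eta$ at scale $R\le K$ with parameter $\e=\delta$, summing over the $O(\log K)$ relevant scales $s$, and integrating in $\eta$ returns inequality \eqref{constdef} for $f$ with constant $A = C_\e R^\e \le C_\delta K^\delta$, establishing $S_K(r,R)\le C_\delta K^\delta$.

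The main obstacle will be the bookkeeping required to make the identification of 3D cone objects (caps, wave envelopes, weight functions) with their 2D parabola analogues precise --- in particular, verifying that the $s^2R$-scaling of $U_{\tau,R}$ in the ${\bf{c}}$-direction is correctly absorbed by the cylinder factor $|I^\ast|$, and that the weight factorization is exact enough after the harmless Schwartz tails are handled. Once these identifications are set up, the slicewise reduction and the final integration in $\eta$ are essentially routine, with the $R^\e$ loss from Theorem \ref{mainP} converting to $K^\e$ because $R\le K$.
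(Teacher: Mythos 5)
Your high-level strategy is the same as the paper's: exploit the Bourgain–Demeter observation that $\Gamma_{\frac1K}$ sits inside a cylindrical neighborhood of a parabola, and reduce to the 2D estimate of Theorem~\ref{mainP}. Your remark that $\mu_f(s)$ collapses to the flat count when $R\le K$ is also correct. However, the technical heart of your argument — Fourier slicing in the cylinder direction via Plancherel — has a genuine gap that I don't think can be patched.

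The claimed identity $\|S_U f\|_{L^2_{\mathrm{avg}}}^2 \sim \int \|S_{\underline U}\, g_\eta\|_{L^2_{\mathrm{avg}}}^2\, d\eta$ is false. The conic wave envelope $U_{\tau,R}$ has extent $s^2R$ in the ${\bf c}$-direction (the cylinder axis), \emph{not} extent $\sim K$; the paper's Section~\ref{trunc} identifies $U_{\tau,R}$ with $V_{\w,R}\times[-s^2R,s^2R]$, not with $V_{\w,R}\times I^*$. Since $s^2R$ can be as small as $1$, the weight $W_U$ localizes $y_3$ to a possibly tiny window. Unpacking the definitions, $\|S_U f\|_{L^2_{\mathrm{avg}}}^2$ is (up to constants) a $y_3$-\emph{average} of slicewise quantities $\|S_{\underline U}\,f(\cdot,y_3)\|_{L^2_{\mathrm{avg}}}^2$ over an interval of length $s^2R$, whereas $\int\|S_{\underline U}\,g_\eta\|_{L^2_{\mathrm{avg}}}^2\,d\eta$ equals by Plancherel the \emph{full} $y_3$-integral $\int_\R\|S_{\underline U}\,f(\cdot,y_3)\|_{L^2_{\mathrm{avg}}}^2\,dy_3$. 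A localized average and a global integral are not comparable; in fact the right side diverges for generic Schwartz $f$. The same problem occurs on the left side of \eqref{constdef}, where $\|S_{B_r}f\|_{L^2_{\mathrm{avg}}}$ for $r$ close to $1$ is heavily $y_3$-localized. You flagged the $s^2R$ vs.\ $|I^*|$ mismatch as a worry, but it is not a bookkeeping issue to be absorbed — it is the reason the Fourier-slicing identity fails.

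The paper resolves this by slicing in \emph{physical} space (fix $x_3$, apply Theorem~\ref{mainP} to $x'\mapsto f_j(x',x_3)$) and then exploiting the narrow $\nu_3$-support of $\widehat{|f_{j,\theta}|^2}$ ($\subset[-K^{-1},K^{-1}]$) together with the $\nu_3$-support of $\widehat{\widetilde W}_V$ to show that the 2D slicewise averages agree with the 3D $\fint_U$ averages up to constants; this uses the locally constant property in $x_3$ at scale $K\ge s^2R$. Your proposal also skips a separate required step: the left side of \eqref{constdef} is $\sum_{B_r}|B_r|\|S_{B_r}f\|_{L^2_{\mathrm{avg}}}^4$, not a superlevel-set measure, and Theorem~\ref{mainP} only bounds $\a^4|U_\a|$. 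The paper bridges this with the separated families $A_1,\dots,A_{1000}$ of caps $\theta$ and a Cauchy--Schwarz/$\chi_{U_\a}$ insertion showing $\sum_{B_r}|B_r|^{-1}(\int|f_j|^2W_{B_r})^2\lesssim\a^4|U_\a|$. That reduction is nontrivial and must appear in any correct proof.
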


The following lemma is our analogue of Lemma 3.3 from \cite{locsmooth}, which plays an analogous role as parabolic rescaling does in decoupling. For technical reasons, we require an extra factor $D_\d(K,r_2)$ which, after choosing parameters, will be $\lessapprox 1$ in an appropriate sense. 
\begin{lemma} \label{rescaling} For any $0<\d<1$ and $10\le r_1< r_2\le r_3$,
\[S_K(r_1,r_3)\le D_\d(K,r_2)K^{3\d}\log r_2\cdot S_K(r_1,r_2)\max_{r_2^{-1/2}\le s\le 1}S_K(s^2r_2,s^2r_3), \]
where $D_\d(K,r_2)$ is from Lemma \ref{pig}. 
\end{lemma}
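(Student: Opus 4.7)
The plan is to follow the structure of Lemma 3.3 of \cite{locsmooth}: first apply the defining inequality of $S_K(r_1, r_2)$ to reduce the outer scale from $r_3$ to $r_2$, then for each cap $\tau \in {\bf{S}}_s$ appearing on the right-hand side use a Lorentz symmetry of the cone to rescale $\tau$ to a full $\sim 1$-cap, apply $S_K(s^2 r_2, s^2 r_3)$ to the rescaled function, and finally unrescale and combine the scales.

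Fix a Schwartz $f$ whose Fourier transform is supported in the $K^{-1}$-truncated $r_3^{-1}$-neighborhood of $\Gamma$ and a parameter $\alpha > 0$. Since this support lies in $\mc{N}_{r_2^{-1}}(\Gamma)$, the defining inequality for $S_K(r_1, r_2)$ bounds the relevant left-hand side by
\[ S_K(r_1, r_2) \sum_{r_2^{-1/2} \le s \le 1} \sum_{\tau \in {\bf{S}}_s} \sum_{\substack{U \| U_{\tau, r_2}\\ S_K(r_1, r_2) \|S_U f\|_{L^2_{avg}} \ge \alpha / \mu_f(s)}} |U| \|S_U f\|_{L^2_{avg}}^4. \]
Dyadic pigeonholing in $s$ loses a factor of $\log r_2$ and isolates one dominant scale $s_*$. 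Applying Lemma \ref{pig} at cost $D_\delta(K, r_2)$ then pigeonholes the cap-count structure of the nonzero $f_\tau$'s so that $\mu_f(s_*)$ is realized as a concrete product of cap counts which factorizes through the rescaling to come.

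For each surviving $\tau \in {\bf{S}}_{s_*}$, let $L_\tau$ be the Lorentz symmetry of the cone taking $\tau$ to a full $\sim 1$-cap and set $g_\tau = f_\tau \circ L_\tau^{-1}$, so that $\widehat{g_\tau}$ is supported in $\mc{N}_{(s_*^2 r_3)^{-1}}(\Gamma)$. Using the axis lengths $s_*^2 r_2 \times s_* r_2 \times r_2$ of $U_{\tau, r_2}$ in the frame $({\bf c}(\omega_\tau), {\bf t}(\omega_\tau), {\bf n}(\omega_\tau))$, a direct computation shows that the physical-space dual of $L_\tau$ sends $U_{\tau, r_2}$ to a ball of radius $\sim s_*^2 r_2$, and more generally sends each wave envelope $U_{\tau', r_3}$ for a subcap $\tau' \subset \tau$ at scale $s_* s'$ to a wave envelope $U_{\tilde\tau, s_*^2 r_3}$ for the corresponding scale-$s'$ cap $\tilde\tau$ on the rescaled cone, with the associated averaged square functions preserved up to absolute constants. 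Since $L_\tau(\tau)$ is a full cap rather than a $K^{-1}$-truncated one, split it into $O(K)$ many $K^{-1}$-truncated pieces, apply $S_K(s_*^2 r_2, s_*^2 r_3)$ to each piece, and absorb the splitting cost into the $K^{3\delta}$ factor. Unrescaling the output, the rescaled wave envelopes at scales $s' \in [(s_*^2 r_3)^{-1/2}, 1]$ become original wave envelopes at scales $s_* s' \in [r_3^{-1/2}, 1]$, exactly the range needed for $S_K(r_1, r_3)$.

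The central bookkeeping step, and the main obstacle, is verifying that the two thresholds compose correctly. The first application imposes $S_K(r_1, r_2) \|S_U f\|_{L^2_{avg}} \ge \alpha/\mu_f(s_*)$, and after rescaling the second application (on $g_\tau$) imposes an analogous lower bound with $\alpha/\mu_f(s_*)$ playing the role of $\alpha$ and $\mu_{g_\tau}(s')$ appearing in the denominator. Their product is $\alpha/(\mu_f(s_*)\mu_{g_\tau}(s'))$, which is bounded below by $\alpha/\mu_f(s_* s')$ because any partition realizing $\mu_{g_\tau}(s')$ on the rescaled cone pulls back via $L_\tau$ and extends by appending the scale $s_*$ to a valid competitor in the supremum defining $\mu_f(s_* s')$ in \eqref{mudef}. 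This is precisely the reason $\mu_f$ is defined multiplicatively over partitions. Once the threshold composition and the linear-algebraic action of the Lorentz rescaling on wave envelopes are checked, the three loss factors $\log r_2$, $K^{3\delta}$, and $D_\delta(K, r_2)$ arise transparently from the scale pigeonhole, the $K^{-1}$-truncation split, and Lemma \ref{pig} respectively.
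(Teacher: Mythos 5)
Your proposal follows the same high-level scheme as the paper's proof (reduce the outer scale via $S_K(r_1,r_2)$, Lorentz-rescale each $\tau$, apply $S_K(s^2 r_2, s^2 r_3)$, then compose the thresholds), but there is a real gap in the threshold-composition step and an incorrect attribution of the $K^{3\delta}$ factor.

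The threshold verification you need is $\mu_f(s)\,\tilde\mu_{f_\tau}(ss'')\lesssim K^{O(\delta)}\mu_f(ss'')$, and you argue this by ``concatenating partitions.'' That argument is valid only in the sub-case $s\le K^{-1}$ and $s''\le K^{-1}$, where both $\mu$'s are suprema over admissible partitions and the concatenation is again admissible. But when $s>K^{-1}$ (which includes the entire range $(K^{-1},1]$ for the top scale $s$), the definition \eqref{mudef} makes $\mu_f(s)$ a raw cap count $\#\{\tau_s: f_{\tau_s}\not\equiv 0\}$, not a sup over partitions, and there is nothing to concatenate. What the paper actually does in that case is use the density regularity from Lemma \ref{pig} (property \eqref{density}) to convert the product $\#\{\tau_s\}\cdot\max_{\tau}\#\{\tau_{ss''}\subset\tau\}$ into $\lesssim K^{3\delta}\#\{\tau_{ss''}\}\le K^{3\delta}\mu_f(ss'')$. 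This is where the $K^{3\delta}$ loss enters, and it is the core reason Lemma \ref{pig} must be invoked; your narrative (``Lemma \ref{pig}... pigeonholes the cap-count structure... so that $\mu_f(s_*)$ is realized as a concrete product'') acknowledges the lemma's existence but does not supply the argument that makes the threshold composition go through in the cap-count regime. The paper works through all four $(s,s'')$ cases explicitly and needs \ref{pig} in three of them.

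Relatedly, your explanation of the $K^{3\delta}$ factor as ``splitting $L_\tau(\tau)$ into $O(K)$ many $K^{-1}$-truncated pieces'' is not right: the Lorentz rescaling fixes the $\nu_3$ coordinate (see \eqref{genLdef} and the proof of Lemma 3.3 in \cite{locsmooth}), so the $K^{-1}$-truncation in $\nu_3$ is preserved and no such splitting occurs. Your $\log r_2$ attribution (dyadic pigeonholing in $s$) is a different mechanism from the paper's (overcounting across scales $s$ of $\tau$'s containing a given $\tau'$), but both are plausible sources of a logarithmic loss; that difference alone I would not flag as a gap.
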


Finally, assuming the above three lemmas, we may bound $S_K(r,R)$ (and therefore bound $S(r,R)$).  
\begin{prop}[Analogue of Proposition 3.4 of \cite{locsmooth}]\label{induct} For any $\e>0$, there exists $K=K(\e)$ so that for any $1\le r\le R$, we have
\[ S_K(r,R)\le \tilde{C}_\e (R/r)^{\e} .\]
\end{prop}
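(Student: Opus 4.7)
The plan is to prove the bound $S_K(r,R)\le\tilde C_\e (R/r)^\e$ by iterating Lemma~\ref{rescaling} on scales, terminating via Lemma~\ref{basecase} once the scale ratio drops below a threshold depending on $K$. Fix $\e>0$ and set $\d=\e/100$; the parameter $K=K(\e)$ will be chosen large enough at the end, and $\tilde C_\e$ will be chosen after $K$ is fixed to absorb the resulting constants.

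First, dispose of the small-ratio regime. If $R\le K$, then Lemma~\ref{basecase} yields $S_K(r,R)\le C_\d K^\d$, which is dominated by $\tilde C_\e(R/r)^\e$ once $\tilde C_\e\ge C_\d K^\d$. For $R>K$, I apply Lemma~\ref{rescaling} with the geometric-mean intermediate scale $r_2=\sqrt{rR}$, giving
\[ S_K(r,R)\le D_\d(K,r_2)\, K^{3\d}(\log r_2)\, S_K(r,r_2)\max_{s\in[r_2^{-1/2},1]} S_K(s^2r_2,s^2R). \]
Both inner $S_K$ factors have scale ratio exactly $\sqrt{R/r}$, strictly smaller than $R/r$. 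Iterating this halving of the logarithm of the scale ratio produces a binary tree of subproblems; after $n\sim \log_2(\log(R/r)/\log K)$ levels, every leaf has scale ratio at most $K$, and Lemma~\ref{basecase} terminates the recursion.

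The decisive step is accounting for the multiplicative loss accumulated over the iteration. Each internal application of Lemma~\ref{rescaling} contributes a factor $D_\d(K,r_2)K^{3\d}\log R$, and the $2^n$ base-case terminations contribute $(C_\d K^\d)^{2^n}$. Taking logarithms and using $2^n\sim\log(R/r)/\log K$, the total log-loss is of order
\[ \frac{\log(R/r)}{\log K}\Bigl(\log D_\d+3\d\log K+\log\log R+\d\log K\Bigr)=(4\d+o(1))\log(R/r), \]
where the $o(1)$ absorbs $\log\log R/\log K$ and $\log D_\d/\log K$. With $\d=\e/100$, once $K=K(\e)$ is taken large enough (depending on $\d$ and on the growth rate of $D_\d(K,\cdot)$ provided by Lemma~\ref{pig}, which should be subpolynomial in $K$), this total is at most $\e\log(R/r)$, closing the induction.

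The \textbf{main obstacle} is precisely this balancing in the last paragraph: one must check that $D_\d(K,r_2)$, appearing at \emph{every} iteration, grows slowly enough in $K$ (and in $r_2$) that its contribution over $\sim\log(R/r)/\log K$ iterations stays below $(R/r)^{\e/4}$; and that $\d\log K$ can be kept much smaller than $\e$ while $K$ is still large enough to swallow the base-case constant $C_\d K^\d$ and the $\log\log R$ terms. A secondary subtlety is that the $\max_{s\in[r_2^{-1/2},1]}$ in Lemma~\ref{rescaling} forces the induction hypothesis to be uniform in the small scale, but since $S_K(r_1,r_3)$ depends only on $K$ and the pair $(r_1,r_3)$, this uniformity is automatic.
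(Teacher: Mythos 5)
Your strategy of bisecting the scale ratio via $r_2=\sqrt{rR}$ and recursing into a binary tree has a genuine quantitative gap, and it lies exactly where you flagged the ``main obstacle.'' The factor $D_\d(K,r_2)$ supplied by Lemma~\ref{pig} is
\[
D_\d(K,r_2)=(C\d\log K)^{\d^{-1}\log(K^2 r_2^{1/2})/\log K},
\qquad\text{so}\qquad
\log D_\d(K,r_2)\;\sim\;\frac{\log(C\d\log K)}{2\d\,\log K}\,\log r_2
\]
for $r_2$ large. This is \emph{not} $O(\log K)$: it grows linearly in $\log r_2$. Since the rightmost branches of your tree keep the upper endpoint at $R$ (the $\max_s$ in Lemma~\ref{rescaling} must allow $s\sim 1$), a constant fraction of the internal nodes have $r_2$ within a bounded power of $R$, so $\log D_\d\sim \d^{-1}(\log\log K/\log K)\log R$ at those nodes. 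Summing $\log r_2$ over the full tree (the $r_2$'s at level $j$ are the $2^j$ midpoints of a uniform partition of $[\log r,\log R]$), the total contribution from the $D_\d$ factors has logarithm of order
\[
\frac{\log(C\d\log K)}{\d\,(\log K)^2}\,\log(R/r)\,\log R .
\]
For fixed $K$ this is \emph{not} $O(\e\log(R/r))$: as $R\to\infty$ with $r$ fixed, the extra $\log R/(\log K)^2$ factor blows up. Your display for the total log-loss silently treats $\log D_\d/\log K$ as $o(1)$, but it is unbounded in $R$.

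The paper's proof avoids this by running a \emph{linear} (not branching) induction and by choosing $r_2$ so that $D_\d(K,r_2)$ is genuinely tame. When $r\le K^{1/2}$ it takes $r_2=K^{1/2}r\le K$, so $D_\d(K,r_2)$ is a constant depending only on $K,\d$; when $r>K^{1/2}$ it takes $r_2=r^2$, so $D_\d(K,r^2)\sim r^{O(\d^{-1}\log\log K/\log K)}$ has a small power of $r$ that is absorbed by the extra factor $r^{-\e}$ gained because the remaining ratio shrinks from $R/r$ to $R/r^2$. Crucially, in each case only \emph{one} of the two factors of Lemma~\ref{rescaling} is recursed on: the other is closed immediately by Lemma~\ref{basecase} (Case 1) or by the Kakeya estimate Lemma~\ref{kaklem} (Case 2), neither of which incurs a $D_\d$ loss. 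Your proposal does not invoke Lemma~\ref{kaklem} at all, but it is essential: it is what lets the paper peel off the range $[r,r^2]$ with only a $(\log r)^{10}$ loss while the $D_\d$ factor at that step stays of size $\approx r^{o(1)}$. To repair your argument you would have to either prevent the tree from recursing near the top scale $R$, or show $D_\d$ is subpolynomial in $r_2$ uniformly in $K$ --- neither of which Lemma~\ref{pig} provides.
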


\begin{proof} We proceed analogously as in the proof of Proposition 3.4 in \cite{locsmooth}, written here for convenience. 
First note that if $r>R^{1/2}$, then Lemma \ref{kaklem} says that $S(r,R)\le C [\log (R/r)]^{10}$. Therefore, the conclusion holds in this case. 

The constant $K=K(\e)>10$ will be chosen below based on Lemmas \ref{basecase} and \ref{rescaling}. 

\noindent \underline{Base case:} $R/r\le \sqrt{K}$. Without loss of generality (based on the first note in this proof), assume $r\le R^{1/2}$. Combined with $R/r\le\sqrt{K}$, this means $R\le K$. Since $K$ is a constant depending only on $\e$, the inequality $S(r,R)\le \tilde{C}_K=\tilde{C}_\e$ is easily verified using a trivial argument like the one following \eqref{constdef}.

\noindent{\underline{Inductive hypothesis:}} Suppose that given $(r,R)$, for any pair $(r',R')$ satisfying $R'/r'\le R/{2r}$, we have $S_K(r',R')\le \tilde{C}_\e (R'/r')^\e$.

\noindent\underline{Case 1: $r\le K^{1/2}$.} By Lemma \ref{rescaling}, for each $0<\d<1$,
\[S_K(r,R)\le (C\d \log K)^{\d^{-1}\log (K^{3/2})/\log K}K^{3\d}\log K\cdot S_K(r,K^{1/2}r)\max_{(K^{1/2}r)^{-1/2}\le s\le 1}S_K(s^2K^{1/2}r,s^2R) . \]
The first factor of $S_K$ is bounded by Lemma \ref{basecase} and the second factor of $S_K$ is bounded by induction, yielding the inequality
\[ S_K(r,R)\le (C\d \log K)^{\d^{-1}\log (K^{3/2})/\log K}K^{3\d}\log KC_\d K^\d \tilde{C}_\e (R/K^{1/2}r)^\e. \]
Choose $\d=\e/20$ and let $K=K(\e)$ be large enough so 
\begin{align*} 
(C(\e/20) \log K)^{(20/\e)\log (K^{3/2})/\log K}&K^{3\e/20}\log KC_{\e/20}K^{\e/20}K^{-\e/2}\\
&\le \log K\cdot K^{30\e^{-1}\log (C\e\log K)/\log K}C_{\e/20}K^{-3\e/10}\le 1,\end{align*}
which closes the induction in this case.

\noindent\underline{Case 2: $r>K^{1/2}$.} Again by Lemma \ref{rescaling}, we have that for each $0<\d<1$,
\[ S_K(r,R)\le (C\d \log K)^{\d^{-1}\log (K^2r)/\log K}K^{3\d} \log r\cdot S_K(r,r^2) \max_{r^{-1}\le s\le 1}S_K(s^2r^2,s^2R). \]
Take $\d=\frac{\e}{20}$ and use Lemma \ref{kaklem} for the first factor of $S_K$ and induction for the second factor, yielding
\[ S(r,R)\le (C(\e/20) \log K)^{20\e^{-1}\log (K^2r)/\log K}K^{3\e/20}\log r C(\log r)^{10}\tilde{C}_\e (R/r^2)^\e   . \]
Choose $K$ large enough so that $r\ge K^{1/2}$ implies that 
\begin{align*}
(C(\e/20) \log K)^{20\e^{-1}\log (K^2r)/\log K}&K^{3\e/20} \log r C(\log r)^{10}r^{-\e}\\
&\le (K^2r)^{20\e^{-1}\log(C\e \log K)/\log K}K^{3\e/20} \log r C(\log r)^{10}r^{-\e}\le 1,
\end{align*}
which closes the induction. 
\end{proof}

\section{Theorem \ref{mainC} from Proposition \ref{induct}\label{mainthmpf}}

Theorem \ref{mainC} follows from Proposition \ref{induct} (with $r=1$ and $S$ in place of $S_K$) after an initial pigeonholing step. 
\begin{proof}[Theorem \ref{mainC} from Proposition \ref{induct}]

Begin with a pigeonholing process to guarantee that the distribution of the Fourier support of $f$ is somewhat regular. This will allow us to show that $\mu_f(s)\lessapprox \#\tau$, when $\tau\in{\bf{S}}_s$, in an appropriate sense. For the initial step, let $K^{-N\e}\le R^{-1/2}K^{-\e}\le K^{-N\e+\e}$ and write 
\[ \{\tau_{N}\in{\bf{S}}_{K^{-N\e}}:\exists\theta\in{\bf{S}}_{R^{-1/2}}\,\text{s.t.}\,f_\theta\not=0,\,\,\theta\subset\tau_{N}\}=\sum_{1\le \lambda\le K^\e}\Lambda_{N}(\lambda) \]
where $\lambda$ is a dyadic number, $\Lambda_N(\lambda)=\{\tau_{N}\in{\bf{S}}_{K^{-N\e}}:\#\theta\subset\tau_{N}\sim \lambda\}$, $\#\theta\subset\tau_{N}$ means $\#\{\theta\subset\tau_{N}:f_\theta\not\equiv0\}$, and $\#\theta\subset\tau_{N}\sim\lambda$ means $\lambda\le \#\theta\subset\tau_{N}<2\lambda$. Note that we use the convention that ${\bf{S}}_{K^{-N\d}}$ actually means ${\bf{S}}_{s}$ where $s$ is the largest dyadic number satisfying $s\le K^{-N\e}$. Since there are $\lesssim \e\log K$ many $\lambda$ in the sum, there exists some $\lambda_{N}$ such that
\[ |\{x:|f(x)|>\a\}|\le C(\e\log K)|\{x:C(\log K)|\sum_{\tau_{N}\in\Lambda_{N}(\lambda_{N})}f_{\tau_{N}}(x)|>\a\}|. \]
Write $f^N=\sum_{\tau_N\in\Lambda_N(\lambda_N)}f_{\tau_N}$. Continuing in this manner, we have
\[ \{\tau_k\in{\bf{S}}_{K^{-k\e}}:\exists \tau_{k+1}\in\Lambda_{k+1}(\lambda_{k+1})\,\,\text{s.t.}\,\,f_{\tau_{k+1}}^{k+1}\not\equiv 0,\tau_{k+1}\subset\tau_k\}=\sum_{1\le \lambda\le K^\e}\Lambda_k(\lambda) \]
where $\Lambda_k(\lambda)=\{\tau_k\in{\bf{S}}_{K^{-k\e}}:\#\{\tau_{k+1}\in\Lambda_{k+1}(\lambda_{k+1}):f^{k+1}_{\tau_{k+1}}\not\equiv0,\,\tau_{k+1}\subset\tau_k\}\sim\lambda\}$ and for some $\lambda_k$, 
\begin{align*} 
|\{x:(C\e(\log K))^{N-k}|&f^{k+1}(x)|\ge |f(x)|>\a\}|\\
&\qquad\qquad \le C(\e\log K)|\{x:(C(\e\log K))^{N-k+1}|f^{k}(x)|\ge |f(x)|>\a\}|  
\end{align*} 
with $f^k=\sum_{\tau_k\in\Lambda_k(\lambda_k)}f^{k+1}_{\tau_k}$. Continue this process until we have found $f^1$, $\lambda_1$ so that
\begin{align*} 
|\{x:|f(x)|>\a\}|\le (C\e \log K)^{N}|\{x:C^{\e^{-1}}(\log R)^{N}|f^1(x)|>\a\}|  ,
\end{align*} 
noting that $N\sim \e^{-1}\log R/\log K$, so for an appropriate constant $c>0$, $(\e\log K)^{N}\le R^{\frac{c\e^{-1}\log\log K}{\log K}}$ (which, since $K$ may be arbitrarily large depending on $\e$, is an acceptable loss factor). The function $f^1$ now satisfies the property that $\mu_{f^1}(s)$ may be controlled by $\#\{\tau\in{\bf{S}}_s:f^1_\tau\not\equiv 0\}$. Indeed, let $s\le s_1\le\cdots s_k\le 1$ be a partition satisfying $s_i\le K^{-1}s_{i+1}$. The associated quantity in the definition of $\mu_{f^1}(s)$ is 
\begin{equation}\label{potquantity} \#\{\tau_{s_k}\in{\bf{S}}_{s_k}:f^1_{\tau_{s_k}}\not\equiv 0\}\prod_{i=1}^{k-1}\max_{\tau_{s_{i+1}}\in{\bf{S}}_{s_{i+1}}}\#\{\tau_{s_i}\in{\bf{S}}_{s_i}:\tau_{s_i}\subset\tau_{s_{i+1}},\quad f_{\tau_{s_i}}^1\not\equiv 0\}.\end{equation}
For each $s_i$, use the notation $\lceil s_i\rceil$ to denote the smallest element $b\in K^{\e\N}$ satisfying $s_i\le b$ and $\lfloor s_i\rfloor$ to denote the largest element $a\in K^{-\e \N}$ satisfying $a\le s_i$. Then note that the first term and the $i=k-1$ term from \eqref{potquantity} are bounded by 
\begin{align*} 
&\#\{\tau_{\lfloor s_k\rfloor}\in{\bf{S}}_{\lfloor s_k\rfloor}: f_{\tau_{\lfloor s_k\rfloor }}^1\not\equiv 0\}\max_{\tau_{\lceil s_{k}\rceil}\in{\bf{S}}_{\lceil s_{k}\rceil}}\#\{\tau_{\lfloor s_{k-1}\rfloor}\in{\bf{S}}_{\lfloor s_{k-1}\rfloor}:\tau_{\lfloor s_{k-1}\rfloor }\subset\tau_{\lceil s_{k}\rceil },\quad f_{\tau_{\lfloor s_{k-1}\rfloor }}^1\not\equiv 0\} \\
    &\qquad \le CK^\e\#\{\tau_{\lceil s_k\rceil}\in{\bf{S}}_{\lceil s_k\rceil}: f_{\tau_{\lceil s_k\rceil }}^1\not\equiv 0\}\max_{\tau_{\lceil s_{k}\rceil}\in{\bf{S}}_{\lceil s_{k}\rceil}}\#\{\tau_{\lfloor s_{k-1}\rfloor}\in{\bf{S}}_{\lfloor s_{k-1}\rfloor}:\tau_{\lfloor s_{k-1}\rfloor }\subset\tau_{\lceil s_{k}\rceil },\quad f_{\tau_{\lfloor s_{k-1}\rfloor }}^1\not\equiv 0\} \\
    &\qquad \le 2CK^\e \#\{\tau_{\lfloor s_{k-1}\rfloor}\in{\bf{S}}_{\lfloor s_{k-1}\rfloor}: f_{\tau_{\lfloor s_{k-1}\rfloor }}^1\not\equiv 0\}
\end{align*}
where we used our construction in the final inequality. Continuing in this manner with successive terms from \eqref{potquantity}, we obtain the upper bound 
\[ (2CK^\e)^{k-1}\#\{\tau_{\lfloor s_{1}\rfloor}\in{\bf{S}}_{\lfloor s_{1}\rfloor}: f_{\tau_{\lfloor s_{1}\rfloor }}^1\not\equiv 0\}.\]
By the spacing condition $s_i\le K^{-1}s_{i+1}$ and the constraint $R^{-\frac{1}{2}}\le s$, we have $k\le c\ln R/\ln K$ for an appropriate constant $c>0$. Therefore, the build-up of constants is bounded by $(2CK^\e)^{c\ln R/\ln K} \le R^{2\e} $. The other factor in the above displayed expression is clearly bounded by $\{\tau_s\in{\bf{S}}_s:f_{\tau_s}^1\not\equiv 0\}$. By our construction of $f^1$, this is bounded by $\{\tau_s\in{\bf{S}}_s:f_{\tau_s}\not\equiv 0\}$, as desired.

It remains to show that 
\begin{equation}\label{remains} \a^4||\{x\in B_R:(C\e\log K)^N|{f}^1(x)|>\a\}|\le   \sum_{\substack{B_1\subset\R^3\\C_\e R^{\e}\|S_{B_1}{f}^1\|_{L^2_{avg}}\ge \a }}|B_1|\|S_{B_1}{f}^1\|_{L^2_{avg}}^4 \end{equation}
and then invoke Proposition \ref{induct} with $r=1$ and $R$, since $|S_Uf^1|\le |S_Uf|$ for any wave envelope $U$. Let $B_1$ be a $1$-cube in a tiling of $\R^3$ which has nonempty intersection with $U_\a$. If $\rho$ is a smooth bump function equal to $1$ on $[-3,3]^3$ and supported in $[-6,6]^3$, then by Fourier inversion, ${f^1}={f^1}*\widecheck{\rho}$ pointwise. Then if $x\in U_\a\cap B_1$, 
\begin{align*} 
\frac{\a^2}{(C\e \log K)^{2N}}&< |{f^1}(x)|^2=|{f}^1*\widecheck{\rho}(x)|^2\lesssim |f^1|^2*|\widecheck{\rho}|(x)\lesssim \|S_{B_1}f^1\|_{L^2_{avg}}^2.  
\end{align*}
The inequality \eqref{remains} is an immediate consequence of the inequality displayed above, which concludes the proof.

\end{proof}

\section{A refined Kakeya-type estimate (Proof of Lemma \ref{kaklem})\label{kak} }

Unwinding the definitions in Lemma \ref{kaklem}, it suffices to prove the following proposition, which is a refinement of Lemma 1.4 in \cite{locsmooth}. 
\begin{prop}\label{kakcone} There exists $C>0$ large enough so that for any $10\le r$, $r\le r_1\le r^2$, and $\a>0$, 
\begin{equation}\label{ineq} \sum_{\substack{B_{r_1}\subset\R^3\\\|S_{B_{r_1}}f\|_{L^2_{avg}}\ge \a}}|B_{r_1}|\|S_{B_{r_1}}f\|_{L^2_{avg}}^4\le C(\log \frac{r^2}{r_1})^{10}\sum_{r^{-1}\le s\le 1}\sum_{\tau\in{\bf{S}}_s}\sum_{\substack{U\|U_{\tau,r^2}\\C(\log (r^2/r_1))^3\|S_Uf\|_{L^2_{avg}}^2\ge \frac{\a^2}{{\#\tau}}}}|U|\|S_Uf\|_{L^2_{avg}}^4.  \end{equation}
\end{prop}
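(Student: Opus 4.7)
The plan is to follow the proof of Lemma 1.4 of \cite{locsmooth}, which is a Kakeya-type incidence estimate on wave envelopes, and insert an amplitude-pigeonholing step that produces the stated threshold restriction on the right-hand side. For each ball $B_{r_1}$ in the sum on the left-hand side of \eqref{ineq}, the first step is to apply local $L^2$-orthogonality as in Lemma \ref{locconst} to replace $\sum_{\tau \in {\bf S}_{r_1^{-1/2}}}|f_\tau|^2$ by $\sum_{\theta \in {\bf S}_{r^{-1}}}|f_\theta|^2$, followed by a Fourier-side Littlewood--Paley decomposition analogous to \eqref{Fdecomp} that partitions this sum according to wave-envelope scales. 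Dyadic pigeonholing over the $\lesssim \log(r^2/r_1)$ relevant scales (those $s \in [r_1^{1/2}/r, 1]$ for which $U_{\tau,r^2}$ is at least as large as $B_{r_1}$ in every direction) selects, for each $B_{r_1}$, a dominant scale $s = s(B_{r_1})$ at which
\[
\|S_{B_{r_1}}f\|_{L^2_{avg}}^2 \le C\log(r^2/r_1)\sum_{\tau\in{\bf S}_s}\|S_{U_\tau(B_{r_1})}f\|_{L^2_{avg}}^2,
\]
where $U_\tau(B_{r_1})\|U_{\tau,r^2}$ is the wave envelope at scale $s$ whose weight dominates on $B_{r_1}$.

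The second step extracts the amplitude condition. For each $B_{r_1}$, split ${\bf S}_s$ into good $\tau$, for which $\|S_{U_\tau(B_{r_1})}f\|_{L^2_{avg}}^2 \ge \alpha^2/(\tilde C(\log r^2/r_1)^3\#\tau)$, and bad $\tau$ below the threshold. The bad $\tau$ collectively contribute at most $\#\tau \cdot \alpha^2/(\tilde C(\log r^2/r_1)^3\#\tau) = \alpha^2/(\tilde C(\log r^2/r_1)^3)$ to the $\tau$-sum above, which is strictly smaller than the lower bound $\alpha^2/(C\log(r^2/r_1))$ forced by $\|S_{B_{r_1}}f\|_{L^2_{avg}}\ge\alpha$ once $\tilde C$ is chosen large enough. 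Discarding the bad $\tau$ preserves the inequality up to an absolute constant, and every surviving $U_\tau(B_{r_1})$ now satisfies the amplitude restriction appearing on the right-hand side of \eqref{ineq}.

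The third step is the rearrangement of $\sum_{B_{r_1}}|B_{r_1}|\bigl(\sum_{\tau\text{ good}}\|S_{U_\tau(B_{r_1})}f\|_{L^2_{avg}}^2\bigr)^2$ into the right-hand side of \eqref{ineq}, which is the Kakeya-type heart of Lemma 1.4 of \cite{locsmooth}. The key geometric input is that the translates $U\|U_{\tau,r^2}$ tile $\R^3$, so for any fixed $U$ at scale $s$, the balls $B_{r_1}$ that select $U$ as $U_\tau(B_{r_1})$ satisfy $\sum_{B_{r_1}:U_\tau(B_{r_1})=U}|B_{r_1}|\lesssim|U|$. Combining this with Cauchy--Schwarz in the $\tau$-sum, a further dyadic pigeonholing on the size of $\|S_{U_\tau(B_{r_1})}f\|_{L^2_{avg}}^2$, and the wave-packet coefficient pigeonhole internal to the argument of \cite{locsmooth}, one obtains the target inequality with a total loss of $(\log r^2/r_1)^{10}$, absorbing all the dyadic-pigeonhole factors from the preceding two steps.

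The main obstacle is executing this third step while respecting the amplitude restriction introduced in step two: the amplitude pigeonhole must be compatible with the wave-packet stratification internal to the Kakeya argument of \cite{locsmooth}, and in particular one must ensure that restricting to good $\tau$ in step two does not destroy the incidence estimate $\sum_{B_{r_1}:U_\tau(B_{r_1})=U}|B_{r_1}|\lesssim|U|$ nor inflate the combinatorial loss beyond the allotted polylogarithmic budget.
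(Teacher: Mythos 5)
The displayed inequality in your second step,
\[
\|S_{B_{r_1}}f\|_{L^2_{avg}}^2 \le C\log(r^2/r_1)\sum_{\tau\in{\bf{S}}_s}\|S_{U_\tau(B_{r_1})}f\|_{L^2_{avg}}^2,
\]
is false, and the restriction of the pigeonhole to scales $s\ge r_1^{1/2}/r$ is the wrong move. Both problems are visible at once on a single wave packet: take $f=f_\theta$ with $|f_\theta|^2\approx\chi_{\theta^*}$ for one dual plank $\theta^*$ of dimensions $1\times r\times r^2$. For a ball $B_{r_1}$ meeting $\theta^*$ one has $\fint_{B_{r_1}}|f_\theta|^2\sim r/r_1^2$, while for any $s\ge r_1^{1/2}/r$ the envelope $U\supset B_{r_1}$ with $\theta\subset\tau\in{\bf{S}}_s$ contains $\theta^*$, so $\fint_U|f_\theta|^2=|\theta^*|/|U|=s^{-3}r^{-3}$. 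The ratio $\fint_{B_{r_1}}|f_\theta|^2\big/\fint_U|f_\theta|^2 = s^3r^4/r_1^2\ge r/r_1^{1/2}\ge 1$ on your allowed range of $s$, and it equals $r^{1/2}$ when $r_1=r$: a polynomial loss that no choice of $s$ with $U_{\tau,r^2}\supset B_{r_1}$ can repair. For this example the Proposition is saved by the $s=r^{-1}$ term with $U=\theta^*$, but that is precisely the scale you exclude, because when $\theta^*\not\supset B_{r_1}$ the object ``$U_\tau(B_{r_1})$, the one envelope whose weight dominates on $B_{r_1}$'' is not well defined. The underlying issue is a normalization mismatch: the left side averages over $|B_{r_1}|$ and the right over the much larger $|U|$, so when $U\supset B_{r_1}$ one only gets the reverse comparison $\sum_\tau\|S_Uf\|_{L^2_{avg}}^2\gtrsim (|B_{r_1}|/|U|)\|S_{B_{r_1}}f\|_{L^2_{avg}}^2$ with a vanishing volume ratio, not your claimed inequality.

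The paper's argument is structurally different, and the difference is essential. Instead of one dyadic pigeonhole over frequency scales followed by a good/bad split at the selected scale, it runs a recursive pruning process (Definition \ref{conepruning}) across all scales $\sigma_1\le\cdots\le\sigma_{N_0}$: at scale $\sigma_n$ the envelopes $U\|U_{\tau_n,r^2}$ with $\fint_U\sum_{\theta\subset\tau_n}|f_{\sigma_{n+1},\theta}|^2$ below the $\alpha$-dependent threshold are discarded, and the surviving (pruned) function is passed to the next scale. Lemma \ref{prune1} shows the pruned $f_{\sigma_{N_0}}$ still satisfies $\fint_{B_{r_1}}\sum_\theta|f_{\sigma_{N_0},\theta}|^2\gtrsim\alpha^2$, and the identity $f_{\sigma_{N_0}}=\sum_n f_{\sigma_n}^{\mathcal{B}}+f_{\sigma_1}$ gives the pigeonhole — but the pigeonhole is over the pruned remainders $f_{\sigma_n}^{\mathcal{B}}$, not over raw frequency bands. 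The ingredient your sketch has no analogue of is the weak high-dominance Lemma \ref{weakhi}: if $f_{\sigma_n}^{\mathcal{B}}$ carries the $\alpha^2$ lower bound on $B_{r_1}$, then the square function of the bad part is dominated by its high-frequency piece $*\,\widecheck{\eta}_{\Theta_{\tau_n}^c}$, because if the low piece dominated, the bad envelopes' defining threshold would force a contradiction. The high piece is what the incidence estimate (Lemma \ref{middom}) then localizes to good wave envelopes at the finer scales, and the pruning guarantees the amplitude restriction there. Your single-scale good/bad split has no mechanism for reallocating the bad contribution to other scales, and your third step's incidence bound $\sum_{B_{r_1}:U_\tau(B_{r_1})=U}|B_{r_1}|\lesssim|U|$ — while true — is far weaker than the Fourier-support overlap estimates that actually drive Lemmas \ref{middom} and \ref{lowdom}.
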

As before, $\#\tau=\#\{\tau\in{\bf{S}}_s:f_\tau\not=0\}$. The proof of Proposition \ref{kakcone} will follow the proof of Lemma 1.4 of \cite{locsmooth} after an additional pruning process that has a similar flavor as \textsection\ref{pruning} for the parabola. We will use notation that is consistent with the proof of Lemma 1.4 from \cite{locsmooth}.

\begin{enumerate}
    \item The parameters $\sigma_i$ vary over dyadic values in the range $\sigma_1=r^{-1}\le \sigma_n\le \sigma_N=1$. 
    \item For each $\sigma_n$, fix a collection $\{\tau_n\}={\bf{S}}_{\sigma_n^{-1}r^{-1}}$ of $\sim 1\times\sigma_n^{-1}r^{-1}\times \sigma_n^{-2}r^{-2}$ planks tiling $\mc{N}_{\sigma_n^{-2}r^{-2}}(\Gamma)$.
    \item Distinguish the finest frequency blocks by $\tau_N=\theta\in{\bf{S}}_{r^{-1}}$. 
    \item Let $\#\tau_n$ denote
 \[ \#\tau_n=\#\{\tau_n\in{\bf{S}}_{\sigma_n^{-1}r^{-1}}:f_{\tau_n}\not=0\}. \] 
    \item For each $\tau_n$, let $U_{\tau_n,r^2}$ be a dual envelope of dimensions $ \sigma_n^{-2}\times\sigma_n^{-1}r\times r^2$ defined at the beginning of \textsection\ref{mainCpf}. 
    \item Let $1=\sum_{U\pp U_{\tau_n,r^2}}\s_{U}$ be a partition of unity associated to the tiling $U\|U_{\tau,r^2}$, defined analogously as in \textsection\ref{pruning}. 
    \item Distinguish the parameter $\sigma_{N_0}$ satisfying $\sigma_{N_0}=(r_1/r)^{-1}$. 
\end{enumerate}

Now we define the pruning process. The constant $C_0$ used in the definition of the good sets $\mc{G}_{\tau_n}$ will be chosen in the proof of Lemma \ref{weakhi}. 

\begin{definition}[Pruning with respect to $\tau_n$ for the cone]\label{conepruning} For $\tau_{N_0}\in{\bf{S}}_{\sigma_{N_0}^{-1}r^{-1}}$, define 
\begin{align*} 
\mc{G}_{\tau_{N_0}}&:=\{U\|U_{\tau_{N_0},r^2}:C_0(\log (r^2/r_1))^5|U|^{-1}\int\sum_{\substack{\theta\in{\bf{S}}_{r^{-1}}\\\theta\subset\tau_{N_0}}}|f_\theta|^2W_U\ge \frac{\a^2}{\#\tau_{N_0}}\}, \\
f_{\sigma_{N_0},\theta}&:=\sum_{\substack{U\in\mc{G}_{\tau_{N_0}}}}\s_Uf_\theta \qquad \text{and}\qquad f_{\sigma_{N_0}}:=\sum_{\theta\in{\bf{S}}_{r^{-1}}} f_{\sigma_{N_0},\theta}. 
\end{align*} 
For each $n<N_0$ and each $\tau_n\in{\bf{S}}_{\sigma_n^{-1}r^{-1}}$, let 
\begin{align*} 
\mc{G}_{\tau_n}:=\{U&\|U_{\tau_n,r^2}:C_0(\log( r^2/r_1))^{5}|U|^{-1}\int\sum_{\theta\subset\tau_n}|f_{\sigma_{n+1},\theta}|^2W_U \ge \frac{\a^2}{\#\tau_n}\}, \\
f_{\sigma_n,\theta}&:=\sum_{\substack{U\in\mc{G}_{\tau_n}}}\s_U f_{\sigma_{n+1},\theta} \quad\text{where}\quad\theta\subset\tau_n, \qquad\text{and}\qquad f_{\sigma_n}:=\sum_{\theta}f_{\sigma_n,\theta}.
\end{align*}
Finally, write $f_{\sigma_{n+1}}-f_{\sigma_n}:=f_{\sigma_n}^{\mc{B}}$ and $f_{\sigma_{n},\theta}^{\mc{B}}=\sum_{\substack{U\|U_{\tau_n}\\ U\not\in\mc{G}_{\tau_n}}}\s_Uf_{\sigma_{n+1},\theta}$. 
\end{definition}

\begin{lemma}\label{conepruneprops}
\begin{enumerate}
    \item $\supp f_{\sigma_n,\theta}\subset \underline{C}(\log (r^2/r_1))\tilde{\theta}$ and $\supp f_{\sigma_n,\theta}^{\mc{B}}\subset \underline{C}(\log (r^2/r_1))\tilde{\theta}$. 
    \item $|f_{\sigma_n,\theta}^{\mc{B}}|\le |f_{\sigma_{n+1},\theta}|\le |f_{\sigma_k,\theta}|\le |\sum_{\substack{U\|U_{\tau_k,r^2}\\ U\in\mc{G}_{\tau_k}}}\s_Uf_\theta|\le |f_\theta|$ for any $n\le k\le N_0-1$.  
    \item $f_{\sigma_n,\theta}=f_{\sigma_{n-1},\theta}^{\mc{B}}+\cdots+f_{\sigma_1,\theta}^{\mc{B}}+f_{\sigma_1}$ for all $1<n\le N_0$.
\end{enumerate}

\end{lemma}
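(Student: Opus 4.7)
The plan is to verify the three properties as formal consequences of the partition-of-unity structure defining the pruning, following the same pattern as the parabola case in Lemma \ref{pruneprop}. The backbone is that for each $n$ and $\tau_n$ the family $\{\s_U\}_{U\|U_{\tau_n,r^2}}$ is a non-negative partition of unity, and each $\widehat{\s_U}$ is supported in the origin-centered dual plank $U^*_{\tau_n,r^2}$ of dimensions $\sigma_n^2\times\sigma_n r^{-1}\times r^{-2}$ in the cone frame $({\bf{c}},{\bf{n}},{\bf{t}})(\w_{\tau_n})$.

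Properties (2) and (3) are purely algebraic. For (2), since $\s_U\ge 0$ and $\sum_{U\|U_{\tau_n,r^2}}\s_U\equiv 1$, the triangle inequality gives
\[ |f_{\sigma_n,\theta}|=\Big|\sum_{U\in\mc{G}_{\tau_n}}\s_U f_{\sigma_{n+1},\theta}\Big|\le\sum_{U\|U_{\tau_n,r^2}}\s_U|f_{\sigma_{n+1},\theta}|=|f_{\sigma_{n+1},\theta}|, \]
and the identical one-step bound applied to the bad set $\mc{G}_{\tau_n}^c$ yields $|f_{\sigma_n,\theta}^{\mc{B}}|\le|f_{\sigma_{n+1},\theta}|$. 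Iterating monotonicity from $n+1$ up to $k\le N_0-1$ and then, at the top scale, substituting the base definition $f_{\sigma_{N_0},\theta}=\sum_{U\in\mc{G}_{\tau_{N_0}}}\s_U f_\theta$, completes the chain in (2). For (3), summing $\s_U$ over the entire tiling rather than over $\mc{G}_{\tau_k}$ or $\mc{G}_{\tau_k}^c$ gives the one-step telescoping identity
\[ f_{\sigma_k,\theta}+f_{\sigma_k,\theta}^{\mc{B}}=\sum_{U\|U_{\tau_k,r^2}}\s_U f_{\sigma_{k+1},\theta}=f_{\sigma_{k+1},\theta}, \]
i.e.\ $f_{\sigma_k,\theta}^{\mc{B}}=f_{\sigma_{k+1},\theta}-f_{\sigma_k,\theta}$; summing this over $k=1,\ldots,n-1$ collapses to $f_{\sigma_n,\theta}=f_{\sigma_1,\theta}+\sum_{k=1}^{n-1}f_{\sigma_k,\theta}^{\mc{B}}$.

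For (1), I would induct downward on $n$ starting at $n=N_0$. At the base case, $\widehat{f_{\sigma_{N_0},\theta}}$ is a sum of convolutions $\widehat{\s_U}*\widehat{f_\theta}$, each supported in $\theta+U^*_{\tau_{N_0},r^2}$. Since $\theta\subset\tau_{N_0}$, the frames at $\w_\theta$ and $\w_{\tau_{N_0}}$ differ by at most the angular aperture $\sigma_{N_0}^{-1}r^{-1}$ of $\tau_{N_0}$; reading the dimensions $\sigma_{N_0}^2\le 1$, $r^{-2}$, $\sigma_{N_0}r^{-1}\le r^{-1}$ of $U^*_{\tau_{N_0},r^2}$ in the frame of $\theta$, the tangential tilt contributes only $O(r^{-2})$ in the normal direction, so $U^*_{\tau_{N_0},r^2}$ lies inside a bounded dilate of $\theta$. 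Hence $\supp\widehat{f_{\sigma_{N_0},\theta}}\subset\underline{C}\tilde\theta$. The inductive step replaces $f_{\sigma_{n+1},\theta}$ by $\sum_{U\in\mc{G}_{\tau_n}}\s_U f_{\sigma_{n+1},\theta}$, convolving the Fourier transform by one more $\widehat{\s_U}$ for $U\|U_{\tau_n,r^2}$ and extending the support by one further translate of $U^*_{\tau_n,r^2}\subset\underline{C}\tilde\theta$. After $N_0-n$ such steps the total support lies in a dilate of $\tilde\theta$ by a factor at most $\underline{C}(N_0-n+1)\lesssim\log(r^2/r_1)$, since $N_0=O(\log(r^2/r_1))$. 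The same argument applies verbatim to $f_{\sigma_n,\theta}^{\mc{B}}$.

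The one step requiring real care is the geometric bookkeeping in (1): because the dual planks $U^*_{\tau_n,r^2}$ sit in cone frames that rotate with $n$, one has to check that each of them still fits inside a common bounded dilate of $\theta$'s own plank, so that the Fourier support of $f_{\sigma_n,\theta}$ accumulates only an additive linear-in-$(N_0-n)$ contribution — and hence only the logarithmic-in-$r^2/r_1$ factor claimed — rather than growing multiplicatively. Everything else is a direct consequence of the partition-of-unity structure.
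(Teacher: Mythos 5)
Your argument is correct and follows the paper's proof essentially step for step: (2) and (3) are the same partition-of-unity algebra ($\s_U\ge 0$, $\sum_U\s_U\le 1$, and the one-step telescoping $f_{\sigma_{k+1},\theta}=f_{\sigma_k,\theta}+f_{\sigma_k,\theta}^{\mc{B}}$), and (1) rests on the same observation that each pruning step extends the Fourier support by a translate of $U^*_{\tau_k,r^2}\subset C\tilde\theta$, so the enlargement is additive over the $N_0\lesssim\log(r^2/r_1)$ scales. The only cosmetic difference is that you phrase (1) as a downward induction on $n$ while the paper expands $f_{\sigma_n,\theta}=\sum\prod_k\s_{U_k}f_\theta$ in one shot and bounds $\supp\widehat{\prod_k\s_{U_k}f_\theta}\subset 2U_n^*+\cdots+2U_{N_0}^*+\theta$ directly.
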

\begin{proof} Fix $\theta\in{\bf{S}}_{r^{-1}}$ and let $\tau_n\in{\bf{S}}_{\sigma_n^{-1}r^{-1}}$ satisfy $\theta\subset\tau_n$. The first property follows from the identity
\begin{align*}
    f_{\sigma_n,\theta}=\sum_{U_n\in\mc{G}_{\tau_n}}\s_{U_n}f_{\sigma_{n+1},\theta}=\cdots&=\sum_{U_n\in\mc{G}_{\tau_n}}\s_{U_n}\cdots\sum_{U_N\in\mc{G}_{\tau_{N_0}}}\s_{U_{N_0}}f_\theta \\
    &= \sum_{U_n\in\mc{G}_{\tau_n}}\cdots\sum_{U_{N_0}\in\mc{G}_{\tau_{N_0}}}\prod_{k=n}^{N_0} \s_{U_k}f_\theta
\end{align*}
where $f_{\sigma_{N_0+1},\theta}=f_\theta$. Let $U_k^*=\{\xi:|\xi\cdot x|\le 1\quad\forall x\in U_{\tau_k,r^2}\}$. Them each summand $\prod_{k=n}^{N_0} \s_{U_k}f_\theta$ has Fourier transform supported in  
\[ 2U_n^*+\cdots+2U_{N_0}^*+\theta.  \]
Since each $U_k^*\subset\theta-\theta$ and the number of scales is $N_0\lesssim \log (r^2/r)$, $\supp f_{\sigma_n,\theta}\subset \underline{C}(\log (r^2/r_1))\theta$ for a large enough constant $\underline{C}$. The argument for $f_{\sigma_n,\theta}^{\mc{B}}$ is the same. The second property follows from the fact that $\sum_{U\|U_{\tau_n,r^2}}\s_U\le 1$ for all choices of $\tau_n$, for all scales $n=1,\ldots,N_0$. Property (3) is clear from the definition of the pruning process, particularly the relation $f_{\sigma_{n+1}}=f_{\sigma_n}^{\mc{B}}+f_{\sigma_n}$. 

\end{proof}

Next we define some auxiliary functions which will be used to approximate weight functions $W_{B_{r_1}}$ by a version $\widetilde{W}_{B_{r_1}}$ with controlled Fourier support. 
\begin{definition}\label{weight} 
Let $\p(x):\R^2\to[0,\infty)$ be a radial, smooth bump function satisfying $\p(x)=1$ when $|x|\le 1$ and $\p(x)=0$ if $|x|>2$. For each $k\in\Z_{\ge 0}$, let $\chi_{A_k}=\p(\frac{x}{2^k r_1})-\p(\frac{x}{2^{k-1} r_1})$ if $k\ge 1$ and let $\chi_{A_0}=\p(\frac{x}{r_1})$. Let $H:\R^3\to[0,\infty)$ be a Schwartz function satisfying $H\gtrsim 1$ on $B_{r_1}$ and $\widehat{H}$ is supported where $|\xi|\le r_1^{-1}$.  Define  
\begin{align}\label{weight2}
    \widetilde{W}_{B_{r_1}}(x)=\sum_{k=0}^\infty \frac{1}{(2^k)^{10}}\int H(x-y)\chi_{A_k}(y)dy. 
\end{align}
Note that $W_{B_{r_1}}\sim \widetilde{W}_{B_{r_1}}$ and $\widehat{\widetilde{W}}_{B_{r_1}}$ is supported in $|\xi|\le r_1^{-1}$. 
\end{definition}

Recall some more notation from \cite{locsmooth}. Call $\xi\in\R^3$ an orientation if $\xi_1^2+\xi_2^2=\xi_3^2$ and $\xi_3=1$. For each orientation $\xi$, define the core line direction ${\bf{c}}(\xi)=(\xi_1,\xi_2,1)$, the normal direction  ${\bf{n}}(\xi)=(\xi_1,\xi_2,-1)$, and the tangent direction ${\bf{t}}(\xi)=(-\xi_2,\xi_1,0)$. For each $\theta\in{\bf{S}}_{r^{-1}}$, write $\tilde{\theta}$ for the set $\theta-\theta$. Note that each $\theta$, $\tilde{\theta}$ has an associated orientation $\xi$ so that $\tilde{\theta}(\xi)$ is contained in (and comparable to)
\begin{equation}\label{thetadef}  \{\w:|{\bf{c}}(\xi)\cdot\w|\le2,\quad |{\bf{n}}(\xi)\cdot\w|\le 2r^{-2},\quad |{\bf{t}}(\xi)\cdot\w|\le 2r^{-1}\} .
\end{equation}    
For each $\sigma_n$, ${\bf{CP}}_{\sigma_n}$ is the union of $2\sigma_n^2\times 2r^{-1}\sigma_n\times 2r^{-2}$ planks $\Theta$ centered at the origin and tangent to the light cone. Define $\Theta\in{\bf{CP}}_{\sigma_n}$ explicitly by taking orientations $\xi$ from a $r^{-1}$-separated set and letting
    \begin{equation}\label{Thetadef} \Theta=\Theta(\sigma_n,\xi)=\{\w:|{\bf{c}}(\xi)\cdot\w|\le\sigma_n^2,\quad |{\bf{n}}(\xi)\cdot\w|\le 2r^{-2},\quad |{\bf{t}}(\xi)\cdot\w|\le 2r^{-1}\sigma_n\} .\end{equation} 
For each $\sigma_n$, $\tau_n\in{\bf{S}}_{\sigma_n^{-1}r^{-1}}$, and each $\theta\in{\bf{S}}_{r^{-1}}$ with $\theta\subset\tau_n$, associate a single $\Theta_{\tau_n}\in{\bf{CP}}_{\sigma_n}$ so that $\Theta_{\tau_n}=\Theta_{\tau_n}(\sigma_n,\xi')$, $\theta=\theta(\xi)$, and $|\xi-\xi'|\le  \sigma_n^{-1}r^{-1}$. Let $\Omega_{\le \sigma_n}=\cup_{\Theta\in{\bf{CP}}_{\sigma_n}}\Theta$ and $\Omega_{\sigma_n}=\Omega_{\sigma_n}\setminus\Omega_{\le \sigma_n/2}$. Partition $\Omega:=\cup_{\theta\in{\bf{S}}_{r^{-1}}}\tilde{\theta}$ into sets $\Omega_{\le r^{-1}}\cup\big(\cup_{r^{-1}<\sigma_n\le 1}\Omega_{\sigma_n}\big)$.

The next lemma will be used to show that if $\|S_{B_{r_1}}f\|_{L^2_{avg}}\ge \a$, then $f_{\sigma_{N_0}}$ satisfies a similar property. 

\begin{lemma}\label{prune1} For each $r_1$-ball $B_{r_1}\subset\R^3$, 
\[|B_{r_1}|^{-1}\int\sum_{\theta\in{\bf{S}}_{r^{-1}}}|f_\theta-f_{\sigma_{N_0},\theta}|^2W_{B_{r_1}}\lesssim  \frac{1}{C_0}\a^2.  \]
\end{lemma}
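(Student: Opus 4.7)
The plan is to expand $f_\theta - f_{\sigma_{N_0},\theta} = \sum_{U \not\in \mc{G}_{\tau_{N_0}}} \s_U f_\theta$, where $\tau_{N_0}$ denotes the unique cap in ${\bf{S}}_{\sigma_{N_0}^{-1}r^{-1}}$ containing $\theta$, and then exploit the defining property of the complement of $\mc{G}_{\tau_{N_0}}$. Cauchy--Schwarz combined with the partition-of-unity bound $\sum_{U\|U_{\tau_{N_0},r^2}}\s_U\le 1$ gives the pointwise inequality
\[
|f_\theta - f_{\sigma_{N_0},\theta}|^2(x) \le \sum_{U\notin\mc{G}_{\tau_{N_0}}}\s_U(x)\,|f_\theta(x)|^2.
\]
Summing over $\theta$ and grouping by $\tau_{N_0}$, the claim is reduced to showing
\[
\sum_{\tau_{N_0}} \sum_{U\notin\mc{G}_{\tau_{N_0}}}\int \s_U\, g_{\tau_{N_0}}\, W_{B_{r_1}} \lesssim \frac{|B_{r_1}|\,\a^2}{C_0(\log(r^2/r_1))^5},
\]
where $g_{\tau_{N_0}}:=\sum_{\theta\subset\tau_{N_0}}|f_\theta|^2$.

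Next I would establish a locally constant property for $g_{\tau_{N_0}}$ at the scale of the wave envelopes $U\|U_{\tau_{N_0},r^2}$. Since each $|f_\theta|^2$ has Fourier support in $\tilde\theta=\theta-\theta$, the Fourier support of $g_{\tau_{N_0}}$ is contained in $\bigcup_{\theta\subset\tau_{N_0}}\tilde\theta\subset \Theta_{\tau_{N_0}}$, the cone plank from \eqref{Thetadef}, whose dual is comparable to $U_{\tau_{N_0},r^2}$. Choosing a Schwartz bump $\psi_{\tau_{N_0}}$ equal to $1$ on $\Theta_{\tau_{N_0}}$ with $|\widecheck{\psi}_{\tau_{N_0}}|$ being $L^1$-normalized and pointwise bounded by $|U|^{-1}W_{U_{\tau_{N_0},r^2}}$, the identity $g_{\tau_{N_0}}=g_{\tau_{N_0}}*\widecheck{\psi}_{\tau_{N_0}}$ together with $g_{\tau_{N_0}}\ge 0$ yield $g_{\tau_{N_0}}(x)\lesssim |U|^{-1}(g_{\tau_{N_0}}*W_{U_{\tau_{N_0},r^2}})(x)$. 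Using $\s_U\lesssim W_U$ and Fubini then produces, for each $U\|U_{\tau_{N_0},r^2}$,
\[
\int \s_U\, g_{\tau_{N_0}}\, W_{B_{r_1}} \lesssim \fint_U g_{\tau_{N_0}}\cdot \int W_U\, W_{B_{r_1}}.
\]

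The conclusion follows by combining the defining inequality $\fint_U g_{\tau_{N_0}} < \frac{\a^2}{C_0(\log(r^2/r_1))^5\,\#\tau_{N_0}}$ for $U\notin\mc{G}_{\tau_{N_0}}$ with the tiling bound $\sum_{U\|U_{\tau_{N_0},r^2}}\int W_U\,W_{B_{r_1}} = \int(\sum_U W_U)\,W_{B_{r_1}} \lesssim \int W_{B_{r_1}}\sim |B_{r_1}|$, which is valid because $\sum_U W_U\lesssim 1$ pointwise for a tiling by translates of a Schwartz weight. Summing first over $U$, then over $\tau_{N_0}$ (using $\sum_{\tau_{N_0}:\, f_{\tau_{N_0}}\not\equiv 0}(\#\tau_{N_0})^{-1}=1$), and finally dividing by $|B_{r_1}|$ yields the claim, with an extra factor of $(\log(r^2/r_1))^{-5}$ to spare. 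The main obstacle is executing the locally constant reduction rigorously: one must simultaneously track the rapid decay of $\widecheck{\psi}_{\tau_{N_0}}$ off $U_{\tau_{N_0},r^2}$ and of $W_{B_{r_1}}$ off $B_{r_1}$ so as to replace the product $\s_U W_{B_{r_1}}$ by an average of $g_{\tau_{N_0}}$ against $W_U$, without incurring losses that would overwhelm the slack provided by $(\log(r^2/r_1))^{-5}$.
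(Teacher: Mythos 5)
The first reduction (Cauchy--Schwarz and $\sum_U \s_U\le 1$, giving $|f_\theta - f_{\sigma_{N_0},\theta}|^2\le \sum_{U\notin\mc{G}_{\tau_{N_0}}}\s_U|f_\theta|^2$) matches the paper. The gap is in the locally constant step. Your claim that
\[
\bigcup_{\theta\subset\tau_{N_0}}\tilde\theta\subset\Theta_{\tau_{N_0}}
\]
is false, and in fact the inclusion goes the other way. From \eqref{thetadef}, $\tilde\theta$ is a $2\times 2r^{-2}\times 2r^{-1}$ plank (core/normal/tangent), while from \eqref{Thetadef}, $\Theta_{\tau_{N_0}}$ has dimensions $\sigma_{N_0}^2\times 2r^{-2}\times 2r^{-1}\sigma_{N_0}$. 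Since $r_1>r$ forces $\sigma_{N_0}=r/r_1<1$, $\Theta_{\tau_{N_0}}$ is \emph{strictly smaller} than $\tilde\theta$ in both the core and tangent directions, so $\Theta_{\tau_{N_0}}\subset\tilde\theta$ but not conversely. Consequently the identity $g_{\tau_{N_0}}=g_{\tau_{N_0}}*\widecheck{\psi}_{\tau_{N_0}}$ does not hold for a bump $\psi_{\tau_{N_0}}$ adapted to $\Theta_{\tau_{N_0}}$, and the estimate $g_{\tau_{N_0}}(x)\lesssim|U|^{-1}(g_{\tau_{N_0}}*W_{U_{\tau_{N_0},r^2}})(x)$ is false in general: if a single $|f_\theta|^2$ is concentrated on a translate of $\theta^*$ (dimensions $1\times r\times r^2$), the peak value is $\sim 1$ while the average over the larger envelope $U_{\tau_{N_0},r^2}$ (dimensions $\sigma_{N_0}^{-2}\times\sigma_{N_0}^{-1}r\times r^2$) is $\sim\sigma_{N_0}^{3}\ll 1$.

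The paper circumvents this by not trying to prove a pointwise locally constant bound at the scale $U_{\tau_{N_0},r^2}$. Instead it replaces $W_{B_{r_1}}$ by the modified weight $\widetilde{W}_{B_{r_1}}$ from Definition~\ref{weight}, whose Fourier transform is supported in the ball $|\xi|\le r_1^{-1}$, writes the integral on the Fourier side via Plancherel, and uses the geometric fact that $10\tilde\theta\cap B_{r_1^{-1}}\subset C\Theta_{\tau_{N_0}}$. That containment is what makes the scale $\sigma_{N_0}=r/r_1$ the correct one: the $\Theta_{\tau_{N_0}}$ dimensions $r^2/r_1^2\times 2r^{-2}\times 2r_1^{-1}$ swallow exactly the portion of $\tilde\theta$ visible at frequencies $\lesssim r_1^{-1}$. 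Only after this truncation can one insert the bump $\rho_{\Theta_{\tau_{N_0}}}$ and pass to the averaged quantity $|U|^{-1}\int|f_\theta|^2 W_U$ as you intended. Your argument, as written, skips the frequency truncation entirely, so the central step does not go through. Separately, the claimed normalization $\sum_{\tau_{N_0}:f_{\tau_{N_0}}\not\equiv 0}(\#\tau_{N_0})^{-1}=1$ holds because $\#\tau_{N_0}$ counts nonvanishing planks; this part is fine, but it is moot given the earlier gap.
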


\begin{proof} 
Using that $|\sum_{\substack{U\|U_{\tau_{N_0},r^2}\\U\not\in{\mc{G}}_{\tau_{N_0}}}}\s_U|\le 1$, we have  
\begin{align*}
\sum_\theta|f_\theta-f_{\sigma_{N_0},\theta}|^2(x)&=     \sum_{\theta\in{\bf{S}}_{r^{-1}}}|\sum_{\substack{U\|U_{\tau_{N_0},r^2}\\ U\not\in \mc{G}_{\tau_{N_0}}
}} \s_{U} f_\theta|^2 (x) \\
&\le  \sum_{\tau_{N_0}\in{\bf{S}}_{\sigma_{N_0}^{-1}r^{-1}}}\sum_{\substack{\theta\in{\bf{S}}_{r^{-1}}\\\theta\subset\tau_{N_0}}} \sum_{\substack{U\|U_{\tau_{N_0},r^2}\\ U\not\in \mc{G}_{\tau_{N_0}}
}} \s_{U} |f_\theta|^2 (x).
\end{align*}
Then using Definition \ref{weight}, we have
\begin{align*}
\int\sum_{\theta\in{\bf{S}}_{r^{-1}}}|f_\theta-f_{\sigma_{N_0},\theta}|^2W_{B_{r_1}} &\lesssim     \sum_{\tau_{N_0}\in{\bf{S}}_{\sigma_{N_0}^{-1}r^{-1}}}\sum_{\substack{\theta\in{\bf{S}}_{r^{-1}}\\\theta\subset\tau_{N_0}}} \sum_{\substack{U\|U_{\tau_{N_0},r^2}\\ U\not\in \mc{G}_{\tau_{N_0}}
}}   \int \s_{U}|f_\theta|^2 \widetilde{W}_{B_{r_1}} .
\end{align*}
Next, analyze each integral. By Plancherel's theorem, 
\[ \int \s_{U}|f_\theta|^2 \widetilde{W}_{B_{r_1}}=\int \widehat{\s}_{U}*\widehat{|f_\theta|^2} \overline{\widehat{\widetilde{W}}}_{B_{r_1}}. \]
The support of $\widehat{\s}_U*\widehat{|f_\theta|^2}$ is contained in $U^*+\tilde{\theta}$, which is contained in $10\tilde{\theta}$. The support of $\widehat{\widetilde{W}}_{B_{r_1}}$ is contained in a ball $B_{r_1^{-1}}$centered at the origin of radius $r_1^{-1}$. 

Recall that $\sigma_{N_0}= (r_1/r)^{-1}$, $\tau_{N_0}\in{\bf{S}}_{r_1/r^2}$, and each $\Theta(\sigma_{N_0},\xi)\in{\bf{CP}}_{\sigma_{N_0}}$ is defined by \eqref{Thetadef}. If $\Theta_{\tau_{N_0}}\in{\bf{CP}}_{\sigma_{N_0}}$ is associated to $\theta\subset\tau_{N_0}$, then it is clear from the definition of $\Theta_{\tau_{N_0}}$ that $10\tilde{\theta}\cap B_{r_1^{-1}}$ is contained in $C \Theta_{\tau_{N_0}}$ for an appropriate constant $C>0$. 
Let $\rho_{\Theta_{\tau_{N_0}}}$ be a smooth bump function equal to $1$ on $C\Theta_{\tau_{N_0}}$ and equal to $0$ outside of $2C\Theta_{\tau_{N_0}}$. Then 
\[ \int \widehat{\s}_{U}*\widehat{|f_\theta|^2} \overline{\widehat{\widetilde{W}}}_{B_{r_1}}=\int \widehat{\s}_{U}*\widehat{|f_\theta|^2}\rho_{\Theta_{\tau_{N_0}}} \overline{\widehat{\widetilde{W}}}_{B_{r_1}}=\int ({\s}_{U}{|f_\theta|^2})*\widecheck{\rho}_{\Theta_{\tau_{N_0}}}{{\widetilde{W}}}_{B_{r_1}} \]
and so
\begin{align} \label{B}\int\sum_{\theta\in{\bf{S}}_{r^{-1}}}|f_\theta-f_{\sigma_{N_0},\theta}|^2W_{B_{r_1}} &\lesssim \sum_{\tau_{N_0}\in{\bf{S}}_{\sigma_{N_0}^{-1}r^{-1}}}\sum_{\substack{\theta\in{\bf{S}}_{r^{-1}}\\\theta\subset\tau_{N_0}}} \sum_{\substack{U\|U_{\tau_{N_0},r^2}\\ U\not\in \mc{G}_{\tau_{N_0}}
}}    \int (\s_{U}|f_\theta|^2)*|\widecheck{\rho}_{\Theta_{\tau_{N_0}}}| W_{B_{r_1}} . 
\end{align} 
Next we analyze the integrand pointwise, without the $W_{B_{r_1}}$ function. Note that $\s_U(x-y)|\widecheck{\rho}_{\Theta_{\tau_{N_0}}}|(y)\lesssim |U|^{-1}W_U(x)W_U(x-y)$, so for each $x\in\R^3$,
\begin{align*}
\int \s_{U}(x-y)|f_\theta|^2(x-y)|\widecheck{\rho}_{\Theta_{\tau_{N_0}}}|(y)dy \lesssim   W_U(x)  |U|^{-1}\int |f_\theta|^2 W_U.  
\end{align*}
By the definition of $\mc{G}_{\tau_{N_0}}$, this means we may bound the right hand side of \eqref{B} by  
\[ \sum_{\tau_{N_0}\in{\bf{S}}_{\sigma_{N_0}^    {-1}r^{-1}}}                     \sum_{\substack{U\|U_{\tau_{N_0},r^2}\\ U\not\in \mc{G}_{\tau_{N_0}}
}}  \int W_U W_{B_{r_1}}\left( \frac{1}{C_0\log (r^2/r_1)}\frac{\a^2}{\#\tau_{N_0}}\right) \lesssim  |B_{r_1}| \frac{1}{C_0(\log (r^2/r_1))^3}\a^2 , \]
which proves the claim. 
\end{proof}


We will define a high/low frequency cutoff now related to the notation for the frequency decomposition in \cite{locsmooth}. 
\begin{definition}[Frequency cut-off] \label{freqcutoff}
For $\theta\in{\bf{S}}_{r^{-1}}$ and $\tau_n\in{\bf{S}}_{\sigma_n^{-1}r^{-1}}$, let $\Theta_{\tau_n}$ be the $\Theta\in{\bf{CP}}_{\sigma_n}$ that is associated to $\theta$. Then let $\eta_{\Theta_{\tau_n}}$ be a bump function equal to $1$ on $4\underline{C}(\log (r^2/r_1))\Theta_{\tau_n}$ (where $\underline{C}$ is the same as in property (1) from Lemma \ref{conepruning}) and equal to $0$ off of $8\underline{C}(\log (r^2/r_1))\Theta_{\tau_n}$. Also define $\eta_{\Theta_{\tau_n}^c}=\chi_{B_2}-\eta_{\Theta_{\tau_n}}$ where $\chi_{B_2}$ is a fixed smooth bump function equal to $1$ when $|\xi|\le 2\underline{C}(\log (r^2/r_1))$ and supported in $|\xi|\le 4\underline{C}(\log (r^2/r_1))$.
\end{definition}

\begin{lemma}[Weak high-dominance] \label{weakhi} For each $n$, $1\le n\le N_0-1$, if 
\begin{equation}\label{contra}
|B_{r_1}|\a^2\lesssim (\log(r^2/r_1))^2\int\sum_{\theta\in{\bf{S}}_{r^{-1}}}|f_{\sigma_n,\theta}^{\mc{B}}|^2 \widetilde{W}_{B_{r_1}} ,\end{equation}
then 
\[ \int\sum_{\theta\in{\bf{S}}_{r^{-1}}}|f_{\sigma_n,\theta}^{\mc{B}}|^2\widetilde{W}_{B_{r_1}}\lesssim  \Bigl|\sum_{\tau_n\in{\bf{S}}_{\sigma_n^{-1}r^{-1}}}\int \big(\sum_{\substack{\theta\in{\bf{S}}_{r^{-1}}\\\theta\subset\tau_n}}|f_{\sigma_n,\theta}^{\mc{B}}|^2*\widecheck{\eta}_{\Theta_{\tau_n}^c}\big)\widetilde{W}_{B_{r_1}}\Bigr| . \]
\end{lemma}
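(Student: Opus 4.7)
The strategy is a contradiction argument parallel to Lemma~\ref{highdom}. By property~(1) of Lemma~\ref{conepruneprops}, the Fourier support of $\sum_{\theta\subset\tau_n}|f_{\sigma_n,\theta}^{\mc{B}}|^2$ is contained in $2\underline{C}(\log(r^2/r_1))\Theta_{\tau_n}$, on which $\chi_{B_2}\equiv 1$. Writing $\chi_{B_2}=\eta_{\Theta_{\tau_n}}+\eta_{\Theta_{\tau_n}^c}$ on the Fourier side yields
\[ \int\sum_\theta|f_{\sigma_n,\theta}^{\mc{B}}|^2\widetilde{W}_{B_{r_1}}=\sum_{\tau_n}\int\sum_{\theta\subset\tau_n}|f_{\sigma_n,\theta}^{\mc{B}}|^2*\widecheck{\eta}_{\Theta_{\tau_n}}\widetilde{W}_{B_{r_1}}+\sum_{\tau_n}\int\sum_{\theta\subset\tau_n}|f_{\sigma_n,\theta}^{\mc{B}}|^2*\widecheck{\eta}_{\Theta_{\tau_n}^c}\widetilde{W}_{B_{r_1}}. \]
It therefore suffices to show that the opposite of the desired conclusion, namely the low-frequency dominance
\[ \int\sum_\theta|f_{\sigma_n,\theta}^{\mc{B}}|^2\widetilde{W}_{B_{r_1}}\le 2\Bigl|\sum_{\tau_n}\int\sum_{\theta\subset\tau_n}|f_{\sigma_n,\theta}^{\mc{B}}|^2*\widecheck{\eta}_{\Theta_{\tau_n}}\widetilde{W}_{B_{r_1}}\Bigr|, \]
contradicts the hypothesis \eqref{contra} once $C_0$ is taken sufficiently large.

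To estimate the low-frequency part, expand $f_{\sigma_n,\theta}^{\mc{B}}=\sum_{U\notin\mc{G}_{\tau_n}}\s_Uf_{\sigma_{n+1},\theta}$ and apply Cauchy-Schwarz (using $\sum_{U}\s_U\le 1$) as in the proof of Lemma~\ref{highdom} to obtain the pointwise bound
\[ \sum_{\theta\subset\tau_n}|f_{\sigma_n,\theta}^{\mc{B}}|^2(x)\le\sum_{U\notin\mc{G}_{\tau_n}}\s_U(x)\sum_{\theta\subset\tau_n}|f_{\sigma_{n+1},\theta}|^2(x). \]
Since $\widecheck{\eta}_{\Theta_{\tau_n}}$ is $L^1$-normalized and concentrated on the dual of $\Theta_{\tau_n}$, which up to a $\log(r^2/r_1)$ rescaling is comparable to $U_{\tau_n,r^2}$, convolution with $|\widecheck{\eta}_{\Theta_{\tau_n}}|$ acts as a weighted average at scale $U_{\tau_n,r^2}$. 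Combined with the rapid decay of $\s_U$ off $U\|U_{\tau_n,r^2}$, this yields
\[ \Bigl(\sum_{\theta\subset\tau_n}|f_{\sigma_n,\theta}^{\mc{B}}|^2\Bigr)*|\widecheck{\eta}_{\Theta_{\tau_n}}|(x)\lesssim\sum_{U\notin\mc{G}_{\tau_n}}W_U(x)\fint_U\sum_{\theta\subset\tau_n}|f_{\sigma_{n+1},\theta}|^2. \]
By Definition~\ref{conepruning}, each average obeys $\fint_U\sum_{\theta\subset\tau_n}|f_{\sigma_{n+1},\theta}|^2<\frac{\a^2}{C_0(\log(r^2/r_1))^5\#\tau_n}$, and $\sum_{U\|U_{\tau_n,r^2}}W_U\lesssim 1$, so integrating against $\widetilde{W}_{B_{r_1}}$ and summing over $\tau_n$ gives
\[ \sum_{\tau_n}\int\sum_{\theta\subset\tau_n}|f_{\sigma_n,\theta}^{\mc{B}}|^2*\widecheck{\eta}_{\Theta_{\tau_n}}\widetilde{W}_{B_{r_1}}\lesssim\frac{|B_{r_1}|\a^2}{C_0(\log(r^2/r_1))^5}. \]
Chaining this with the low-dominance assumption and \eqref{contra} yields $\frac{|B_{r_1}|\a^2}{(\log(r^2/r_1))^2}\lesssim\frac{|B_{r_1}|\a^2}{C_0(\log(r^2/r_1))^5}$, which is false once $C_0$ exceeds the universal implicit constant.

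The main technical obstacle is justifying the pointwise convolution estimate $(\s_U g)*|\widecheck{\eta}_{\Theta_{\tau_n}}|(x)\lesssim W_U(x)\fint_U g\, W_U$ with an absolute constant. The scales match because $\Theta_{\tau_n}$ (a $\sigma_n^2\times r^{-1}\sigma_n\times r^{-2}$ Fourier plank) is dual to $U_{\tau_n,r^2}$ (a $\sigma_n^{-2}\times\sigma_n^{-1}r\times r^2$ spatial plank); the additional $\log(r^2/r_1)$ dilation built into $\eta_{\Theta_{\tau_n}}$ only contracts the physical-side kernel and is absorbed by the rapid decay of $\s_U$ and $W_U$. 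All accumulated powers of $\log(r^2/r_1)$ are comfortably swallowed by the $(\log(r^2/r_1))^5$ factor fixed in the definition of $\mc{G}_{\tau_n}$, which is precisely why that power was chosen.
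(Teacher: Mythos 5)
Your proof follows essentially the same route as the paper's: a contradiction argument starting from the low‑dominance assumption, expanding $f_{\sigma_n,\theta}^{\mc B}$ via the pruning process, estimating the convolution with $\widecheck{\eta}_{\Theta_{\tau_n}}$ pointwise, and invoking the $U\notin\mc G_{\tau_n}$ threshold to reach the contradiction once $C_0$ (the paper calls this the constant in the pruning definition) is large.

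One small inaccuracy worth flagging: your claimed convolution estimate $(\s_U g)*|\widecheck{\eta}_{\Theta_{\tau_n}}|(x)\lesssim W_U(x)\,\fint_U g\,W_U$ does not hold with an absolute implicit constant. Since $\eta_{\Theta_{\tau_n}}$ is adapted to the $\log(r^2/r_1)$-dilated plank $8\underline{C}\log(r^2/r_1)\Theta_{\tau_n}$, the kernel $\widecheck{\eta}_{\Theta_{\tau_n}}$ has peak height $\sim(\log(r^2/r_1))^3|U|^{-1}$, not $|U|^{-1}$; the paper's corresponding estimate $\s_U(y)|\widecheck{\eta}_{\Theta_{\tau_n}}|(x-y)\lesssim(\log(r^2/r_1))^3\s_U^{1/2}(x)|U|^{-1}W_U(y)$ carries this factor. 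Your later sentence acknowledges that the $\log$ powers need to be absorbed by the $(\log(r^2/r_1))^5$ in the definition of $\mc G_{\tau_n}$ (which is correct, and consistent with the paper: $3$ from the kernel, $2$ from the hypothesis \eqref{contra}), so this is a slip in the precise statement of the bound rather than a gap in the argument; but you should not assert an absolute constant.
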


\begin{proof} Suppose for a contradiction that
\[ \int\sum_{\theta\in{\bf{S}}_{r^{-1}}}|f_{\sigma_n,\theta}^{\mc{B}}|^2 \widetilde{W}_{B_{r_1}}\le 2\Big|\int\big(\sum_{\theta\in{\bf{S}}_{r^{-1}}}|f_{\sigma_n,\theta}^{\mc{B}}|^2*\widecheck{\eta}_{\Theta_{\tau_n}}\big) \widetilde{W}_{B_{r_1}} \Big| .\]
Analyze the integrand pointwise, without the weight function $\widetilde{W}_{B_{r_1}}$. Using the definition of the pruning process, we have
\begin{align*}
\Bigl| \sum_{\theta\in{\bf{S}}_{r^{-1}}}|f_{\sigma_n,\theta}^{\mc{B}}|^2*\widecheck{\eta}_{\Theta_{\tau_n}}(x)\Bigr|&= \Bigl|\sum_{\tau_n\in{\bf{S}}_{\sigma_n^{-1}r^{-1}}}\sum_{\substack{\theta\in{\bf{S}}_{r^{-1}}\\\theta\subset\tau_n}}|\sum_{\substack{U\|U_{\tau_n,r^2}\\ U\not\in \mc{G}_{\tau_n}}} \s_{U} f_{\sigma_{n+1},\theta}|^2*\widecheck{\eta}_{\Theta_{\tau_n}}(x)   \Bigr| \\
&\le \sum_{\tau_n\in{\bf{S}}_{\sigma_n^{-1}r^{-1}}}\sum_{\substack{\theta\in{\bf{S}}_{r^{-1}}\\\theta\subset\tau_n}}\sum_{\substack{U\|U_{\tau_n,r^2}\\ U\not\in \mc{G}_{\tau_n}}} \int \s_U(y)|f_{\sigma_{n+1},\theta}|^2(y)|\widecheck{\eta}_{\Theta_{\tau_n}}|(x-y)dy. 
\end{align*}
Noting that $\s_U(y)|\widecheck{\eta}_{\Theta_{\tau_n}}|(x-y)\lesssim (\log (r^2/r_1))^3\s_U^{1/2}(x)|U|^{-1}{W}_U(y)$, we have
\begin{align*}
\sum_{\tau_n\in{\bf{S}}_{\sigma_n^{-1}r^{-1}}}\sum_{\substack{\theta\in{\bf{S}}_{r^{-1}}\\\theta\subset\tau_n}}&\sum_{\substack{U\|U_{\tau_n,r^2}\\ U\not\in \mc{G}_{\tau_n}}} \int \s_U(y)|f_{\sigma_{n+1},\theta}|^2(y)|\widecheck{\eta}_{\Theta_{\tau_n}}|(x-y)dy\\
&\lesssim (\log (r^2/r_1))^3\sum_{\tau_n\in{\bf{S}}_{\sigma_n^{-1}r^{-1}}}\sum_{\substack{U\|U_{\tau_n,r^2}\\ U\not\in \mc{G}_{\tau_n}}} \s_U^{1/2}(x)|U|^{-1}\int \sum_{\substack{\theta\in{\bf{S}}_{r^{-1}}\\\theta\subset\tau_n}} |f_{\sigma_{n+1},\theta}|^2|W_U \\
&\lesssim (\log (r^2/r_1))^3\sum_{\tau_n\in{\bf{S}}_{\sigma_n^{-1}r^{-1}}}\sum_{\substack{U\|U_{\tau_n,r^2}\\ U\not\in \mc{G}_{\tau_n}}} \s_U^{1/2}(x)\frac{1}{C_0(\log (r^2/r_1))^5}\frac{\a^2}{\#\tau_n}\lesssim \frac{\a^2}{C_0(\log (r^2/r_1))^2}. 
\end{align*}
This is a contradiction to \eqref{contra}, if $C_0$ is sufficiently large, which leads to the desired conclusion. 
\end{proof}

\begin{lemma}[Cone high lemma] \label{middom} For each $n$, $1\le n\le N_0-1$,
\[\int_{|\xi|\le 2r_1^{-1}}|\sum_{d(\theta)=r^{-1}}\widehat{|f_{\sigma_n,\theta}^{\mc{B}}|^2}{\eta}_{\Theta_{\tau_n}^c} |^2\lesssim (\log \big(\frac{r^2}{r_1}\big))^6\sum_{\sigma_n^{-1}r^{-1}\le s\le 1}\sum_{d(\tau)=s}\sum_{\substack{U\|U_{\tau,r^2}\\C_0(\log (\frac{r^2}{r_1}))^3\|S_Uf\|_{L^2_{avg}}^2\ge \frac{\a^2}{{\#\tau}}}}|U|\|S_Uf\|_{L^2_{avg}}^4.  \]
\end{lemma}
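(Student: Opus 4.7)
The plan is to adapt the proof of Lemma \ref{high2} (the parabola high lemma) to the cone setting, with the added complication that the relevant Fourier supports have been thickened by $\log(r^2/r_1)$ factors from the pruning process (property (1) of Lemma \ref{conepruneprops}). First I would introduce a smooth dyadic partition of unity
\[ \eta_{\Theta_{\tau_n}^c}\chi_{\{|\xi|\le 2r_1^{-1}\}}=\sum_s \psi_s \]
indexed by dyadic scales $s\in[\sigma_n^{-1}r^{-1},1]$, where each $\psi_s$ is supported on the annular region at the plank scale corresponding (via the cone geometry) to the cap scale $s$. Since there are $O(\log(r^2/r_1))$ such scales, expanding the square via Cauchy--Schwarz costs at most a factor of $\log(r^2/r_1)$ and reduces the problem to estimating a single scale's contribution.

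For fixed $s$, the key geometric input is that on $\mathrm{supp}(\psi_s)$, the log-thickened Fourier supports $\underline{C}\log(r^2/r_1)\tilde\theta$ for $\theta\subset\tau\in{\bf S}_s$ all lie inside a single further log-thickening of the plank dual to $U_{\tau,r^2}$, while these planks overlap at most $(\log(r^2/r_1))^c$ times as $\tau$ ranges over ${\bf S}_s$. Cauchy--Schwarz in $\tau$ then gives the $L^2$-orthogonality bound
\[ \int\Bigl|\sum_\theta\widehat{|f_{\sigma_n,\theta}^{\mc{B}}|^2}\,\eta_{\Theta_{\tau_n}^c}\,\psi_s\Bigr|^2 \lesssim \bigl(\log\tfrac{r^2}{r_1}\bigr)^c \sum_{\tau\in{\bf S}_s}\int\Bigl|\sum_{\theta\subset\tau}|f_{\sigma_n,\theta}^{\mc{B}}|^2 * \widecheck{\psi}_s\Bigr|^2. \]
Introducing an auxiliary bump $\rho_\tau\approx\chi_{U_{\tau,r^2}^*}$ dominating the Fourier support, and running the Minkowski/local-constancy argument exactly as in Case 1 of Proposition \ref{mainprop}, each right-hand integral is bounded by $\sum_{U\pp U_{\tau,r^2}}|U|\bigl(|U|^{-1}\int\sum_{\theta\subset\tau}|f_{\sigma_n,\theta}^{\mc{B}}|^2 W_U\bigr)^2$.

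To conclude, apply Lemma \ref{conepruneprops}(2) to replace $|f_{\sigma_n,\theta}^{\mc{B}}|$ by $|f_\theta|$, converting these averages into $\|S_Uf\|_{L^2_{avg}}^2$. To restrict the $U$-sum to envelopes satisfying $C_0(\log\tfrac{r^2}{r_1})^3\|S_Uf\|_{L^2_{avg}}^2\ge\tfrac{\a^2}{\#\tau}$, split the sum: for bad $U$ the inequality $\|S_Uf\|_{L^2_{avg}}^4 \le \|S_Uf\|_{L^2_{avg}}^2\cdot\tfrac{\a^2}{C_0(\log)^3 \#\tau}$ gives a contribution dominated by a small multiple of $\tfrac{\a^2}{(\log)^3\#\tau}\int\sum_{\theta\subset\tau}|f_{\sigma_n,\theta}^{\mc{B}}|^2 W_U$, which when summed over $\tau$ and $s$ is absorbed into the left-hand side via the weak high-dominance furnished by Lemma \ref{weakhi} (applied in the regime $|B_{r_1}|\a^2\lesssim(\log)^2\int\sum_\theta|f_{\sigma_n,\theta}^{\mc{B}}|^2\widetilde W_{B_{r_1}}$ under which \ref{middom} is ultimately used). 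The main obstacle will be verifying the precise bounded-overlap geometry of the log-thickened planks $\tilde\tau$ across the dyadic scales $s$ for the cone, and checking that the correspondence between the frequency-decomposition plank scale and the cap scale $s$ in the statement yields exactly the range $\sigma_n^{-1}r^{-1}\le s\le 1$ without boundary losses beyond the stated $(\log(r^2/r_1))^6$ factor.
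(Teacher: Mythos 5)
Your overall skeleton — decompose the high annulus into dyadic scales, use bounded overlap of the thickened planks to reduce to a single $\tau$, then run the local-constancy/Minkowski argument to produce $\sum_{U\|U_{\tau,r^2}}|U|\,\big(|U|^{-1}\int\sum_{\theta\subset\tau}|f_{\sigma_n,\theta}^{\mc{B}}|^2W_U\big)^2$ — matches the paper's proof (which carries this out via the sets $\Omega_{\sigma_k}$ for $n<k\le N_0$ and the bump functions $\eta_{\Theta_{\tau_k}}$, citing Lemmas 4.1 and 4.2 of \cite{locsmooth}). The gap is in your final step.

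You propose to pass from $|f_{\sigma_n,\theta}^{\mc{B}}|$ to $|f_\theta|$ using the crude monotonicity $|f_{\sigma_n,\theta}^{\mc{B}}|\le|f_\theta|$, and then recover the restriction to good wave envelopes by a good/bad split, absorbing the bad contribution ``via Lemma~\ref{weakhi}.'' This does not work. Lemma~\ref{middom} is a self-contained upper bound on the frequency integral $\int_{|\xi|\le 2r_1^{-1}}|\sum_\theta\widehat{|f_{\sigma_n,\theta}^{\mc{B}}|^2}\,\eta_{\Theta_{\tau_n}^c}|^2$; there is nothing on the left that a bad-$U$ physical-space term $\sum_\tau\frac{\a^2}{C_0(\log)^3\#\tau}\int\sum_{\theta\subset\tau}|f_\theta|^2 W_U$ could be absorbed into, and Lemma~\ref{weakhi} is a statement about the high-dominance of $\int\sum_\theta|f_{\sigma_n,\theta}^{\mc{B}}|^2\widetilde W_{B_{r_1}}$ for a fixed $B_{r_1}$, which is used \emph{before} Lemma~\ref{middom} in the proof of Proposition~\ref{kakcone}, not as an absorption device inside it. Moreover, once you have coarsened $|f_{\sigma_n,\theta}^{\mc{B}}|$ to $|f_\theta|$, the pruning structure that creates the good restriction is already gone.

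The mechanism you need is the sharper half of Lemma~\ref{conepruneprops}(2): for $n<k\le N_0$,
\[ |f_{\sigma_n,\theta}^{\mc{B}}|\;\le\;|f_{\sigma_k,\theta}|\;\le\;\Bigl|\sum_{U'\in\mc{G}_{\tau_k}}\s_{U'}f_\theta\Bigr|. \]
This inequality \emph{already} carries the restriction to $U'\in\mc{G}_{\tau_k}$, because the pruning at every scale $n< k\le N_0$ multiplied by partitions of unity supported on good wave envelopes. Substituting this into the square, expanding $|\sum_{U'\in\mc{G}_{\tau_k}}\s_{U'}f_\theta|^2\le\sum_{U'\in\mc{G}_{\tau_k}}\s_{U'}|f_\theta|^2$, and running Cauchy--Schwarz in $U$ versus $U'$ together with $\|\cdot\|_{\ell^2}\le\|\cdot\|_{\ell^1}$ localizes the average directly onto the good $U'$ and yields $\sum_{U'\in\mc{G}_{\tau_k}}|U'|\|S_{U'}f\|_{L^2_{avg}}^4$ — no separate good/bad split or absorption is needed. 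In other words, the scale $k$ at which you select good envelopes must be the scale appearing in the annulus decomposition, which is strictly between $n$ and $N_0$, so the intermediate $f_{\sigma_k,\theta}$ is the right object to compare to, not $f_\theta$.
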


\begin{proof} As in the proof of Lemma 1.4 in  \cite{locsmooth}, we may write the Fourier support of the function $\sum_{\theta\in{\bf{S}}_{r^{-1}}}|f_\theta|^2$ as $\Omega=\big(\sqcup_{k=n+1}^N \Omega_{\sigma_k}\big)\sqcup \Omega_{\le\sigma_n}$. Since $\sum_{\theta\in{\bf{S}}_{r^{-1}}}|f_{\sigma_n,\theta}^{\mc{B}}|^2$ has Fourier support in the $\underline{C}\log (r^2/r_1)$ dilation of the Fourier support of $\sum_\theta|f_\theta|^2$ (by property (1) of Lemma \ref{conepruneprops}), its Fourier support is partitioned into $\underline{C}\log (r^2/r_1)$ dilations of the sets $\Omega_{\sigma_k}$ and $\Omega_{\le\sigma_n}$. 

Note that by the proof of Lemma 4.1 from \cite{locsmooth}, $\tilde{\theta}\cap\Omega_{\le\sigma_n}\subset 4 \Theta_{\tau_n}$ where  $\Theta_{\tau_n}\in{\bf{CP}}_{\sigma_n}$ associated to $\theta$. It follows that $\supp\widehat{|f_{\sigma_n,\theta}^{\mc{B}}|^2}\cap\underline{C}\log (r^2/r_1)\Omega_{\le\sigma_n}$ is contained in $4\underline{C}\log (r^2/r_1)\Theta_{\tau_n}$. Since $\supp|\widehat{f_{\sigma_n,\theta}^{\mc{B}}}|^2{\eta}_{\Theta_{\tau_n}^c}\subset \underline{C}\log (r^2/r_1)\tilde{\theta}\setminus 4\underline{C}\log (r^2/r_1)\Theta_{\tau_n}$, we must have 
\begin{equation}\label{spose}
\int_{|\xi|\le 2r_1^{-1}}|\sum_{\theta\in{\bf{S}}_{r^{-1}}}\widehat{|f_{\sigma_n,\theta}^{\mc{B}}|^2}{\eta}_{\Theta_{\tau_n}^c} |^2\lesssim \sum_{n<k\le N_0}\int_{\underline{C}(\log (r^2/r_1))\Omega_{\sigma_k}}|\sum_{\theta\in{\bf{S}}_{r^{-1}}}\widehat{|f_{\sigma_n,\theta}^{\mc{B}}|^2}{\eta}_{\Theta_{\tau_n}^c} |^2 . 
\end{equation}
Note that we also omitted the sets $\Omega_{\sigma_{N_0+1}},\ldots,\Omega_{\sigma_N}$ since they (and their dilations by $\underline{C}\log(r^2/r_1)$) do not intersect the ball $|\xi|\le 2r_1^{-1}$. Then by Lemmas 4.1 and 4.2 of \cite{locsmooth} and Cauchy-Schwarz, for each $k$, $n< k\le N_0$, 
\begin{align*}
\int_{\underline{C}(\log (r^2/r_1))\Omega_{\sigma_k}}|\sum_{\theta\in{\bf{S}}_{r^{-1}}}\widehat{|f_{\sigma_n,\theta}^{\mc{B}}|^2}{\eta}_{\Theta_{\tau_n}^c} |^2&\lesssim \log (r^2/r_1)\sum_{\tau_k\in{\bf{S}}_{\sigma_k^{-1}r^{-1}}}\int_{\underline{C}(\log (r^2/r_1))\Omega_{\sigma_k}}|\sum_{\substack{\theta\in{\bf{S}}_{r^{-1}}\\ \theta\subset\tau_k}} \widehat{|f_{\sigma_n,\theta}^{\mc{B}}|^2}\eta_{\Theta_{\tau_k}}{\eta}_{\Theta_{\tau_n}^c} |^2\\
&\lesssim \log (r^2/r_1)\sum_{\tau_k\in{\bf{S}}_{\sigma_k^{-1}r^{-1}}}\int |\sum_{\substack{\theta\in{\bf{S}}_{r^{-1}}\\ \theta\subset\tau_k}}  \widehat{|f_{\sigma_n,\theta}^{\mc{B}}|^2}{\eta}_{\Theta_{\tau_k}} |^2 . 
\end{align*}
It remains to bound each integral. We have
\begin{align*}
\int |\sum_{\substack{\theta\in{\bf{S}}_{r^{-1}}\\ \theta\subset\tau_k}}  \widehat{|f_{\sigma_n,\theta}^{\mc{B}}|^2}{\eta}_{\Theta_{\tau_k}} |^2&= \int |\sum_{\substack{\theta\in{\bf{S}}_{r^{-1}}\\ \theta\subset\tau_k}}|f_{\sigma_n,\theta}^{\mc{B}}|^2*\widecheck{\eta}_{\Theta_{\tau_k}} |^2 =\sum_{U\|U_{\tau_k,r^2}}  \int_U  |\sum_{\substack{\theta\in{\bf{S}}_{r^{-1}}\\ \theta\subset\tau_k}}|f_{\sigma_n,\theta}^{\mc{B}}|^2*\widecheck{\eta}_{\Theta_{\tau_k}} |^2  . 
\end{align*}
Note that  
\begin{align*}
\sup_{x\in U} \int \sum_{\substack{\theta\in{\bf{S}}_{r^{-1}}\\ \theta\subset\tau_k}}|f_{\sigma_n,\theta}^{\mc{B}}|^2(y)|\widecheck{\eta}_{\Theta_{\tau_k}} |(x-y)dy &\le      \int \sum_{\substack{\theta\in{\bf{S}}_{r^{-1}}\\ \theta\subset\tau_k}}|f_{\sigma_n,\theta}^{\mc{B}}|^2(y)  \sup_{x\in U}|\widecheck{\eta}_{\Theta_{\tau_k}} |(x-y)dy \\
&\lesssim (\log (r^2/r_1))^3|U|^{-1}\int \sum_{\substack{\theta\in{\bf{S}}_{r^{-1}}\\ \theta\subset\tau_k}}|f_{\sigma_n,\theta}^{\mc{B}}|^2(y) W_U(y)dy,
\end{align*}
so 
\begin{align*}
\sum_{U\|U_{\tau_k,r^2}}  \int_U  |\sum_{\substack{\theta\in{\bf{S}}_{r^{-1}}\\ \theta\subset\tau_k}}|f_{\sigma_n,\theta}^{\mc{B}}|^2*\widecheck{\eta}_{\Theta_{\tau_k}} |^2&\lesssim (\log (r^2/r_1))^6\sum_{U\|U_{\tau_k,r^2}}  |U|^{-1}  \Bigl(\int \sum_{\substack{\theta\in{\bf{S}}_{r^{-1}}\\ \theta\subset\tau_k}}|f_{\sigma_n,\theta}^{\mc{B}}|^2 W_U \Bigr)^2. 
\end{align*}
By property (2) of Lemma \ref{conepruneprops}, since $n<k\le N_0$,  $|f_{\sigma_n,\theta}^{\mc{B}}|\le |f_{\sigma_k,\theta}|\le |\sum_{U\in\mc{G}_{\tau_k}}\s_Uf_\theta|$. Using this and property (2) of Lemma \ref{conepruneprops}, it remains to observe
\begin{align}
\sum_{U\|U_{\tau_k,r^2}}  |U|^{-1}  \Bigl(\int \sum_{\substack{\theta\in{\bf{S}}_{r^{-1}}\\ \theta\subset\tau_k}}|f_{\sigma_n,\theta}^{\mc{B}}|^2 W_U \Bigr)^2&\le \sum_{U\|U_{\tau_k,r^2}}  |U|^{-1}  \Bigl(\int \sum_{\substack{\theta\in{\bf{S}}_{r^{-1}}\\ \theta\subset\tau_k}}|\sum_{U'\in\mc{G}_{\tau_k}}\s_{U'} f_{\theta}|^2 W_U \Bigr)^2    \label{beginninghere}\\
&\le \sum_{\substack{\substack{U\|U_{\tau_k,r^2}}}}|U|^{-1}\Bigl(\sum_{\substack{U'
    \|U_{\tau_k,r^2} \\ U'\in\mc{G}_{\tau_k}}}\|\s^{1/2}_{U'}W_U^{1/2}\|_\infty \int\sum_{\theta\subset\tau_k}|f_{\theta}|^2 \s_{U'}^{1/2}W_U^{1/2}\Bigr)^2 \nonumber\\
\text{(Cauchy-Schwarz)} \qquad    &\le|U|^{-1}\sum_{\substack{U\|U_{\tau_k,r^2}}} \Bigl( \sum_{\substack{U'
    \|U_{\tau_k,r^2}\\ U'\in\mc{G}_{\tau_k}}}\|\s_{U'}W_U\|_\infty\Bigr) \sum_{\substack{U'
    \|U_{\tau_k,r^2}\\ U'\in\mc{G}_{\tau_k}}}\Bigl(\int\sum_{\theta\subset\tau_k}|f_{\theta}|^2 \s_{U'}W_U\Bigr)^2 \nonumber\\
(\|\cdot\|_{\ell^2}\le \|\cdot\|_{\ell^1})\qquad     &\lesssim |U|^{-1}\sum_{\substack{U'
    \|U_{\tau_k,r^2}\\ U'\in\mc{G}_{\tau_k}}}\Bigl( \sum_{U\|U_{\tau_k,r^2}}\int\sum_{\theta\subset\tau_k}|f_{\theta}|^2 \s_{U'}^{1/2}W_U^{1/2}\Bigr)^2 \nonumber\\
&\lesssim \sum_{\substack{U'
    \|U_{\tau_k,r^2}\\ U'\in\mc{G}_{\tau_k}}}|U'|^{-1}\Bigl( \int\sum_{\theta\subset\tau_k}|f_{\theta}|^2 W_{U'}\Bigr)^2=\sum_{\substack{U'
    \|U_{\tau_k,r^2}\\ U'\in\mc{G}_{\tau_k}}}|U'|\|S_{U'}f\|_{L^2_{avg}}^4 \nonumber.
\end{align}

\end{proof}

\begin{lemma}[Cone low lemma]\label{lowdom}
\[\int_{|\xi|\le 2r_1^{-1}}|\sum_{\theta\in{\bf{S}}_{r^{-1}}}\widehat{|f_{\sigma_1,\theta}|^2} |^2\lesssim C(\log \big(\frac{r^2}{r_1}\big))^6\sum_{r^{-1}\le s\le 1}\sum_{d(\tau)=s}\sum_{\substack{U\|U_{\tau,r^2}\\C_0(\log (\frac{r^2}{r_1}))^3\|S_Uf\|_{L^2_{avg}}^2\ge \frac{\a^2}{{\#\tau}}}}|U|\|S_Uf\|_{L^2_{avg}}^4.  \]

\end{lemma}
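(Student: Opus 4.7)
My plan is to mirror the proof of Lemma \ref{middom}, with the role of the complementary cutoff $\eta_{\Theta_{\tau_n}^c}$ now played by the restriction $|\xi|\le 2r_1^{-1}$ on the domain of integration. The natural scale selected by this restriction is $\sigma_{N_0}=(r_1/r)^{-1}$, corresponding to caps $\tau_{N_0}\in{\bf{S}}_{r_1/r^2}$: the tangential dimension of $\Theta_{\tau_{N_0}}\in{\bf{CP}}_{\sigma_{N_0}}$ is exactly $2r^{-1}\sigma_{N_0}=2r_1^{-1}$, matching the ball $|\xi|\le 2r_1^{-1}$, while the radial dimension $2\sigma_{N_0}^2=2r^2/r_1^2$ is $\ge 2r_1^{-1}$ (since $r_1\le r^2$) and the normal dimension $2r^{-2}$ is $\le 2r_1^{-1}$. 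The key geometric claim, analogous to Lemma 4.1 of \cite{locsmooth}, is that for every $\theta\subset\tau_{N_0}$,
\[ \bigl(\underline{C}(\log(r^2/r_1))\,\tilde{\theta}\bigr)\cap\{|\xi|\le 2r_1^{-1}\}\subset C\underline{C}(\log(r^2/r_1))\,\Theta_{\tau_{N_0}} \]
for some universal constant $C$.

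Given this, I would first apply Plancherel to rewrite the left-hand side as $\int|\sum_{\theta}|f_{\sigma_1,\theta}|^2*\widecheck{\eta}_{r_1^{-1}}|^2$, where $\eta_{r_1^{-1}}$ is a smooth bump adapted to $|\xi|\le 2r_1^{-1}$. By property (1) of Lemma \ref{conepruneprops}, each $|f_{\sigma_1,\theta}|^2$ has Fourier support in $2\underline{C}\log(r^2/r_1)\,\tilde{\theta}$, so the displayed geometric containment allows me to harmlessly insert the cutoff $\eta_{\Theta_{\tau_{N_0}}}$ from Definition \ref{freqcutoff} on each $\theta\subset\tau_{N_0}$. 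Then a bounded-overlap argument for the dilates $C\underline{C}\log(r^2/r_1)\,\Theta_{\tau_{N_0}}$ analogous to Lemma 4.2 of \cite{locsmooth}, followed by Cauchy-Schwarz, will reduce the problem to bounding, for each $\tau_{N_0}\in{\bf{S}}_{r_1/r^2}$, the integral
\[ \int\Bigl|\sum_{\theta\subset\tau_{N_0}}|f_{\sigma_1,\theta}|^2*\widecheck{\eta}_{\Theta_{\tau_{N_0}}}\Bigr|^2 . \]

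For this inner integral I would follow verbatim the computation from the end of the proof of Lemma \ref{middom} starting at \eqref{beginninghere}. The crucial input is the pointwise bound $|f_{\sigma_1,\theta}|\le\bigl|\sum_{U\in\mc{G}_{\tau_{N_0}}}\s_{U}f_\theta\bigr|$, obtained by iterating property (2) of Lemma \ref{conepruneprops} down from $n=N_0$ to $n=1$ and repeatedly using that $\sum_{U}\s_{U}\le 1$. The locally constant property of convolution with $\widecheck{\eta}_{\Theta_{\tau_{N_0}}}$ over the wave envelopes $U\|U_{\tau_{N_0},r^2}$, together with Cauchy-Schwarz exactly as in \eqref{beginninghere}, yields
\[ \int\Bigl|\sum_{\theta\subset\tau_{N_0}}|f_{\sigma_1,\theta}|^2*\widecheck{\eta}_{\Theta_{\tau_{N_0}}}\Bigr|^2\lesssim \bigl(\log(r^2/r_1)\bigr)^{O(1)}\sum_{U\in\mc{G}_{\tau_{N_0}}}|U|\|S_Uf\|_{L^2_{avg}}^4. \]
Summing over $\tau_{N_0}$ and collecting the log-losses isolates the single $s=r_1/r^2=\sigma_{N_0}^{-1}r^{-1}$ term on the right-hand side of the lemma.

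The main obstacle is verifying the geometric containment at the outset, which requires handling the fact that the frames of $\tilde{\theta}$ and $\Theta_{\tau_{N_0}}$ are given by orientations $\xi,\xi'$ that may differ by as much as $\sigma_{N_0}^{-1}r^{-1}=r_1/r^2$. This frame misalignment is exactly what is addressed in Lemma 4.1 of \cite{locsmooth}, where it is shown that $\Theta_{\tau_{N_0}}$ is wide enough in each direction (by a fixed constant) to absorb the shift; the remainder of the argument is a direct transcription of the high-lemma proof.
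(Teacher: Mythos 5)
Your geometric containment $\tilde\theta\cap\{|\xi|\le 2r_1^{-1}\}\subset C\underline{C}\log(r^2/r_1)\,\Theta_{\tau_{N_0}}$ for $\theta\subset\tau_{N_0}$ is correct, as is the iterated pointwise bound $|f_{\sigma_1,\theta}|\le|f_{\sigma_{N_0},\theta}|$. The Cauchy--Schwarz step, however, has a genuine gap: the planks $\underline{C}\log(r^2/r_1)\,\Theta_{\tau_{N_0}}$ are all centered at the origin, and their smallest dimension (the normal direction) is $\sim r^{-2}$, so every one of them contains any point $\xi$ with $|\xi|\lesssim r^{-2}$. Near the origin their overlap count is $\sim\#\tau_{N_0}$, not $O(1)$. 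The finite-overlap statement you invoke (Lemma 4.2 of \cite{locsmooth}, and its analog Lemma \ref{our4.2}) holds only for $\xi$ lying in the single annulus $\underline{C}\log(r^2/r_1)\,\Omega_{\sigma_{N_0}}$, where the plank scale matches the annulus scale; it fails throughout the interior of the ball $|\xi|\le 2r_1^{-1}$. Applying Cauchy--Schwarz with a fixed plank scale over the whole ball therefore loses a factor of order $\#\tau_{N_0}$ coming from the small-$|\xi|$ region, which is not affordable.

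The paper avoids this by decomposing the ball $|\xi|\le 2r_1^{-1}$ into the dyadic pieces $\underline{C}\log(r^2/r_1)\,\Omega_{\sigma_k}$ for $k=2,\ldots,N_0$ together with the innermost piece $\underline{C}\log(r^2/r_1)\,\Omega_{\le\sigma_1}$. On each annulus $\Omega_{\sigma_k}$ the plank scale is matched to the annulus scale, so the scale-$\sigma_k$ planks $\Theta_{\tau_k}$ are finitely overlapping there and the argument of Lemma \ref{middom} (from \eqref{spose} onward, using $|f_{\sigma_1,\theta}|\le|f_{\sigma_k,\theta}|$ in place of $|f^{\mc{B}}_{\sigma_n,\theta}|\le|f_{\sigma_k,\theta}|$) goes through; the innermost piece $\Omega_{\le\sigma_1}$ is handled with a single bump function and the trivial $1\times1\times1$ plank $\tau_1$. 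This multi-scale decomposition is precisely why the right-hand side carries a sum over all scales $r^{-1}\le s\le 1$: your proposal would produce only the single term $s=r_1/r^2$, which the argument cannot deliver. Replacing your fixed plank scale $\sigma_{N_0}$ by the scale-matched annular decomposition repairs the proof, and at that point it coincides with the paper's argument.
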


\begin{proof} Use the decomposition of the Fourier support $\underline{C}\log (r^2/r_1)\Omega$ (where $\Omega$ is from the proof of Lemma 1.4 in \cite{locsmooth}):
\begin{align*} \int_{|\xi|\le \underline{C}\log(\frac{r^2}{r_1})}|\sum_{\theta\in{\bf{S}}_{r^{-1}}}\widehat{|f_{\sigma_1,\theta}|^2} |^2 &\le \sum_{k=2}^{N_0}\int_{\underline{C}\log (\frac{r^2}{r_1})\Omega_{\sigma_k}}|\sum_{\theta\in{\bf{S}}_{r^{-1}}}\widehat{|f_{\sigma_1,\theta}|^2} |^2  +\int_{\underline{C}\log (\frac{r^2}{r_1})\Omega_{\le \sigma_1}}|\sum_{\theta\in{\bf{S}}_{r^{-1}}}\widehat{|f_{\sigma_1,\theta}|^2} |^2. 
\end{align*} 
Note that the sets $\underline{C}\log (\frac{r^2}{r_1})\Omega_{\sigma_n}$ for $N_0< n\le N$ do not contain the ball $|\xi|\le \underline{C}\log(\frac{r^2}{r_1})$, which is why the sum on the right hand side above ends at $N_0$. For the terms corresponding to $k=2,\ldots,N_0$, repeat the argument beginning at \eqref{spose} in the proof of Lemma \ref{middom} to bound each integral. For the final integral, $\underline{C}(\log (\frac{r^2}{r_1})\Omega_{\le \sigma_1}$ is contained in a ball of radius $\sim\underline{C}(\log \frac{r^2}{r_1})$ centered at the origin. It follows that for a bump function $\eta_{\sigma_1}$ equal to $1$ on that ball,
\begin{align*}
\int_{\underline{C}(\log (\frac{r^2}{r_1}))\Omega_{\le \sigma_1}}|\sum_{\theta\in{\bf{S}}_{r^{-1}}}\widehat{|f_{\sigma_1,\theta}|^2} |^2&\le      \int|\sum_{\theta\in{\bf{S}}_{r^{-1}}}\widehat{|f_{\sigma_1,\theta}|^2} \eta_{\sigma_1}|^2\\
&\lesssim \int|\sum_{\theta\in{\bf{S}}_{r^{-1}}}{|f_{\sigma_1,\theta}|^2} *\widecheck{\eta}_{\sigma_1}|^2 =
\sum_{U\|U_{\tau_1,r^2}}\int_{U_{\tau_1,r^2}}|\sum_{\theta\in{\bf{S}}_{r^{-1}}}{|f_{\sigma_1,\theta}|^2} *\widecheck{\eta}_{\sigma_1}|^2
\end{align*}
where $\tau_1$ is just a $1\times 1\times 1$ plank. Then by the same reasoning as that beginning in \eqref{beginninghere} above, conclude that
\[ \sum_{U\|U_{\tau_1,r^2}}\int_{U_{\tau_1,r^2}}|\sum_{\theta\in{\bf{S}}_{r^{-1}}}{|f_{\sigma_1,\theta}|^2} *\widecheck{\eta}_{\sigma_1}|^2 \lesssim (\log (\frac{r^2}{r_1}))^6 \sum_{\substack{U\|U_{\tau_1,r^2}\\ U\in\mc{G}_{\tau_1}}} |U|\|S_Uf\|_{L^2_{avg}}^4.\]
which finishes the proof. 
\end{proof}

With the pruning process and related lemmas in hand, we may prove Proposition \ref{kakcone}. 
\begin{proof}[Proof of Proposition \ref{kakcone}] 
The indexing set for the sum on the left hand side of \eqref{ineq} is a finitely overlapping collection of $r_1$ balls $B_{r_1}\subset\R^3$ for which $\a^2\le \|S_{B_{r_1}}f\|_{L^2_{avg}}^2$. Now we do a standard local $L^2$ orthogonality argument to relate $\|S_{B_{r_1}}f\|_{L^2_{avg}}$ to an expression involving $\sum_{\theta\in{\bf{S}}_{r^{-1}}}|f_\theta|^2$. Using Definition \ref{weight}, we have 
\begin{align*}
\|S_{B_{r_1}}f\|_{L^2_{avg}}^2&\lesssim |B_{r_1}|^{-1}\int\sum_{d(\tau)=r^{-1/2}}|f_\tau|^2\widetilde{W}_{B_{r_1}}\\
&= |B_{r_1}|^{-1}\sum_{d(\tau)=r^{-1/2}} \sum_{\substack{\theta\in{\bf{S}}_{r^{-1}}\\\theta,\theta'\subset\tau}}\int f_\theta\overline{f}_{\theta'}\widetilde{W}_{B_{r_1}} \\
&= |B_{r_1}|^{-1}\sum_{d(\tau)=r^{-1/2}} \sum_{\substack{\theta\in{\bf{S}}_{r^{-1}}\\\theta,\theta'\subset\tau}}\int \widehat{f_\theta}\overline{\widehat{f_{\theta'}}*\widehat{\widetilde{W}}}_{B_{r_1}}. 
\end{align*} 
By Lemma The support of $\widehat{f_{\theta'}}*\widehat{\widetilde{W}}_{B_{r_1}}$ is contained in the $r_1^{-1}$-neighborhood of $\theta'$, which intersects $\theta$ only if $\theta$ and $\theta'$ are neighbors. The conclusion is that 
\[ \|S_{B_{r_1}}f\|_{L^2_{avg}}^2\lesssim |B_{r_1}|^{-1}\int\sum_{\theta\in{\bf{S}}_{r^{-1}}}|f_\theta|^2\widetilde{W}_{B_{r_1}}\sim \fint_{B_{r_1}}\sum_{\theta\in{\bf{S}}_{r^{-1}}}|f_\theta|^2 .  \]
Since we only consider balls $B_{r_1}$ on which
\[ \a^2\lesssim\fint \sum_{\theta\in{\bf{S}}_{r^{-1}}}|f_\theta|^2,\]
by Lemma \ref{prune1}, we also have
\[ \a^2\lesssim\fint \sum_{\theta\in{\bf{S}}_{r^{-1}}}|f_\theta|^2   \lesssim \fint \sum_{\theta\in{\bf{S}}_{r^{-1}}}|f_{\sigma_{N_0},\theta}|^2    .    \]

Next, we use property (3) of Lemma \ref{conepruneprops} to write $f_{\sigma_{N_0}}=f_{\sigma_{N_0-1},\theta}^{\mc{B}}+\cdots+f_{\sigma_1,\theta}^{\mc{B}}+f_{\sigma_1,\theta}$ and also 
\begin{align*}
    \a\lesssim\sum_{k=1}^{N_0-1} \left(|B_{r_1}|^{-1}\int \sum_{\theta\in{\bf{S}}_{r^{-1}}}|f_{\sigma_{k},\theta}^{\mc{B}}|^2\widetilde{W}_{B_{r_1}}\right)^{1/2}+\left(|B_{r_1}|^{-1}\int \sum_{\theta\in{\bf{S}}_{r^{-1}}}|f_{\sigma_{1},\theta}|^2\widetilde{W}_{B_{r_1}}\right)^{1/2}. 
\end{align*}
Since there are $N_0\lesssim \log (\frac{r^2}{r_1})$ many $\sigma_n$, we may assume that either 
\begin{equation}\label{case1} \sum_{\substack{B_{r_1}\subset\R^3\\\|S_{B_{r_1}}f\|_{L^2_{avg}}\ge \a}}|B_{r_1}|\|S_{B_{r_1}}f\|_{L^2_{avg}}^4\lesssim (\log \frac{r^2}{r_1})^4\sum_{\substack{B_{r_1}\subset\R^3\\ (\log \frac{r^2}{r_1})^2\fint_{B_{r_1}}\underset{\theta\in{\bf{S}}_{r^{-1}}}{\sum}|f_{\sigma_n,\theta}^{\mc{B}}|^2 \gtrsim \a^2}}|B_{r_1}|\Bigl(\fint_{B_{r_1}}\sum_{\theta\in{\bf{S}}_{r^{-1}}}|f_{\sigma_n,\theta}^{\mc{B}}|^2\Bigr)^2 \end{equation}
for some $1\le n\le N_0-1$ or 
\begin{equation}\label{case2} \sum_{\substack{B_{r_1}\subset\R^3\\\|S_{B_{r_1}}f\|_{L^2_{avg}}\ge \a}}|B_{r_1}|\|S_{B_{r_1}}f\|_{L^2_{avg}}^4\lesssim (\log \frac{r^2}{r_1})^4\sum_{\substack{B_{r_1}\subset\R^3\\(\log \frac{r^2}{r_1})^2\fint_{B_{r_1}}\underset{\theta\in{\bf{S}}_{r^{-1}}}{\sum}|f_{\sigma_1,\theta}|^2\gtrsim \a^2}}|B_{r_1}|\Bigl(\fint_{B_{r_1}}\sum_{\theta\in{\bf{S}}_{r^{-1}}}|f_{\sigma_1,\theta}|^2\Bigr)^2 . \end{equation} 
Suppose that we are in the latter case \eqref{case2}. Let $\widetilde{W}$ denote the weight function from Definition \ref{weight} associated to the $r_1$-ball centered at the origin. Note that for all $x\in B_{r_1}$, 
\[ \underset{\theta\in{\bf{S}}_{r^{-1}}}{\sum}|f_{\sigma_1,\theta}|^2*\widetilde{W}(x)\sim \int \underset{\theta\in{\bf{S}}_{r^{-1}}}{\sum}|f_{\sigma_1,\theta}|^2W_{B_{r_1}}. \]
Therefore, 
\begin{align*}
\sum_{\substack{B_{r_1}\subset\R^3\\(\log \frac{r^2}{r_1})^2\fint_{B_{r_1}}\underset{\theta\in{\bf{S}}_{r^{-1}}}{\sum}|f_{\sigma_1,\theta}|^2\gtrsim \a^2}}|B_{r_1}|\Bigl(\fint_{B_{r_1}}\sum_{\theta\in{\bf{S}}_{r^{-1}}}|f_{\sigma_1,\theta}|^2\Bigr)^2 &\lesssim |B_{r_1}|^{-2}\int|\sum_{\theta\in{\bf{S}}_{r^{-1}}}|f_{\sigma_1,\theta}|^2*\widetilde{W} |^2\\
&= |B_{r_1}|^{-2}\int|\sum_{\theta\in{\bf{S}}_{r^{-1}}}\widehat{|f_{\sigma_1,\theta}|^2}\widehat{\widetilde{W}} |^2.  
\end{align*}
Since $\widehat{\widetilde{W}}$ is supported in $|\xi|\le r_1^{-1}$ and we may assume that $|\widehat{\widetilde{W}}|\lesssim |B_{r_1}|$, we may bound the final expression above by
\[ \int_{|\xi|\le r_1^{-1}}|\sum_{\theta\in{\bf{S}}_{r^{-1}}}\widehat{|f_{\sigma_1,\theta}|^2} |^2 \]
and then use Lemma \ref{lowdom} to bound the right hand side. 

The remaining case is that for some $n$, $1\le n\le N_0-1$, \eqref{case1} holds. Then using Lemma \ref{weakhi}, for each $B_{r_1}$ in the indexing set on the right hand side of \eqref{case1}, we have
\[ \fint_{B_{r_1}}\sum_{\theta\in{\bf{S}}_{r^{-1}}}|f_{\sigma_n,\theta}^{\mc{B}}|^2\sim |B_{r_1}|^{-1}\int \sum_{\theta\in{\bf{S}}_{r^{-1}}}|f_{\sigma_n,\theta}^{\mc{B}}|^2\widetilde{W}_{B_{r_1}}\lesssim|B_{r_1}|^{-1}\left|\int \sum_{\theta\in{\bf{S}}_{r^{-1}}}|f_{\sigma_n,\theta}^{\mc{B}}|^2*\widecheck{\eta}_{\Theta_{\tau_n}^c}\widetilde{W}_{B_{r_1}} \right|. \]
Let $\eta_{r_1}$ be a bump function equal to $1$ on $|\xi|\le r_1^{-1}$ and supported in $|\xi|\le 2r_1^{-1}$. Then 
\[\int \sum_{\theta\in{\bf{S}}_{r^{-1}}}|f_{\sigma_n,\theta}^{\mc{B}}|^2*\widecheck{\eta}_{\Theta_{\tau_n}^c}\widetilde{W}_{B_{r_1}}=\int \sum_{\theta\in{\bf{S}}_{r^{-1}}}|f_{\sigma_n,\theta}^{\mc{B}}|^2*\widecheck{\eta}_{\Theta_{\tau_n}^c}*\widecheck{\eta}_{r_1}\widetilde{W}_{B_{r_1}} \]
and so using Cauchy-Schwarz, we may bound 
\begin{align*} 
\sum_{\substack{B_{r_1}\subset\R^3\\ (\log \frac{r^2}{r_1})^2\fint_{B_{r_1}}\underset{\theta\in{\bf{S}}_{r^{-1}}}{\sum}|f_{\sigma_n,\theta}^{\mc{B}}|^2 \gtrsim \a^2}}|B_{r_1}|&\Bigl(\fint_{B_{r_1}}\sum_{\theta\in{\bf{S}}_{r^{-1}}}|f_{\sigma_n,\theta}^{\mc{B}}|^2\Bigr)^2 \\ 
&\lesssim  \sum_{\substack{B_{r_1}\subset\R^3\\ (\log \frac{r^2}{r_1})^2\fint_{B_{r_1}}\underset{\theta\in{\bf{S}}_{r^{-1}}}{\sum}|f_{\sigma_n,\theta}^{\mc{B}}|^2 \gtrsim \a^2}}\int|\sum_{\theta\in{\bf{S}}_{r^{-1}}}|f_{\sigma_n,\theta}^{\mc{B}}|^2*\widecheck{\eta}_{\Theta_{\tau_n}^c}*\widecheck{\eta}_{r_1}|^2\widetilde{W}_{B_{r_1}} \\
    &\lesssim \int|\sum_{\theta\in{\bf{S}}_{r^{-1}}}|f_{\sigma_n,\theta}^{\mc{B}}|^2*\widecheck{\eta}_{\Theta_{\tau_n}^c}*\widecheck{\eta}_{r_1}|^2\lesssim \int_{|\xi|\le 2r_1^{-1}}|\sum_{\theta\in{\bf{S}}_{r^{-1}}}\widecheck{|f_{\sigma_n,\theta}^{\mc{B}}|^2}{\eta}_{\Theta_{\tau_n}^c}|^2. 
\end{align*} 
Invoking Lemma \ref{middom} finishes the proof.

\end{proof}

\section{Base case of the parabola (Proof of Lemma \ref{basecase}) \label{trunc}}

We record the Lorentz transformations recorded in \textsection 5 of \cite{locsmooth}. As discussed just after Lemma 3.1 in \cite{locsmooth}, it suffices to replace the truncated light cone $\Gamma$ with $\Gamma_{\frac{1}{K}}$ where $K$ will be a constant depending on $\e$ that we choose later. The truncated cone is
\[ \Gamma= \{(\xi_1,\xi_2,\xi_3):\xi_1^2+\xi_2^2=\xi_3^2,\quad \frac{1}{2}\le \xi_1^2+\xi_2^2\le 2 \}.   \]
Using the coordinates  $(\nu_1,\nu_2,\nu_3)$ defined by $\nu_2=\xi_1$, $\nu_1=\frac{\xi_3-\xi_2}{\sqrt{2}}$, $\nu_3=\frac{\xi_3+\xi_2}{\sqrt{2}}$, let 
\begin{align*} 
\Gamma_{\frac{1}{K}}&:= \Big\{(\nu_1,\nu_2,\nu_3):2\nu_1\nu_3=\nu_2^2,\qquad 1-\frac{1}{K}\le \nu_3\le 1,\quad\Big|\frac{\nu_2}{\nu_3}\Big|\le 1 \Big\}. 
\end{align*} 

Then planks $\tau$, $d(\tau)=s$ are approximately the convex hull of sets
\[\Lambda=\Lambda(\eta,s)=\Big\{(\nu_1,\nu_2,\nu_3)\in\Gamma_{\frac{1}{K}}:\Big|\frac{\nu_2}{\nu_3}-\eta\Big|<s\Big\} \]
for a real number $\eta=\eta(\tau)$, $|\eta|<1$ and $0<s<1$ satisfying $-1\le \eta\pm s\le 1$.

In their proof of decoupling for the cone \cite{BD}, Bourgain and Demeter showed that a piece of the cone like $\Gamma_{\frac{1}{K}}$ is contained in a certain neighborhood of a cylinder over a parabola, so that decoupling for the parabola can be used at a certain scale. We will use this idea to prove Lemma \ref{basecase}. 

In particular, note that the $\frac{1}{K}$-neighborhood of $\Gamma_{\frac{1}{K}}$, denoted $\mc{N}_{\frac{1}{K}}(\Gamma_{\frac{1}{K}})$ is approximately the $\frac{1}{K}$ neighborhood of 
\begin{equation}\label{cone} \Big\{(\nu_1,\nu_2,\nu_3): \nu_1=\frac{1}{2}\nu_2^2,\quad 1-\frac{1}{K}\le \nu_3\le 1,\quad \Big|\frac{\nu_2}{\nu_3}\Big|\le 1\Big\} \end{equation} 
since if $(\nu_1,\nu_2,\nu_3)\in\Gamma_{\frac{1}{K}}$, then $|\nu_1-\frac{1}{2}\nu_2^2|=|\frac{1}{2\nu_3}\nu_2^2-\frac{1}{2}\nu_2^2|\le K^{-1}$. The set \eqref{cone} is a portion of a cylinder over the parabola $\{(\frac{1}{2}\nu_2^2,\nu_2,0):|\nu_2|\le 1\}$. For any dyadic $R\ge 1$, the caps $\theta\in{\bf{S}}_{R^{-1/2}}$ defined in \eqref{theta} will now be $\theta\cap \{1-K^{-1}\le \nu_3\le 1\}$, which we will continue to denote by $\theta$. 

For $s$, $K^{-1/2}<s<1$, the convex hull of $\Lambda(\eta,s)$ is comparable to some cap $\tau\in{\bf{S}}_{s}$. The projection $\pi_3(\tau)=\{(\nu_1,\nu_2):\exists\nu_3\,\text{with}\,(\nu_1,\nu_2,\nu_3)\in\tau\}$ is equal to a cap $\w$ with $\ell(\w)=s$ of the $s^2$-neighborhood of the parabola. The \emph{conic} wave envelope $U_{\tau,R}$ associated to $\tau$ has dimensions $s^2R\times sR\times R$. We will use the fact that for the $sR\times R$ \emph{parabolic} wave envelope $V_{\w,R}$ associated to $\w$, $U_{\tau,R}$ is equivalent to $V_{\w,K}\times[-s^2R,s^2R]$, oriented according to the axes of $U_{\tau,R}$. 

Unwinding the definitions in Lemma \ref{basecase}, it suffices to prove the following proposition. 
\begin{prop}\label{basecaseprop} Let $K$ be a sufficiently large constant. For any $1\le r\le R\le K$, $R/r\ge K^{1/2}$, 
\begin{equation}\label{RHS}\sum_{\substack{B_r\subset\R^3\\\|S_{B_r}f\|_{L^2_{avg}}^2\ge \a^2 }}|B_r|\|S_{B_r}f\|_{L^2_{avg}}^4 \le C_\delta K^{\delta} \sum_{R^{-1/2}\le s\le 1}\sum_{\tau\in{\bf{S}}_s}\sum_{\substack{U \parallel U_{\tau,R}\\ C_\d K^\d\|S_Uf\|_{L^2_{avg}}^2\ge \frac{\a^2}{(\#\tau)^2}}}|U|\|S_Uf\|_{L^2_{avg}}^4  \end{equation} 
for any $\a>0$ and any Schwartz function $f:\R^3\to\C$ with $\supp\widehat{f}\subset\mc{N}_{\frac{1}{K}}(\Gamma_{\frac{1}{K}})$. 
\end{prop}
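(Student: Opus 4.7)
The plan is to reduce Proposition~\ref{basecaseprop} to Theorem~\ref{mainP} via the Bourgain--Demeter observation that, when $R\le K$, the neighborhood $\mc{N}_{R^{-1}}(\Gamma_{1/K})$ is comparable to the cylindrical slab $\mc{N}_{R^{-1}}(\mb{P}^1)\times[1-K^{-1},1]$ in $(\nu_1,\nu_2,\nu_3)$-coordinates, the point being that on $\Gamma_{1/K}$ one has $|\nu_1-\nu_2^2/2|=|(1-\nu_3)\nu_2^2/(2\nu_3)|\lesssim K^{-1}\le R^{-1}$. For each fixed value of the coordinate $x_3$ dual to $\nu_3$, define the 2D slice $g_{x_3}(x_1,x_2):=f(x_1,x_2,x_3)$; this is Schwartz with Fourier support in $\mc{N}_{R^{-1}}(\mb{P}^1)$. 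Cone caps $\theta\in\mathbf{S}_{R^{-1/2}}$ project bijectively to parabola caps $\w_\theta$ via $(\nu_1,\nu_2,\nu_3)\mapsto(\nu_1,\nu_2)$, so $(f_\theta)_{x_3}=(g_{x_3})_{\w_\theta}$ and the cone square function of $f$ equals the parabola square function of $g_{x_3}$ pointwise. In particular $\#\tau=\#\w_\tau$ for a.e.\ $x_3$.

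I would apply Theorem~\ref{mainP} at scale $R$ to each slice $g_{x_3}$ and then integrate in $x_3$. By Fubini, the LHS produces $\a^4|U_\a|$, matching the LHS of~\eqref{RHS} after the standard reduction from $r$-balls to $r$-cubes $Q_r=Q_r^{2D}\times I_r$. On the RHS, each parabola wave envelope $V\|V_{\w,R}$ (with dimensions $sR\times R$) must be converted into a cone envelope $U\|U_{\tau,R}$; here we use the factorization $U_{\tau,R}\simeq V_{\w_\tau,R}\times I$, where $I$ is an interval of length $\sim s^2R$, so that $|U|=|V|\cdot|I|$ and $W_U(x_1,x_2,x_3)\simeq W_V(x_1,x_2)\,W_I(x_3)$ under an appropriate choice of weights.

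Since $\widehat{f}$ is supported in a slab of $\nu_3$-thickness $K^{-1}$, the function $f$ (modulo a harmless modulation $e^{2\pi ix_3\nu_3^0}$) is essentially locally constant in the $x_3$-direction at scale $K\ge R\ge s^2R$. This yields two key comparisons for $x_3\in I$, valid up to $O(1)$ factors: (i) $|V|^{-1}\|S_V g_{x_3}\|_2^2\sim|U|^{-1}\|S_Uf\|_2^2$, so the defining inequalities of $\mc{G}_\w(\a)$ (applied to $g_{x_3}$) and $\mc{G}_\tau(\a)$ (applied to $f$) coincide under the correspondence $V\leftrightarrow U=V\times I$; and (ii) $\int_I|V|^{-1}\|S_V g_{x_3}\|_2^4\,dx_3\sim|U|^{-1}\|S_Uf\|_2^4$, using $|I|\cdot|V|^{-1}\cdot|I|^{-2}=|U|^{-1}$. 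Summing (ii) over the tiling of $\mathbb{R}$ by intervals $I$ (for each $V$) and then over $V$ reconstitutes the full cone wave envelope sum on the RHS of~\eqref{RHS}. The $C_\e R^\e$ loss from Theorem~\ref{mainP} is absorbed into the required $C_\d K^\d$ since $R\le K$.

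The main technical obstacle is making the local-constancy assertion quantitatively rigorous in the presence of the rapidly decaying weights $W_U$. Concretely, one needs to express $f$ as the convolution $f=f*\widecheck{\chi}$ for a smooth $\chi$ equal to $1$ on $[1-K^{-1},1]$ in the $\nu_3$-variable, so that $\widecheck{\chi}$ is an $L^1$-normalized function with mass essentially concentrated in a $1\times 1\times K$ slab, and then track how this convolution interacts with $W_V\cdot W_I$. The rapid-decay tails produce error terms that can be absorbed either into a slightly worsened weight of the same form or into the harmless $C_\d K^\d$ factor. Once these pointwise weight comparisons are set up cleanly, the rest of the argument is just Fubini and bookkeeping.
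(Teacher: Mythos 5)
Your overall strategy is the same as the paper's: use the Bourgain--Demeter observation that $\mc{N}_{R^{-1}}(\Gamma_{1/K})$ is approximately a cylinder over $\mc{N}_{R^{-1}}(\P^1)$ (when $R\le K$), apply Theorem~\ref{mainP} to the $x_3$-slices, and integrate. Your local-constancy argument for converting parabolic wave envelopes $V$ into conical envelopes $U\simeq V\times I$ is a legitimate alternative to the paper's Fourier-analytic version of the same step (Plancherel in $x'$, Fourier inversion in $x_3$, and the containment $V^*\times[-K^{-1},K^{-1}]\subset C\Theta_{\tau'}$); both exploit exactly the same geometry.

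However there is a genuine gap at the very beginning. The left side of \eqref{RHS} is $\sum_{B_r}|B_r|\|S_{B_r}f\|_{L^2_{avg}}^4$, not $\a^4|U_\a|$, and your claim that ``by Fubini, the LHS produces $\a^4|U_\a|$'' is not correct as stated: Theorem~\ref{mainP} applied slicewise controls a superlevel-set quantity $\a^4|U_\a|$, not an $\ell^4$-average of partial square functions over $r$-balls. The paper bridges this by first decomposing the caps $\theta\in{\bf S}_{R^{-1/2}}$ into $O(1)$ disjoint sub-collections $A_1,\dots,A_{1000}$ of $\gtrsim R^{-1/2}$-separated planks, setting $f_j=\sum_{\theta\in A_j}f_\theta$, using local $L^2$-orthogonality at scale $r$ to rewrite $\|S_{B_r}f\|_{L^2_{avg}}^2\lesssim\sum_j\fint_{B_r}|f_j|^2$, pigeonholing on $j$, defining $U_\a=\{|f_j|\ge c\a\}$, and then running a Cauchy--Schwarz argument to reduce the left side to $\a^4|U_\a|$. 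This $A_j$ decomposition is not cosmetic: the separation also makes the projections $\theta\mapsto\pi_3(\theta)$ genuinely disjoint within a fixed $A_j$, which is what justifies your asserted identity $(g_{x_3})_{\w_\theta}=(f_\theta)_{x_3}$ and the slicewise application of Theorem~\ref{mainP} with a clean parabola cap decomposition. Without it, adjacent cone caps project to overlapping parabolic caps and the slicewise square function is not literally of the form Theorem~\ref{mainP} addresses. So you need to insert the $A_j$-decomposition-plus-pigeonholing step before the slicing argument; once that is in place, the rest of your plan (slicing, local constancy of quantities with $\nu_3$-Fourier support in $[-K^{-1},K^{-1}]$ at scale $s^2R\le K$, and reassembling $U=V\times I$) carries through.
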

\begin{proof} We re-use notation from the proof of Proposition 6.6 in \cite{locsmooth}. Fix the notation $\tau$ for a plank in ${\bf{S}}_{r^{-1/2}}$. Also fix the notation $\theta$ for a plank in ${\bf{S}}_{R^{-1/2}}$. Let $A_1,\ldots,A_{1000}$ be disjoint collections of $\theta$, $f_j=\sum_{\theta\in  A_j}f_\theta$, $f_{j,\tau}=\sum_{\substack{\theta\subset\tau,\\ \theta\in A_j}}f_\theta$ so that distinct planks $\theta,\theta'\in\ A_j$ are $\ge 999R^{-1/2}$-separated. As in the proof of Proposition 6.6 in \cite{locsmooth}, for each $B_{r}\subset\R^3$, 
\begin{align*}
    |B_r|\|S_{B_r}f\|_{L^2_{avg}}^4&= |B_r|^{-1}\left(\int\sum_{\tau}|f_\tau|^2 W_{B_r}\right)^2\\
    &\lesssim \sum_{j=1}^{1000}|B_r|^{-1}\left(\int\sum_{\tau}|f_{j,\tau}|^2 W_{B_r}\right)^2 \\
    &=\sum_{j=1}^{1000}|B_r|^{-1}\left(\int|f_{j}|^2 W_{B_r}\right)^2 . 
\end{align*}
For each $B_r$ satisfying $\|S_{B_r}f\|_{L^2_{avg}}\ge \a$, there is some $j$, $1\le j\le 1000$, for which 
\begin{align} \label{C}
    \a^2&\lesssim |B_r|^{-1}\int|f_j|^2W_{B_r}
    .
\end{align}
Suppose $1\le j\le 1000$ satisfies 
\begin{align*}
\sum_{\substack{B_r\subset\R^3\\\|S_{B_r}f\|_{L^2_{avg}}\ge \a }}|B_r|\|S_{B_r}f\|_{L^2_{avg}}^4&\lesssim     \sum_{\substack{B_r\subset\R^3\\\fint_{B_r}|f_j|^2\gtrsim  \a^2 }}|B_r|^{-1}\left(\int |f_j|^2W_{B_r}\right)^2
\end{align*}
Let $U_\a=\{x\in \R^3:|f_j(x)|\ge c\a \}$ where $c>0$ will be chosen presently. If $\chi_{U_\a}$ is the characteristic function of $U_\a$, then for $c>0$ sufficiently small, 
\[ \a^2\le \|S_{B_r}f\|_{L^2_{avg}}^2\lesssim \fint_{B_r}|f_j|^2\lesssim \fint_{B_r}\chi_{U_\a}| f_j|^2. \] 
It follows using Cauchy-Schwarz that 
\begin{align*}
\sum_{\substack{B_r\subset\R^3\\\|S_{B_r}f\|_{L^2_{avg}}\ge \a }}|B_r|\|S_{B_r}f\|_{L^2_{avg}}^4&\lesssim \sum_{\substack{B_r\subset\R^3\\\fint_{B_r}|f_j|^2\ge \a^2 }}|B_r|^{-1}\left(\int\chi_{U_\a}|f_j|^2W_{B_r}\right)^2\\
    &\lesssim \sum_{\substack{B_r\subset\R^3\\\fint_{B_r}|f_j|^2\ge \a^2 }}\int\chi_{U_\a}|f_j|^4W_{B_r} \lesssim \a^{4}|U_\a|. 
\end{align*}
Next we argue that we can apply a cylindrical version of Theorem \ref{mainP} at scale $R$. Analyze each $f_{j,\theta}$ using the product structure of cylinders. Let $I_{K^{-1}}=[1-\frac{1}{K},1]$ and let $\pi_3(\theta)=\{(\nu_1,\nu_2):\exists\nu_3\quad\text{with}\quad(\nu_1,\nu_2,\nu_3)\in\theta\}$. Writing $\nu'=(\nu_1,\nu_2)$ and $x'=(x_1,x_2)$, we may express $f_{j,\theta}(x)$ by 
\begin{align*}
    f_{j,\theta}(x',x_3)=\int_{I_{K^{-1}}}\int_{\pi_3(\theta)}\widehat{f}_{j,\theta}(\nu',\nu_3)e^{2\pi ix'\cdot\xi'}e^{2\pi i x_3\xi_3}d\nu'd\nu_3 .
\end{align*}
For each $x_3\in\R^3$, the above formula defines a function on $\R^2$ with Fourier transform supported in $\mc{N}_{R^{-1}}(\P^1)$. Although it is not a Schwartz function, we may approximate $f_{j,\theta}$ arbitrarily closely with Schwartz functions which converge pointwise to $f_{j,\theta}$ by taking smooth cutoffs for $\theta$, which suffices for our purposes. With this view, we may apply Theorem \ref{mainP} to the function $\sum_{\theta\in A_j}f_{j,\theta}(x',x_3)$ in the coordinate $x'$, which we do as follows. By Fubini's theorem, for almost every $x_3$, $U_\a^{x_3}=\{x':(x',x_3)\in U_\a\}$ is a Lebesgue measurable subset of $\R^2$. By Theorem \ref{mainP}, we have
\begin{align*}
\a^4|U_\a^{x_3}|\le C_\delta R^\delta  \sum_{R^{-1/2}<s<1}\sum_{\ell(\w)=s} \sum_{V\in\mc{G}_{\w}}|V|\left(\fint_{V}\sum_{\pi_3(\theta)\subset\w}|f_{j,\theta}(x',x_3)|^2 dx'\right)^2   
\end{align*}
where $\ell(\w)=s$ indexes $s\times s^2$ caps $\w$ of $\mc{N}_{s^2}(\P^1)$, $V$ is an $sR\times R$ (parabolic) wave envelope associated to $\w$, and $V\in\mc{G}_{\w}$ if 
\[ C_\delta R^\delta |V|^{-1}\int \sum_{\pi_3(\theta)\subset\w}|f_{j,\theta}(x',x_3)|^2W_{V}(x')dx'\ge \frac{\a^2}{(\#\w)^2}.  \]
As in Definition \ref{weight}, we may replace $W_V$ by the weight $\widetilde{W}_V$ which has Fourier transform supported in an $(sR)^{-1}\times R^{-1}$ rectangle $V^*$ centered at the origin which is dual to $V$. Using Plancherel in $x'$, Fourier inversion in $x_3$, and the Fourier support of $|f_{j,\theta}|^2$ (which is $(\theta-\theta)\cap \{-K^{-1}\le\nu_3\le K^{-1}\}$), we have
\[\int \sum_{\pi_3(\theta)\subset\w}|f_{j,\theta}(x',x_3)|^2\widetilde{W}_{V}(x')dx'=\int_{[-K^{-1},K^{-1}]}e^{2\pi i x_3\nu_3}\int_{V^*} \sum_{\pi_3(\theta)\subset\w}\widehat{|f_{j,\theta}|^2}(\nu)\overline{\widehat{\widetilde{W}}_{V}}(\nu')d\nu'd\nu_3\]
where $\nu'=(\nu_1,\nu_2)$. Choose $\tau'\in{\bf{S}}_s$ so the set of $\theta$ with $\pi_3(\theta)\subset\w$ is equivalent to $\theta\subset\tau'$. The set $\{(\nu',\nu_3):\nu'\in V^*,\quad\nu_3\in[-K^{-1},K^{-1}]\}$ is contained in $10\Theta_{\tau'}$ (using the notation from Section \ref{kak}), which is dual to the conical envelope $U_{\tau',R}$ of dimensions $(s^2R)^{-1}\times (sR)^{-1}\times R^{-1}$. Therefore, \begin{align*}
    \int \sum_{\pi_3(\theta)\subset\w}|f_{j,\theta}(x',x_3)|^2\widetilde{W}_{V}(x')dx'&=\int_{[-K^{-1},K^{-1}]}e^{2\pi i x_3\nu_3}\int_{V^*} \sum_{\pi_3(\theta)\subset\w}\widehat{|f_{j,\theta}|^2}(\nu)\eta_{\Theta_{\tau'}}(\nu)\overline{\widehat{\widetilde{W}}_{V}}(\nu')d\nu'd\nu_3 \\
    &= \int\int \sum_{\pi_3(\theta)\subset\w}|f_{j,\theta}|^2(y)\widecheck{\eta}_{\Theta_{\tau'}}(x'-y'-z',x_3-y_3)\widetilde{W}_V(z')dz'dy
\end{align*}
where $\eta_{\Theta_{\tau'}}$ is a bump function equal to $1$ on $10\Theta_{\tau'}$ and equal to $0$ off $20\Theta_{\tau'}$. Noting that 
\[ |V|^{-1}\int |\widecheck{\eta}_{\Theta_{\tau'}}|(x'-y'-z',x_3-y_3)\widetilde{W}_V(z')dz'\lesssim W_{U_{\tau',R}}(x-y), \]
conclude that
\[ \b^4|U_\a^{x_3}|\le C_\delta R^\delta  \sum_{R^{-1/2}<s<1}\sum_{\ell(\w)=s} \sum_{V\in\mc{G}_{\w}}|V|\left(\fint_{U}\sum_{\pi_3(\theta)\subset\w}|f_{j,\theta}|^2 \right)^2   . \]
where $U$ is a translate of $U_{\tau',R}$ that intersects $V\times\{x_3\}$. Since there are $O(1)$ many choices of $U$ for each $x_3$ in an $s^2R$-interval $I_s$, we have
\begin{align*}
\int \a^4|U_\a^{x_3}|dx_3&\le C_\delta R^\delta  \sum_{R^{-1/2}<s<1}\sum_{\ell(\w)=s} \sum_{I_s}\sum_{V\in\mc{G}_{\w}}\int_{I_s}|V|\left(\fint_{U}\sum_{\pi_3(\theta)\subset\w}|f_{j,\theta}|^2 \right)^2 dx_3\\
&\lesssim 
C_\delta R^\delta  \sum_{R^{-1/2}<s<1}\sum_{\tau'\in{\bf{S}}_s} \sum_{\substack{U\|U_{\tau',R} \\ C_\delta R^\delta \|S_{U}f\|_{L^2_{avg}}^4\ge  \frac{\a^2}{(\#\tau)^2}}} |U|\left(\fint_{U}\sum_{\theta\subset\tau'}|f_{j,\theta}|^2 \right)^2 ,
\end{align*}
which finishes the proof. 
\end{proof}

\section{Lorentz rescaling (Proof of Lemma \ref{rescaling})\label{lorsec}}

Begin with a reduction which regularizes the Fourier support of $f$, analogous to the proof in \textsection\ref{mainthmpf}. 
\begin{lemma} \label{pig} Let $1>\d>0$ and $10\le r_1\le r_2\le r_3$. Write $D_\d(K,r_2)=(C\d \log K)^{\d^{-1}\log (K^2r_2^{1/2})/\log K}$ where $C$ is an absolute constant. For any $\a>0$ and Schwartz function $f:\R^3\to\C$ with Fourier transform supported in $\mc{N}_{r_3^{-1}}(\Gamma)$, there is a refined version $\tilde{f}$ of $f$ which satisfies 
\begin{align} 
\sum_{\substack{B_{r_1}\subset\R^3\\\|S_{B_{r_1}}f\|_{L^2_{avg}}}\ge \a}|B_{r_1}|\|S_{B_{r_1}}f\|_{L^2_{avg}}^4&\le D_\d(K,r_2)^4\sum_{\substack{B_{r_1}\subset\R^3\\D_\d(K,r_2)\|S_{B_{r_1}}\tilde f\|_{L^2_{avg}}}\ge  \a}|B_{r_1}|\|S_{B_{r_1}}\tilde{f}\|_{L^2_{avg}}^4, \\
    \text{for any }\tau\in{\bf{S}}_{\max(r_3^{-1/2},K^{-2}r_2^{-1/2})},\quad\text{either}&\quad\tilde{f}_{\tau}=f_\tau\quad\text{or}\quad\tilde{f}_\tau\equiv 0,
\end{align}
and for each $l,\quad \max(r_3^{-1/2},K^{-2}r_2^{-1/2})\le K^{-\d l}\le 1$, there exists $\lambda_l>0$ so that 
\begin{align}\label{density}
\#\{\tau_{K^{-\d l}}\in{\bf{S}}_{K^{-\d l}}:\tau_{K^{-\d l}}\subset \tau_{K^{-\d l+\d}},\quad \tilde{f}_{\tau_{K^{-\d l}}}\not\equiv 0\}\sim \lambda_l\quad\text{or}\quad 0 
\end{align}
for any $\tau_{K^{-\d l+\d}}\in{\bf{S}}_{K^{-\d l+\d}}$. 

\end{lemma}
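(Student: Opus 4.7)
The approach is iterated dyadic pigeonholing over the scales $s_l := K^{-\d l}$ for $l = 1, \ldots, L$, where $L$ is the largest integer with $s_L \ge s_0 := \max(r_3^{-1/2}, K^{-2} r_2^{-1/2})$; then $L \le \d^{-1}\log(K^2 r_2^{1/2})/\log K$, exactly the exponent in $D_\d(K, r_2)$. The procedure mirrors the opening pigeonholing step in Section \ref{mainthmpf}, but tracks the weighted square-function sum $\sum_{B_{r_1}} |B_{r_1}| \|S_{B_{r_1}} f\|^4$ instead of a superlevel set. Since each parent $\tau' \in {\bf{S}}_{s_{l-1}}$ has at most $\sim K^\d$ potential scale-$s_l$ children, the count of nonzero children takes at most $O(\d\log K)$ dyadic values, so one step will cost only a factor polynomial in $\d\log K$, and $L$ iterations will compound to $D_\d(K,r_2)$.

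\textbf{One step.} Set $f^{(0)} = f$. Given $f^{(l-1)}$, partition
\[
\{\tau' \in {\bf{S}}_{s_{l-1}} : f^{(l-1)}_{\tau'} \not\equiv 0\} = \bigsqcup_{\lambda} \Lambda_l(\lambda),\qquad \Lambda_l(\lambda) := \bigl\{\tau' : \#\{\tau'' \in {\bf{S}}_{s_l} : \tau'' \subset \tau',\ f^{(l-1)}_{\tau''} \not\equiv 0\} \sim \lambda\bigr\},
\]
and let $g^{(l)}_\lambda := \sum_{\tau' \in \Lambda_l(\lambda)} f^{(l-1)}_{\tau'}$, so that $f^{(l-1)} = \sum_\lambda g^{(l)}_\lambda$ with $O(\d\log K)$ nonzero summands and essentially disjoint Fourier supports at scale $s_{l-1}$. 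A pointwise Cauchy--Schwarz bound on $|f^{(l-1)}_\tau|^2 = |\sum_\lambda (g^{(l)}_\lambda)_\tau|^2$ for $\tau \in {\bf{S}}_{r_1^{-1/2}}$ yields
\[
\|S_{B_{r_1}} f^{(l-1)}\|_{L^2_{\mathrm{avg}}}^2 \lesssim (\d\log K)^{O(1)} \max_\lambda \|S_{B_{r_1}} g^{(l)}_\lambda\|_{L^2_{\mathrm{avg}}}^2
\]
for every ball $B_{r_1}$. Partitioning the balls by the dominating index $\lambda^*(B_{r_1})$ and then pigeonholing over $\lambda$ picks out a single $\lambda_l$, and setting $f^{(l)} := g^{(l)}_{\lambda_l}$ gives, for any $\b > 0$,
\[
\sum_{B_{r_1} : \|S_{B_{r_1}} f^{(l-1)}\| \ge \b} |B_{r_1}| \|S_{B_{r_1}} f^{(l-1)}\|^4 \le (C\d\log K)^{O(1)} \sum_{B_{r_1} : (C\d\log K)^{O(1)}\|S_{B_{r_1}} f^{(l)}\| \ge \b} |B_{r_1}| \|S_{B_{r_1}} f^{(l)}\|^4.
\]

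\textbf{Iteration and conclusion.} Iterating this estimate $L$ times starting from $\b = \a$ compounds the losses to $(C\d\log K)^{O(L)}$ in both the sum and the threshold, which is absorbed by $D_\d(K,r_2)^4$ and $D_\d(K,r_2)$ respectively after choosing the absolute constant $C$ in the definition of $D_\d(K,r_2)$ suitably large. Setting $\tilde f := f^{(L)}$, one has $\tilde f_\tau = f_\tau$ precisely when $\tau \in {\bf{S}}_{s_L}$ has its ancestor at every scale $s_l$ inside $\Lambda_l(\lambda_l)$, and $\tilde f_\tau \equiv 0$ otherwise, which is the second claim. The density condition (\ref{density}) is immediate from the definition of $\Lambda_l(\lambda_l)$: inside each surviving scale-$s_{l-1}$ parent there are exactly $\sim \lambda_l$ surviving scale-$s_l$ children, while inside killed parents the count is $0$.

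\textbf{Main obstacle.} The main technical point is the ball-wise Cauchy--Schwarz comparison between $\|S_{B_{r_1}} f^{(l-1)}\|^2$ and $\max_\lambda \|S_{B_{r_1}} g^{(l)}_\lambda\|^2$. When $s_{l-1} \ge r_1^{-1/2}$, each $\tau \in {\bf{S}}_{r_1^{-1/2}}$ lies inside a unique scale-$s_{l-1}$ parent, so the decomposition $\sum_\tau |f^{(l-1)}_\tau|^2 = \sum_\lambda \sum_\tau |(g^{(l)}_\lambda)_\tau|^2$ is exactly orthogonal and the comparison is clean. When $s_{l-1} < r_1^{-1/2}$, multiple scale-$s_{l-1}$ planks contribute to a single $\tau$, and pointwise Cauchy--Schwarz costs an additional $O(\d\log K)$ factor. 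A secondary point is that the maximizing $\lambda^*(B_{r_1})$ varies from ball to ball; one handles this, as in Section \ref{mainthmpf}, by partitioning the balls according to $\lambda^*(B_{r_1})$ before pigeonholing over $\lambda$, which costs one further $O(\d\log K)$ factor per scale. Either way the per-step loss is polynomial in $\d\log K$, so the total after $L$ iterations is $(C\d\log K)^{O(L)}$, fitting inside $D_\d(K,r_2)$.
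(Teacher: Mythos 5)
Your overall strategy — iterated dyadic pigeonholing on the number of nonzero children across the scales $s_l=K^{-\d l}$, compounding an $O(\d\log K)$ loss per scale into $D_\d(K,r_2)$ — is the same as the paper's, and the weighted-sum accounting and the characterization of $\tilde f$ are fine. The gap is the direction of iteration.

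You iterate from coarse to fine, but this order does not deliver the density condition \eqref{density}. After step $l$, each surviving parent $\tau'\in\Lambda_l(\lambda_l)\subset{\bf{S}}_{s_{l-1}}$ does have $\sim\lambda_l$ nonzero $s_l$-children \emph{in $f^{(l)}$}. But step $l+1$ then prunes caps at scale $s_l$ (killing those $\tau''\in{\bf{S}}_{s_l}$ not in $\Lambda_{l+1}(\lambda_{l+1})$), which removes some of those children. Consequently in $f^{(l+1)}$ — and hence in $\tilde f=f^{(L)}$ — the number of nonzero $s_l$-children of $\tau'$ is generally strictly less than $\lambda_l$ and, more importantly, may \emph{vary} from parent to parent. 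Your assertion that the density condition is "immediate" therefore fails for every $l<L$; only the finest level survives intact, because step $L$ is the last. The paper's proof (an abbreviated version of the pigeonholing in \textsection\ref{mainthmpf}) iterates in the opposite order: it starts at the finest scale and moves to coarser scales. Pruning at a coarser scale only deletes entire coarse caps together with their descendants; it never perturbs the fine-scale tree inside a surviving coarse cap. That is precisely what makes the density at finer levels, once established, stable under all subsequent pruning steps. To repair your argument you would need either to reverse the order of iteration, or to add a second pass re-pigeonholing at each level after the first pass — which amounts to the same thing with extra work.
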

\begin{proof} We write an abbreviated version of the proof in \textsection\ref{mainthmpf} adapted to this set-up. Let $N$ be defined by $K^{-\d N-\d}\le \max(r_3^{-1/2},K^{-2}r_2^{-1/2})\le K^{-\d N}$. Step $k$ of the algorithm takes as an input a collection $\Lambda_{k+1}(\lambda_{k+1})\subset{\bf{S}}_{K^{-\d (N-k)}}$, a function $f^{k+1}$ satisfying \eqref{density} for $l=k+1$, and an inequality 
\begin{align*} \sum_{\substack{B_{r_1}\subset\R^3\\\|S_{B_{r_1}}f\|_{L^2_{avg}}}\ge \a}|B_{r_1}|\|S_{B_{r_1}}f\|_{L^2_{avg}}^4&\le (C\d \log K)^{4(N-k)}\sum_{\substack{B_{r_1}\subset\R^3\\(C\d \log K)^{N-k}\|S_{B_{r_1}} f^{k+1}\|_{L^2_{avg}}}\ge  \a}|B_{r_1}|\|S_{B_{r_1}}{f^{k+1}}\|_{L^2_{avg}}^4.
\end{align*} 
To obtain $f^k$, begin by writing 
\[ f^k=\sum_{\tau_k\in{\bf{S}}_{K^{-k\d}}}f_{\tau_k}^{k+1}=\sum_{1\le \lambda\le K^{\d}}\sum_{\tau_k\in\Lambda_k(\lambda)}f_{\tau_k}^{k+1}\]
in which $\lambda$ is dyadic and $\Lambda_k(\lambda)=\{\tau_k\in{\bf{S}}_{K^{-k\e}}:\#\{\tau_{k+1}\in\Lambda_{k+1}(\lambda_{k+1}):f^{k+1}_{\tau_{k+1}}\not\equiv0,\,\tau_{k+1}\subset\tau_k\}\sim\lambda\}$. Since for each $B_{r_1}\subset\R^3$, $S_{B_{r_1}}f^{k+1}\le \sum_{\lambda}S_{B_{r_1}}(\sum_{\tau_{k}\in\Lambda_k(\lambda)}f_{\tau_{k}}^{k+1})$, by dyadic pigeonholing, there is some $\lambda_k$ for which 
\begin{align*} \sum_{\substack{B_{r_1}\subset\R^3\\(C\d \log K)^{N-k}\|S_{B_{r_1}} f^{k+1}\|_{L^2_{avg}}}\ge  \a}|B_{r_1}|\|S_{B_{r_1}}{f^{k+1}}\|_{L^2_{avg}}^4\le (C\d\log K)^4\sum_{\substack{B_{r_1}\subset\R^3\\(C\d \log K)^{N-k+1}\|S_{B_{r_1}} f^{k}\|_{L^2_{avg}}}\ge  \a}|B_{r_1}|\|S_{B_{r_1}}{f^{k}}\|_{L^2_{avg}}^4.
\end{align*} 
where $f^k=\sum_{\tau_k\in\Lambda_k(\lambda_k)}f^{k+1}_{\tau_k}$. Continue this process until we have found $f^1$, which we set equal to $\tilde{f}$. Note that the accumulated constant satisfies $(C\d \log K)^{4N}\le (C\d \log K)^{\d^{-1}\log (Kr_2^{1/2})/\log K}$.

\end{proof}

We re-state Lemma \ref{rescaling} for the reader's convenience. 

\begin{lemma}[Analogue of Lemma 3.3 of \cite{locsmooth}] For any $0<\d<1$ and $10\le r_1< r_2\le r_3$,
\[S_K(r_1,r_3)\le D_\d(K,r_2)^4K^{3\d}\log r_2\cdot S_K(r_1,r_2)\max_{r_2^{-1/2}\le s\le 1}S_K(s^2r_2,s^2r_3), \]
where $D_\d(K,r_2)$ is from Lemma \ref{pig}. \end{lemma}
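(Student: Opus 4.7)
The plan is to reduce to the truncated setting via the standard pigeonholing (Lemma \ref{pig}), apply the wave envelope estimate at scale $r_2$, then Lorentz-rescale each cap $\tau\in{\bf S}_s$ (for $s\in[r_2^{-1/2},1]$) so that it is mapped to a piece of $\Gamma_{K^{-1}}$, and finally apply $S_K$ at scale $s^2 r_3$ to each rescaled piece before transforming back.

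First I would apply Lemma \ref{pig} at scale $r_2$ to replace $f$ by a regularized $\tilde f$ with uniform density parameters $\lambda_l$ at each dyadic sub-scale $K^{-\d l}\in[\max(r_3^{-1/2},K^{-2}r_2^{-1/2}),1]$, paying the cost $D_\d(K,r_2)^4$; as a consequence one has $\mu_{\tilde f}(s)\sim \prod_{l: K^{-\d l}\ge s}\lambda_l$ up to a factor $K^{O(\d)}$ for any $s$ in that range. Next, since $\mc{N}_{r_3^{-1}}(\Gamma)\subset\mc{N}_{r_2^{-1}}(\Gamma)$, I would apply the definition of $S_K(r_1,r_2)$ directly to $\tilde f$ to bound
\[
\sum_{\substack{B_{r_1}\\ \|S_{B_{r_1}}\tilde f\|_{L^2_{avg}}\ge \a/D_\d}}|B_{r_1}|\|S_{B_{r_1}}\tilde f\|^4_{L^2_{avg}} \le S_K(r_1,r_2)\sum_{\substack{r_2^{-1/2}\le s\le 1}}\sum_{\tau\in{\bf S}_s}\sum_{U\in\mc{G}_\tau}|U|\|S_U \tilde f\|^4_{L^2_{avg}},
\]
with $\mc{G}_\tau=\{U\|U_{\tau,r_2}:S_K(r_1,r_2)\|S_U\tilde f\|_{L^2_{avg}}\ge (\a/D_\d)/\mu_{\tilde f}(s)\}$.

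Now fix $\tau\in{\bf S}_s$. The Lorentz rescaling $L_\tau$ mapping $\tau$ onto (a piece of) $\Gamma_{K^{-1}}$ acts on physical space by an affine map whose linear part has dimensions $1\times s^{-1}\times s^{-2}$ along $({\bf c}(\w_\tau),{\bf t}(\w_\tau),{\bf n}(\w_\tau))$: it sends the $r_2^{-1/2}$-caps $\theta\subset\tau$ to canonical $(s^2 r_2)^{-1/2}$-caps of $\Gamma_{K^{-1}}$, the thickness $r_3^{-1}$ to $(s^2 r_3)^{-1}$, and each $U\|U_{\tau,r_2}$ to an approximate $s^2r_2$-ball. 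Writing $g^\tau=\tilde f_\tau\circ L_\tau^{-1}$ and applying $S_K(s^2 r_2, s^2 r_3)$ to $g^\tau$ produces caps $\tilde\tau\in{\bf S}_{s'}$ ($s'\in[(s^2r_3)^{-1/2},1]$) and wave envelopes $\tilde U\|U_{\tilde\tau,s^2r_3}$; undoing $L_\tau$, these correspond to caps $\hat\tau=L_\tau^{-1}\tilde\tau\in{\bf S}_{ss'}$ with $\hat\tau\subset\tau$ and wave envelopes $U''\|U_{\hat\tau,r_3}$. The weighted $L^2$-averages are preserved up to absolute constants by $L_\tau$ (the Jacobian cancels between $|U|^{-1}$ and $W_U$), so we gain the factor $S_K(s^2 r_2,s^2 r_3)$ and land on the right-hand side of $S_K(r_1,r_3)$ at scales $\sigma=ss'$.

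Finally I would reorganize the resulting double sum $\sum_{s,s'}$ as a single sum over $\sigma=ss'\in[r_3^{-1/2},1]$: for fixed $\sigma$ there are $O(\log r_2)$ dyadic choices of $s$, which yields the $\log r_2$ factor. The $\a$-conditions combine as follows: after rescaling, the outer threshold becomes $S_K(r_1,r_2)S_K(s^2r_2,s^2r_3)\|S_{U''}f\|_{L^2_{avg}}\ge \a/(D_\d\,\mu_{\tilde f}(s)\mu_{g^\tau}(s'))$, and the regularity provided by Lemma \ref{pig} gives $\mu_{\tilde f}(s)\mu_{g^\tau}(s')\le K^{3\d}\,\mu_{f}(\sigma)$ — this is the main obstacle and is precisely why the Lorentz-rescaled densities concatenate with only a $K^{3\d}$ loss (roughly one factor of $K^\d$ per scale for the at most three dyadic scales $\{s,s',\sigma\}$ where density ratios need to be regularized). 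Combining these estimates produces the claimed bound
\[
S_K(r_1,r_3)\le D_\d(K,r_2)K^{3\d}\log r_2\cdot S_K(r_1,r_2)\max_{r_2^{-1/2}\le s\le 1}S_K(s^2 r_2,s^2 r_3).
\]
The main obstacle, as indicated, is ensuring that the pigeonholed densities $\lambda_l$ combine correctly across the two-step rescaling so that $\mu_f(\sigma)$ (rather than a larger quantity) controls the threshold in the final good set; keeping the constants in Lemma \ref{pig} at the level $D_\d(K,r_2)$ and $K^{3\d}$ is exactly what makes the induction-on-scales in Proposition \ref{induct} close.
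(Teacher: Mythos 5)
Your proposal follows essentially the same route as the paper: regularize via Lemma \ref{pig} at cost $D_\d(K,r_2)^4$, apply $S_K(r_1,r_2)$ to pass from $r_1$-balls to wave envelopes $U\|U_{\tau,r_2}$, Lorentz-rescale each $\tau\in{\bf S}_s$ so that $U$ becomes an $s^2r_2$-ball and apply $S_K(s^2r_2,s^2r_3)$, then reconcile the two sums over scales (picking up $\log r_2$ from the scale counting) and the two $\mu$-thresholds via the regularity from Lemma \ref{pig} (picking up $K^{3\d}$). The only place you are lighter than the paper is the verification that $\mu_f(s)\tilde\mu_{f_\tau}(s''s)\lesssim K^{3\d}\mu_f(s''s)$, which the paper does by a four-way case analysis ($s\gtrless K^{-1}$, $s''\gtrless K^{-1}$) exploiting the supremum-over-admissible-partitions structure of $\mu_f$; your heuristic of ``one $K^\d$ per scale'' gestures at the right conclusion but the actual argument concatenates and rounds admissible partitions using the density uniformity supplied by Lemma \ref{pig}.
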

\begin{proof} By Lemma \ref{pig}, it suffices to consider $f=\tilde{f}$ which satisfies the properties in that lemma. To bound $S_K(r_1,r_3)$, we need to show that for 
\[A=K^{10\d}\log r_2\cdot S_K(r_1,r_2)\max_{r_2^{-1/2}\le s\le 1}S_K(s^{2}r_2,s^{2}r_3) \]
and any $\a>0$, we have
\begin{align*} 
\sum_{\substack{B_{r_1}\subset\R^3\\\|S_{B_{r_1}}f\|_{L^2_{avg}}}\ge \a}|B_{r_1}|\|S_{B_{r_1}}f\|_{L^2_{avg}}^4&\le A  \sum_{r_3^{-1/2}\le s\le 1}\sum_{\tau\in{\bf{S}}_s}\sum_{\substack{U\parallel U_{\tau,r_3}\\ A\|S_Uf\|_{L^2_{avg}(U)}\ge \frac{\a}{\mu_f(s)}}}|U|\|S_Uf\|_{L^2_{avg}}^4. 
\end{align*} 
Use the definition of $S_K(r_1,r_2)$ to write
\[ \sum_{\substack{B_{r_1}\subset\R^3\\\|S_{B_{r_1}}f\|_{L^2_{avg}}}\ge \a}|B_{r_1}|\|S_{B_{r_1}}f\|_{L^2_{avg}}^4 \le S_K(r_1,r_2)\sum_{r_2^{-1/2}\le s\le 1}\sum_{\tau\in{\bf{S}}_s}\sum_{\substack{U_1 \parallel U_{\tau,r_2}\\S_K(r_1,r_2) \|S_{U_1}f\|_{L^2_{avg}}\ge \frac{\a}{\mu_f(s)}}}|U_1|\|S_{U_1}f\|_{L^2_{avg}}^4. \]
Next, we show that for each sector $\tau\in{\bf{S}}_s$, $r_2^{-1/2}\le s\le 1$,
\begin{align}
\label{aftlor}
\sum_{\substack{U_1 \parallel U_{\tau,r_2}\\S_K(r_1,r_2) \|S_{U_1}f\|_{L^2_{avg}}\ge \frac{\a}{\mu_f(s)}}}&|U_1|\|S_{U_1}f\|_{L^2_{avg}}^4\le S_K(s^2r_2,s^2r_3)\\
    &\times\sum_{r_3^{-1/2}\le s'\le s}\sum_{\substack{\tau'\subset\tau\\ \tau'\in{\bf{S}}_{s'}}}\sum_{\substack{U\parallel U_{\tau',r_3}\\ S_K(r_1,r_2)S_K(s^2r_2,s^2r_3)\|S_Uf\|_{L^2_{avg}}\ge \frac{\a}{\mu_f(s){\mu}_{f_{\tau}}(s')}}}|U|\|S_Uf\|_{L^2_{avg}}^4 \nonumber.  
\end{align} 
The factor $\log r_2$ appears in Lemma \ref{rescaling} since the number of $\tau\in{\bf{S}}_s$ which intersect a $\tau'\in{\bf{S}}_{s'}$ is bounded by $\log r_2$. Let $\b=[S_K(r_1,r_2)]^{-1}\frac{\a}{\mu_f(s)}$. By the definition of $S_K(s^2,r_2,s^2r_3)$, if $\widehat{h}$ is supported on $\mc{N}_{s^{-2}r_3^{-1}}(\Gamma_{\frac{1}{K}})$, then 
\begin{align*}
\sum_{\substack{B_{s^2r_2}\subset\R^3\\\|S_{B_{s^2r_2}}h\|_{L^2_{avg}}}\ge s^{-3}\b}|B_{s^2r_2}|&\|S_{B_{s^2r_2}}f\|_{L^2_{avg}}^4 \le S_K(s^2r_2,s^2r_3)\\
    &\times\sum_{s^{-1}r_3^{-1/2}\le s''\le 1}\sum_{\tau''\in{\bf{S}}_{s''}}\sum_{\substack{U'' \parallel U_{\tau'',s^2r_3}\\S_K(s^2r_2,s^2r_3)\|S_{U''}h\|_{L^2_{avg}(U'')}\ge \frac{s^{-3}\b}{\mu_{h}(s'')}}}|U''|\|S_{U''}h\|_{L^2_{avg}}^4.
\end{align*}
The inequality \eqref{aftlor} follows from the above defining inequality of $S(s^2r_2,s^2r_3)$ after a Lorentz transformation. Let $T$ be the Lorentz transformation from the proof of Lemma 3.3 in \cite{locsmooth}. Then we have the following relations, which are demonstrated thoroughly in \cite{locsmooth}:
\begin{enumerate}
    \item $T$ is a linear map taking $\tau$ to $[-1,1]^3$ (roughly), so $\det T\sim s^{-3}$.
    \item Define $\widehat{h}=\widehat{f_\tau}(T^{-1}(\cdot))$, so $h(x)=(\det T)f_\tau(T^*x)$. Also, $\widehat{h}_{\tau''}=\widehat{f}_{\tau'}(T^{-1}(\cdot))$ where $T(\tau')=\tau''$. 
    \item $U_1=T^*(B_{s^2r_2})$ and $\|S_{U_1}f\|_{L^2}^2=(\det T)^{-1}\|S_{B_{s^2r_2}}h\|_{L^2}^2$, so $\|S_{U_1}f\|_{L^2_{avg}}^2=(\det T)^{-2}\|S_{B_{s^2r_2}}h\|_{L^2_{avg}}^2$. 
    \item $U=T^*(U'')$ and $\|S_{U}f\|_{L^2}^2=(\det T)^{-1}\|S_{U''}h\|_{L^2}^2$, so $\|S_{U}f\|_{L^2_{avg}}^2=(\det T)^{-2}\|S_{U''}h\|_{L^2_{avg}}^2$.
    \item \[ \sum_{\substack{U_1\|U_{\tau,r_2}\\\|S_{U_1}f\|_{L^2_{avg}}\ge \b}}\longleftrightarrow \sum_{\substack{B_{s^2r_2}\subset\R^3\\\|S_{B_{s^2r_2}}h\|_{L^2_{avg}}\ge s^{-3}\b}} \]
    \item \begin{align*} \sum_{s^{-1}r_3^{-1/2}\le s''\le 1}\sum_{\tau''\in{\bf{S}}_{s''}}\sum_{\substack{U'' \parallel U_{\tau'',s^2r_3}\\S_K(s^2r_2,s^2r_3)\|S_{U''}h\|_{L^2_{avg}}\ge \frac{s^{-3}\b}{\mu_h(s'')}}} &\longleftrightarrow \sum_{r_3^{-1/2}\le s''s\le s}\sum_{\tau'\in{\bf{S}}_{s''s}}\sum_{\substack{U \parallel U_{\tau',r_3}\\S_K(s^2r_2,s^2r_3)\|S_{U}f\|_{L^2_{avg}}\ge \frac{\b}{\tilde{\mu}_{f_\tau}(s''s)}}} .\end{align*} 
    In the above relation, we relabel $\mu_h(s'')$ by $\tilde{\mu}_{f_\tau}(s''s)$. 

\end{enumerate}
We would be done if we could verify that for each $r_2^{-1/2}\le s\le 1$, $\tau\in{\bf{S}}_s$, and $r_3^{-1/2}\le s''s\le s$, 
\[ \mu_f(s)\tilde{\mu}_{f_\tau}(s''s)\lessapprox  \mu_f(s''s). \]
We unwind the definition of $\mu_f(s)$ and $\tilde{\mu}_{f_\tau}(s''s)$ using the two cases which go into both of their definitions. 

\noindent\fbox{$s> K^{-1}$ and $s''> K^{-1}$.} Then 
\[ \mu_f(s)\tilde{\mu}_{f_\tau}(s''s)=\#\{\tau_s\in{\bf{S}}_s:f_{\tau_s}\not\equiv 0\}\#\{\tau_{s''s}\in{\bf{S}}_{s''s}:f_{\tau_{s''s}}\not\equiv 0,\tau_{s''s}\subset\tau\}. \]
By Lemma \ref{pig}, since $\max(r_3^{-1/2},K^{-2}r_2^{-1/2})\le s''s$, this is bounded by 
\[ 2K^{3\d}\#\{\tau_{s''s}\in{\bf{S}}_{s''s}:f_{\tau_{s''s}}\not\equiv 0\}\le 2K^{2\d}\mu_f(s''s).  \]

\noindent\fbox{$s\le K^{-1}$ and $s''> K^{-1}$.} Then 
\[ \mu_f(s)\tilde{\mu}_{f_\tau}(s''s)=\mu_f(s)\#\{\tau_{s''s}\in{\bf{S}}_{s''s}:f_{\tau_{s''s}}\not\equiv 0,\tau_{s''s}\subset\tau\}. \]
Let $s\le s_1\le\cdots s_k\le K^{-1}$ be an admissible partition from the definition of $\mu_f(s)$. By Lemma \ref{pig}, since $\max(r_3^{-1/2},K^{-2}r_2^{-1/2})\le s''s$ and $r_2^{-1/2}\le s_1\le s_2$, we have that 
\[ \max_{\tau_{s_2}\in{\bf{S}}_{s_2}}\{\tau_{s_1}\in{\bf{S}}_{s_1}:\tau_{s_1}\subset\tau_{s_2},\quad f_{\tau_{s_1}}\not\equiv 0\}\#\{\tau_{s''s}\in{\bf{S}}_{s''s}:f_{\tau_{s''s}}\not\equiv 0,\tau_{s''s}\subset\tau\}\]
is bounded by 
\[ 2K^{3\d} \max_{\tau_{s_2}\in{\bf{S}}_{s_2}}\{\tau_{ s''s}\in{\bf{S}}_{ s''s}:\tau_{ s''s}\subset\tau_{s_2},\quad f_{\tau_{ s''s}}\not\equiv 0\} .\]
Therefore, we have taken an admissible partition $s\le s_1<\cdots< s_k\le K^{-1}$ from the definition of $\mu_f(s)$ and transferred it to an admissible partition $s''s\le \tilde{s}_1<\cdots< s_k\le K^{-1}$, with $\tilde{s}_1=s''s$, from the definition of $\mu_f(s''s)$. Thus $\mu_f(s)\tilde{\mu}_{f_\tau}(s''s)\le 2K^{3\d}\mu_f(s''s)$. 

\noindent\fbox{$s> K^{-1}$ and $s''\le K^{-1}$.} In this case, 
\[ \mu_f(s)\tilde{\mu}_{f_\tau}(s''s)=\#\{\tau_s\in{\bf{S}}_s:f_{\tau_s}\not\equiv 0\}\tilde{\mu}_{f_\tau}(s''s). \]
Let $s''s\le s_1s<\cdots< s_ks\le K^{-1}s$ be an admissible partition from the definition of $\tilde{\mu}_{f_\tau}(s''s)$. It is no loss of generality (by possibly adding $s_{k+1}=K^{-1}s$ to the partition) to assume that $K^{-2}s\le s_ks\le K^{-1}s$, which we do now. Then $\max(r_3^{-1/2},K^{-2}r_2^{-1/2})\le s_ks$, so by Lemma \ref{pig}, 
\[  \#\{\tau_s\in{\bf{S}}_s:f_{\tau_s}\not\equiv 0\}\#\{\tau_{s_{k}s}\in{\bf{S}}_{\tau_{s_{k}s}}:f_{\tau_{s_{k}s}}\not\equiv 0,\quad \tau_{s_{k}s}\subset\tau\}\le 2K^{3\d}\#\{\tau_{s_ks}\in{\bf{S}}_{s_ks}:f_{\tau_{s_ks}}\not\equiv 0\}.  \]
Therefore, for each admissible partition $s''s\le s_1s<\cdots<s_ks\le K^{-1}s$ used to define the expression $\mu_f(s)\tilde{\mu}_{f_\tau}(s''s)$, we have obtained an expression from the definition of $\mu_f(s''s)$, which concludes this case. 

\noindent\fbox{$s\le K^{-1}$ and $s''\le K^{-1}$.} In this final case, $\mu_f(s)$ and $\tilde{\mu}_{f_\tau}(s''s)$ are both determined by partitions. Since the partitions in the definition of $\tilde{\mu}_{f_\tau}(s''s)$ are bounded above by $K^{-1}s$, the partition formed by concatenating partitions from $\mu_f(s)$ and $\tilde{\mu}_{f_\tau}(s''s)$ are admissible for $\mu_f(s''s)$, which concludes the argument. 
\end{proof}

\section{Small cap decoupling for $\Gamma$ from Theorem \ref{mainC} \label{conesmallcap}}

Let $\b_1\in[\frac{1}{2},1]$, $\b_2\in[0,1]$, and let $\g$ be approximate $R^{-\b_1}\times R^{-\b_2}\times  R^{-1}$ boxes tiling $\Gamma$ and which subdivide canonical $\theta\in{\bf{S}}_{R^{-1/2}}$.


Recall the statement of Theorem \ref{smallcapthm}:
\begin{theorem*}
Let $\b_1\in[\frac{1}{2},1]$ and $\b_2\in[0,1]$. For any $p\ge 2$, $q\ge 1$, 
\begin{equation}\label{smallcaplpLp} \int_{B_R}|\sum_\g f_\g|^p\le C_\e R^\e\Big[R^{(\b_1+\b_2)(\frac{p}{2}-1)}+R^{(\b_1+\b_2)(p-2)-1}+R^{(\b_1+\b_2-\frac{1}{2})(p-2)}\Big]\sum_\g\|f_\g\|_{L^p(\R^2)}^p .\end{equation}
\end{theorem*}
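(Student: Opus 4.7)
The plan is to follow the roadmap in Section \ref{int}, deriving Theorem \ref{smallcapthm} as a consequence of the amplitude-dependent wave envelope estimate (Theorem \ref{mainC}). First, I would carry out a pigeonholing procedure analogous to Section 5 of \cite{gmw} to reduce matters to the following superlevel-set estimate: under the normalizing hypotheses $\|f_\g\|_\infty \sim 1$ or $f_\g\equiv 0$, and $\|f_\g\|_2^2$ essentially constant in $\g$, it is enough to prove
\[ \a^p|U_\a| \le C_\e R^\e\bigl[R^{(\b_1+\b_2)(p/2-1)} + R^{(\b_1+\b_2)(p-2)-1} + R^{(\b_1+\b_2-1/2)(p-2)}\bigr]\sum_\g \|f_\g\|_2^2 \]
for $U_\a = \{x:|f(x)|\sim \a\}$. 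The ranges $2\le p\le 4$ and $p\ge 6$ follow from the $p=4$ and $p=6$ cases by interpolation against trivial $L^2$ and $L^\infty$ estimates, so the real work is confined to $4\le p\le 6$.

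Next I would apply Theorem \ref{mainC} and use the defining property $C_\e R^\e|U|^{-1}\|S_Uf\|_2^2\ge \a^2/(\#\tau)^2$ of $\mathcal{G}_\tau(\a)$ to upgrade the $L^4$ inequality to an $L^p$ inequality, by inserting the factor $(C_\e R^\e (\#\tau)^2|U|^{-1}\|S_Uf\|_2^2/\a^2)^{p/2-2}$ on the right-hand side. This yields
\[ \a^p|U_\a| \le C_\e R^{O(\e)}\sum_{\substack{R^{-1/2}<s<1\\s\ \text{dyadic}}}\sum_{\tau\in\mathbf{S}_s}(\#\tau)^{p-4}\sum_{U\in\mathcal{G}_\tau(\a)}|U|\Bigl(|U|^{-1}\int\sum_{\theta\subset\tau}|f_\theta|^2 W_U\Bigr)^{p/2}. \]
The inner integrand is then processed by combining Hölder's inequality, the flat-decoupling bound $\|\sum_{\g\subset\theta}|f_\g|^2\|_\infty\lesssim \#\g\subset\theta$ coming from $\|f_\g\|_\infty\lesssim 1$, and (when useful) local $L^2$-orthogonality via the Fourier support of $W_U$ to replace $\sum_{\theta\subset\tau}|f_\theta|^2$ by $\sum_{\g\subset\tau}|f_\g|^2$. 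Plancherel's theorem then collapses the sum over $U$ into $\sum_\g \|f_\g\|_2^2$.

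The two extreme scales are treated in Section \ref{int}: the $s=R^{-1/2}$ case produces the constructive-interference term $R^{(\b_1+\b_2)(p-2)-1}$ via $(\#\g\subset\theta)^{p-2}\lesssim R^{(\b_1+\b_2-1/2)(p-2)}$, and the $s=1$ case produces the square-root cancellation term $R^{(\b_1+\b_2)(p/2-1)}$ via full local $L^2$-orthogonality down to $\g$. At intermediate scales $R^{-1/2}<s<1$ the same recipe applies, but the split between flat decoupling and $L^2$-orthogonality depends on how $s$ compares to $R^{-\b_1}$ (and on whether the small caps $\g$ are still finer than $\tau$ in both frequency directions). The block-example exponent $R^{(\b_1+\b_2-1/2)(p-2)}$ arises precisely as the worst-case value of the combinatorial factor $(\#\tau)^{p-4}(\#\g\subset\tau)^{p/2-1}(\#\g\subset\theta)^{p/2-1}$ across these intermediate regimes.

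The main obstacle will be the case-by-case combinatorial bookkeeping: one must verify in each of the regimes (determined by the positions of $s$ relative to $R^{-\b_1}$ and $R^{-1/2}$, and of $R^{-\b_1}$ relative to $s$ and to $R^{-\b_2}$) that the resulting power of $R$ is bounded by the maximum of the three target exponents. Once this verification is complete at every dyadic scale $s$, summing over the $O(\log R)$ such scales costs only a logarithmic factor that is absorbed into $R^\e$, finishing the proof.
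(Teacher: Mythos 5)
Your road map correctly identifies Theorem \ref{mainC} as the engine and the pigeonholing-plus-combinatorics shape of the argument, and the proposed interpolation reduction to $4\le p\le 6$ against trivial $L^2$ and $L^\infty$ estimates is a workable alternative to the paper's Section \ref{conesmallcap}, which instead treats all $p\ge 2$ in one pass by parametrizing the exponent $q$ in \eqref{fromhereC} and running four explicit case analyses. However, the concrete combinatorial factor you write down at the intermediate scales is wrong, and the way it is wrong is exactly the place where the intermediate-scale small caps $\g_\sigma$ are indispensable.

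Fix the dyadic scale $\sigma\in[R^{-1/2},1]$, $\tau\in{\bf{S}}_{\sigma^{-1}R^{-1/2}}$, and $U\|U_{\tau,R}$. The local $L^2$-orthogonality supplied by $\supp\widehat{W_U}$ resolves $\sum_{\theta\subset\tau}|f_\theta|^2$ only down to the scale of the dual plank $\Theta_\sigma\approx\sigma^2\times\sigma R^{-1/2}\times R^{-1}$, that is, to the intermediate small cap $\g_\sigma$ of dimensions $\max(R^{-\b_2},\sigma^2)\times\max(R^{-\b_1},\sigma R^{-1/2})\times R^{-1}$, and not all the way to $\g$ unless $\sigma$ is essentially $R^{-1/2}$. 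Hence the $L^\infty$ loss in the H\"older/flat-decoupling step is $\#\g\subset\tau\cdot\#\g\subset\g_\sigma$, and the sharp factor at scale $\sigma$ is
\[ (\sigma R^{1/2})^{p-4}\bigl(\sigma^{-1}R^{\b_1+\b_2-\frac{1}{2}}\max(1,R^{\b_1}\sigma R^{-\frac{1}{2}},R^{\b_2}\sigma^2,\sigma^3R^{\b_1+\b_2-\frac{1}{2}})\bigr)^{\frac{p}{2}-1}. \]
Your proposed factor $(\#\tau)^{p-4}(\#\g\subset\tau)^{p/2-1}(\#\g\subset\theta)^{p/2-1}$ replaces $\#\g\subset\g_\sigma$ by the $\sigma$-independent quantity $\#\g\subset\theta\sim R^{\b_1+\b_2-\frac{1}{2}}$; these agree only at $\sigma=1$ (where $\g_\sigma=\theta$). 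At the opposite endpoint $\sigma=R^{-1/2}$ one has $\g_\sigma=\g$, $\#\g\subset\g_\sigma=1$, and the paper's factor collapses to $R^{(\b_1+\b_2)(\frac{p}{2}-1)}$, while your formula gives $R^{(\b_1+\b_2-\frac{1}{4})(p-2)}$. For $4<p<6$ and $\b_1+\b_2>\frac{1}{2}$ that quantity strictly exceeds all three target exponents $R^{(\b_1+\b_2)(\frac{p}{2}-1)}$, $R^{(\b_1+\b_2)(p-2)-1}$, $R^{(\b_1+\b_2-\frac{1}{2})(p-2)}$: for instance it exceeds the first by exactly $R^{(\b_1+\b_2-\frac{1}{2})(p-2)/2}$ and the second by $R^{(6-p)/4}$. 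So the ``worst-case value across intermediate regimes'' of your factor is not the block exponent; it overshoots. The fix is precisely the step you gesture at but do not carry out: orthogonalize to $\g_\sigma$ rather than $\g$, take the $L^\infty$ factor $\#\g\subset\tau\cdot\#\g\subset\g_\sigma$ (which shrinks as $\sigma\to R^{-1/2}$), and then run the dyadic case analysis as in \textsection\ref{conesmallcap}.
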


\begin{proof}[Proof of Theorem \ref{smallcapthm}]
By an analogous explanation as in the proof of Theorem \ref{smallcapthmB}, it suffices to prove \[\a^p|U_\a|\le C_\e R^\e[R^{(\b_1+\b_2)(\frac{p}{2}-1)}+R^{(\b_1+\b_2)(p-2)-1}+R^{(\b_1+\b_2-\frac{1}{2})(p-2)}]\sum_\g\|f_\g\|_p^p \]
for Schwartz functions $f=\sum_\g f_\g$ satisfying the additional hypotheses that $\|f_\g\|_\infty\sim 1$ or $f_\g=0$ for all $\g$ and $\|f_\g\|_p^p\sim_p\|f_\g\|_2^2$ for all $\g$ and all $p\ge 2$. 

By Theorem \ref{mainC}, there is a scale $\sigma$, $R^{-1/2}\le \sigma\le 1$, for which 
\[ \a^{4}|U_\a|\le C_\e R^\e \sum_{\tau_\sigma}\sum_{U\in\mc{G}_{\tau_\sigma}(\a)}|U|^{-1}\|S_Uf\|_{2}^4. \]
Here $\tau_\sigma\in{\bf{S}}_{\sigma^{-1}R^{-1/2}}$ and $S_{\tau_{\sigma},R}f(x)^2=\sum_{\theta\subset\tau_\sigma}|f_\theta|^2(x) W_U(x)$ where $W_{U}$ is an $L^\infty$ normalized weight function localized to $U\|U_{\tau_\sigma,R}$, which is an $\sigma^{-1}R^{\frac{1}{2}}\times \sigma^{-2}\times R$ wave envelope. 

Let $\g_\sigma$ denote  $\max(R^{-\b_2},\sigma^2)\times\max(R^{-\b_1},\sigma R^{-\frac{1}{2}})\times R^{-1}$-small caps, so either $\g=\g_\sigma$ or $\g\subset\g_\sigma$. From the proof of Lemma 1.4 in \cite{locsmooth}, the dual set $U^*$ may be denoted by $\Theta\in{\bf{CP}}_{\sigma}$, using their notation. $\Theta$ is either comparable to or contained in a small cap plank $\g_\sigma\subset\tau_\sigma$. Since the set of $\g_\sigma'+\g_\sigma$, varying over $\g_\sigma'\subset\tau_\sigma$, are finitely overlapping, we may use local $L^2$ orthogonality. By local $L^2$-orthogonality, we have the inequality
\[ \|S_Uf\|_2^2=\int\sum_{\theta\subset\tau_\sigma}|f_\theta|^2W_U\lesssim \int\sum_{\g_\sigma\subset\tau_\sigma}|f_{\g_\sigma}|^2W_U. \]
Combined with the defining property of $U\in\mc{G}_{\tau_\sigma}(\a)$, this implies the upper bound
\[ \a^2\le C_\e R^\e (\#\tau)^2|U|^{-1}\|S_Uf\|_2^2\lesssim C_\e R^\e (\sigma R^{\frac{1}{2}})^2|U|^{-1}\|\sum_{\g_\sigma\subset\tau_\sigma}|f_{\g_\sigma}|^2W_U\|_2^2.\]
Therefore, for any $q\ge 4$, 
\[ \a^{q}|U_\a|\le C_\e R^\e\sum_{\tau_\sigma}\sum_{U\in\mc{G}_{\tau_\sigma}(\a)}(\sigma R^{\frac{1}{2}})^{q-4}|U|^{1-\frac{q}{2}}\Big(\int\sum_{\g_\sigma\subset\tau_\sigma}|f_{\g_\sigma}|^2W_U\Big)^{\frac{q}{2}} . \]
Use the assumption $\|f_\g\|_\infty\lesssim 1$ to bound
\begin{align*}
    |U|^{1-\frac{q}{2}}\Big(\int\sum_{\g_\sigma\subset\tau_\sigma}|f_{\g_\sigma}|^2W_U\Big)^{\frac{q}{2}-1}&\lesssim \|\int\sum_{\g_\sigma\subset\tau_\sigma}|f_{\g_\sigma}|^2\|_\infty^{\frac{q}{2}-1}\\
    &\lesssim \#\g\subset\tau_\sigma \#\g\subset\g_\sigma\lesssim \sigma^{-1}R^{\b_1+\b_2-\frac{1}{2}}\max(1,R^{\b_1}\sigma R^{-\frac{1}{2}},R^{\b_2}\sigma^2,\sigma^3 R^{\b_1+\b_2-\frac{1}{2}}). 
\end{align*}
This simplifies the inequality to 
\[\a^{q}|U_\a|\le C_\e R^\e\sum_{\tau_\sigma}\sum_{U\in\mc{G}_{\tau_\sigma}(\a)}(\sigma R^{\frac{1}{2}})^{q-4}\int\sum_{\g_\sigma\subset\tau_\sigma}|f_{\g_\sigma}|^2W_U\sigma^{-1}R^{\b_1+\b_2-\frac{1}{2}}\max(1,R^{\b_1}\sigma R^{-\frac{1}{2}},R^{\b_2}\sigma^2,\sigma^3 R^{\b_1+\b_2-\frac{1}{2}})  \]
We are done using special properties of the wave envelopes $U$, so within the displayed expression, we bound
\[\sum_{\tau_\sigma}\sum_{U\in\mc{G}_{\tau_\sigma}(\a)}\int\sum_{\g_\sigma\subset\tau_\sigma}|f_{\g_\sigma}|^2W_U\lesssim \sum_{\tau_\sigma}\int\sum_{\g_\sigma\subset\tau_\sigma}|f_{\g_\sigma}|^2\lesssim\sum_{\g}\|f_\g\|_2^2 \]
by $L^2$ orthogonality. This yields 
\begin{align}
\a^{q} |U_\a|&\le C_\e R^\e(\sigma R^{\frac{1}{2}})^{q-4}(\sigma^{-1}R^{\b_1+\b_2-\frac{1}{2}}\max(1,R^{\b_1}\sigma R^{-\frac{1}{2}},R^{\b_2}\sigma^2,\sigma^3 R^{\b_1+\b_2-\frac{1}{2}}))^{\frac{q}{2}-1}\sum_\g\|f_\g\|_2^2 \label{fromhereC}  .\end{align}

The argument divides into four ranges of $p$ and $\b_1,\b_2$, and for each of these ranges, we subdivide into four different cases depending on the maximum on the right hand side.

\noindent\fbox{Case 1: $2\le p\le 4$ and $\b_1+\b_2\ge 1$. } Begin by noting that by $L^2$ orthogonality,
\[ \a^2|U_\a|\lesssim \sum_\g\|f_\g\|_2^2. \]
From here, we would be done if $R^{(\b_1+\b_2-\frac{1}{2})}\ge\a$, so assume that $\a\ge R^{(\b_1+\b_2-\frac{1}{2})}$. Then using \eqref{fromhereC} with $q=4$, we have
\[ \a^p|U_\a|\le C_\e R^\e \frac{1}{\a^{4-p}}\sigma^{-1}R^{\b_1+\b_2-\frac{1}{2}}\max(1,R^{\b_1}\sigma R^{-\frac{1}{2}},R^{\b_2}\sigma^2,\sigma^3 R^{\b_1+\b_2-\frac{1}{2}})\sum_\g\|f_\g\|_2^2 .\]
It suffices to verify that
\[ \sigma^{-1}R^{\b_1+\b_2-\frac{1}{2}}\max(1,R^{\b_1}\sigma R^{-\frac{1}{2}},R^{\b_2}\sigma^2,\sigma^3 R^{\b_1+\b_2-\frac{1}{2}})\overset{?}{\le}R^{2(\b_1+\b_2-\frac{1}{2})}, \]
which we do presently in the following four cases. 
\begin{enumerate}[label=(\alph*)]
    \item $1=\max(1,R^{\b_1}\sigma R^{-\frac{1}{2}},R^{\b_2}\sigma^2,\sigma^3 R^{\b_1+\b_2-\frac{1}{2}})$: 
    \begin{align*}
    \sigma^{-1}R^{\b_1+\b_2-\frac{1}{2}}&\overset{?}{\le}R^{2(\b_1+\b_2-\frac{1}{2})} \qquad\impliedby \qquad R^{\frac{1}{2}}\le R^{\b_1+\b_2-\frac{1}{2}}\qquad(true). 
    \end{align*}
    \item $R^{\b_1-\frac{1}{2}}\sigma=\max(1,R^{\b_1}\sigma R^{-\frac{1}{2}},R^{\b_2}\sigma^2,\sigma^3 R^{\b_1+\b_2-\frac{1}{2}})$: clearly $R^{\b_1+\b_2-\frac{1}{2}}R^{\b_1-\frac{1}{2}}\le R^{2(\b_1+\b_2-\frac{1}{2})}$. 
    \item $R^{\b_2}\sigma^2=\max(1,R^{\b_1}\sigma R^{-\frac{1}{2}},R^{\b_2}\sigma^2,\sigma^3 R^{\b_1+\b_2-\frac{1}{2}})$:
    \begin{align*}
\sigma^{-1}R^{\b_1+\b_2-\frac{1}{2}}R^{\b_2}\sigma^2&\overset{?}{\le}R^{2(\b_1+\b_2-\frac{1}{2})} \\
\sigma &\overset{?}{\le}R^{\b_1-\frac{1}{2}}.
     \end{align*}
The final line is true since in this case, $\sigma\le R^{\frac{1}{2}-\b_1}$ and $R^{\frac{1}{2}-\b_1}$ is always bounded by $R^{\b_1-\frac{1}{2}}$.     
\item $\sigma^3R^{\b_1+\b_2-\frac{1}{2}}=\max(1,R^{\b_1}\sigma R^{-\frac{1}{2}},R^{\b_2}\sigma^2,\sigma^3 R^{\b_1+\b_2-\frac{1}{2}})$: clearly, $\sigma^{-1}R^{\b_1+\b_2-\frac{1}{2}}\sigma^3 R^{\b_1+\b_2-\frac{1}{2}}\overset{?}{\le}R^{2(\b_1+\b_2-\frac{1}{2})}$. 
\end{enumerate}

\noindent\fbox{Case 2: $2\le p\le 4$ and $\b_1+\b_2<1$. } By $L^2$ orthogonality, $\a^2|U_\a|\lesssim \sum_\g\|f_\g\|_2^2$. We would be done if $R^{(\b_1+\b_2)\frac{1}{2}}\ge\a$, so assume that $\a\ge R^{(\b_1+\b_2)\frac{1}{2}}$. Using \eqref{fromhereC} with $q=4$, it suffices to verify that
\[ \sigma^{-1}R^{\b_1+\b_2-\frac{1}{2}}\max(1,R^{\b_1}\sigma R^{-\frac{1}{2}},R^{\b_2}\sigma^2,\sigma^3 R^{\b_1+\b_2-\frac{1}{2}})\overset{?}{\le}R^{\b_1+\b_2}, \]
which we do presently in the following four cases. 
\begin{enumerate}[label=(\alph*)]
    \item $1=\max(1,R^{\b_1}\sigma R^{-\frac{1}{2}},R^{\b_2}\sigma^2,\sigma^3 R^{\b_1+\b_2-\frac{1}{2}})$: clearly $\sigma^{-1}R^{\b_1+\b_2-\frac{1}{2}}{\le}R^{\b_1+\b_2}$. 
    \item $R^{\b_1-\frac{1}{2}}\sigma=\max(1,R^{\b_1}\sigma R^{-\frac{1}{2}},R^{\b_2}\sigma^2,\sigma^3 R^{\b_1+\b_2-\frac{1}{2}})$: clearly $R^{\b_1+\b_2-\frac{1}{2}}R^{\b_1-\frac{1}{2}}\le R^{\b_1+\b_2}$. 
    \item $R^{\b_2}\sigma^2=\max(1,R^{\b_1}\sigma R^{-\frac{1}{2}},R^{\b_2}\sigma^2,\sigma^3 R^{\b_1+\b_2-\frac{1}{2}})$: 
    \[ \sigma^{-1}R^{\b_1+\b_2-\frac{1}{2}}R^{\b_2}\sigma^2{\le}R^{\b_1+\b_2}\qquad\impliedby\qquad \sigma{\le}R^{\frac{1}{2}-\b_2}  \]
    and $1\le R^{\frac{1}{2}-\b_2}$ since $\frac{1}{2}+\b_2\le \b_1+\b_2< 1$. 
    \item $\sigma^3R^{\b_1+\b_2-\frac{1}{2}}=\max(1,R^{\b_1}\sigma R^{-\frac{1}{2}},R^{\b_2}\sigma^2,\sigma^3 R^{\b_1+\b_2-\frac{1}{2}})$:  $\sigma^{-1}R^{\b_1+\b_2-\frac{1}{2}}\sigma^3 R^{\b_1+\b_2-\frac{1}{2}}\overset{?}{\le}R^{\b_1+\b_2}$ follows from noticing that $\sigma^{2}\le1\le R^{1-\b_1-\b_2}$.
\end{enumerate}

\noindent\fbox{Case 3: $4\le p\le 6$. }
Using \eqref{fromhereC} with $q=p$, we have
\begin{align*}
\a^{p} |U_\a|&(\sigma R^{\frac{1}{2}})^{p-4}(\sigma^{-1}R^{\b_1+\b_2-\frac{1}{2}}\max(1,R^{\b_1}\sigma R^{-\frac{1}{2}},R^{\b_2}\sigma^2,\sigma^3 R^{\b_1+\b_2-\frac{1}{2}}))^{\frac{p}{2}-1}\sum_\g\|f_\g\|_2^2 .\end{align*}
It suffices to verify that 
\[(\sigma R^{\frac{1}{2}})^{p-4}(\sigma^{-1}R^{\b_1+\b_2-\frac{1}{2}}\max(1,R^{\b_1}\sigma R^{-\frac{1}{2}},R^{\b_2}\sigma^2,\sigma^3 R^{\b_1+\b_2-\frac{1}{2}}))^{\frac{p}{2}-1}\le R^{(\b_1+\b_2)(\frac{p}{2}-1)}+R^{(\b_1+\b_2)(p-2)-1}. \]
\begin{enumerate}[label=(\alph*)]
    \item $1=\max(1,R^{\b_1}\sigma R^{-\frac{1}{2}},R^{\b_2}\sigma^2,\sigma^3 R^{\b_1+\b_2-\frac{1}{2}})$: 
    \begin{align*} 
    (\sigma R^{\frac{1}{2}})^{p-4}(\sigma^{-1}R^{\b_1+\b_2-\frac{1}{2}})^{\frac{p}{2}-1}&\overset{?}{\le}R^{(\b_1+\b_2)(\frac{p}{2}-1)} \\
    (\sigma^{-1})^{3-\frac{p}{2}}&{\le}(R^{\frac{1}{2}})^{3-\frac{p}{2}}\qquad(true) .
    \end{align*}  
    \item $R^{\b_1-\frac{1}{2}}\sigma=\max(1,R^{\b_1}\sigma R^{-\frac{1}{2}},R^{\b_2}\sigma^2,\sigma^3 R^{\b_1+\b_2-\frac{1}{2}})$: 
    \begin{align*} 
    (\sigma R^{\frac{1}{2}})^{p-4}(\sigma^{-1}R^{\b_1+\b_2-\frac{1}{2}}R^{\b_1}\sigma R^{-\frac{1}{2}})^{\frac{p}{2}-1}&\overset{?}{\le}R^{(\b_1+\b_2)(p-2)-1}\\
    \sigma^{p-4}R^{\b_1(\frac{p}{2}-1)}&{\le}R^{(\b_1+\b_2)(\frac{p}{2}-1)}\qquad(true).
    \end{align*}  
    \item $R^{\b_2}\sigma^2=\max(1,R^{\b_1}\sigma R^{-\frac{1}{2}},R^{\b_2}\sigma^2,\sigma^3 R^{\b_1+\b_2-\frac{1}{2}})$: 
    \begin{align*} 
    (\sigma R^{\frac{1}{2}})^{p-4}(\sigma^{-1}R^{\b_1+\b_2-\frac{1}{2}}R^{\b_2}\sigma^2)^{\frac{p}{2}-1}&\overset{?}{\le}R^{(\b_1+\b_2)(p-2)-1}\\
    (\sigma^3)^{\frac{p}{2}-1}&\overset{?}{\le}R^{(\b_1-\frac{1}{2})(\frac{p}{2}-1)}\qquad(true).
    \end{align*}  
    \item $\sigma^3R^{\b_1+\b_2-\frac{1}{2}}=\max(1,R^{\b_1}\sigma R^{-\frac{1}{2}},R^{\b_2}\sigma^2,\sigma^3 R^{\b_1+\b_2-\frac{1}{2}})$:  
    \begin{align*} 
    (\sigma R^{\frac{1}{2}})^{p-4}(\sigma^{-1}R^{\b_1+\b_2-\frac{1}{2}}\sigma^3R^{\b_1+\b_2-\frac{1}{2}})^{\frac{p}{2}-1}&\overset{?}{\le}R^{(\b_1+\b_2)(p-2)-1}\\
    \sigma^{2p-6}&\overset{?}{\le}1\qquad(true).
    \end{align*} 
\end{enumerate}

\noindent\fbox{Case 4: $6\le p$. } Again using \eqref{fromhereC} with $q=p$, it suffices to verify that
\[(\sigma R^{\frac{1}{2}})^{p-4}(\sigma^{-1}R^{\b_1+\b_2-\frac{1}{2}}\max(1,R^{\b_1}\sigma R^{-\frac{1}{2}},R^{\b_2}\sigma^2,\sigma^3 R^{\b_1+\b_2-\frac{1}{2}}))^{\frac{p}{2}-1}\le R^{(\b_1+\b_2)(p-2)-1}, \]
which we do in the following four cases. 
\begin{enumerate}[label=(\alph*)]
    \item $1=\max(1,R^{\b_1}\sigma R^{-\frac{1}{2}},R^{\b_2}\sigma^2,\sigma^3 R^{\b_1+\b_2-\frac{1}{2}})$: 
    \begin{align*} 
    (\sigma R^{\frac{1}{2}})^{p-4}(\sigma^{-1}R^{\b_1+\b_2-\frac{1}{2}})^{\frac{p}{2}-1}&\overset{?}{\le}R^{(\b_1+\b_2)(p-2)-1} \\
    (\sigma R^{\frac{1}{2}})^{\frac{p}{2}-3}&\overset{?}{\le}R^{(\b_1+\b_2)(\frac{p}{2}-1)-1} \\
    \sigma^{\frac{p}{2}-3}&{\le}1\le  R^{(\b_1+\b_2-\frac{1}{2})(\frac{p}{2}-1)} \qquad(true) .
    \end{align*}  
\end{enumerate}
Cases (b)-(d) are completely analogous to cases (b)-(d) from Case 3.

\end{proof}

\section{Appendix: General cones \label{app}}

We describe the adjustments needed to prove Theorem \ref{mainC} for general cones. To define a general cone, first let $\g(t):\R^3\to S^2$ be a nondegenerate $C^2$ curve, meaning that
\[ \det\big[\g(t)\,\dot{\g}(t)\,\ddot{\g}(t)\big]\not=0\qquad\forall t\in[-\frac{1}{2},\frac{1}{2}). \]
The corresponding general cone is
\[ \Gamma=\Gamma^\g=\{r\g(t):\,\,\frac{1}{2}\le r\le 2,\quad -\frac{1}{2}\le t<\frac{1}{2} \}. \]
For a parameter $R\in 4^\N$, define canonical caps $\theta$ of $\Gamma^\g$ to be approximately the $R^{-1}$-neighborhood of sectors 
\[ \{r\g(t):\frac{1}{2}\le r\le 2,\quad lR^{-1/2}\le t<(l+1)R^{-1/2}\}\]
where $l=-\frac{1}{2}R^{1/2}+1,\ldots,\frac{1}{2}R^{1/2}-1$. One way to specify $\theta$ is to fix a unit vector $v\in\R^3\setminus\{\g(t):0\le t\le 1\}$ and use the planes $\text{Span}\{\g(lR^{-1/2}),v\}$ and $\text{Span}\{\g((l+1)R^{-1/2}),v\}$ to define the edges of $\theta$ inside of $\mc{N}_{R^{-1}}(\Gamma)$. The collection of canonical caps $\theta$ is denoted ${\bf{S}}_{R^{-1/2}}$.

Instead of working with all of $\g$, we will cut $\g$ up into curves of length $2a$, where $a$ is chosen to be sufficiently small (depending on $\g$) below. For an interval $I_a\subset[-\frac{1}{2},\frac{1}{2})$ containing $0$, it suffices to consider $\g:I_a\to S^2$ with the extra assumptions that
\begin{align}\label{cond0} \g(0)&=\frac{1}{\sqrt{2}}(0,1,1),\quad\dot{\g}(0)=(1,0,0),\quad \frac{1}{2}\le \g_3(t)\le 1\quad\text{and}\quad |\g(t)|=|\dot{\g}(t)|=1\quad\text{for all}\quad t\in I_a  \end{align}
where $\g_3(t)$ is the third coordinate of $\g$. Define $b=[\dot{\g}(0)\times\g(0)]\cdot\ddot\g(0)$, which, without loss of generality, we assume is positive. To repeat the Kakeya step in the proof, it will be useful to view $\Gamma^\g$ as a subset of the cone 
\begin{equation}\label{def2} \{r(\frac{\g_1}{\g_3}(t),\frac{\g_2}{\g_3}(t),1):t\in I_a,\qquad r\ge 0\}. \end{equation}
Write $\tilde{\g}_1=\frac{\g_1}{\g_3}$ and $\tilde{\g}_2=\frac{\g_2}{\g_3}$. Using the inverse function theorem, we may 
view $\tilde{\g}_2$ as a $C^2$ function of $\tilde{\g}_1$ which satisfies 
\begin{equation}\label{cond4} \tilde{\g}_2(0)=1,\quad \frac{d}{d\tilde{\g}_1}\tilde{\g}_2(0)=0,\quad \text{and}\quad -4b\le \frac{d^2}{d\tilde{\g}_1^2}\tilde{\g}_2(\tilde{\g}_1)\le -\frac{b}{4}\qquad\text{for all}\qquad \tilde{\g}_1\in I \end{equation}
where $\tilde{\g}_1\in I$ is in bijection with $t\in I_a$. Here we used the fact that $\frac{d^2}{d\tilde{\g}_2^2}\tilde{\g}_1(0)=-b$. In \textsection\ref{genlorsec}, we will choose $a>0$ small enough so that \eqref{cond4} continues to hold after a general Lorentz rescaling. 

For the base case and rescaling steps, it is useful to work with a rotated coordinate system which we describe now. Define the orthonormal frame 
\[{\bf{c}}(t)=\g(t),\quad {\bf{n}}(t)=\dot{\g}(t)\times{\g}(t),\quad\text{ and }\quad{\bf{t}}(t)=\dot{\g}(t). \]
The coordinates $(\nu_1,\nu_2,\nu_3)$ are defined in terms of $\xi\in\R^3$ by $\nu_1={\bf{n}}(0)\cdot\xi$, $\nu_2={\bf{t}}(0)\cdot\xi$, and $\nu_3={\bf{c}}(0)\cdot\xi$. These are consistent with the coordinates defined in \textsection\ref{trunc}. Recall that $K$ is a constant permitted to depend on $\e$. We will assume that $a$ is small enough so that 
\begin{equation}\label{cond1} 1-\frac{1}{K}\le {\bf{c}}(0)\cdot r\g(t)\le 1\end{equation}
for all $\sqrt{1-K^{-1}}\le r\le 1$ and $t\in I_a$. This means that $\nu_3$ varies in the range $1-\frac{1}{K}\le \nu_3\le 1$. Next, consider the relation 
\[ \frac{\nu_2}{\nu_3}=\frac{{\bf{t}}(0)\cdot\g(t)}{{\bf{c}}(0)\cdot\g(t)}. \]
Calculate that $\frac{\nu_2}{\nu_3}=0$ when $t=0$ and $\left.\frac{d}{dt}\right|_{t=0}\frac{\nu_2}{\nu_3}=1$. By the inverse function theorem, for sufficiently small $a>0$, there is an interval $I_a$ of length $\sim a$ containing $0$ and an injective $C^2$ function $g:(-a,a)\to I_a$ satisfying $g(\frac{\nu_2}{\nu_3})=t$, $g(0)=0$, $g'(0)=1$, and $\frac{1}{2}\le g'(s)\le 2$. The function $g$ allows us to write 
\[ \frac{\nu_1}{\nu_3}=\frac{{\bf{n}}(0)\cdot\g(t)}{{\bf{c}}(0)\cdot\g(t)} \]
as a function of $\frac{\nu_2}{\nu_3}$. 
We work with a truncated version $\Gamma_{\frac{1}{K}}^\g$ defined by 
\begin{equation}\label{gentrunc} \Gamma_{\frac{1}{K}}^\g:=\Big\{(\nu_1,\nu_2,\nu_3): 1-\frac{1}{K}\le \nu_3\le 1, \quad\big|\frac{\nu_2}{\nu_3}\big|\le a,\quad \frac{\nu_1}{\nu_3}=\frac{{\bf{t}}(0)\cdot\g(g(\frac{\nu_2}{\nu_3}))}{{\bf{c}}(0)\cdot\g(g(\frac{\nu_2}{\nu_3}))}\Big\}. \end{equation}
Use the notation $\tilde{\nu_1}=\frac{\nu_1}{\nu_3}$ and $\tilde{\nu_2}=\frac{\nu_2}{\nu_3}$. Regarding $\tilde{\nu_1}$ as a function of $\tilde{\nu_2}$, we calculate 
\begin{equation}\label{cond2} \tilde{\nu_1}(0)=0\qquad\text{and}\qquad \frac{d}{d\tilde{\nu_2}}\tilde{\nu_1}(0)=0.  \end{equation}
Also calculate that $\frac{d^2}{d\tilde{\nu_2}^2}\tilde{\nu_1}(0)={\bf{n}}(0)\cdot\ddot{\g}(0)=b$. Take $a$  (and therefore $I_a$) small enough so that 
\begin{equation}\label{cond3} \frac{b}{2}\le \frac{d^2}{d\tilde{\nu_2}^2}\tilde{\nu_1}(\tilde{\nu_2})\le 2b \qquad\text{for all }\tilde{\nu_2}\in [-a,a]. \end{equation}

Let $\mc{F}(a,b)$ denote the set of nondegenerate $C^2$ functions $\g:I_a\to\R^3$ satisfying \eqref{cond0} and \eqref{cond1} and whose associated functions $\tilde{\nu_1}(\tilde{\nu_2})$ and $\tilde{\g}_2(\tilde{\g}_1)$ satisfy \eqref{cond2}, \eqref{cond3}, and \eqref{cond4}. 
We will define an analogue of $S_K(r,R)$ simultaneously for all general cones $\Gamma_{\frac{1}{K}}^\g$ generated by $\g\in \mc{F}(a,b)$.

For each dyadic $\sigma\in[R^{-1/2},1]$ and each canonical cap $\tau=\tau(l\sigma^{-1}R^{-1/2})$ of dimensions $1\times \sigma^{-1}R^{-1/2}\times \sigma^{-2}R^{-1}$, define the general wave envelopes $U_{\tau,R}$ to be 
\begin{align}
\label{genwe} U_{\tau,R}:= \{x\in\R^3:|x\cdot{\bf{c}}(l\sigma^{-1}R^{-1/2})|\le \sigma^{-2}\quad\text{and}&\quad|x\cdot{\bf{n}}(l\sigma^{-1}R^{-1/2})|\le R \\
&\qquad\text{and}\quad |x\cdot{\bf{t}}(l\sigma^{-1}R^{-1/2})|\le \sigma^{-1}R^{1/2}\}. \nonumber
\end{align}
As above, we write $U\|U_{\tau,R}$ to index a tiling of $\R^3$ by translates of $U_{\tau,R}$. Now we define a general $S_K^g(r,R)=S_K^g(r,R,a,b)$ exactly as in \eqref{constdef}, but allowing any Schwartz function $f:\R^3\to\C$ with Fourier support in $\mc{N}_{R^{-1}}(\Gamma_{\frac{1}{K}}^\g)$ for some $\g\in \mc{F}(a,b)$. In the following sections, we will describe the adjustments needed to prove Lemmas \ref{kaklem}, \ref{basecase}, and \ref{rescaling} for the general cone set-up. 

\subsection{Adaptation of the Kakeya step (Lemma \ref{kaklem})}

We sketch the changes needed in \textsection\ref{kak}. The parameters $\sigma_i$ continue to vary over dyadic values in the range $\sigma_1=r^{-1}\le \sigma_n\le \sigma_N=1$. For each $\sigma_n$, and each $\tau_n\in{\bf{S}}_{\sigma_n^{-1}r^{-1}}^\g$, let $U_{\tau_n,r^2}$ be a dual envelope of dimensions $ \sigma_n^{-2}\times\sigma_n^{-1}r\times r^2$ defined in \eqref{genwe}. Let $\sigma_{N_0}=(r_1/r)^{-1}$. 

In the pruning process, the definition of $\mc{G}_{\tau_n}$ becomes
\[ \mc{G}_{\tau_n}:=\{U\|U_{\tau_n,r^2}:C_b^4(\log( r^2/r_1))^{5}|U|^{-1}\int\sum_{\theta\subset\tau_n}|f_{\sigma_{n+1},\theta}|^2W_U \ge \frac{\a^2}{\#\tau_n}\}
\]
where $C_b$ depends on the parameter $b$ from the definition of $\mc{F}(a,b)$. The constant $C_b\ge1$ is chosen to be large enough in the proofs of Lemma \ref{weakhi} and the new Lemma \ref{our4.1} below, and $C_b^4$ replaces $C_0$ in the statement of Lemma \ref{weakhi}. Lemmas \ref{conepruneprops} and \ref{prune1} are unchanged. In Definition \ref{freqcutoff}, $\eta_{\Theta_{\tau_n}}=1$ on $4C_b\underline{C}(\log (r^2/r_1))\Theta_{\tau_n}$ and is supported in $8C_b\underline{C}(\log(r^2/r_1))\Theta_{\tau_n}$. For Lemmas \ref{middom} and \ref{lowdom}, we invoke Lemmas 4.1 and 4.2 of \cite{locsmooth}. Now we write versions of Lemmas 4.1 and 4.2 from \cite{locsmooth} which are adapted our general cone set-up. 

We need some more notation. For each $\theta\in{\bf{S}}_{r^{-1}}$, write $\tilde{\theta}$ for $\theta-\theta$. If $\theta=\theta(lr^{-1})$, then $\tilde{\theta}$ is contained in (and comparable to) the set 
\begin{equation*}
    \label{thetadef2} \{\w:|{\bf{c}}(lr^{-1})\cdot\w|\le1,\quad |{\bf{n}}(lr^{-1})\cdot\w|\le 2r^{-2},\quad |{\bf{t}}(lr^{-1})\cdot\w|\le 2r^{-1}\} .
\end{equation*}    
For each $\sigma_n$, ${\bf{CP}}_{\sigma_n}$ is a union of $\sigma_n^2\times 2r^{-1}\sigma_n\times 2r^{-2}$ planks $\Theta$ defined by 
\begin{equation*}\label{Thetadef2} \Theta=\Theta(\sigma_n,lr^{-1})=\{\w:|{\bf{c}}(lr^{-1})\cdot\w|\le \sigma_n^2,\quad |{\bf{n}}(lr^{-1})\cdot\w|\le 2r^{-2},\quad |{\bf{t}}(lr^{-1})\cdot\w|\le 2r^{-1}\sigma_n\} \end{equation*} 
where $l$ varies over $[-ra,ra]\cap \Z$. The $\Theta\in{\bf{CP}}_{\sigma_n}$ may be sorted into $\sigma_n r$ many essentially distinct sets. If $\Theta\in{\bf{CP}}_{\sigma_n}$ has parameters $\Theta=\Theta(\sigma_n,l\sigma_n^{-1}r^{-1})$, then $\Theta$ is naturally associated to  $\tau_n\in{\bf{S}}_{\sigma_n^{-1}r^{-1}}$ with $\tau_n=\tau_n(l\sigma_n^{-1}r^{-1})$. Also associate $\theta\in{\bf{S}}_{r^{-1}}$ with $\Theta_{\tau_n}\in{\bf{CP}}_{\sigma_n}$ where $\tau_n\supset\theta$. Let $\Omega_{\le \sigma_n}=\cup_{\Theta\in{\bf{CP}}_{\sigma_n}}\Theta$ and $\Omega_{\sigma_n}=\Omega_{\le \sigma_n}\setminus\Omega_{\le \sigma_n/2}$. Decompose $\Omega:=\cup_{\theta\in{\bf{S}}_{r^{-1}}}\tilde{\theta}$ into sets $\Omega_{\le r^{-1}}\cup\big(\cup_{r^{-1}<\sigma_n\le 1}\Omega_{\sigma_n}\big)$.

\begin{lemma}\label{our4.1} For each $\xi\in \underline{C}(\log (r^2/r_1))\Omega_{\sigma_k}$, 
\[ \sum_{\theta\in{\bf{S}}_{r^{-1}}}\widehat{|f_{\sigma_n,\theta}^{\mc{B}}|^2}(\xi)= \sum_{\tau_k\in{\bf{S}}_{\sigma_k^{-1}r^{-1}}}\sum_{\substack{\theta\in{\bf{S}}_{r^{-1}}\\  \theta\subset\tau_k}} \widehat{|f_{\sigma_n,\theta}^{\mc{B}}|^2}(\xi)\eta_{\Theta_{\tau_k}}(\xi).\]
\end{lemma}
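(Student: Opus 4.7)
The plan is to reduce Lemma \ref{our4.1} to a pointwise geometric containment statement, namely: for $\xi \in \underline{C}(\log(r^2/r_1))\Omega_{\sigma_k}$ and any $\theta\in{\bf{S}}_{r^{-1}}$ with $\theta\subset\tau_k$, one has $\xi\in 4C_b\underline{C}(\log(r^2/r_1))\Theta_{\tau_k}$ whenever $\widehat{|f_{\sigma_n,\theta}^{\mc{B}}|^2}(\xi)\neq 0$. Granting this, the insertion of $\eta_{\Theta_{\tau_k}}$ on the right hand side is lossless because $\eta_{\Theta_{\tau_k}}\equiv 1$ on $4C_b\underline{C}(\log(r^2/r_1))\Theta_{\tau_k}$. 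Since each $\theta$ is contained in exactly one $\tau_k$ at scale $\sigma_k^{-1}r^{-1}$, after summing over $\theta$ we may equivalently sum over $\tau_k$ and then over $\theta\subset\tau_k$, which gives the claimed identity.

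First I will invoke property (1) of Lemma \ref{conepruneprops} to reduce to $\xi\in \underline{C}(\log(r^2/r_1))\tilde\theta$. From the definition of $\tilde\theta$ this constrains $\xi$ in the frame $\{{\bf c}(lr^{-1}),{\bf n}(lr^{-1}),{\bf t}(lr^{-1})\}$ attached to $\theta=\theta(lr^{-1})$, with $|{\bf c}(lr^{-1})\cdot\xi|\le \underline{C}\log(r^2/r_1)$ and the ${\bf n},{\bf t}$ components bounded by $O(r^{-2})$ and $O(r^{-1})$. The condition $\xi\in\underline{C}(\log(r^2/r_1))\Omega_{\sigma_k}$ then adds the additional information that there exists some orientation $\w$ and $\Theta(\sigma_k,\w)\in{\bf CP}_{\sigma_k}$ containing $\xi$ (up to the logarithmic dilation), which pins down $|{\bf c}(\w)\cdot\xi|\lesssim \underline{C}(\log(r^2/r_1))\sigma_k^2$.

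The crux will be to convert this ``small core-component'' condition into an upper bound on the tangent component $|{\bf t}(lr^{-1})\cdot\xi|$ with respect to the frame centered at $\tau_k$'s orientation $l\sigma_k^{-1}r^{-1}\cdot r^{-1}$. This is where the curvature hypothesis \eqref{cond3}, $b/2\le (d^2/d\tilde\nu_2^2)\tilde\nu_1(\tilde\nu_2)\le 2b$, enters. Passing to the $(\tilde\nu_1,\tilde\nu_2)$ coordinates of \eqref{gentrunc} and doing a Taylor expansion of the core-component functional around the center of $\tau_k$, the bound $|{\bf c}(\w)\cdot\xi|\lesssim \sigma_k^2\log(r^2/r_1)$ combined with $\tilde\nu_1''\gtrsim b$ forces the tangent offset between $\w$ and $\theta$'s frame to be $O(\sqrt{b^{-1}}\,\sigma_k)$, and hence $|{\bf t}(l\sigma_k^{-1}r^{-1}\cdot r^{-1})\cdot\xi|\lesssim C_b\,\sigma_k r^{-1}\log(r^2/r_1)$. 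Together with the trivial ${\bf n}$-bound (whose scale $r^{-2}$ is identical to that of $\Theta_{\tau_k}$), this puts $\xi$ into $4C_b\underline{C}(\log(r^2/r_1))\Theta_{\tau_k}$, provided $C_b$ is chosen sufficiently large in terms of $b$.

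The main obstacle will be book-keeping the change of frame between $\theta$'s axes $({\bf c}(lr^{-1}),{\bf n}(lr^{-1}),{\bf t}(lr^{-1}))$ and $\tau_k$'s axes, and showing that this frame rotation is close enough to the identity (on the scale we care about) to not destroy the estimates. This is a standard computation using that $\dot{\bf c}$, $\dot{\bf n}$, $\dot{\bf t}$ are bounded on $I_a$ by $C^2$-smoothness of $\g$, together with the smallness of the angular gap between the two frames (bounded by $\sigma_k^{-1}r^{-1}\le a$). Once the frame change is controlled and the quadratic-curvature argument above is carried out, the rest of the lemma is bookkeeping; in particular, the key point is simply that the added cutoff $\eta_{\Theta_{\tau_k}}$ is identically $1$ at every frequency where the corresponding summand is nonzero.
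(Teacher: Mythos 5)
Your overall strategy matches the paper's: reduce to the geometric containment that, whenever $\widehat{|f^{\mc{B}}_{\sigma_n,\theta}|^2}(\xi)\neq 0$ and $\xi\in\underline{C}(\log(r^2/r_1))\Omega_{\sigma_k}$, one has $\eta_{\Theta_{\tau_k}}(\xi)=1$ for the $\tau_k$ containing $\theta$, and then use the uniform curvature bound on the generating curve to establish it. The paper organizes the curvature step by slicing at $\{\xi_3=h\}$ with $h\le\sigma_k^2$ and comparing the 2D rectangles $\tilde\theta\cap\{\xi_3=h\}$, $\Omega_{\le\sigma_k}\cap\{\xi_3=h\}$, and $\Theta_{\tau_k}\cap\{\xi_3=h\}$; you frame it as a Taylor expansion in the $(\tilde{\nu_1},\tilde{\nu_2})$-coordinates, which is morally the same input.

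However, one explicit intermediate step in your sketch is not correct as stated. You claim that $|{\bf{c}}(\w)\cdot\xi|\lesssim\sigma_k^2$ together with $\tilde{\nu_1}''\gtrsim b$ "forces the tangent offset between $\w$ and $\theta$'s frame to be $O(\sqrt{b^{-1}}\sigma_k)$." The condition $\xi\in\Omega_{\le\sigma_k}$ does not constrain the certifying orientation $\w$ in this way: for $\xi$ with small core component in $\theta$'s frame, $\w$ can be essentially arbitrary. What curvature actually controls is the \emph{tangent extent of the set} $\tilde\theta\cap\Omega_{\le\sigma_k}$ in each slice $\{\xi_3=h\}$: since $|\tilde{\g}_2''|\sim b$, the curve in the $h$-slice departs from its tangent line by $\sim b\Delta^2/h$ over tangent distance $\Delta$, and staying inside the normal width $\sim r^{-2}$ of $\tilde\theta$ forces $\Delta\lesssim (b^{-1}hr^{-2})^{1/2}\lesssim b^{-1/2}\sigma_k r^{-1}$. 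Moreover, the "frame-change bookkeeping" you defer as routine cannot be separated from this curvature estimate: naively transporting the bound $|{\bf{c}}(lr^{-1})\cdot\xi|\lesssim 1$ to $\tau_k$'s tangent direction costs a factor $\sigma_k^{-1}r^{-1}$, which is far larger than the target $\sigma_k r^{-1}$. The paper avoids both pitfalls by combining them into the single slice-wise geometric fact that $\tilde\theta\cap\Omega_{\le\sigma_k}\cap\{\xi_3=h\}$, which is tangent to the curve at $\theta$'s angle, fits inside $C_b\Theta_{\tau_k}\cap\{\xi_3=h\}$, which is tangent at $\tau_k$'s nearby angle. To make your proposal work you would need to replace the angular-offset claim by this containment.
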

\begin{proof} We follow the argument from Lemma 4.1 in \cite{locsmooth}. Write $\xi=\underline{C}(\log (r^2/r_1))\xi'$. Note that $\xi'\in\Omega_{\sigma_k}$,  $\xi\in\supp\widehat{|f_{\sigma_n,\theta}^{\mc{B}}|^2}$ implies that $\xi'\in\theta-\theta$, and $\xi\in\supp\eta_{\Theta_{\tau_k}}$ implies that $\xi'\in8C_b\Theta_{\tau_k}$. It suffices to show that if $\xi'\in\tilde{\theta}\cap\Omega_{\le \sigma_k}$, then $\xi'\in C_b\Theta_{\tau_k}$, where $\tau_k\supset\theta$. Begin with understanding the set $\Omega_{\le \sigma_k}$ by intersecting with the plane $\{\xi_3=h\}$. This set is empty if $h>\sigma_k^2$, so assume that $h\le \sigma_k^2$. The set of $\Theta\in{\bf{CP}}_{\sigma_k}$ are planks that are tangent to the cone
\[\{r(\tilde{\g}_1,\tilde{\g}_2(\tilde{\g}_1),1):\tilde{\g}_1\in I,\qquad r\ge 0\}  \]
where we use the description \eqref{def2}. The intersection of that cone with the plane $\{\xi_3=h\}$ is the curve 
\begin{equation}\label{curve} \{h(\tilde{\g}_1,\tilde{\g}_2(\tilde{\g}_1),1):\tilde{\g}_1\in I\}. \end{equation} 
If $\Theta\in{\bf{CP}}_{\sigma_k}$ has associated parameters $\Theta=\Theta(\sigma_k,l\sigma_k^{-1}r^{-1})$, then $\Theta\cap\{\xi_3=h\}$ is a rectangle of dimensions $\sim \sigma_k r^{-1}\times r^{-2}$ that is tangent to the curve \eqref{curve} at the point $h(\tilde{\g}_1(l\sigma_k^{-1}r^{-1}),\tilde{\g}_2(\tilde{\g}_1(l\sigma_k^{-1}r^{-1})),1)$ (recalling that we may view $\tilde{\g}_1$ as a function of $t$). If we allow $l$ to vary over $\sigma_k rI_a\cap\Z$ and recall that $\tilde{\g}_2''\sim -b$, then we obtain the following neighborhood of \eqref{curve}:
\begin{equation}\label{nbhd}
\{h(\tilde{\g}_1,\tilde{\g}_2(\tilde{\g}_1),1)+\lambda(-\tilde{\g}_2'(\tilde{\g}_1),1,0):\tilde{\g}_1\in I,\qquad0\le \lambda\lesssim \min(h^{-1}\sigma_k^2r^{-2},\sigma_k r^{-1})\}. 
\end{equation}

With this description of $\Omega_{\le\sigma_k}$ in hand, consider the intersection $\tilde{\theta}\cap\Omega_{\le\sigma_k}\cap\{\xi_3=h\}$. First note that ${\theta}\cap\{\xi_3=h\}$ is a rectangle with dimensions $\sim r^{-1}\times r^{-2}$ and tangent to the curve \eqref{curve} at the point $h(\tilde{\g}_1(lr^{-1}),\tilde{\g}_2(\tilde{\g}_1(lr^{-1})),1)$. The intersection of $\tilde{\theta}$ with the set \eqref{nbhd} is a shorter rectangle with dimensions $\sim \tilde{C}_b \sigma_k r^{-1}\times r^{-2}$, recalling that $|\tilde{\g}_2''|\sim b$. If we take $l$ such that $\theta=\theta(lr^{-1})$ and $l'$ so that $|lr^{-1}-l'\sigma_k^{-1}r^{-1}|<\sigma_k^{-1}r^{-1}$, then we may choose $C_b$ large enough so that the rectangle $\tilde{\theta}\cap\Omega_{\le\sigma_k}\cap\{\xi_3-h\}$ is contained in $\Theta=\Theta(\sigma_k,l')=\Theta_{\tau_k}$ dilated by a factor of $C_b$, which finishes the proof. 

\end{proof}

\begin{lemma}\label{our4.2} For each $1\le n\le k\le N_0$, $\xi\in \underline{C}(\log (r^2/r_1))\Omega_{\sigma_k}$, there are $\lesssim_b 1$ many $\tau_k\in{\bf{S}}_{\sigma_k^{-1}r^{-1}}$ so that 
\[\sum_{\substack{\theta\in{\bf{S}}_{r^{-1}}\\  \theta\subset\tau_k}} \widehat{|f_{\sigma_n,\theta}^{\mc{B}}|^2}(\xi)\eta_{\Theta_{\tau_k}}(\xi)\]
is nonzero. 
\end{lemma}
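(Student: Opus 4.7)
The plan is to adapt the geometric argument from Lemma 4.2 of \cite{locsmooth}, reusing the slice analysis developed in the proof of Lemma \ref{our4.1}. Write $L=\underline{C}(\log(r^2/r_1))$. For the summand $\widehat{|f^{\mc{B}}_{\sigma_n,\theta}|^2}(\xi)\eta_{\Theta_{\tau_k}}(\xi)$ to be nonzero, we need in particular that $\xi\in 8C_b L\,\Theta_{\tau_k}$; thus it suffices to prove that the dilated planks $\{8C_bL\,\Theta_{\tau_k}:\tau_k\in{\bf{S}}_{\sigma_k^{-1}r^{-1}}\}$ are $O_b(1)$-overlapping at any point of $L\Omega_{\sigma_k}$.

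First I would restrict to the horizontal slice $\{\xi_3=h\}$ containing $\xi$. Since $\xi\in L\Omega_{\sigma_k}=L(\Omega_{\le\sigma_k}\setminus\Omega_{\le\sigma_k/2})$ and, as observed in the proof of Lemma \ref{our4.1}, a slice of $\Omega_{\le\sigma_k}$ is nonempty only for $|\xi_3|\lesssim\sigma_k^2$, one has $h\sim L\sigma_k^2$. On this slice, each $8C_bL\,\Theta_{\tau_k}$ appears as a rectangle of dimensions $\sim C_bL\sigma_kr^{-1}\times C_bLr^{-2}$, tangent to the $L$-dilated curve \eqref{curve} at the point $h\bigl(\tilde\g_1(l\sigma_k^{-1}r^{-1}),\tilde\g_2(\tilde\g_1(l\sigma_k^{-1}r^{-1})),1\bigr)$.

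Next I would exploit the curvature bound \eqref{cond3} (equivalently \eqref{cond4} after the change of coordinates). If two planks $\Theta_{\tau_k}$ and $\Theta_{\tau'_k}$ are anchored at indices $l,l'$ with $|l-l'|=j$, then the tangent points on the slice are separated tangentially by $\sim j\sigma_k^{-1}r^{-1}\cdot h\sim Lj\sigma_k r^{-1}$. Because $|\tilde\g_2''|\sim b$, the normal offset between the two dilated tangent rectangles at the second tangent point is
\[ \sim b\cdot h\cdot(j\sigma_k^{-1}r^{-1})^2\sim bLj^2 r^{-2}. \]
For the rectangles to overlap, this offset must not exceed their normal thickness $\sim C_b L r^{-2}$, which forces $j^2\lesssim C_b/b$, so at most $O_b(1)$ choices of $l'$ produce an overlapping plank. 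Summing over these finitely many admissible $j$ yields the claimed bound.

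The main obstacle is essentially bookkeeping: one must verify that once the normal-separation constraint pins down $j$, the remaining tangential ($\sim C_bL\sigma_kr^{-1}$) and core-direction ($\sim C_bL\sigma_k^2$) constraints are automatically consistent for $\xi\in L\Omega_{\sigma_k}$, and that the log-dilation factor $L$ cancels uniformly between the curve scale, the plank thickness, and the angular spacing. No new analytic input beyond the quadratic separation of tangent planes supplied by \eqref{cond3} is required.
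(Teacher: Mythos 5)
Your deduction ``one has $h\sim L\sigma_k^2$'' is false, and this gap is fatal. Knowing that a slice of $\Omega_{\le\sigma_k}$ is nonempty only for $|\xi_3|\lesssim\sigma_k^2$ gives the one-sided bound $h\lesssim L\sigma_k^2$, but $\Omega_{\sigma_k}=\Omega_{\le\sigma_k}\setminus\Omega_{\le\sigma_k/2}$ is \emph{not} confined to the annulus of heights $\sim L\sigma_k^2$. For heights $h\le\sigma_k^2/4$, the slice of $\Omega_{\le\sigma_k}$ is a neighborhood of the curve \eqref{curve} of tangential width $\sim\min(h^{-1}\sigma_k^2 r^{-2},\sigma_k r^{-1})=\sigma_k r^{-1}$, while the corresponding slice of $\Omega_{\le\sigma_k/2}$ has strictly smaller tangential width $\sim\sigma_k r^{-1}/2$; subtracting leaves a nontrivial set at every such height. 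So the slice of $\Omega_{\sigma_k}$ at a low height $h$ looks like two strips straddling the curve, and the intersection $\Theta_{\tau_k}^b\cap\{\xi_3=h\}\cap\Omega_{\sigma_k}$ is a union of two $\sim C_b\sigma_k r^{-1}\times r^{-2}$ rectangles separated by $\gtrsim\sigma_k r^{-1}$. Your quadratic-separation argument, which tracks the single tangent point, does not see this structure.

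Concretely, your overlap analysis handles only the paper's \underline{Case $\sigma_k^2/4\le h\le\sigma_k^2$}, where each plank slice is a single tangent rectangle and the first coordinate of the overlap condition immediately forces $|l-l'|\lesssim_b 1$. The paper's \underline{Case $h\le\sigma_k^2/4$} is the genuinely harder one: there the scalar $\lambda$ in the overlap relation \eqref{rel} obeys $\frac{1}{2}h^{-1}\sigma_k r^{-1}\le|\lambda|\le C_b h^{-1}\sigma_k r^{-1}$, so the first coordinate alone gives no useful constraint on $l-l'$, and one has to pass to the second coordinate. This is done by introducing $H(s)=\tilde\g_2(\tilde\g_1(l))-\tilde\g_2(s)+\lambda[\tilde\g_2'(\tilde\g_1(l))-\tilde\g_2'(s)]-(\tilde\g_1(l)-s)\tilde\g_2'(s)$, computing $H'(s)=-\tilde\g_2''(s)(\lambda+\tilde\g_1(l)-s)$, and using the two-sided bound $b/2\le-\tilde\g_2''\le 2b$ together with the mean value theorem and Taylor's theorem to show that $\{s:|H(s)|\lesssim h^{-1}r^{-2}\}$ is a union of at most two intervals of length $\lesssim b^{-1}h^{-1}r^{-2}/\lambda\lesssim b^{-1}\sigma_k^{-1}r^{-1}$ (with a further case split depending on whether $\lambda+\tilde\g_1(l)$ lies inside or outside $[-a_1,a_2]$). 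Without this analysis you cannot conclude $O_b(1)$ overlap when $h$ is small, so the proposal as written does not prove the lemma.

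A minor additional remark: even in the case you do treat, the normal-offset estimate should be stated relative to the slice heights and you should be careful that the dilation by $L$ is applied consistently to the tangent-rectangle dimensions, the curve, and the separation scale; the paper sidesteps this by working directly with $\Theta_{\tau_k}^b$, the anisotropic $\sim\sigma_k^2\times C_b\sigma_k r^{-1}\times r^{-2}$ dilate, rather than with the isotropic $8C_bL\Theta_{\tau_k}$.
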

\begin{proof} We follow the argument from Lemma 4.2 in \cite{locsmooth}. Also build on the description of $\Omega_{\sigma_k}$ from the proof of Lemma \ref{our4.1}. It suffices to show that for each $\xi'\in\Omega_{\sigma_k}\cap\{\xi_3=h\}$ with $h\le \sigma_k^2$, there are $\lesssim_b 1$ many $\tau_k\in{\bf{S}}_{\sigma_k^{-1}r^{-1}}$ so that $\xi'\in \Theta_{\tau_k}^b\cap\{\xi_3=h\}$ where $\Theta_{\tau_k}^b$ is anisotropically dilated so that it is a $\sim \sigma_k^2\times C_b\sigma_kr^{-1}\times r^{-2}$ plank (which contains $[\underline{C}(\log (r^2/r_1))]^{-1}\supp\widehat{|f_{\sigma_n,\theta}^{\mc{B}}|^2}\eta_{\Theta_{\tau_k}}$ for each $\theta\subset\tau_k$). First consider the case that $\sigma_k^2/4\le h\le \sigma_k^2$. Then $\Theta_{\tau_k}^b\cap\{\xi_3=h\}$ is a $\sim C_b\sigma_k r^{-1}\times r^{-2}$ rectangle which is tangent to \eqref{curve}. If  $\Theta_{\tau_k}^b\cap\Theta_{\tau_k'}^b\cap\{\xi_3=h\}$ is nonempty, then for some $|\lambda|,|\lambda'|\lesssim C_bh^{-1}\sigma_kr^{-1}$ and for $\tilde{\g}_1(l)=\tilde{\g}(t=l\sigma_k^{-1}r^{-1})$, $\tilde{\g}_1(l')=\tilde{\g}_1(t=l'\sigma_k^{-1}r^{-1})$, 
\begin{align}\label{rel}
    |(\tilde{\g}_1(l),\tilde{\g}_2(\tilde{\g}_1(l)))+\lambda(1,\tilde{\g}_2'(\tilde{\g}_1(l)))-(\tilde{\g}_1(l'),\tilde{\g}_2(\tilde{\g}_1(l')))-\lambda'(1,\tilde{\g}_2'(\tilde{\g}_1(l')))|\lesssim h^{-1}r^{-2}.
\end{align}
Using the sizes of $\lambda$ and $\lambda'$, and the assumption that $h^{-1}\sim\sigma_k^{-2}$, by considering the first component above, we have
\[ |\tilde{\g}_1(l)-\tilde{\g}_1(l')|\lesssim C_b\sigma_k^{-1}r^{-1}. \]
By the mean value theorem, there exists $x\in(l'\sigma_k^{-1}r^{-1},l\sigma_k^{-1}r^{-1})$ satisfying $\tilde{\g}_1(l)-\tilde{\g}_1(l')=\tilde{\g}_1'(x)(l-l')\sigma_k^{-1}r^{-1}$. Since $|\tilde{\g}_1'|\sim 1$, we conclude that $|l-l'|\lesssim_b 1$, as desired. 

The remaining case is that $h\le \sigma_k^2/4$. Then the the intersection of $\Theta_{\tau_k}^b\cap\{\xi_3=h\}\cap\Omega_{\sigma_k}$ is the union of two $\sim C_b\sigma_k r^{-1}\times r^{-2}$-rectangles which are $\frac{1}{2}\sigma_k r^{-1}$-separated. If $\Theta_{\tau_k}^b\cap\Theta_{\tau_k'}^b\cap\{\xi_3=h\}\cap\Omega_{\sigma_k}$ is nonempty, then \eqref{rel} holds with $\frac{1}{2}h^{-1}\sigma_k r^{-1}\le|\lambda|,|\lambda'|\le C_b h^{-1}\sigma_k r^{-1}$. Considering the two components in \eqref{rel} separately, we have
\begin{align}
    |\tilde{\g}_1(l)+\lambda-\tilde{\g}_1(l')-\lambda'|&\lesssim h^{-1} r^{-2}\label{A0}
    \\
    |\tilde{\g}_2(\tilde{\g}_1(l))-\tilde{\g}_2(\tilde{\g}_1(l'))+\lambda[\tilde{\g}_2'(\tilde{\g}_1(l))-\tilde{\g}_2'(\tilde{\g}_1(l'))]-(\tilde{\g}_1(l)-\tilde{\g}_1(l'))\tilde{\g}_2'(\tilde{\g}_1(l'))|&\lesssim h^{-1} r^{-2} \label{A}. 
\end{align}
Recall that $\g_1(I_a)=I$, which we now label $I=[-a_1,a_2]$. Define $H:[-a_1,a_2]\to\R$ by 
\[ H(s)= \tilde{\g}_2(\tilde{\g}_1(l))-\tilde{\g}_2(s)+\lambda[\tilde{\g}_2'(\tilde{\g}_1(l))-\tilde{\g}_2'(s)]-(\tilde{\g}_1(l)-s)\tilde{\g}_2'(s) . \] 
Note that $H(\tilde{\g}_1(l))=0$. The first derivative is
\[ H'(s)=
-\tilde{\g}_2''(s)(\lambda+\tilde{\g}_1(l)-s). \]
Recall that $\frac{b}{2}\le -\tilde{\g}_2''(s)\le 2b$ and $b>0$. For $s\in[-a_1,a_2]$, assume that 
\begin{equation}\label{asmpt}-Ch^{-1}r^{-2}\le H(s)\le Ch^{-1}r^{-2}.  \end{equation}
In the following three cases, we will show that \eqref{asmpt} implies that $s$ is in either one or two intervals with centers determined by $l$ and $\lambda$ and length $\sim b^{-1}h^{-1}r^{-2}/\lambda$. Then since $\lambda\gtrsim h^{-1}\sigma_k r^{-1}$, the length of these intervals is $\lesssim b^{-1}\sigma_k^{-1}r^{-1}$. The points $\tilde{\g}_1(l')$, varying over $l'\in\sigma_k rI_a\cap\Z$, form a $\sim \sigma_k^{-1}r^{-1}$-separated set, so we conclude that there are $\lesssim b^{-1}$ many $l'$ satisfying \eqref{A}. It remains to analyze \eqref{asmpt}. 

\noindent \underline{Case 1}: $\lambda+\tilde{\g}_1(l)<-a_1$. In this case, $\lambda<0$ and $H'(s)$ is always negative. The inequality \eqref{asmpt} implies that $s$ is in a neighborhood of $\tilde{\g}_1(l)$. In particular, for $\d>0$ and $\tilde{\g}_1(l)+\d\le a_2$, we calculate that
\[ H(\tilde{\g}_1(l)+\d)=\int_{\tilde{\g}_1(l)}^{\tilde{\g}_1(l)+\d}-\tilde{\g}_2''(t)(\lambda+\tilde{\g}_1(l)-t)dt \le -\frac{b}{2}\int_{\tilde{\g}_1(l)}^{\tilde{\g}_1(l)+\d}(t-\lambda-\tilde{\g}_1(l))dt \le \frac{b}{4}\d(2\lambda-\d)  .\]
Combining the above line with $\d>0$ and $-Ch^{-1}r^{-2}\le H(\tilde{\g}_1(l)+\d)$, conclude that $0\le \d \lesssim \frac{b^{-1}h^{-1}r^{-2}}{\lambda}$. A similar calculation show that if $H(\tilde{\g}_1(l)-\d)\le Ch^{-1}r^{-1}$, then $-\frac{b^{-1}h^{-1}r^{-2}}{\lambda}\lesssim -\d\le 0$. Conclude in this case that $|s-\tilde{\g}_1(l)|\lesssim \frac{b^{-1}h^{-1}r^{-2}}{\lambda}$.

\noindent \underline{Case 2}: $-a_1\le \lambda+\tilde{\g}_1(l)\le a_2$. Then $H'(s)\ge 0$ if $s\le \lambda+\tilde{\g}_1(l)$ and $H'(s)\le 0$ if $s\ge \lambda+\tilde{\g}_1(l)$. The maximum is $H(\lambda+\tilde{\g}_1(l))=\tilde{\g}_2(\tilde{\g}_1(l))-\tilde{\g}_2(\lambda+\tilde{\g}_1(l))+\lambda\tilde{\g}_2'(\tilde{\g}_1(l))$. By Taylor's theorem, there is some $|z_0|\le |\lambda|$ so that $H(\lambda+\tilde{\g}_1(l))=-\frac{1}{2}\tilde{\g}_2''(z_0+\tilde{\g}_1(l))\lambda^2\ge \frac{b}{4}\lambda^2$. We may assume that $\frac{b}{4}\lambda^2\ge Ch^{-1}r^{-2}$ since if not, then we would have $h\sim_b\sigma_k^2$. Repeating the argument from the case $h\ge \frac{\sigma_k^2}{4}$ finishes that case. Assuming now that $h(\lambda+\tilde{\g}_1(l))>Ch^{-1}r^{-2}$, the solutions to \eqref{asmpt} occur in at most two intervals. If $h'(s)> 0$, then by an argument similar to Case 1, $|s-\tilde{\g}_1(l)|\lesssim \frac{b^{-1}h^{-1}r^{-2}}{\lambda}$. If $h'(s)<0$, then $s$ is contained in a $\lesssim \frac{b^{-1}h^{-1}r^{-2}}{\lambda}$-neighborhood of either $a_2$ or $s_0$ satisfying $h(s_0)=0$ and $\lambda+\tilde{\g}_1(l)<s_0<a_2$. 

\noindent \underline{Case 3}: $a_2<\lambda+\tilde{\g}_1(l)$. Then $h'(s)$ is always positive. This case is analogous the Case 1 and we conclude that $|s-\tilde{\g}_1(l)|\lesssim \frac{b^{-1}h^{-1}r^{-2}}{\lambda}$.

\end{proof}

\subsection{General Lorentz rescaling \label{genlorsec}}

In the following two subsections (\textsection\ref{basecaseappsec} and \ref{rescalingappsec}), we will use the description \eqref{gentrunc} of $\Gamma^\g_{\frac{1}{K}}$. Describing the tools in our argument in the $(\nu_1,\nu_2,\nu_3)$-coordinates is analogous to \textsection{5} of \cite{locsmooth}. For each dyadic $s\in(0,a]$, define $\tau\in{\bf{S}}_s$ in the $(\nu_1,\nu_2,\nu_3)$-coordinates to be the $s^2$-neighborhood of 
\begin{equation}\label{nutau} \{(\nu_1,\nu_2,\nu_3):1-\frac{1}{K}\le \nu_3\le 1,\quad
\big|\frac{\nu_2}{\nu_3}-ls\big|\le\frac{s}{2},\quad \frac{\nu_1}{\nu_3}=\frac{{\bf{t}}(0)\cdot\g(g(\frac{\nu_2}{\nu_3}))}{{\bf{c}}(0)\cdot\g(g(\frac{\nu_2}{\nu_3}))}\} \end{equation}
where $l\in [-s^{-1}a,s^{-1}a]\cap\Z$. These $\tau$ are comparable to the canonical caps defined at the beginning of \eqref{app}. Fix a $\tau\in{\bf{S}}_s$ with $\tau=\tau(l)$. We define the Lorentz rescaling $\mc{L}:\R^3\to\R^3$ to be the linear map
\begin{equation}\label{genLdef} \begin{cases}
x_1\mapsto  \frac{4a^2}{s^2}(x_1-\tilde{\nu_1}'(ls)x_2-\tilde{\nu_1}(ls)x_3+\tilde{\nu_1}'(ls)lsx_3)\\ 
x_2\mapsto \frac{2a}{s}(x_2-lsx_3)\\
x_3\mapsto x_3\end{cases} \end{equation}
where the notation $\tilde{\nu}_1=\frac{\nu_1}{\nu_3}$, $\tilde{\nu}_2=\frac{\nu_2}{\nu_3}$, and $\tilde{\nu}_1'$ means $\frac{d}{d\tilde{\nu}_2}\tilde{\nu}_1$, as in \eqref{cond2}. Applying $\mc{L}$ to \eqref{nutau} yields 
\begin{align}\label{Ltau} \big\{(\w_1,\w_2,\w_3):&1-\frac{1}{K}\le \w_3\le 1,\quad\big|\frac{\w_2}{\w_3}\big|\le a,\\
&\qquad\quad \frac{\w_1}{\w_3}=\frac{4a^2}{s^2}(\tilde{\nu}_1(\frac{s}{2a}\frac{\w_2}{\w_3}+ls)-\tilde{\nu}_1'(ls)[\frac{s}{2a}\frac{\w_2}{\w_3}+ls]-\tilde{\nu}_1(ls)+\tilde{\nu}_1'(ls)ls)\big\} .\nonumber \end{align}
Our goal is to regard this set as $\Gamma_{\frac{1}{K}}^{\underline{\g}}$ for some $\underline{\g}\in{\mc{F}}(a,b)$. Write $\tilde{\w}_2=\frac{\w_2}{\w_3}$ and $\tilde{\w}_1=\frac{\w_1}{\w_3}$. Viewing $\tilde{\w_1}$ as a function of $\tilde{\w}_2$, we have 
\[ \tilde{\w}_1'(\tilde{\w}_2)=\frac{2a}{s}(\tilde{\nu}_1'(\frac{s}{2a}\tilde{\w}_2+ls)-\tilde{\nu}_1'(ls))\quad\text{and}\quad \tilde{\w}_1''(\tilde{\w}_2)=\tilde{\nu}_2''(\frac{s}{2a}\tilde{\w}_2+ls). \]
We readily see that $\tilde{\w}_1(0)=0$, $\tilde{\w}_1'(0)=0$, and $\frac{\b}{2}\le \tilde{\w}_1''(\tilde{\w}_2)\le 2b$ for all $|\tilde{\w}_2|\le a$, which verifies the conditions \eqref{cond2} and \eqref{cond3}. To figure out which $\tilde{\g}$ the set \eqref{Ltau} corresponds to, we return to the $(\xi_1,\xi_2,\xi_3)$-coordinates (recalling the relationship between $\xi$ and $\nu$ from the beginning of \textsection\ref{app}), obtaining
\begin{align*}
\big\{(\w_2,\frac{\w_3-\w_1}{\sqrt{2}},\frac{\w_3+\w_1}{\sqrt{2}}):&1-\frac{1}{K}\le \w_3,\quad\big|\frac{\w_2}{\w_3}\big|\le a,\\
&\quad \tilde{\w}_1=\frac{4a^2}{s^2}(\tilde{\nu}_1(\frac{s}{2a}\tilde{\w}_2+ls)-\tilde{\nu}_1'(ls)[\frac{s}{2a}\tilde{\w}_2+ls]-\tilde{\nu}_1(ls)+\tilde{\nu}_1'(ls)ls)\big\} .    
\end{align*}
The curve $\underline{\g}$ is has components $\underline{\g}_i$ which satisfy $\frac{\underline{\g}_1}{\underline{\g}_3}=\underline{\tilde{\g}}_1=\frac{\sqrt{2}\tilde{\w}_2}{1+\tilde{\w}_1}$, and $\frac{\underline{\g}_2}{\underline{\g}_3}=\tilde{\underline{\g}}_2=\frac{1-\tilde{\w}_1}{1+\tilde{\w}_1}$. In order to verify that $\tilde{\g}\in\mc{F}(a,b)$, we need to check the properties \eqref{cond4} with $\underline{\tilde{\g}}_i$ in place of $\tilde{\g}$. Taking $\underline{\tilde{\g}}_1=0$ means that $\tilde{\w}_2=0$, so regarding $\underline{\tilde{\g}}_2$ as a function of $\underline{\tilde{\g}}_1$, $\underline{\tilde{\g}}_2(0)=1$. Next, calculate 
\[ \frac{d\underline{\tilde{\g}}_2}{d\underline{\tilde{\g}}_1}=-\frac{(2+\tilde{\w}_1)\tilde{\w}_1'}{\sqrt{2}(1+\tilde{\w}_1-\tilde{\w}_2\tilde{\w}_1')},\]
where $\tilde{\w}_1=\tilde{\w}_1(\tilde{\w}_2)$ and $\tilde{\w}_1'=\tilde{\w}_1'(\tilde{\w}_2)$. Note that $\underline{\tilde{\g}}_2'(0)=0$. Finally, we have
\[ \frac{d^2\underline{\tilde{\g}}_2}{d\underline{\tilde{\g}}_1^2}=\frac{-(1+\tilde{\w}_1)^2(\tilde{\w}_1')^2}{2(1+\tilde{\w}_1-\tilde{\w}_2\tilde{\w}_1')^2} -\frac{(1+\tilde{\w}_1)^3(2+\tilde{\w}_1)}{2(1+\tilde{\w}_1-\tilde{\w}_2\tilde{\w}_1')^3}\tilde{\w}_1'', \]
so $\underline{\tilde{\g}}_2''(0)=-\tilde{\nu}_2''(ls)$. 
As $\tilde{\w}_2$ varies over the interval $|\tilde{\w}_2|\le a$, using the mean value theorem and the fact that $b\le 1$, we have $|\tilde{\w}_1'(\tilde{\w}_2)|\le 2a$ and $|\tilde{\w}_1(\tilde{\w}_2)|\le 2a$. Since $\frac{b}{2}\le \tilde{\w}_1''\le 2b$, we may assume that $a>0$ is small enough so that the right hand side displayed above is in the range $[-4b,-b/4]$. This finishes the verification of \eqref{cond4}. 

To specify the curve $\underline{\g}$, write 
\[ F(\tilde{\w}_2)=\frac{(\underline{\tilde{\g}}_1(\tilde{\w}_2),\underline{\tilde{\g}}_2(\tilde{\w}_2),1)}{[(\underline{\tilde{\g}}_1)^2+(\underline{\tilde{\g}}_2)^2+1]^{1/2}}. \]
It is easy to verify that for $a>0$ sufficiently small, the third component satisfies $\frac{1}{2}\le F_3\le 2$. Also, $F(0)=\frac{1}{\sqrt{2}}(0,1,1)$ and $\frac{dF}{d\tilde{\w}_2}(0)=(1,0,0)$. Letting $t=\int_0^{\tilde{\w}_2}|\frac{dF}{d\tilde{\w}_2}|$, we define $\underline{\g}$ to be the unit-speed parametrization $F(\tilde{\w}_2(t))$, which then satisfies \eqref{cond0}. This finishes showing that the image of \eqref{Ltau} under $\mc{L}$ is $\Gamma_{\frac{1}{K}}^{\underline{\g}}$ for $\underline{\g}\in{\mc{F}}(a,b)$. 

Finally, we identify $\mc{L}(\tau')$, where $\tau'\in{\bf{S}}_{s'}$ and $\tau'\subset\tau$, with a plank in ${\bf{S}}_{2as's^{-1}}$ which is part of a partition of the $(2as's^{-1})^2$-neighborhood of $\Gamma_{\frac{1}{K}}^{\underline{\g}}$. The plank $\tau'$ is comparable to the convex hull of 
\[ \{(\nu_1,\nu_2,\nu_3):1-\frac{1}{K}\le \nu_3\le 1,\quad
\big|\frac{\nu_2}{\nu_3}-l's'\big|\le\frac{s'}{2},\quad \frac{\nu_1}{\nu_3}=\frac{{\bf{t}}(0)\cdot\g(g(\frac{\nu_2}{\nu_3}))}{{\bf{c}}(0)\cdot\g(g(\frac{\nu_2}{\nu_3}))}\}  \]
for some $l'\in[(-s')^{-1}a,(s')^{-1}a]\cap\Z$ satisfying $|l's'-ls|\le \frac{s}{2}$. Applying $\mc{L}$ to the above set yields 
\[ \{ (\w_3\tilde{\w}_1(\tilde{\w}_2),\w_2,\w_3):1-\frac{1}{K}\le \w_3,\quad\big|\frac{\w_2}{\w_3}-(ls(s')^{-1}-l')2as's^{-1}\big|\le as's^{-1}\}, \]
which has convex hull comparable to a plank in ${\bf{S}}_{2as's^{-1}}$. Since taking convex hulls commutes with linear transformations, we are done. 

For $r^{-1}<s$, it remains to note that $(\mc{L}^{-1})^*(U_{\tau,r^2})$ is comparable to $U_{\mc{L}(\tau),(2a)^{-2}s^2r^2}$. Since we are using the $(\nu_1,\nu_2,\nu_3)$-coordinate from now on, by $U_{\tau,r^2}$, we mean a rotated version of the set defined in \eqref{genwe}. As is done in \cite{locsmooth}, we use the characterization of $U_{\tau,r^2}$ as the convex hull of $\cup_{\theta\subset\tau}\tilde{\theta}^*$, where the union is taken over $\theta\in{\bf{S}}_{r^{-1}}$ contained in $\tau$. If for a subset $A\subset\R^3$, we define $A^*=\{x\in\R^3:|x\cdot y|\le 1\qquad\forall y\in A\}$, then it is easy to verify that $(\mc{L}^{-1})^*(\tilde{\theta}^*)=\mc{L}(\tilde{\theta})^*$. It follows that $(\mc{L}^{-1})^*(U_{\tau,r^2})\approx U_{\mc{L}(\tau),(2a)^{-2}s^2r^2}$.

With this background for the general Lorentz transformations established, the remainder of the argument is straightforward.

\subsubsection{Adaptation of the base-case step (Lemma \ref{basecase}) \label{basecaseappsec}}

Begin by noting that $\mc{N}_{\frac{1}{K}}(\Gamma_{\frac{1}{K}}^\g)$ is approximately the $\frac{1}{K}$-neighborhood of 
\[ \Big\{(\nu_1,\nu_2,\nu_3): 1-\frac{1}{K}\le \nu_3\le 1,\quad \Big|\frac{\nu_2}{\nu_3}\Big|\le a,\quad \nu_1=\frac{{\bf{t}}(0)\cdot\g(g(\nu_2))}{{\bf{c}}(0)\cdot\g(g(\nu_2))}  \Big\}  .\]
This is because in $\Gamma_{\frac{1}{K}}^\g$, $\nu_1=\nu_3\frac{{\bf{t}}(0)\cdot\g(g(\frac{\nu_2}{\nu_3}))}{{\bf{c}}(0)\cdot\g(g(\frac{\nu_2}{\nu_3}))}$ and by the mean value theorem, $|\frac{{\bf{t}}(0)\cdot\g(g(\frac{\nu_2}{\nu_3}))}{{\bf{c}}(0)\cdot\g(g(\frac{\nu_2}{\nu_3}))} -\frac{{\bf{t}}(0)\cdot\g(g(\nu_2))}{{\bf{c}}(0)\cdot\g(g(\nu_2))} |\le \frac{1}{K}$. 
The set displayed above is a cylinder over the curve $(\nu_1(\nu_2),\nu_2,0)$. The proof of Proposition \ref{basecaseprop} in \textsection\ref{trunc} goes through, except we must invoke the general version of Theorem \ref{mainP} proved in \textsection\ref{mainPgensec}.

\subsubsection{Adaptation of the rescaling step (Lemma \ref{rescaling})\label{rescalingappsec}}

The pigeonholing step Lemma \ref{pig} which regularizes the support of $\widehat{f}$ is unchanged. In the proof of Lemma \ref{rescaling}, we invoke the general Lorentz rescaling defined in \eqref{genLdef} rather than the map from \cite{locsmooth}.

\bibliographystyle{alpha}
\bibliography{AnalysisBibliography}

\end{document}